\theoremstyle{plain}
\newtheorem{theorem}{Theorem}[section]
\newtheorem{corollary}[theorem]{Corollary}
\newtheorem{lemma}[theorem]{Lemma}
\newtheorem{proposition}[theorem]{Proposition}
\theoremstyle{definition}
\newtheorem{definition}[theorem]{Definition}
\newtheorem{remark}[theorem]{Remark}
\newtheorem{example}[theorem]{Example}
\numberwithin{equation}{section}
\begin{document}
\title[The equivariant index]{The eta invariant and equivariant index of
transversally elliptic operators}
\author[J.~Br\"{u}ning]{Jochen Br\"{u}ning}
\address{Institut f\"{u}r Mathematik \\
Humboldt Universit\"{a}t zu Berlin \\
Unter den Linden 6 \\
D-10099 Berlin, Germany}
\email[J.~Br\"{u}ning]{bruening@mathematik.hu-berlin.de}
\author[F. W.~Kamber]{Franz W.~Kamber}
\address{Department of Mathematics, University of Illinois \\
1409 W. Green Street \\
Urbana, IL 61801, USA}
\email[F. W.~Kamber]{kamber@math.uiuc.edu}
\author[K.~Richardson]{Ken Richardson}
\address{Department of Mathematics \\
Texas Christian University \\
Fort Worth, Texas 76129, USA}
\email[K.~Richardson]{k.richardson@tcu.edu}
\subjclass[2000]{{58J20; 53C12; 58J28; 57S15; 54H15}}
\keywords{equivariant, index, transversally elliptic, eta invariant,
stratification, foliation}
\thanks{Work of the first author was partly supported by the grant SFB647
''Space-Time-Matter''.}
\date{\today }

\begin{abstract}
We prove a formula for the multiplicities of the index of an equivariant
transversally elliptic operator on a $G$-manifold. The formula is a sum of
integrals over blowups of the strata of the group action and also involves
eta invariants of associated elliptic operators. Among the applications, we
obtain an index formula for basic Dirac operators on Riemannian foliations,
a problem that was open for many years.
\end{abstract}

\maketitle
\tableofcontents

\section{Introduction}

Suppose that a compact Lie group $G$ acts by isometries on a compact,
connected Riemannian manifold $M$, and let $E=E^{+}\oplus E^{-}$ be a
graded, $G$-equivariant Hermitian vector bundle over $M$. We consider a
first order $G$-equivariant differential operator $D=D^{+}:$ $\Gamma \left(
M,E^{+}\right) \rightarrow \Gamma \left( M,E^{-}\right) $ that is
transversally elliptic, and let $D^{-}$ be the formal adjoint of $D^{+}$.

The group $G$ acts on $\Gamma \left( M,E^{\pm }\right) $ by $\left(
gs\right) \left( x\right) =g\cdot s\left( g^{-1}x\right) $, and the
(possibly infinite-dimensional) subspaces $\ker \left( D^{+}\right) $ and $%
\ker \left( D^{-}\right) $ are $G$-invariant subspaces. Let $\rho
:G\rightarrow U\left( V_{\rho }\right) $ be an irreducible unitary
representation of $G$, and let $\chi _{\rho }=\mathrm{tr}\left( \rho \right) 
$ denote its character. Let $\Gamma \left( M,E^{\pm }\right) ^{\rho }$ be
the subspace of sections that is the direct sum of the irreducible $G$%
-representation subspaces of $\Gamma \left( M,E^{\pm }\right) $ that are
unitarily equivalent to the representation $\rho $. It can be shown that the
extended operators 
\begin{equation*}
\overline{D}_{\rho ,s}:H^{s}\left( \Gamma \left( M,E^{+}\right) ^{\rho
}\right) \rightarrow H^{s-1}\left( \Gamma \left( M,E^{-}\right) ^{\rho
}\right)
\end{equation*}%
are Fredholm and independent of $s$, so that each irreducible representation
of $G$ appears with finite multiplicity in $\ker D^{\pm }$ (see Corollary %
\ref{FredholmSobolev}). Let $a_{\rho }^{\pm }\in \mathbb{Z}_{\geq 0}$ be the
multiplicity of $\rho $ in $\ker \left( D^{\pm }\right) $.

The study of index theory for such transversally elliptic operators was
initiated by M. Atiyah and I. Singer in the early 1970s (\cite{A}). The
virtual representation-valued index of $D$ is given by 
\begin{equation*}
\mathrm{ind}^{G}\left( D\right) :=\sum_{\rho }\left( a_{\rho }^{+}-a_{\rho
}^{-}\right) \left[ \rho \right] ,
\end{equation*}%
where $\left[ \rho \right] $ denotes the equivalence class of the
irreducible representation $\rho $. The index multiplicity is 
\begin{equation*}
\mathrm{ind}^{\rho }\left( D\right) :=a_{\rho }^{+}-a_{\rho }^{-}=\frac{1}{%
\dim V_{\rho }}\mathrm{ind}\left( \left. D\right\vert _{\Gamma \left(
M,E^{+}\right) ^{\rho }\rightarrow \Gamma \left( M,E^{-}\right) ^{\rho
}}\right) .
\end{equation*}%
In particular, if $\rho _{0}$ is the trivial representation of $G$, then 
\begin{equation*}
\mathrm{ind}^{\rho _{0}}\left( D\right) =\mathrm{ind}\left( \left.
D\right\vert _{\Gamma \left( M,E^{+}\right) ^{G}\rightarrow \Gamma \left(
M,E^{-}\right) ^{G}}\right) ,
\end{equation*}%
where the superscript $G$ implies restriction to $G$-invariant sections.

There is a clear relationship between the index multiplicities and Atiyah's
equivariant distribution-valued index $\mathrm{ind}_{g}\left( D\right) $;
the multiplicities determine the distributional index, and vice versa. Let $%
\left\{ X_{1},...,X_{r}\right\} $ be an orthonormal basis of the Lie algebra
of $G$. Let $\mathcal{L}_{X_{j}}$ denote the induced Lie derivative with
respect to $X_{j}$ on sections of $E$, and let $C=\sum_{j}\mathcal{L}%
_{X_{j}}^{\ast }\mathcal{L}_{X_{j}}$ be the Casimir operator on sections of $%
E$. The space $\Gamma \left( M,E^{\pm }\right) ^{\rho }$ is a subspace of
the $\lambda _{\rho }$-eigenspace of $C$. The virtual character $\mathrm{ind}%
_{g}\left( D\right) $ is given by (see \cite{A}) 
\begin{eqnarray*}
\mathrm{ind}_{g}\left( D\right) &:&=\text{\textquotedblleft }\mathrm{tr}%
\left( \left. g\right\vert _{\ker D^{+}}\right) -\mathrm{tr}\left( \left.
g\right\vert _{\ker D^{-}}\right) \text{\textquotedblright } \\
&=&\sum_{\rho }\mathrm{ind}^{\rho }\left( D\right) \chi _{\rho }\left(
g\right) .
\end{eqnarray*}%
Note that the sum above does not in general converge, since $\ker D^{+}$ and 
$\ker D^{-}$ are in general infinite-dimensional, but it does make sense as
a distribution on $G$. That is, if $dg$ is the normalized, biinvariant Haar
measure on $G$, and if $\phi =\beta +\sum c_{\rho }\chi _{\rho }\in
C^{\infty }\left( G\right) $, with $\beta $ orthogonal to the subspace of
class functions on $G$, then 
\begin{eqnarray*}
\mathrm{ind}_{\ast }\left( D\right) \left( \phi \right) &=&\text{%
\textquotedblleft }\int_{G}\phi \left( g\right) ~\overline{\mathrm{ind}%
_{g}\left( D\right) }~dg\text{\textquotedblright } \\
&=&\sum_{\rho }\mathrm{ind}^{\rho }\left( D\right) \int \phi \left( g\right)
~\overline{\chi _{\rho }\left( g\right) }~dg=\sum_{\rho }\mathrm{ind}^{\rho
}\left( D\right) c_{\rho },
\end{eqnarray*}%
an expression which converges because $c_{\rho }$ is rapidly decreasing and $%
\mathrm{ind}^{\rho }\left( D\right) $ grows at most polynomially as $\rho $
varies over the irreducible representations of $G$. From this calculation,
we see that the multiplicities determine Atiyah's distributional index.
Conversely, let $\alpha :G\rightarrow U\left( V_{\alpha }\right) $ be an
irreducible unitary representation. Then 
\begin{equation*}
\mathrm{ind}_{\ast }\left( D\right) \left( \chi _{\alpha }\right)
=\sum_{\rho }\mathrm{ind}^{\rho }\left( D\right) \int \chi _{\alpha }\left(
g\right) \overline{\chi _{\rho }\left( g\right) }\,dg=\mathrm{ind}^{\alpha
}D,
\end{equation*}%
so that in principle complete knowledge of the equivariant distributional
index is equivalent to knowing all of the multiplicities $\mathrm{ind}^{\rho
}\left( D\right) $. Because the operator $\left. D\right\vert _{\Gamma
\left( M,E^{+}\right) ^{\rho }\rightarrow \Gamma \left( M,E^{-}\right)
^{\rho }}$ is Fredholm, all of the indices $\mathrm{ind}^{G}\left( D\right) $
, $\mathrm{ind}_{g}\left( D\right) $, and $\mathrm{ind}^{\rho }\left(
D\right) $ depend only on the stable homotopy class of the principal
transverse symbol of $D$.

Let us now consider the heat kernel expression for the index multiplicities.
The usual McKean-Singer argument shows that, in particular, for every $t>0$,
the index $\mathrm{ind}^{\rho }\left( D\right) $ may be expressed as the
following iterated integral: 
\begin{gather}
\mathrm{ind}^{\rho }\left( D\right) =\int_{x\in M}\int_{g\in G}\,\mathrm{str~%
}g\cdot K\left( t,g^{-1}x,x\right) ~\overline{\chi _{\rho }\left( g\right) }%
~dg~\left\vert dx\right\vert  \notag \\
=\int_{x\in M}\int_{g\in G}\left( \mathrm{tr~}g\cdot K^{+}\left(
t,g^{-1}x,x\right) -\mathrm{tr~}g\cdot K^{-}\left( t,g^{-1}x,x\right)
\right) ~\overline{\chi _{\rho }\left( g\right) }~dg~\left\vert dx\right\vert
\label{indexIntegralIntroduction}
\end{gather}%
where $K^{\pm }\left( t,\cdot ,\cdot \right) \in \Gamma \left( M\times
M,E^{\mp }\boxtimes \left( E^{\pm }\right) ^{\ast }\right) $ is the kernel
for $e^{-t\left( D^{\mp }D^{\pm }+C-\lambda _{\rho }\right) }$ on $\Gamma
\left( M,E^{\pm }\right) $, letting $\left\vert dx\right\vert $ denote the
Riemannian density over $M$.

A priori, the integral above is singular near sets of the form 
\begin{equation*}
\bigcup\limits_{G_{x}\in \left[ H\right] }x\times G_{x}\subset M\times G,
\end{equation*}%
where the isotropy subgroup $G_{x}$ is the subgroup of $G$ that fixes $x\in
M $, and $\left[ H\right] $ is a conjugacy class of isotropy subgroups.

A\ large body of work over the last twenty years has yielded theorems that
express $\mathrm{ind}_{g}\left( D\right) $ and $\int_{M}\,\left( \mathrm{tr~}%
g\cdot K^{+}\left( t,g^{-1}x,x\right) -\mathrm{tr~}g\cdot K^{-}\left(
t,g^{-1}x,x\right) \right) ~\left\vert dx\right\vert $ in terms of
topological and geometric quantities, as in the Atiyah-Segal-Singer index
theorem for elliptic operators \cite{ASe} or the Berline-Vergne Theorem for
transversally elliptic operators \cite{Be-V1},\cite{Be-V2}. However, until
now there has been very little known about the problem of expressing $%
\mathrm{ind}^{\rho }\left( D\right) $ in terms of topological or geometric
quantities which are determined at the different strata of the $G$-manifold $%
M$. The special case when all of the isotropy groups are the same dimension
was solved by M. Atiyah in \cite{A}, and this result was utilized by T.
Kawasaki to prove the Orbifold Index Theorem (see \cite{Kawas2}). Our
analysis is new in that the integral over the group in (\ref%
{indexIntegralIntroduction}) is performed first, before integration over the
manifold, and thus the invariants in our index theorem are very different
from those seen in other equivariant index formulas. Theorem \ref%
{MainTheorem} gives a formula for the Fourier coefficients of the virtual
character instead of the value of the character at a particular $g\in G$.

Our main theorem (Theorem \ref{MainTheorem}) expresses $\mathrm{ind}^{\rho
}\left( D\right) $ as a sum of integrals over the different strata of the
action of $G$ on $M$, and it involves the eta invariant of associated
equivariant elliptic operators on spheres normal to the strata. The result is%
\begin{eqnarray*}
\mathrm{ind}^{\rho }\left( D\right) &=&\int_{G\diagdown \widetilde{M_{0}}%
}A_{0}^{\rho }\left( x\right) ~\widetilde{\left\vert dx\right\vert }%
~+\sum_{j=1}^{r}\beta \left( \Sigma _{\alpha _{j}}\right) ~, \\
\beta \left( \Sigma _{\alpha _{j}}\right) &=&\frac{1}{2\dim V_{\rho }}%
\sum_{b\in B}\frac{1}{n_{b}\mathrm{rank~}W^{b}}\left( -\eta \left(
D_{j}^{S+,b}\right) +h\left( D_{j}^{S+,b}\right) \right) \int_{G\diagdown 
\widetilde{\Sigma _{\alpha _{j}}}}A_{j,b}^{\rho }\left( x\right) ~\widetilde{%
\left\vert dx\right\vert }~,
\end{eqnarray*}%
(The notation will be explained later; the integrands $A_{0}^{\rho }\left(
x\right) $ and $A_{j,b}^{\rho }\left( x\right) $ are the familar
Atiyah-Singer integrands corresponding to local heat kernel supertraces of
induced elliptic operators over closed manifolds.) Even in the case when the
operator $D$ is elliptic, this result was not known previously. Further, the
formula above gives a method for computing eta invariants of Dirac-type
operators on quotients of spheres by compact group actions; these have been
computed previously only in some special cases. We emphasize that every part
of the formula is explicitly computable from local information provided by
the operator and manifold. Even the eta invariant of the operator $%
D_{j}^{S+,b}$ on a sphere is calculated directly from the principal symbol
of the operator $D$ at one point of a singular stratum. The de Rham operator
provides an important example illustrating the computability of the formula,
yielding a new theorem expressing the equivariant Euler characteristic in
terms of ordinary Euler characteristics of the strata of the group action
(Theorem \ref{EulerCharacteristicTheorem}).

One of the primary motivations for obtaining an explicit formula for $%
\mathrm{ind}^{\rho }\left( D\right) $ was to use it to produce a basic index
theorem for Riemannian foliations, thereby solving a problem that has been
open since the 1980s (it is mentioned, for example, in \cite{EK}). In fact,
the basic index theorem is a consequence of the invariant index theorem
(Theorem \ref{InvariantIndexTheorem}), corresponding to the trivial
representation $\rho _{0}$. This theorem is stated in Section \ref%
{BasicIndexSubsection}. We note that a recent paper of Gorokhovsky and Lott
addresses this transverse index question on Riemannian foliations. Using a
different technique, they are able to prove a formula for the basic index of
a basic Dirac operator that is distinct from our formula, in the case where
all the infinitesimal holonomy groups of the foliation are connected tori
and if Molino's commuting sheaf is abelian and has trivial holonomy (see 
\cite{GLott}). Our result requires at most mild topological assumptions on
the transverse structure of the strata of the Riemannian foliation and has a
similar form to the formula above for $\mathrm{ind}^{\rho _{0}}\left(
D\right) $. In particular, the analogue for the Gauss-Bonnet Theorem for
Riemannian foliations (Theorem \ref{BasicGaussBonnet}) is a corollary and
requires no assumptions on the structure of the Riemannian foliation.

There are several new techniques in this paper that have not been explored
previously. First, the fact that $\mathrm{ind}^{\rho }\left( D\right) $ is
invariant under $G$-equivariant homotopies is used in a very specific way,
and we keep track of the effects of these homotopies so that the formula for
the index reflects data coming from the original operator and manifold. In
Section \ref{BlowupDoubleMainSection} we describe a process of blowing up,
cutting, and reassembling the $G$-manifold into what is called the
desingularization, which also involves modifying the operator and vector
bundles near the singular strata as well. The result is a $G$-manifold that
has less intricate structure and for which the heat kernels are easier to
evaluate. The key idea is to relate the local asymptotics of the equivariant
heat kernel of the original manifold to the desingularized manifold; at this
stage the eta invariant appears through a direct calculation on the normal
bundle to the singular stratum. We note that our desingularization process
and the equivariant index theorem were stated and announced in \cite{RiOber}
and \cite{RiKor}; recently Albin and Melrose have taken it a step further in
tracking the effects of the desingularization on equivariant cohomology and
equivariant K-theory (\cite{AlMel}).

Another new idea in this paper is the decomposition of equivariant vector
bundles over $G$-manifolds with one orbit type. A crucial step in the proof
required the construction of a subbundle of an equivariant bundle over a $G$%
-invariant part of a stratum that is the minimal $G$-bundle decomposition
that consists of direct sums of isotypical components of the bundle. We call
this decomposition the \emph{fine decomposition} and define it in Section %
\ref{FineDecompSection}. A more detailed account of this method will appear
in \cite{KRi}.

There are certain assumptions on the operator $D$ that are required to
produce the formula in the main theorem, mainly that it has product-like
structure near the singular strata after a $G$-equivariant homotopy. We note
that most of the major examples of transversally elliptic differential
operators have the required properties. In fact, in \cite{PrRi}, a large
variety of examples of naturally defined transversal operators similar to
Dirac operators are explored and shown under most conditions to provide all
possible index classes of equivariant transversally elliptic operators.
These operators almost always satisfy the required product condition at the
singular strata (see Section \ref{productAssumptionSection}). Further, we
note that when the basic index problem for transverse Dirac-type operators
on Riemannian foliations is converted to an invariant index problem, the
resulting operator on the $G$-manifold again satisfies the required
assumptions, under mild topological conditions on the bundle and principal
transverse symbol of the operator.

The outline of the paper is as follows. We first review the stratification
of $G$-manifolds in Section \ref{stratification} and establish the
elementary properties of transversally elliptic operators in Section \ref%
{TransEllipOpsSection}. In Section \ref{OneIsotropyTypeMainSection}, we
discuss equivariant analysis on manifolds with one orbit type and decompose
equivariant bundles over them in various ways, including the fine
decomposition mentioned above. The relevant properties of the supertrace of
the equivariant heat kernel are discussed in Section \ref%
{heatKernelEquivIndexSection}. In Section \ref{inducedOperatorSphereSection}%
, we compute the local contribution of the supertrace of a general constant
coefficient equivariant heat operator in the neighborhood of a singular
point of an orthogonal group action on a sphere. It is here that the
equivariant index is related to a boundary value problem, which explains the
presence of eta invariants in the main theorem. In Section \ref%
{BlowupDoubleMainSection}, we describe the iterative process of
desingularizing the $G$-manifold near a minimal stratum and producing a
double of the manifold with one less stratum. We apply the heat kernel
analysis, representation theory, and fine decomposition to produce a heat
kernel splitting formula in Section \ref{newMultiplicitiesSection}. This
process leads to a reduction theorem for the equivariant heat supertrace
(Theorem \ref{supertraceReduction}). The main theorem follows and is stated
in Section \ref{MainTheoremSubsection}. Several examples are discussed in
Section \ref{ExamplesApplicationsSection}; these examples show that all of
the terms in the formula above are nontrivial. The basic index theorem for
Riemannian foliations is stated and discussed in Section \ref%
{BasicIndexSubsection}.

We note that other researchers have investigated transverse index theory of
group actions and foliations in noncommutative geometry and topology; these
papers answer different questions about the analysis and topology of certain
groupoids (see \cite{ConnesMosc}). A survey of index theory, primarily in
the noncommutative geometry setting, can be found in \cite{Kord}.

We thank James Glazebrook, Efton Park and Igor Prokhorenkov for helpful
discussions. The authors would like to thank variously the Mathematisches
Forschungsinstitut Oberwolfach, the Erwin Schr\"{o}dinger International
Institute for Mathematical Physics (ESI), Vienna, the Department for
Mathematical Sciences (IMF) at Aarhus University, the Centre de Recerca Matem%
\`{a}tica (CRM), Barcelona, and the Department of Mathematics at TCU for
hospitality and support during the preparation of this work.

\section{Stratifications of $G$-manifolds}

\label{stratification}In the following, we will describe some standard
results from the theory of Lie group actions (see \cite{Bre}, \cite{Kaw}).
Such $G$-manifolds are stratified spaces, and the stratification can be
described explicitly. In the following discussion, $G$ is a compact Lie
group acting on a smooth, connected, closed manifold $M$. We assume that the
action is effective, meaning that no $g\in G$ fixes all of $M$. (Otherwise,
replace $G$ with $G\diagup \left\{ g\in G:gx=x\text{ for all }x\in M\right\} 
$.) Choose a Riemannian metric for which $G$ acts by isometries; average the
pullbacks of any fixed Riemannian metric over the group of diffeomorphisms
to obtain such a metric.

Given such an action and $x\in M$, the isotropy or stabilizer subgroup $%
G_{x}<G$ is defined to be $\left\{ g\in G:gx=x\right\} $. The orbit $%
\mathcal{O}_{x}$ of a point $x$ is defined to be $\left\{ gx:g\in G\right\} $%
. Since $G_{xg}=gG_{x}g^{-1}$, the conjugacy class of the isotropy subgroup
of a point is fixed along an orbit.

On any such $G$-manifold, the conjugacy class of the isotropy subgroups
along an orbit is called the \textbf{orbit type}. On any such $G$-manifold,
there are a finite number of orbit types, and there is a partial order on
the set of orbit types. Given subgroups $H$ and $K$ of $G$, we say that $%
\left[ H\right] \leq $ $\left[ K\right] $ if $H$ is conjugate to a subgroup
of $K$, and we say $\left[ H\right] <$ $\left[ K\right] $ if $\left[ H\right]
\leq $ $\left[ K\right] $ and $\left[ H\right] \neq $ $\left[ K\right] $. We
may enumerate the conjugacy classes of isotropy subgroups as $\left[ G_{0}%
\right] ,...,\left[ G_{r}\right] $ such that $\left[ G_{i}\right] \leq \left[
G_{j}\right] $ implies that $i\leq j$. It is well-known that the union of
the principal orbits (those with type $\left[ G_{0}\right] $) form an open
dense subset $M_{0}$ of the manifold $M$, and the other orbits are called 
\textbf{singular}. As a consequence, every isotropy subgroup $H$ satisfies $%
\left[ G_{0}\right] \leq \left[ H\right] $. Let $M_{j}$ denote the set of
points of $M$ of orbit type $\left[ G_{j}\right] $ for each $j$; the set $%
M_{j}$ is called the \textbf{stratum} corresponding to $\left[ G_{j}\right] $%
. If $\left[ G_{j}\right] \leq \left[ G_{k}\right] $, it follows that the
closure of $M_{j}$ contains the closure of $M_{k}$. A stratum $M_{j}$ is
called a \textbf{minimal stratum} if there does not exist a stratum $M_{k}$
such that $\left[ G_{j}\right] <\left[ G_{k}\right] $ (equivalently, such
that $\overline{M_{k}}\subsetneq \overline{M_{j}}$). It is known that each
stratum is a $G$-invariant submanifold of $M$, and in fact a minimal stratum
is a closed (but not necessarily connected) submanifold. Also, for each $j$,
the submanifold $M_{\geq j}:=\bigcup\limits_{\left[ G_{k}\right] \geq \left[
G_{j}\right] }M_{k}$ is a closed, $G$-invariant submanifold.

Now, given a proper, $G$-invariant submanifold $S$ of $M$ and $\varepsilon
>0 $, let $T_{\varepsilon }(S)$ denote the union of the images of the
exponential map at $s$ for $s\in S$ restricted to the open ball of radius $%
\varepsilon $ in the normal bundle at $S$. It follows that $T_{\varepsilon
}(S)$ is also $G$ -invariant. If $M_{j}$ is a stratum and $\varepsilon $ is
sufficiently small, then all orbits in $T_{\varepsilon }\left( M_{j}\right)
\setminus M_{j}$ are of type $\left[ G_{k}\right] $, where $\left[ G_{k}%
\right] <\left[ G_{j}\right] $. This implies that if $j<k$, $\overline{M_{j}}%
\cap \overline{M_{k}}\neq \varnothing $, and $M_{k}\subsetneq \overline{M_{j}%
}$, then $\overline{M_{j}}$ and $\overline{M_{k}}$ intersect at right
angles, and their intersection consists of more singular strata (with
isotropy groups containing conjugates of both $G_{k}$ and $G_{j}$).

Fix $\varepsilon >0$. We now decompose $M$ as a disjoint union of sets $%
M_{0}^{\varepsilon },\dots ,M_{r}^{\varepsilon }$. If there is only one
isotropy type on $M$, then $r=0$, and we let $M_{0}^{\varepsilon }=\Sigma
_{0}^{\varepsilon }=M_{0}=M$. Otherwise, for $j=r,r-1,...,0$, let $%
\varepsilon _{j}=2^{j}\varepsilon $, and let%
\begin{eqnarray}
{\ }\Sigma _{j}^{\varepsilon } &=&M_{j}\setminus \overline{%
\bigcup_{k>j}M_{k}^{\varepsilon }}  \label{Sigmajepsilon} \\
M_{j}^{\varepsilon } &=&T_{\varepsilon _{j}}\left( M_{j}\right) \setminus 
\overline{\bigcup_{k>j}M_{k}^{\varepsilon }},  \label{Mjepsilon}
\end{eqnarray}%
Thus, 
\begin{equation*}
{\ }T_{\varepsilon }\left( \Sigma _{j}^{\varepsilon }\right) \subset
M_{j}^{\varepsilon },~\Sigma _{j}^{\varepsilon }\subset M_{j}.
\end{equation*}

The following facts about this decomposition are contained in \cite[pp.~203ff%
]{Kaw}:

\begin{lemma}
\label{decomposition} For sufficiently small $\varepsilon >0$, we have, for
every $i\in \{0,\ldots ,r\}$:

\begin{enumerate}
\item $\displaystyle M=\coprod_{i=0}^{r}M_{i}^{\varepsilon }$ (disjoint
union).

\item $M_{i}^{\varepsilon }$ is a union of $G$-orbits; $\Sigma
_{i}^{\varepsilon }$ is a union of $G$-orbits.

\item The manifold $M_{i}^{\varepsilon }$ is diffeomorphic to the interior
of a compact $G$ -manifold with corners; the orbit space $M_{i}^{\varepsilon
}\diagup G$ is a smooth manifold that is isometric to the interior of a
triangulable, compact manifold with corners. The same is true for each $%
\Sigma _{i}^{\varepsilon }$.

\item If $\left[ G_{j}\right] $ is the isotropy type of an orbit in $%
M_{i}^{\varepsilon }$, then $j\leq i$ and $\left[ G_{j}\right] \leq \left[
G_{i}\right] $.

\item The distance between the submanifold $M_{j}$ and $M_{i}^{\varepsilon }$
for $j>i$ is at least $\varepsilon $.
\end{enumerate}
\end{lemma}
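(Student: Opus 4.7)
The strategy is to unwind the inductive definitions (\ref{Sigmajepsilon}) and (\ref{Mjepsilon}) using the equivariant slice/tube theorem together with the right-angle intersection property of stratum closures noted in the paragraph preceding the lemma. Items (2), (4), (1), and (5) follow from direct book-keeping; the main obstacle is (3), which encodes the geometric content of the decomposition.

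For (2) and (4): since $G$ acts by isometries, each $M_i$ is $G$-invariant and the normal exponential map is $G$-equivariant, so $T_{\varepsilon_i}(M_i)$ is $G$-invariant. A downward induction on $i$ then shows $\overline{\bigcup_{k>i}M_k^\varepsilon}$ is $G$-invariant, whence so are $M_i^\varepsilon$ and $\Sigma_i^\varepsilon$. For (4), a point $x\in M_i^\varepsilon\setminus M_i$ lies in $T_{\varepsilon_i}(M_i)\setminus M_i$, so by the slice theorem (recalled above the lemma) its isotropy type $[G_j]$ satisfies $[G_j]<[G_i]$, and by our enumeration $j\leq i$; the case $x\in M_i$ gives $j=i$.

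For (1) and (5): essential disjointness is built into (\ref{Mjepsilon}) by construction, while coverage follows by downward induction on $j$: any $x\in M_j$ either lies in $\overline{\bigcup_{k>j}M_k^\varepsilon}$ (where it is handled by the inductive hypothesis) or satisfies the defining condition for $M_j^\varepsilon$ directly. For (5), if $j>i$ and $x\in M_i^\varepsilon$ had $d(x,M_j)<\varepsilon$, then since $\varepsilon_j=2^j\varepsilon\geq 2\varepsilon$ we would have $x\in T_{\varepsilon_j}(M_j)$, so by (\ref{Mjepsilon}) $x$ would lie in $M_j^\varepsilon\cup\overline{\bigcup_{k>j}M_k^\varepsilon}\subseteq\overline{\bigcup_{k>i}M_k^\varepsilon}$, contradicting $x\in M_i^\varepsilon$.

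The main obstacle is (3). My plan is to use the equivariant tubular neighborhood theorem to identify each $T_{\varepsilon_i}(M_i)$ with an open disk subbundle of the $G$-equivariant normal bundle $\nu M_i\to M_i$, so that its closure is a compact $G$-manifold with boundary. The set $M_i^\varepsilon$ is then obtained from this by removing the closures of finitely many open tubes $M_k^\varepsilon$ with $k>i$. The crucial geometric input is the right-angle intersection of closures of distinct strata, recorded just before the lemma: it translates into the statement that the boundary hypersurfaces $\partial T_{\varepsilon_k}(M_k)$ carving out $M_i^\varepsilon$ intersect transversally and orthogonally, yielding the structure of the interior of a compact $G$-manifold with corners, with codimension-$\ell$ corners corresponding to $\ell$-fold tube intersections. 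Passing to the quotient, the slice theorem locally models $T_{\varepsilon_i}(M_i)/G$ as a product of a chart in $M_i/G$ with the radial slice $V/G_i$, which after excising the higher tubes inherits manifold-with-corners charts, and the $G$-invariant Riemannian metric descends to a smooth Riemannian structure on the quotient. Triangulability of compact orbit spaces is standard, and the analogous assertion for $\Sigma_i^\varepsilon$ follows by restricting the same argument to the single-orbit-type submanifold $M_i$.
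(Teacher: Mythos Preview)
The paper does not actually prove this lemma: immediately before the statement it says ``The following facts about this decomposition are contained in \cite[pp.~203ff]{Kaw},'' and no argument is given. So there is no proof in the paper to compare your proposal against; your sketch is in effect supplying what the authors defer to Kawakubo.

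Your outline for (2), (4), (5), and the core idea for (3) is sound and is the standard route: $G$-invariance of the tubes follows from the isometric action, the isotropy bound (4) is exactly the fact recorded in the paragraph above the lemma, and the orthogonal intersection of stratum closures is precisely what makes the excised tubes meet in corner-type intersections, giving the manifold-with-corners structure. The passage to the quotient in (3) is also correctly identified as following from the slice theorem together with the single-isotropy-type condition on $\Sigma_i^\varepsilon$ and on $M_i^\varepsilon$ after the excision.

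There is, however, a genuine gap in your argument for coverage in (1). Your downward induction says: if $x\in M_j$ lies in $\overline{\bigcup_{k>j}M_k^\varepsilon}$, it is ``handled by the inductive hypothesis.'' But the inductive hypothesis only places points of the strata $M_k$, $k>j$, inside $\bigcup_{k>j}M_k^\varepsilon$; it says nothing about a point of $M_j$ that lies on the \emph{boundary} $\partial\bigl(\bigcup_{k>j}M_k^\varepsilon\bigr)$. Concretely, with $r=1$: $M_1^\varepsilon=T_{2\varepsilon}(M_1)$ is open, $M_0^\varepsilon=M\setminus\overline{T_{2\varepsilon}(M_1)}$, and the sphere bundle $\partial T_{2\varepsilon}(M_1)$ lies in neither set. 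As literally defined with open tubes, the $M_i^\varepsilon$ miss these measure-zero boundary hypersurfaces. This is a well-known imprecision in such decompositions (harmless for the heat-kernel integrals that follow), and the fix is either to take the tubes half-open or to read (1) as a decomposition up to a set of measure zero; but your inductive argument as written does not address it.
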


\begin{remark}
The lemma above remains true if at each stage $T_{\varepsilon }\left(
M_{j}\right) $ is replaced by any sufficiently small open neighborhood of $%
M_{j}$ that contains $T_{\varepsilon }\left( M_{j}\right) $, that is a union
of $G$-orbits, and whose closure is a manifold with corners.
\end{remark}

Let $\gamma $ be a geodesic orthogonal to $M_{j}$ through $w\in M_{j}$. This
situation occurs exactly when this geodesic is orthogonal both to the fixed
point set $M^{G_{j}}$ of $G_{j}$ and to the orbit $\mathcal{O}_{w}$ of $G$
containing $w$. For any $h\in G_{j}$, right multiplication by $h$ maps
geodesics orthogonal to $M^{G_{j}}$ through $w$ to themselves and likewise
maps geodesics orthogonal to $\mathcal{O}_{w}$ through $w$ to themselves.
Thus, the group $G_{j}$ acts orthogonally on the normal space to $w\in M_{j}$
by the differential of the left action. \ Observe in addition that there are
no fixed points for this action; that is, there is no element of the normal
space that is fixed by every $h\in G_{j}$. \ Since $G_{j}$ acts without
fixed points, the codimension of $M_{j}$ is at least two if $G_{j}$
preserves orientation.

\section{Properties of transversally elliptic operators\label%
{TransEllipOpsSection}}

Let $G$ be a compact Lie group, $M$ be a compact, Riemannian $G$-manifold,
and $E\rightarrow M$ a $G$-equivariant Hermitian vector bundle. For the
following constructions, we refer to \cite{BrH1}. The unitary $G$%
-representation on $L^{2}\left( M,E\right) $ admits an orthogonal Hilbert
sum decomposition with respect to equivalence classes $\left[ \rho \right] $
of irreducible representations $\rho :G\rightarrow U\left( V_{\rho }\right) $%
:%
\begin{equation}
L^{2}\left( M,E\right) \cong \bigoplus_{\left[ \rho \right] \in \widehat{G}%
}L^{2}\left( M,E\right) _{\rho }\otimes _{\mathbb{C}}V_{\rho }~.
\label{decompositionIrreducibles}
\end{equation}%
The component $L^{2}\left( M,E\right) _{\rho }$ of type $\left[ \rho \right] 
$ is given by 
\begin{equation*}
L^{2}\left( M,E\right) _{\rho }=\mathrm{Hom}_{G}\left( V_{\rho },L^{2}\left(
M,E\right) \right) \cong L^{2}\left( M,E\otimes _{\mathbb{C}}V_{\rho }^{\ast
}\right) ^{G}
\end{equation*}%
and (\ref{decompositionIrreducibles}) is determined up to unitary
equivalence by the inclusions (evaluation maps)%
\begin{equation}
i_{\rho }:L^{2}\left( M,E\right) _{\rho }\otimes _{\mathbb{C}}V_{\rho
}\rightarrow L^{2}\left( M,E\right) .  \label{evaluationMap}
\end{equation}%
We define $L^{2}\left( M,E\right) ^{\rho }\subset L^{2}\left( M,E\right) $
to be the image $i_{\rho }\left( L^{2}\left( M,E\right) _{\rho }\otimes _{%
\mathbb{C}}V_{\rho }\right) $ and $\Gamma \left( M,E\right) ^{\rho }=\Gamma
\left( M,E\right) \cap L^{2}\left( M,E\right) ^{\rho }$.

The orthogonal projections 
\begin{equation*}
P_{\rho }:L^{2}\left( M,E\right) \rightarrow L^{2}\left( M,E\right) ^{\rho }
\end{equation*}%
are given by integration over $G$:%
\begin{gather*}
L^{2}\left( M,E\right) \overset{\otimes I_{\rho }}{\rightarrow }L^{2}\left(
M,E\right) \otimes _{\mathbb{C}}\mathrm{End}_{\mathbb{C}}\left( V_{\rho
}\right) \cong L^{2}\left( M,E\otimes _{\mathbb{C}}V_{\rho }^{\ast }\right)
\otimes _{\mathbb{C}}V_{\rho } \\
\overset{\int_{G}\otimes I_{\rho }}{\rightarrow }L^{2}\left( M,E\otimes _{%
\mathbb{C}}V_{\rho }^{\ast }\right) ^{G}\otimes _{\mathbb{C}}V_{\rho }%
\overset{i_{\rho }}{\rightarrow }L^{2}\left( M,E\right) ^{\rho }
\end{gather*}

\begin{lemma}
\label{ProjectionLemma}Let $\rho $ be an irreducible unitary representation
of a compact Lie group $G$ on a complex vector space $V_{\rho }$. Let $\chi
_{\rho }\left( \cdot \right) =\mathrm{tr~}\rho \left( \cdot \right) $ be the
character of $\rho $, and let $\alpha :G\rightarrow U\left( Q\right) $ be
any other representation on a Hermitian vector space $Q$. Let $dh$ denote
the volume form induced by the biinvariant, normalized metric on $G$. Let $%
P_{\rho }^{\alpha }:Q\rightarrow Q^{\rho }$ denote the projection onto the
the subspace of $Q$ on which the restriction of $\alpha $ is of type $\rho $%
. Then the following equation of endomorphisms of $Q$ holds: 
\begin{equation*}
\int_{G}\overline{\chi _{\rho }}\left( h\right) \alpha \left( h\right) ~dh=%
\frac{1}{\dim V_{\rho }}P_{\rho }^{\alpha },
\end{equation*}
\end{lemma}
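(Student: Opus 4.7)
The plan is to reduce the identity to Schur orthogonality of characters by exploiting the $G$-equivariance of the averaged operator and complete reducibility of $\alpha$.

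First I would verify that the operator $T := \int_G \overline{\chi_\rho}(h)\alpha(h)\,dh$ is $G$-equivariant. Using biinvariance of $dh$ and the fact that $\chi_\rho$ is a class function, the substitution $h \mapsto g^{-1}hg$ gives
\begin{equation*}
\alpha(g)\,T\,\alpha(g)^{-1} = \int_G \overline{\chi_\rho}(h)\,\alpha(ghg^{-1})\,dh = \int_G \overline{\chi_\rho}(g^{-1}h'g)\,\alpha(h')\,dh' = T.
\end{equation*}
Since $P_\rho^\alpha$ is also $G$-equivariant, both sides commute with the $\alpha$-action.

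Next I would invoke complete reducibility to write $Q = \bigoplus_{\sigma \in \widehat{G}} Q^\sigma$ as a direct sum of isotypic components. Both sides of the claimed identity preserve this decomposition, so it suffices to check equality on each $Q^\sigma$, and in fact on a single irreducible subspace $W \subset Q^\sigma$. Fixing a $G$-equivariant isomorphism $\phi: V_\sigma \to W$ reduces the problem to showing
\begin{equation*}
\int_G \overline{\chi_\rho}(h)\,\sigma(h)\,dh = \frac{\delta_{\rho\sigma}}{\dim V_\rho}\, I_{V_\sigma}.
\end{equation*}
By the equivariance established above and Schur's lemma, the left-hand side is a scalar $c$ times the identity on $V_\sigma$. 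Taking traces and applying the Schur orthogonality relation $\int_G \overline{\chi_\rho}(h)\chi_\sigma(h)\,dh = \delta_{\rho\sigma}$ yields $c \cdot \dim V_\sigma = \delta_{\rho\sigma}$, giving the claim.

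Assembling the pieces: on each $W \subset Q^\rho$ isomorphic to $V_\rho$, the operator $T$ acts as $\frac{1}{\dim V_\rho}\,I$, matching $\frac{1}{\dim V_\rho} P_\rho^\alpha$; on every irreducible subspace of $Q^\sigma$ with $\sigma \neq \rho$, both operators vanish. This completes the identification. There is no real obstacle here — the only mild subtlety is ensuring the averaged operator is genuinely $G$-equivariant before invoking Schur's lemma, which relies crucially on $\chi_\rho$ being a class function rather than an arbitrary scalar function.
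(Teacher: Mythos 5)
Your proof is correct and follows essentially the same route as the paper's: use the class-function property and biinvariance of $dh$ to show the averaged operator commutes with $\alpha(g)$, restrict to an irreducible summand, apply Schur's lemma, and pin down the scalar by taking traces and invoking character orthogonality. The only cosmetic difference is that you spell out the isotypic decomposition $Q = \bigoplus_\sigma Q^\sigma$ before restricting to a single irreducible $W$, whereas the paper passes directly to an irreducible component; the substance is identical.
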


\begin{proof}
If $F$ is a class function on $G$, then for any element $g\in G$, 
\begin{eqnarray*}
\alpha \left( g\right) \int_{G}F\left( h\right) \alpha \left( h\right) ~dh
&=&\int_{G}F\left( h\right) \alpha \left( g\right) \alpha \left( h\right) ~dh
\\
&=&\int_{G}F\left( h\right) \alpha \left( ghg^{-1}\right) ~dh~\alpha \left(
g\right) ,\text{ and letting }\widetilde{h}=ghg^{-1} \\
&=&\int_{G}F\left( g^{-1}\widetilde{h}g\right) \alpha \left( \widetilde{h}%
\right) ~d\widetilde{h}~\alpha \left( g\right) \text{ since }dh\text{ is
biinvariant} \\
&=&\int_{G}F\left( \widetilde{h}\right) \alpha \left( \widetilde{h}\right) ~d%
\widetilde{h}~\alpha \left( g\right) =\int_{G}F\left( h\right) \alpha \left(
h\right) ~dh~\alpha \left( g\right) ,
\end{eqnarray*}%
so $\int_{G}F\left( h\right) \alpha \left( h\right) ~dh$ commutes with $%
\alpha \left( g\right) $ for all $g\in H$. One can see easily that this
remains true if we restrict the endomorphisms to an irreducible component $%
Q^{\beta }$ of type $\beta $. Schur's Lemma implies that on this component $%
\int_{G}F\left( h\right) \alpha \left( h\right) ~dh$ is a constant multiple
of the identity $I_{\beta }$. We evaluate the constant by taking traces. In
particular, on this component, 
\begin{eqnarray*}
\int_{G}\overline{\chi _{\rho }}\left( h\right) \mathrm{~}\alpha \left(
h\right) ~dh &=&\int_{G}\overline{\chi _{\rho }}\left( h\right) \mathrm{~}%
\beta \left( h\right) ~dh=cI_{\beta }\text{ implies} \\
\int_{G}\overline{\chi _{\rho }}\left( h\right) \chi _{\beta }\left(
h\right) ~dh &=&c~\dim V_{\rho }~,\text{ so} \\
c &=&\left\{ 
\begin{array}{ll}
~\frac{1}{\dim V_{\rho }} & \text{if }\beta =\rho \\ 
0 & \text{otherwise}%
\end{array}%
\right. .
\end{eqnarray*}
\end{proof}

Given a group representation $\theta :G\rightarrow U\left( V_{\theta
}\right) $ and an irreducible representation $\rho :G\rightarrow U\left(
V_{\tau }\right) $ of the compact Lie group $G$, the \emph{multiplicity space%
} of $\rho $ in $\theta $ is 
\begin{equation*}
\mathrm{Hom}_{G}\left( V_{\rho },V_{\theta }\right) ,
\end{equation*}%
and the \emph{multiplicity} of $\rho $ in $\theta $ is%
\begin{equation*}
\dim \mathrm{Hom}_{G}\left( V_{\rho },V_{\theta }\right) .
\end{equation*}%
Given a subgroup $H$ of $G$, and a representation $\sigma :H\rightarrow
U\left( W_{\sigma }\right) $, we may form the homogeneous vector bundle $%
G\times _{H}W_{\sigma }\rightarrow G\diagup H$. The space 
\begin{equation*}
\mathrm{Ind}\left( \sigma \right) =L^{2}\left( G\diagup H,G\times
_{H}W_{\sigma }\right)
\end{equation*}%
of sections is a representation space for $G$, called the \emph{%
representation of }$G$\emph{\ induced by }$\sigma $, or simply the \emph{%
induced representation}. By the Frobenius reciprocity theorem, $\mathrm{Ind}$
is the right adjoint functor to $\mathrm{Res}$, the restriction functor.
That is, given a representation $\tau :G\rightarrow U\left( V_{\tau }\right) 
$, $V_{\mathrm{Res}\left( \tau \right) }$ is the representation space $%
V_{\tau }$ with $H$-representation $\mathrm{Res}\left( \tau \right) =\left.
\tau \right\vert _{H}$, and the multiplicity space of $\tau $ in $\mathrm{Ind%
}\left( \sigma \right) $ satisfies 
\begin{eqnarray*}
\mathrm{Hom}_{G}\left( V_{\tau },\mathrm{Ind}\left( \sigma \right) \right)
&\cong &L^{2}\left( G\diagup H,\left( G\times _{H}W_{\sigma }\right) \otimes
_{\mathbb{C}}V_{\tau }^{\ast }\right) ^{G} \\
&\cong &\mathrm{Hom}_{H}\left( V_{\mathrm{Res}\left( \tau \right)
},W_{\sigma }\right)
\end{eqnarray*}%
for every representation $\sigma :H\rightarrow U\left( W_{\sigma }\right) $.
This follows from the fact that a section $s$ of the homogeneous vector
bundle $G\times _{H}W_{\sigma }$ is given by an $H$-equivariant function $%
f_{s}:G\rightarrow W_{\sigma }$, and $f_{s}$ is a $G$-invariant section if
and only if $f_{s}$ is a constant in $W_{\sigma }^{H}$.

Let $D$ be a $G$-equivariant, symmetric differential operator of order $k$
on $\Gamma \left( M,E\right) $, which is transversally elliptic with respect
to the $G$-action (see \cite{A}). This means that the principal symbol $%
\sigma _{k}\left( D\right) \left( \xi \right) $ of $D$ is invertible on all
nonzero 
\begin{eqnarray*}
\xi &\in &T_{G}^{\ast }\left( M\right) _{x}=\{\xi \in T_{x}^{\ast }\left(
M\right) ~|~\xi \left( X\right) =0\  \\
&&\text{for every}~X\ \text{tangent to the orbit at }x.\}
\end{eqnarray*}%
Then $D^{2}$ is a $G$-equivariant, symmetric differential operator of order $%
2k$ on $\Gamma \left( M,E\right) $, which is transversally strongly elliptic.

The following proposition is contained to some extent in \cite{A} and \cite%
{BrH1}.

\begin{proposition}
Let $D$ be a $G$-equivariant, symmetric, transversally elliptic differential
operator of order $k$ on $\Gamma \left( M,E\right) $ .

\begin{enumerate}
\item The operators%
\begin{equation*}
D_{\rho }=\left( D\otimes I_{\rho ^{\ast }}\right) ^{G}=\left. D\otimes
I_{\rho ^{\ast }}\right\vert _{\Gamma \left( M,E\right) _{\rho }}
\end{equation*}%
(and their powers) are essentially self-adjoint on $L^{2}\left( M,E\right)
_{\rho }$ and generate the self-adjoint operators $D_{\rho }^{\ast }=%
\mathcal{S}_{\rho }$ .

\item The operator $D$ (and its powers) is essentially self-adjoint on $%
L^{2}\left( M,E\right) $ and generates a $G$-equivariant, self-adjoint
operator $\mathcal{R}$. With respect to the Hilbert sum decomposition (\ref%
{decompositionIrreducibles}), the operator $\mathcal{R}$ decomposes as a sum%
\begin{equation*}
\mathcal{R}=\bigoplus_{\left[ \rho \right] \in \widehat{G}}\mathcal{S}_{\rho
}\otimes I_{\rho }~.
\end{equation*}

\item Each component $\mathcal{S}_{\rho }$ has discrete spectrum without
finite accumulation points and admits a complete system of smooth
eigensections, and we have%
\begin{equation*}
E_{\lambda }\left( \mathcal{R}\right) _{\rho }\cong E_{\lambda }\left( 
\mathcal{S}_{\rho }\right) =E_{\lambda }\left( D_{\rho }\right) .
\end{equation*}

\item For each $\left[ \rho \right] \in \widehat{G}$, the eigenspaces of $%
\mathcal{S}_{\rho }\otimes I_{\rho }$ are finite $G$-modules of the form%
\begin{equation*}
E_{\lambda }\left( \mathcal{S}_{\rho }\otimes I_{\rho }\right) \cong
E_{\lambda }\left( \mathcal{S}_{\rho }\right) \otimes _{\mathbb{C}}V_{\rho
}~.
\end{equation*}
\end{enumerate}
\end{proposition}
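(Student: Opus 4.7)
The plan is to reduce each part to classical elliptic theory by restricting to isotypical components, following the strategy of \cite{BrH1}. For part (1), I would use the identification $L^{2}(M,E)_{\rho } \cong L^{2}(M, E \otimes V_{\rho }^{\ast })^{G}$ together with the projection formula from Lemma \ref{ProjectionLemma} (realizing $P_{\rho }$ as an average over $G$). Under this identification $D_{\rho } = (D \otimes I_{\rho ^{\ast }})^{G}$ is a symmetric differential operator acting on $G$-invariant sections, and the key point is that on such sections transversal ellipticity of $D$ becomes genuine ellipticity on the orbit space: covectors annihilating orbit directions descend to covectors on the principal stratum of $M/G$, and there $\sigma _{k}(D_{\rho })$ is invertible. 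Essential self-adjointness of $D_{\rho }$ then follows by constructing the Friedrichs extension of $D_{\rho }^{2}$, which is a positive elliptic operator on $G$-invariants since $D^{2}$ is transversally strongly elliptic; the same argument applied to $D^{2k}$ handles the powers.

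Given part (1), part (2) is essentially formal. Because $D$ is $G$-equivariant, each summand $L^{2}(M,E)_{\rho } \otimes V_{\rho }$ in the decomposition (\ref{decompositionIrreducibles}) is $D$-invariant, and the restriction of $D$ to this summand acts as $D_{\rho } \otimes I_{\rho }$: by equivariance the operator sees only the multiplicity space, never the representation space. Taking closures componentwise produces the self-adjoint operator $\mathcal{R} = \bigoplus _{[\rho ] \in \widehat{G}} \mathcal{S}_{\rho } \otimes I_{\rho }$, which is self-adjoint as a direct sum of self-adjoint operators on pairwise orthogonal Hilbert spaces. Parts (3) and (4) then fall out: the elliptic positive operator $\mathcal{S}_{\rho }^{2}$ on the compact (stratified) quotient has compact resolvent by Rellich, whence $\mathcal{S}_{\rho }$ has discrete spectrum without finite accumulation points; smoothness of eigensections comes from transverse elliptic regularity for $D - \lambda $ combined with smoothness along orbits, which is automatic because elements of an isotypical subspace are $G$-finite and hence real-analytic in orbit directions by Peter--Weyl; the chain of isomorphisms $E_{\lambda }(\mathcal{R})_{\rho } \cong E_{\lambda }(\mathcal{S}_{\rho }) = E_{\lambda }(D_{\rho })$ follows from the direct-sum decomposition; and $E_{\lambda }(\mathcal{S}_{\rho } \otimes I_{\rho }) \cong E_{\lambda }(\mathcal{S}_{\rho }) \otimes _{\mathbb{C}} V_{\rho }$ is the tensor-product eigenspace identity, finite-dimensional with $G$-module structure carried entirely by the $V_{\rho }$ factor.

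The main obstacle is the analytic content of part (1) -- essential self-adjointness and compact resolvent -- because the quotient $M/G$ is only a stratified space and standard closed-manifold theorems do not apply directly. I would handle this by working upstairs on $M$: use $G$-invariant cutoffs near the singular strata and the slice theorem to control $D_{\rho }$ on neighborhoods of non-principal orbits, and combine the interior ellipticity of $D_{\rho }$ on the principal stratum with uniform bounds from transversal strong ellipticity of $D^{2}$ to rule out boundary contributions at the strata. This is precisely the analytic reduction carried out in \cite{BrH1}, so the cleanest exposition is to cite that work for the reduction and keep the present proof focused on the algebraic bookkeeping of the isotypical decomposition.
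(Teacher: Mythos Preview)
Your overall architecture is right---reduce to isotypical components, get ellipticity there, and let parts (2)--(4) follow formally---but you miss the key analytic device that both the paper and \cite{BrH1} actually use, and your description of what \cite{BrH1} does is not accurate.

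The paper never descends to the stratified quotient, and it never uses cutoffs or stratum-by-stratum analysis. Instead it observes that $D^{2}\otimes I_{\rho^{\ast}}$, although only transversally elliptic, becomes a genuinely strongly elliptic operator $\widetilde{D}_{\rho}$ on the smooth closed manifold $M$ once one adds the $k^{\mathrm{th}}$ power of the Casimir operator $C$: the principal symbol of $D^{2}$ is invertible transverse to orbits, the symbol of $C$ is invertible along orbits, and together they give full ellipticity. On $G$-invariant sections $C$ vanishes, so $\widetilde{D}_{\rho}$ restricts to $D_{\rho}^{2}$ on $L^{2}(M,E\otimes V_{\rho}^{\ast})^{G}$. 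Now everything---essential self-adjointness of $D_{\rho}^{2}$, compact resolvent, discrete spectrum, smooth eigensections---is classical elliptic theory on a closed manifold, with the singular strata playing no role whatsoever. This is the point of the Casimir trick, and it is what \cite{BrH1} does; your proposed route through slice-theorem estimates and boundary control at the strata is not.

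A second, smaller gap: constructing the Friedrichs extension of $D_{\rho}^{2}$ does not by itself give essential self-adjointness of $D_{\rho}$. The paper supplies the missing step explicitly via a deficiency-index argument: if $D_{\rho}^{\ast}u=\pm i u$ for some nonzero $u$, one pairs against $(D_{\rho}^{2}+I)s$ and uses that $(\widetilde{D}_{\rho}+I)^{\ast}$ is an isomorphism $H^{2k}\to L^{2}$ to force $u=0$. Your sketch skips this passage from the square back to the first-order operator.
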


\begin{proof}
$D_{\rho }^{2}$ is completed to a strongly elliptic operator $\widetilde{D}%
_{\rho }$ by adding the $k^{\mathrm{th}}$ power of an appropriate Casimir
operator, restricting to $D_{\rho }^{2}$ on $L^{2}\left( M,E\right) _{\rho
}=L^{2}\left( M,E\otimes _{\mathbb{C}}V_{\rho }^{\ast }\right) ^{G}$. It
follows that $D_{\rho }^{2}$ is essentially self-adjoint. Furthermore, $%
\left( \widetilde{D}_{\rho }+I\right) ^{\ast }$ is an isomorphism from the
Sobolev space $H^{2k}\left( M,E\right) $ to $L^{2}\left( M,E\right) $. To
see that the operators $D_{\rho }$ are essentially self-adjoint and generate
the self-adjoint operators $\mathcal{S}_{\rho }$ on $L^{2}\left( M,E\right)
_{\rho }$, suppose that $D_{\rho }^{\ast }u=\pm \iota ~u$, with $u$ nonzero
in the domain $\mathrm{Dom}\left( D_{\rho }^{\ast }\right) $. Then we have%
\begin{equation*}
\,\left\langle D_{\rho }s,u\right\rangle =\left\langle s,D_{\rho }^{\ast
}u\right\rangle =\mp \iota \left\langle s,u\right\rangle
\end{equation*}%
for all $s\in \Gamma \left( M,E\right) _{\rho }$. Therefore,%
\begin{equation*}
\left\langle D_{\rho }^{2}s,u\right\rangle =\left\langle D_{\rho }s,D_{\rho
}^{\ast }u\right\rangle =\mp \iota \left\langle D_{\rho }s,u\right\rangle
=\mp \iota \left\langle s,D_{\rho }^{\ast }u\right\rangle =-\left\langle
s,u\right\rangle .
\end{equation*}%
That is, 
\begin{equation*}
\left\langle s,\left( \widetilde{D}_{\rho }+I\right) ^{\ast }u\right\rangle
=\left\langle s,\left( D_{\rho }^{2}+I\right) ^{\ast }u\right\rangle
=\left\langle \left( D_{\rho }^{2}+I\right) s,u\right\rangle =0,
\end{equation*}%
a contradiction.

Next, the family $\left\{ \mathcal{S}_{\rho }\right\} _{\left[ \rho \right]
\in \widehat{G}}$ of self-adjoint operators defines a $G$-equivariant,
self-adjoint operator 
\begin{equation*}
\mathcal{R}=\bigoplus_{\left[ \rho \right] \in \widehat{G}}\mathcal{S}_{\rho
}\otimes I_{\rho }
\end{equation*}%
on $L^{2}\left( M,E\right) $ whose domain is given by%
\begin{equation*}
\left\{ \left. u=\left( u_{\rho }\right) _{\left[ \rho \right] \in \widehat{G%
}}\right\vert 
\begin{array}{c}
u_{\rho }\in \mathrm{Dom}\left( \mathcal{S}_{\rho }\right) \otimes _{\mathbb{%
C}}I_{\rho }, \\ 
\sum_{\left[ \rho \right] }\left( \left\Vert u_{\rho }\right\Vert
^{2}+\left\Vert \left( \mathcal{S}_{\rho }\otimes I_{\rho }\right) u_{\rho
}\right\Vert ^{2}\right) <\infty%
\end{array}%
\right\}
\end{equation*}%
and satisfies $P_{\rho }\mathrm{Dom}\left( \mathcal{R}\right) =\mathrm{Dom}%
\left( \mathcal{S}_{\rho }\right) \otimes I_{\rho }\subset \mathrm{Dom}%
\left( \mathcal{R}\right) $. Thus%
\begin{equation*}
\mathcal{R}u=\sum_{\left[ \rho \right] }\left( \mathcal{S}_{\rho }\otimes
I_{\rho }\right) u_{\rho }\text{ },~u\in \mathrm{Dom}\left( \mathcal{R}%
\right) ,~u_{\rho }=P_{\rho }u.
\end{equation*}%
For $s\in \Gamma \left( M,E\right) $, $u\in \mathrm{Dom}\left( \mathcal{R}%
\right) $, we have%
\begin{eqnarray*}
\left\langle s,\mathcal{R}u\right\rangle &=&\sum_{\left[ \rho \right]
}\left\langle P_{\rho }s,\left( \mathcal{S}_{\rho }\otimes I_{\rho }\right)
u_{\rho }\right\rangle \\
&=&\sum_{\left[ \rho \right] }\left\langle \left( D_{\rho }\otimes I_{\rho
}\right) P_{\rho }s,u_{\rho }\right\rangle \\
&=&\sum_{\left[ \rho \right] }\left\langle P_{\rho }Ds,u_{\rho
}\right\rangle =\sum_{\left[ \rho \right] }\left\langle Ds,u_{\rho
}\right\rangle =\left\langle Ds,u\right\rangle ,
\end{eqnarray*}%
and thus $\mathrm{Dom}\left( \mathcal{R}\right) \subseteq \mathrm{Dom}\left(
D^{\ast }\right) $. Since the operators $\mathcal{S}_{\rho }=D_{\rho }^{\ast
\ast }=D_{\rho }^{\ast }$ are the closures of the components $D_{\rho }$ of $%
D$ and $\mathcal{S}_{\rho }\otimes I_{\rho }=D_{\rho }^{\ast \ast }\otimes
I_{\rho }=\left( D_{\rho }\otimes I_{\rho }\right) ^{\ast \ast }$, it
follows that%
\begin{equation*}
D^{\ast \ast }=\bigoplus_{\left[ \rho \right] }\left( D_{\rho }\otimes
I_{\rho }\right) ^{\ast \ast }=\bigoplus_{\left[ \rho \right] }\mathcal{S}%
_{\rho }\otimes I_{\rho }=\mathcal{R}.
\end{equation*}%
Since $\mathcal{R}$ is self-adjoint, it follows that $D$ is essentially
self-adjoint with closure $\mathcal{R}$.
\end{proof}

\begin{corollary}
\label{FredholmSobolev}The operators $D_{\rho }$ induce operators $\overline{%
D}_{\rho ,s}$ 
\begin{equation*}
\overline{D}_{\rho ,s}:H^{s}\left( \Gamma \left( M,E^{+}\right) ^{\rho
}\right) \rightarrow H^{s-1}\left( \Gamma \left( M,E^{-}\right) ^{\rho
}\right)
\end{equation*}%
that are Fredholm and independent of $s$.
\end{corollary}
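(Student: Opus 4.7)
The plan is to reduce the Fredholm property of $\overline{D}_{\rho,s}$ to standard elliptic theory via a Casimir completion, mirroring the argument sketched in the proof of the preceding proposition. I would first define $\widetilde{D} := D^{-} D^{+} + f(C)$ on $\Gamma(M, E^{+})$, where $C$ is the Casimir operator and $f$ is a polynomial chosen so that $\sigma_{2k}(\widetilde{D})(\xi)$ is invertible on every nonzero $\xi \in T^{*}M$. Such an $f$ exists because, by transverse ellipticity, $\sigma_{k}(D^{-})\sigma_{k}(D^{+})$ is already invertible on $T_{G}^{*}M$, while the Casimir symbol is nonnegative and vanishes exactly on $T_{G}^{*}M$, so taking $f(C) = a \cdot C^{k}$ for a sufficiently large constant $a>0$ handles the remaining directions. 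Then $\widetilde{D}$ is strongly elliptic of order $2k$, is $G$-equivariant, and on the $\rho$-isotypic component $C$ acts by the scalar $\lambda_{\rho}$, so there $\widetilde{D} - a\lambda_{\rho}^{k} I = D^{-} D^{+}$.

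Next, I would identify $H^{s}(\Gamma(M,E^{\pm})^{\rho})$ with $P_{\rho}\bigl(H^{s}(\Gamma(M,E^{\pm}))\bigr)$, the image of the orthogonal projection restricted to the Sobolev space. This is legitimate because $P_{\rho}$ is given by integration of the smooth $G$-action against $\overline{\chi_{\rho}}$, which defines a bounded operator on every $H^{s}$ and commutes with the differential operator $D^{+}$. With this identification the closure $\overline{D}_{\rho,s}$ becomes the restriction of the usual bounded extension $D^{+} : H^{s} \to H^{s-1}$ to a closed subspace, and the standard elliptic estimate for $\widetilde{D}$ restricts on the $\rho$-isotypic component to
\begin{equation*}
\|u\|_{H^{s}}^{2} \le c_{\rho} \bigl( \|D^{+} u\|_{H^{s-1}}^{2} + \|u\|_{L^{2}}^{2} \bigr),
\end{equation*}
with a constant $c_{\rho}$ depending on $\rho$ through $\lambda_{\rho}$.

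From this estimate, finite-dimensionality of $\ker \overline{D}_{\rho,s}$ and closed range follow by the usual Rellich compactness argument, and finite-dimensionality of the cokernel follows by running the same reasoning for the formal adjoint $D^{-}$ mapping the $\rho$-isotypic component of $E^{-}$ to that of $E^{+}$; this gives the Fredholm property. Independence of $s$ is then pure elliptic regularity: any $u$ with $\overline{D}_{\rho,s} u = 0$ satisfies $\widetilde{D} u = a \lambda_{\rho}^{k} u \in L^{2}$, so ellipticity of $\widetilde{D}$ yields $u \in \Gamma(M,E^{+})$, and the same argument applied to the formal adjoint handles the cokernel. The principal obstacle I anticipate is the bookkeeping step of verifying that $P_{\rho}$ preserves each $H^{s}$ and commutes with $D^{+}$ on the level of Sobolev sections; once that is in hand, the Casimir completion reduces everything to textbook elliptic theory as already invoked in the proof of the preceding proposition.
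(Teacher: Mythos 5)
Your argument is correct and matches the route the paper intends: the Casimir completion $\widetilde{D}=D^{-}D^{+}+C^{k}$ is precisely the device used in the proof of the preceding proposition (where it is shown that $(\widetilde{D}_{\rho}+I)^{\ast}$ is an isomorphism $H^{2k}\rightarrow L^{2}$), and the corollary is meant to follow from that by exactly the Sobolev/elliptic-regularity bookkeeping you carry out. One minor simplification: no ``sufficiently large $a$'' is needed in forming $f(C)=aC^{k}$, since $\sigma_{2k}(D^{-}D^{+})$ and $\sigma_{2k}(C^{k})$ are both positive semidefinite, so their sum is positive definite on all of $T^{\ast}M\setminus 0$ for any $a>0$.
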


\begin{corollary}
$D$ admits a complete system of smooth eigensections. Since the spectrum is
given by $Spec\left( D\right) =\bigcup_{\left[ \rho \right] }Spec\left(
D_{\rho }\right) $, it need not be discrete, and the eigenvalues may have
infinite multiplicities.
\end{corollary}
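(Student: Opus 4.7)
The plan is to derive this corollary as a direct consequence of the structural statements in the preceding Proposition, in particular parts (2)--(4). Since $\mathcal{R}$, the closure of $D$, splits as the Hilbert direct sum $\bigoplus_{[\rho]}\mathcal{S}_{\rho}\otimes I_{\rho}$, constructing eigensections and computing the spectrum will reduce to combining the component data for each $[\rho]\in\widehat{G}$.

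First I would build a complete system of smooth eigensections. For each $[\rho]\in\widehat{G}$, Proposition part (3) gives a Hilbert basis $\{\phi_{\rho,k}\}_{k\geq 1}$ of $L^{2}(M,E)_{\rho}$ consisting of smooth eigensections of $\mathcal{S}_{\rho}$, with corresponding eigenvalues $\lambda_{\rho,k}$ and no finite accumulation points. Choosing any orthonormal basis $\{v_{\rho,j}\}_{j=1}^{\dim V_{\rho}}$ of $V_{\rho}$, the elements $\phi_{\rho,k}\otimes v_{\rho,j}$ form a complete orthonormal system of smooth eigensections of $\mathcal{S}_{\rho}\otimes I_{\rho}$ on $L^{2}(M,E)^{\rho}$, with the same eigenvalues $\lambda_{\rho,k}$ (this is what part (4) is recording). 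Applying the evaluation map $i_{\rho}$ and then taking the union over all $[\rho]\in\widehat{G}$ produces, by the orthogonal decomposition (\ref{decompositionIrreducibles}), a complete orthonormal system of smooth eigensections of $\mathcal{R}$, hence a complete system of smooth eigensections of the essentially self-adjoint operator $D$.

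Next, for the spectrum, since $\mathcal{R}$ is a Hilbert direct sum of the self-adjoint operators $\mathcal{S}_{\rho}\otimes I_{\rho}$, one has $\mathrm{Spec}(\mathcal{R})=\overline{\bigcup_{[\rho]}\mathrm{Spec}(\mathcal{S}_{\rho}\otimes I_{\rho})}=\overline{\bigcup_{[\rho]}\mathrm{Spec}(\mathcal{S}_{\rho})}=\overline{\bigcup_{[\rho]}\mathrm{Spec}(D_{\rho})}$, using part (3) for the middle equality. To justify the assertion that the spectrum need not be discrete, it suffices to observe that although each $\mathrm{Spec}(D_{\rho})$ is discrete, the index set $\widehat{G}$ is infinite and the union of countably many discrete sets is typically not closed: points from different isotypical components can accumulate, so $\mathrm{Spec}(\mathcal{R})$ may acquire accumulation points. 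The "infinite multiplicities" claim is even more immediate: a given $\lambda\in\mathrm{Spec}(\mathcal{S}_{\rho})$ appears in $\mathcal{S}_{\rho}\otimes I_{\rho}$ with multiplicity $\dim V_{\rho}\cdot \dim E_{\lambda}(\mathcal{S}_{\rho})$, and in $\mathcal{R}$ one sums these over all $[\rho]$ with $\lambda\in\mathrm{Spec}(\mathcal{S}_{\rho})$; since $\dim V_{\rho}$ is unbounded and infinitely many $[\rho]$ can contribute, the total multiplicity can be infinite.

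The only subtle point is the identity $\mathrm{Spec}(\mathcal{R})=\overline{\bigcup_{[\rho]}\mathrm{Spec}(\mathcal{S}_{\rho}\otimes I_{\rho})}$ for a Hilbert direct sum of self-adjoint operators, which is standard spectral theory: the inclusion $\supseteq$ uses approximate eigenvectors supported in a single summand, while $\subseteq$ uses the block-diagonal resolvent $(\mathcal{R}-\mu)^{-1}=\bigoplus_{[\rho]}(\mathcal{S}_{\rho}\otimes I_{\rho}-\mu)^{-1}$, which is bounded precisely when $\mu$ lies outside the closed union. I do not expect any genuine obstacle here; the corollary is essentially a bookkeeping statement built on the Proposition, together with the general fact that a countable direct sum of operators with discrete spectra need not itself have discrete spectrum.
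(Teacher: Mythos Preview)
Your argument is correct and is precisely the reasoning the paper has in mind: the corollary is stated without proof and is meant to follow immediately from parts (2)--(4) of the preceding Proposition, exactly as you unpack it. One small refinement worth noting is that you (correctly) obtain $\mathrm{Spec}(\mathcal{R})=\overline{\bigcup_{[\rho]}\mathrm{Spec}(D_{\rho})}$, whereas the paper's statement omits the closure; your version is the technically accurate one for a Hilbert direct sum of self-adjoint operators, and indeed this closure is what allows accumulation points and hence non-discrete spectrum.
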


Note that the eigenvalue counting function of $D_{\rho }^{2}$ (of order $2k$%
) satisfies the asymptotic formula%
\begin{align}
N\left( \lambda \right) :& =\sum_{\lambda ^{\rho }\leq \lambda }\dim
E_{\lambda ^{\rho }}\left( D_{\rho }\right)  \notag \\
& \sim c\lambda ^{m/2k},  \label{eigenvalueAsymptotics}
\end{align}%
where $m$ is the dimension of $M\diagup G$ (see \cite{BrH1}, applied to the
strongly elliptic operator $D^{2}+C-\lambda _{\rho }$).

\section{The case of one isotropy type\label{OneIsotropyTypeMainSection}}

In this section we consider the equivariant index problem and representation
theory in the case where the group action has one isotropy type. The
interested reader may consult the paper \cite{KRi} to obtain more detailed
exposition and more explicit and extensive results on $G$-bundles over
manifolds with one isotropy type.

\subsection{Induced bundles over the orbit space\label%
{OneIsotropyTypeSection}}

The statements in this subsection are for the most part known (see, for
example, \cite{BrH1}), but we will use them later in the paper. Let $G$ be a
compact Lie group, and let $X$ be a (not necessarily closed) Riemannian $G$%
-manifold, and suppose that there is only one isotropy type. For all $x\in X$%
, let $\mathcal{O}_{x}$ denote the orbit $Gx$. Let $\overline{X}$ denote the
manifold $G\setminus X$, and let $\pi :X\rightarrow \overline{X}$ denote the
projection. The metric $g$ on $X$ induces a metric $\overline{g}$ on $%
\overline{X}$ defined uniquely by 
\begin{equation*}
\overline{g}_{\pi \left( x\right) }\left( \pi _{\ast }v,\pi _{\ast }w\right)
=\left( \mathrm{vol}\left( \mathcal{O}_{x}\right) \right) ^{2/\dim \left( 
\overline{X}\right) }g\left( v,w\right)
\end{equation*}%
for any $x\in X$ and $v,w\in N_{x}\left( \mathcal{O}_{x}\right) \subset
T_{x}X$. The scale factor is chosen so that $\mathrm{vol~}X=\mathrm{vol~}%
\overline{X}$. Let $E$ be a Hermitian, $G$-equivariant vector bundle over $X$%
, and let $\rho :G\rightarrow U\left( V_{\rho }\right) $ be an irreducible
unitary representation. Denote by $G_{x}$ the isotropy group at $x\in X$.
Define the vector spaces $\mathcal{E}_{\rho }\left( \mathcal{O}_{x}\right) $
and $\mathcal{E}^{\rho }\left( \mathcal{O}_{x}\right) $ by 
\begin{eqnarray*}
\mathcal{E}_{\rho }\left( \mathcal{O}_{x}\right) &:&=L^{2}\left( \mathcal{O}%
_{x},E\right) _{\rho }=\mathrm{Hom}_{G}\left( V_{\rho },L^{2}\left( \mathcal{%
O}_{x},E\right) \right) \\
&\cong &\mathrm{Hom}_{G}\left( V_{\rho },L^{2}\left( \mathcal{O}_{x},G\times
_{G_{x}}E_{x}\right) \right) \\
&\cong &\mathrm{Hom}_{G_{x}}\left( V_{\rho },E_{x}\right) \cong \left(
E_{x}\otimes V_{\rho }^{\ast }\right) ^{G_{x}}, \\
\mathcal{E}^{\rho }\left( \mathcal{O}_{x}\right) &:&=L^{2}\left( \mathcal{O}%
_{x},E\right) ^{\rho }=i_{\rho }\left( \mathcal{E}_{\rho }\left( \pi \left(
x\right) \right) \otimes _{\mathbb{C}}V_{\rho }\right) .
\end{eqnarray*}%
Note that $i_{\rho }$ above is not necessarily one to one. Both $\mathcal{E}%
_{\rho }\left( \mathcal{O}_{x}\right) $ and $\mathcal{E}^{\rho }\left( 
\mathcal{O}_{x}\right) $ are well-defined vector spaces whose dimensions are
constant, and moreover $\mathcal{E}_{\rho }\left( \cdot \right) $ and $%
\mathcal{E}^{\rho }\left( \cdot \right) $ form Hermitian vector bundles over 
$\overline{X}$ (see \cite[Lemma 1.2 ff]{BrH1}). In fact, the vector spaces $%
E_{\rho ,x}$ defined for each $x\in X$ by%
\begin{equation*}
E_{\rho ,x}=\mathrm{Hom}_{G_{x}}\left( V_{\rho },E_{x}\right)
\end{equation*}%
form a $G$-invariant subbundle $E_{\rho }$ of $\mathrm{Hom}\left( V_{\rho
},E\right) $ over $X$. The Hermitian structure on $\mathcal{E}^{\rho }$ is
defined uniquely by%
\begin{equation*}
\left\langle s_{1},s_{2}\right\rangle _{\pi \left( x\right) }=\left\langle
s_{1}\left( x\right) ,s_{2}\left( x\right) \right\rangle _{x}
\end{equation*}%
for every $x\in X$. With these choices, we have canonical isomorphisms 
\begin{eqnarray}
\Gamma \left( \overline{X},\mathcal{E}_{\rho }\right) &\cong &\Gamma \left(
X,E_{\rho }\right) ^{G}\cong \Gamma \left( X,E\right) _{\rho },~\Gamma
\left( \overline{X},\mathcal{E}^{\rho }\right) \cong \Gamma \left(
X,E\right) ^{\rho }  \notag \\
L^{2}\left( \overline{X},\mathcal{E}_{\rho }\right) &\cong &L^{2}\left(
X,E_{\rho }\right) ^{G}\cong L^{2}\left( X,E\right) _{\rho },~L^{2}\left( 
\overline{X},\mathcal{E}^{\rho }\right) \cong L^{2}\left( X,E\right) ^{\rho
}.  \label{sectionsSameOnQuotient}
\end{eqnarray}%
If $E$ has a grading, there are gradings induced on $\mathcal{E}_{\rho }$
and $\mathcal{E}^{\rho }$ in the obvious way.

\subsection{Equivariant operators on $G$-manifolds with one orbit type}

Given a transversally elliptic, $G$-equivariant operator $D$ on sections of $%
E$ over $X$, the operator 
\begin{equation*}
D^{\rho }=\left. D\right\vert _{\Gamma \left( X,E\right) ^{\rho }}
\end{equation*}%
induces an operator, called $\mathcal{D}^{\rho }$, on sections of $\mathcal{E%
}^{\rho }$ over $\overline{X}$. Similarly, $D$ induces an operator $\mathcal{%
D}_{\rho }$ on sections of $\mathcal{E}_{\rho }$ over $\overline{X}$.
Furthermore, $\mathcal{D}^{\rho }$ and $\mathcal{D}_{\rho }$ are elliptic
operators. If $D$ is essentially self-adjoint on $X$, then $\mathcal{D}%
^{\rho }$ and $\mathcal{D}_{\rho }$ are also essentially self-adjoint on $%
L^{2}\left( \overline{X},\mathcal{E}^{\rho }\right) $ and $L^{2}\left( 
\overline{X},\mathcal{E}_{\rho }\right) $. The supertrace of the equivariant
heat kernel corresponding to $D$ restricted to $\Gamma \left( X,E\right)
^{\rho }$ can be identified with the supertrace of the (ordinary) heat
kernel associated to the elliptic operator $\mathcal{D}^{\rho }$ on $\Gamma
\left( \overline{X},\mathcal{E}^{\rho }\right) $ (similarly $\mathcal{D}%
_{\rho }$ on $\Gamma \left( \overline{X},\mathcal{E}_{\rho }\right) $.)

Thus,%
\begin{eqnarray*}
\mathrm{ind}^{\rho }\left( D\right) &=&\frac{1}{\dim V_{\rho }}\mathrm{ind}%
\left( \mathcal{D}^{\rho }\right) \\
&=&\mathrm{ind}\left( \mathcal{D}_{\rho }\right) ,
\end{eqnarray*}%
which is the Atiyah-Singer index of the elliptic operator $\mathcal{D}_{\rho
}$ on the base manifold $\overline{X}$. This is also a special case of our
main theorem (Theorem \ref{MainTheorem}).

\subsection{The normalized isotypical decomposition}

We retain the notation of the previous subsections. For further detail
regarding topics in this section, we refer the reader to \cite{KRi}. Again,
we emphasize that $X$ has one isotropy type $\left[ H\right] $, so that the
orbits form a Riemannian foliation of $X$. Let $N=N\left( H\right) $; note
that $N$-bundles over the fixed point set $X^{H}$ induce $G$-bundles over $X$
and vice versa.

Let $E\rightarrow X$ be a given $G$-equivariant vector bundle over $X$. For
each $x\in X^{H}$, let $\Sigma _{E_{x}}=\left\{ \left[ \sigma :H\rightarrow
U\left( W_{\sigma }\right) \right] \text{ irreducible}:\mathrm{Hom}%
_{H}\left( W_{\sigma },E_{x}\right) \neq 0\right\} $ be the set of
equivalence classes of irreducible representations of $H$ present in $E_{x}$%
; by rigidity, $\Sigma _{E_{x}}$ is locally constant in $x$. Let $\Sigma
_{E}=\bigcup\limits_{x\in X^{H}}\Sigma _{E_{x}}$. The isotypical (or
primary) decomposition of the $H$-module $E_{x}$ is of the form 
\begin{equation*}
E_{x}=\bigoplus_{\left[ \sigma \right] \in \Sigma _{E_{x}}}E_{x}^{\left[
\sigma \right] }
\end{equation*}%
where%
\begin{equation*}
E_{x}^{\left[ \sigma \right] }=i_{\sigma }\left( \mathrm{Hom}_{H}\left(
W_{\sigma },E_{x}\right) \otimes W_{\sigma }\right) .
\end{equation*}%
Note that the representations $E_{x}^{\left[ \sigma \right] }$ are not
necessarily irreducible. Further, these subspaces do not necessarily form $N$%
-equivariant bundles as $x$ varies over $X^{H}\subset X$. In particular, if
the normalizer $N$ is disconnected it may be the case that an element $n\in
N $ maps a primary component $E_{x}^{\left[ \sigma \right] }$ to an
inequivalent primary component of $E_{nx}$, and $nx\in X^{H}$. The
representation of $H$ on $nE_{x}^{\left[ \sigma \right] }\subset E_{nx}$ is
given by%
\begin{equation*}
hv=n\left( n^{-1}hn\right) \left( n^{-1}v\right) ,\text{ }h\in H,v\in E_{nx}
\end{equation*}%
so that the representation on $nE_{x}^{\left[ \sigma \right] }$ is
equivalent to a direct sum of representations of type $\left[ \sigma
_{x}^{n}:H\rightarrow U\left( W_{\sigma _{x}}\right) \right] $ defined by%
\begin{equation}
\sigma _{x}^{n}\left( h\right) =\sigma _{x}\left( n^{-1}hn\right) .
\label{sigma-n-representation}
\end{equation}%
For this reason, we will instead use a more coarse decomposition of $E_{x}$.
For each $n\in N$, we see that $E_{x}^{\left[ \sigma ^{n}\right]
}=n^{-1}\left( E_{nx}^{\left[ \sigma \right] }\right) $, an $H$-invariant
subspace of $E_{x}$ that is the primary $\left[ \sigma ^{n}\right] $-part of
the representation of $H$ on $E_{x}$. The representation of $H$ on $E_{x}^{%
\left[ \sigma ^{n}\right] }$ is equivalent to the representation of $H$ on $%
nE_{x}^{\left[ \sigma \right] }\subset E_{nx}$. We say that the two
representations $\sigma _{x}$ and $\sigma _{x}^{n}$ are \textbf{%
normalizer-conjugate}. It is often the case that $\sigma _{x}^{n}$ is
equivalent to $\sigma _{x}$; as explained in \cite{KRi}, the rigidity of
irreducible representations implies that the set 
\begin{equation*}
\widetilde{N}_{\left[ \sigma \right] }=\left\{ n\in N:\left[ \sigma ^{n}%
\right] =\left[ \sigma \right] \right\} ,
\end{equation*}%
is a subgroup of $N$ such that $N\diagup \widetilde{N}_{\left[ \sigma \right]
}$ is finite. Let $n_{1},...,n_{k}\in N$ be elements such that $N\diagup 
\widetilde{N}_{\left[ \sigma \right] }=\left\{ n_{1}\widetilde{N}_{\left[
\sigma \right] }~,...,n_{k}\widetilde{N}_{\left[ \sigma \right] }\right\} $.
Let 
\begin{equation*}
\left( E_{x}\right) ^{\overline{\sigma }}=\bigoplus_{j=1}^{k}E_{x}^{\left[
\sigma ^{n_{j}}\right] }\subset E_{x}.
\end{equation*}%
Here, $\overline{\sigma }$ is the set of equivalence classes of irreducible
representations inside the set $\left\{ \sigma ^{n}:n\in N\right\} $, so
that we see that the set of all $\left( E_{x}\right) ^{\overline{\sigma }}$
in $E_{x}$ is in fact indexed by the finite set of such $\overline{\sigma }$%
. We decompose for each $x\in X^{H}\subset X$,%
\begin{equation*}
E_{x}=\bigoplus_{\overline{\sigma }}\left( E_{x}\right) ^{\overline{\sigma }%
}.
\end{equation*}%
These subspaces $\left( E_{x}\right) ^{\overline{\sigma }}$ are in fact
invariant under the action of $n\in N$ and patch together to form
well-defined $N$-equivariant subbundles of $\left. E\right\vert _{X^{H}}$ .
Thus, $G$ acts on these subspaces to produce well-defined subbundles of $E$
over each orbit, and these subbundles patch together to form well-defined $G$%
-subbundles $E^{1},...,E^{r}$ over $X$. In summary, we may decompose the
vector bundle $E$ as%
\begin{equation}
E=\bigoplus_{j}E^{j}~~,  \label{NormalizedIsotypicalDecomposition}
\end{equation}%
where each $G$-bundle $E^{j}$ is the direct sum of primary components $%
E^{\sigma }$ corresponding to irreducible representations $\sigma $ of the
isotropy subgroup that are all normalizer-conjugate. Further, the
representations present in $E^{j}$ are not normalizer-conjugate to those
present in $E^{k}$ for $j\neq k$. We call this the \textbf{normalized
isotypical decomposition}; see \cite{KRi} for details. Let $\widetilde{N}$
be the connected component of $N$ relative to $H$, so that $\pi _{0}\left(
N\diagup H\right) \cong N\diagup \widetilde{N}$. If we denote by $\widetilde{%
N}_{\left[ \sigma \right] }$ the subgroup (of finite index) of $N$ fixing $%
\left[ \sigma \right] \in \Sigma _{E}$, so that $\widetilde{N}\subseteq 
\widetilde{N}_{\left[ \sigma \right] }\subseteq N$ and $E^{\left[ \sigma %
\right] }$ is $\widetilde{N}_{\left[ \sigma \right] }$-equivariant, we may
rephrase the preceding construction by saying that the normalized isotypical
components are obtained by `inducing up' the isotypical components $E^{\left[
\sigma \right] }$ from $\widetilde{N}_{\left[ \sigma \right] }$ to $N$. That
is, the normalized isotypical component over $X^{H}$ induced from $\left[
\sigma \right] \in \Sigma _{E}$ is the set 
\begin{equation*}
N\times _{\widetilde{N}_{\left[ \sigma \right] }}E^{\left[ \sigma \right] }
\end{equation*}%
This gives an $N$-subbundle of $E$ over $X^{H}$ containing $E^{\left[ \sigma %
\right] }$ that induces the normalized isotypical component over all of $X$.

\subsection{The refined isotypical decomposition\label{FineDecompSection}}

With notation as above, let $X^{H}$ be the fixed point set of $H$, and for $%
\alpha \in \pi _{0}\left( X^{H}\right) $, let $X_{\alpha }^{H}$ denote the
corresponding connected component of $X^{H}$.

\begin{definition}
\label{componentRelGDefn}We denote $X_{\alpha }=GX_{\alpha }^{H}$, and $%
X_{\alpha }$ is called a \textbf{component of} $X$ \textbf{relative to} $G$.
\end{definition}

\begin{remark}
The space $X_{\alpha }$ is not necessarily connected, but it is the inverse
image of a connected component of $G\diagdown X=N\diagdown X^{H}$ under the
projection $X\rightarrow G\diagdown X$. Also, note that $X_{\alpha
}=X_{\beta }$ if there exists $n\in N$ such that $nX_{\alpha }^{H}=X_{\beta
}^{H}$. If $X$ is a closed manifold, then there are a finite number of
components of $X$ relative to $G$.
\end{remark}

We now introduce a decomposition of a $G$-bundle $E\rightarrow X$ over a $G$%
-space with single orbit type $\left[ H\right] $ that is a priori finer than
the normalized isotypical decomposition. Let $E_{\alpha }$ be the
restriction $\left. E\right\vert _{X_{\alpha }^{H}}$. As in the previous
section, let $\widetilde{N}_{\left[ \sigma \right] }=\left\{ n\in N:\left[
\sigma ^{n}\right] \text{~is~equivalent~to}~\left[ \sigma \right] ~\right\} $
. If the isotypical component $E_{\alpha }^{\left[ \sigma \right] }$ is
nontrivial, then it is invariant under the subgroup $\widetilde{N}_{\alpha ,%
\left[ \sigma \right] }\subseteq \widetilde{N}_{\left[ \sigma \right] }$
that leaves in addition the connected component $X_{\alpha }^{H}$ invariant;
again, this subgroup has finite index in $N$. The isotypical components
transform under $n\in N$ as%
\begin{equation*}
n:E_{\alpha }^{\left[ \sigma \right] }\overset{\cong }{\longrightarrow }E_{%
\overline{n}\left( \alpha \right) }^{\left[ \sigma ^{n}\right] }~,
\end{equation*}%
where $\overline{n}$ denotes the residue class class of $n\in N$ in $%
N\diagup \widetilde{N}_{\alpha ,\left[ \sigma \right] }~$. Then a
decomposition of $E$ is obtained by `inducing up' the isotypical components $%
E_{\alpha }^{\left[ \sigma \right] }$ from $\widetilde{N}_{\alpha ,\left[
\sigma \right] }$ to $N$. That is, 
\begin{equation*}
E_{\alpha ,\left[ \sigma \right] }^{N}=N\times _{\widetilde{N}_{\alpha ,%
\left[ \sigma \right] }}E_{\alpha }^{\left[ \sigma \right] }
\end{equation*}%
is a bundle containing $\left. E_{\alpha }^{\left[ \sigma \right]
}\right\vert _{X_{\alpha }^{H}}$ . This is an $N$-bundle over $NX_{\alpha
}^{H}\subseteq X^{H}$, and a similar bundle may be formed over each distinct 
$NX_{\beta }^{H}$, with $\beta \in \pi _{0}\left( X^{H}\right) $. Further,
observe that since each bundle $E_{\alpha ,\left[ \sigma \right] }^{N}$ is
an $N$-bundle over $NX_{\alpha }^{H}$, it defines a unique $G$ bundle $%
E_{\alpha ,\left[ \sigma \right] }^{G}$.

\begin{definition}
\label{fineComponentDefinition}The $G$-bundle $E_{\alpha ,\left[ \sigma %
\right] }^{G}$ over the submanifold $X_{\alpha }$ is called a \textbf{fine
component} or the \textbf{fine component of }$E\rightarrow X$ \textbf{%
associated to }$\left( \alpha ,\left[ \sigma \right] \right) $.
\end{definition}

If $G\diagdown X$ is not connected, one must construct the fine components
separately over each $X_{\alpha }$. If $E$ has finite rank, then $E$ may be
decomposed as a direct sum of distinct fine components over each $X_{\alpha
} $. In any case, $E_{\alpha ,\left[ \sigma \right] }^{N}$ is a finite
direct sum of isotypical components over each $X_{\alpha }^{H}$.

\begin{definition}
\label{FineDecompositionDefinition}The direct sum decomposition of $\left.
E\right\vert _{X_{\alpha }}$ into subbundles $E^{b}$ that are fine
components $E_{\alpha ,\left[ \sigma \right] }^{G}$ for some $\left[ \sigma %
\right] $, written 
\begin{equation*}
\left. E\right\vert _{X_{\alpha }}=\bigoplus\limits_{b}E^{b}~,
\end{equation*}%
is called the \textbf{refined isotypical decomposition} (or \textbf{fine
decomposition}) of $\left. E\right\vert _{X_{\alpha }}$.
\end{definition}

In the case where $G\diagdown X$ is connected, the group $\pi _{0}\left(
N\diagup H\right) $ acts transitively on the connected components $\pi
_{0}\left( X^{H}\right) $, and thus $X_{\alpha }=X$. We comment that if $%
\left[ \sigma ,W_{\sigma }\right] $ is an irreducible $H$-representation
present in $E_{x}$ with $x\in X_{\alpha }^{H}$, then $E_{x}^{\left[ \sigma %
\right] }$ is a subspace of a distinct $E_{x}^{b}$ for some $b$. The
subspace $E_{x}^{b}$ also contains $E_{x}^{\left[ \sigma ^{n}\right] }$ for
every $n$ such that $nX_{\alpha }^{H}=X_{\alpha }^{H}$~.

\begin{remark}
\label{constantMultiplicityRemark}Observe that by construction, for $x\in
X_{\alpha }^{H}$ the multiplicity and dimension of each $\left[ \sigma %
\right] $ present in a specific $E_{x}^{b}$ is independent of $\left[ \sigma %
\right] $. Thus, $E_{x}^{\left[ \sigma ^{n}\right] }$ and $E_{x}^{\left[
\sigma \right] }$ have the same multiplicity and dimension if $nX_{\alpha
}^{H}=X_{\alpha }^{H}$~.
\end{remark}

\begin{remark}
The advantage of this decomposition over the isotypical decomposition is
that each $E^{b}$ is a $G$-bundle defined over all of $X_{\alpha }$, and the
isotypical decomposition may only be defined over $X_{\alpha }^{H}$.
\end{remark}

This new decomposition is a priori a finer decomposition than the normal
isotypical decomposition into $G$-subbundles.

\begin{definition}
\label{adaptedDefn}Now, let $E$ be a $G$-equivariant vector bundle over $X$,
and let $E^{b}~$be a fine component as in Definition \ref%
{fineComponentDefinition} corresponding to a specific component $X_{\alpha
}=GX_{\alpha }^{H}$ of $X$ relative to $G$. Suppose that another $G$-bundle $%
W$ over $X_{\alpha }$ has finite rank and has the property that the
equivalence classes of $G_{y}$-representations present in $E_{y}^{b},y\in
X_{\alpha }$ exactly coincide with the equivalence classes of $G_{y}$%
-representations present in $W_{y}$, and that $W$ has a single component in
the fine decomposition. Then we say that $W$ is \textbf{adapted} to $E^{b}$.
\end{definition}

\begin{lemma}
\label{AdaptedToAnyBundleLemma}In the definition above, if another $G$%
-bundle $W$ over $X_{\alpha }$ has finite rank and has the property that the
equivalence classes of $G_{y}$-representations present in $E_{y}^{b},y\in
X_{\alpha }$ exactly coincide with the equivalence classes of $G_{y}$%
-representations present in $W_{y}$, then it follows that $W$ has a single
component in the fine decomposition and hence is adapted to $E^{b}$. Thus,
the last phrase in the corresponding sentence in the above definition is
superfluous.
\end{lemma}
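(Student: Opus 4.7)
The plan is to verify that the set of equivalence classes of $H$-representations appearing in $W$ over $X_\alpha^H$ already forms a single $N_\alpha$-orbit (where $N_\alpha \subseteq N$ is the subgroup preserving the component $X_\alpha^H$), and then invoke the definition of the fine decomposition. Since by hypothesis this set coincides with the corresponding set for $E^b$, which by construction is a single $N_\alpha$-orbit, $W$ can contribute only one fine summand.

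First I would pick a basepoint $x \in X_\alpha^H$, so $G_x = H$, and read the hypothesis at $y = x$: the set $\Sigma_{W_x}$ of equivalence classes of irreducible $H$-representations present in $W_x$ equals the set $\Sigma_{E_x^b}$ of equivalence classes present in $E_x^b$. Next I would unwind Definition \ref{fineComponentDefinition} for $E^b = E_{\alpha,[\sigma]}^G$: its fiber over $x$ is built from $E_\alpha^{[\sigma]}$ by inducing up from $\widetilde{N}_{\alpha,[\sigma]}$ to $N$, so at $x$ the classes appearing are exactly the $N_\alpha$-translates $[\sigma^n]$ for $n \in N_\alpha/\widetilde{N}_{\alpha,[\sigma]}$, i.e.\ a single $N_\alpha$-orbit in the set of equivalence classes of irreducible $H$-representations under the action $n \cdot [\tau] = [\tau^n]$ defined in (\ref{sigma-n-representation}). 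Hence $\Sigma_{W_x} = \Sigma_{E_x^b}$ is a single $N_\alpha$-orbit.

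Now I would apply the fine decomposition construction of Definition \ref{FineDecompositionDefinition} to $W$ itself. Over $X_\alpha^H$ the $H$-bundle $W|_{X_\alpha^H}$ splits as a direct sum of isotypical components $W_\alpha^{[\tau]}$ indexed by the classes $[\tau] \in \Sigma_{W_x}$ (this set is locally constant on $X_\alpha^H$ by rigidity, hence constant on the connected $X_\alpha^H$). The fine components of $W$ over $X_\alpha$ are obtained by inducing up the $\widetilde{N}_{\alpha,[\tau]}$-invariant summands $\bigoplus_{n} W_\alpha^{[\tau^n]}$ (with $n$ running over $N_\alpha/\widetilde{N}_{\alpha,[\tau]}$) from $N_\alpha$ to $N$ to $G$; distinct fine components therefore correspond to distinct $N_\alpha$-orbits on $\Sigma_{W_x}$. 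Since $\Sigma_{W_x}$ is a single $N_\alpha$-orbit, there is exactly one fine component, and it is adapted to $E^b$ by construction.

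The only real subtlety, which I expect to be the main obstacle, is checking that the set $\Sigma_{W_y}$ being independent of $y$ along $X_\alpha$ is the correct invariant: at $y = gx$ with $y \in X_\alpha^H$ one must pass from $H$-classes to $gHg^{-1}$-classes via conjugation by $g$, but this is exactly the identification built into the $G$-bundle structure, so consistency of $\Sigma_{W_y}$ with $\Sigma_{E_y^b}$ across $X_\alpha$ reduces back to the single identity $\Sigma_{W_x} = \Sigma_{E_x^b}$ at the basepoint. Once this identification is in place, the conclusion that $W$ has a single fine component, and hence is adapted to $E^b$, is immediate, showing that the adaptedness hypothesis in Definition \ref{adaptedDefn} is automatic.
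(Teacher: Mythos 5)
Your proof is correct and follows essentially the same route as the paper: you use that the representation types present in $E_x^b$ form a single $N_\alpha$-orbit, so by hypothesis those in $W_x$ do too, and hence $W$ has a single fine component; the paper simply makes this concrete by fixing $[\sigma]$ and producing, for each other $[\sigma']$ present, an $n \in N$ with $nX_\alpha^H = X_\alpha^H$ and $nW_x^{[\sigma]} = W_x^{[\sigma^n]} = W_x^{[\sigma']}$ via rigidity (the same rigidity you invoke for local constancy of $\Sigma_{W_x}$). The "subtlety" you raise about varying $y$ along $X_\alpha$ is not really needed, since the argument only uses the hypothesis at the basepoint $x \in X_\alpha^H$, as you ultimately do.
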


\begin{proof}
Suppose that we choose an equivalence class $\left[ \sigma \right] $ of $H$%
-representations present in $W_{x}$, $x\in X_{\alpha }^{H}$. Let $\left[
\sigma ^{\prime }\right] $ be any other equivalence class; then, by
hypothesis, there exists $n\in N$ such that $nX_{\alpha }^{H}=X_{\alpha
}^{H} $ and $\left[ \sigma ^{\prime }\right] =\left[ \sigma ^{n}\right] $.
Then, observe that $nW_{x}^{\left[ \sigma \right] }=W_{nx}^{\left[ \sigma
^{n}\right] }=W_{x}^{\left[ \sigma ^{n}\right] }$, with the last equality
coming from the rigidity of irreducible $H$-representations. Thus, $W$ is
contained in a single fine component, and so it must have a single component
in the fine decomposition.
\end{proof}

\subsection{Canonical isotropy $G$-bundles}

In what follows, we show that there are naturally defined finite-dimensional
vector bundles that are adapted to any fine components. Once and for all, we
enumerate the irreducible representations $\left\{ \left[ \rho _{j},V_{\rho
_{j}}\right] \right\} _{j=1,2,...}$ of $G$. Let $\left[ \sigma ,W_{\sigma }%
\right] $ be any irreducible $H$-representation. Let $G\times _{H}W_{\sigma
} $ be the corresponding homogeneous vector bundle over the homogeneous
space $G\diagup H$. Then the $L^{2}$-sections of this vector bundle
decompose into irreducible $G$-representations. In particular, let $\left[
\rho _{j_{0}},V_{\rho _{j_{0}}}\right] $ be the equivalence class of
irreducible representations that is present in $L^{2}\left( G\diagup
H,G\times _{H}W_{\sigma }\right) $ and that has the lowest index $j_{0}$.
Then Frobenius reciprocity implies%
\begin{equation*}
0\neq \mathrm{Hom}_{G}\left( V_{\rho _{j_{0}}},L^{2}\left( G\diagup
H,G\times _{H}W_{\sigma }\right) \right) \cong \mathrm{Hom}_{H}\left( V_{%
\mathrm{\mathrm{Res}}\left( \rho _{j_{0}}\right) },W_{\sigma }\right) ,
\end{equation*}%
so that the restriction of $\rho _{j_{0}}$ to $H$ contains the $H$%
-representation $\left[ \sigma \right] $. Now, for a component $X_{\alpha
}^{H}$ of $X^{H}$, with $X_{\alpha }=GX_{\alpha }^{H}$ its component in $X$
relative to $G$, the trivial bundle%
\begin{equation*}
X_{\alpha }\times V_{\rho _{j_{0}}}
\end{equation*}%
is a $G$-bundle (with diagonal action) that contains a nontrivial fine
component $W_{\alpha ,\left[ \sigma \right] }$ containing $X_{\alpha
}^{H}\times \left( V_{\rho _{j_{0}}}\right) ^{\left[ \sigma \right] }$.

\begin{definition}
\label{canonicalIsotropyBundleDefinition}We call $W_{\alpha ,\left[ \sigma %
\right] }\rightarrow X_{\alpha }$ the \textbf{canonical isotropy }$G$\textbf{%
-bundle associated to }$\left( \alpha ,\left[ \sigma \right] \right) \in \pi
_{0}\left( X^{H}\right) \times \widehat{H}$. Observe that $W_{\alpha ,\left[
\sigma \right] }$ depends only on the enumeration of irreducible
representations of $G$, the irreducible $H$-representation $\left[ \sigma %
\right] $ and the component $X_{\alpha }^{H}$. We also denote the following
positive integers associated to $W_{\alpha ,\left[ \sigma \right] }$:

\begin{itemize}
\item $m_{\alpha ,\left[ \sigma \right] }=\dim \mathrm{Hom}_{H}\left(
W_{\sigma },W_{\alpha ,\left[ \sigma \right] ,x}\right) =\dim \mathrm{Hom}%
_{H}\left( W_{\sigma },V_{\rho _{j_{0}}}\right) $ (the \textbf{associated
multiplicity}), independent of the choice of $\left[ \sigma ,W_{\sigma }%
\right] $ present in $W_{\alpha ,\left[ \sigma \right] ,x}$ , $x\in
X_{\alpha }^{H}$ (see Remark \ref{constantMultiplicityRemark}).

\item $d_{\alpha ,\left[ \sigma \right] }=\dim W_{\sigma }$(the \textbf{%
associated representation dimension}), independent of the choice of $\left[
\sigma ,W_{\sigma }\right] $ present in $W_{\alpha ,\left[ \sigma \right]
,x} $ , $x\in X_{\alpha }^{H}$.

\item $n_{\alpha ,\left[ \sigma \right] }=\frac{\mathrm{rank}\left(
W_{\alpha ,\left[ \sigma \right] }\right) }{m_{\alpha ,\left[ \sigma \right]
}d_{\alpha ,\left[ \sigma \right] }}$ (the \textbf{inequivalence number}),
the number of inequivalent representations present in $W_{\alpha ,\left[
\sigma \right] ,x}$ , $x\in X_{\alpha }^{H}$.
\end{itemize}
\end{definition}

\begin{remark}
Observe that $W_{\alpha ,\left[ \sigma \right] }=W_{\alpha ^{\prime },\left[
\sigma ^{\prime }\right] }$ if $\left[ \sigma ^{\prime }\right] =\left[
\sigma ^{n}\right] $ for some $n\in N$ such that $nX_{\alpha }^{H}=X_{\alpha
^{\prime }}^{H}~$.
\end{remark}

The lemma below follows immediately from Lemma \ref{AdaptedToAnyBundleLemma}.

\begin{lemma}
\label{canIsotropyGbundleAdaptedExists}Given any $G$-bundle $E\rightarrow X$
and any fine component $E^{b}$ of $E$ over some $X_{\alpha }=GX_{\alpha
}^{H} $, there exists a canonical isotropy $G$-bundle $W_{\alpha ,\left[
\sigma \right] }$ adapted to $E^{b}\rightarrow X_{\alpha }$.
\end{lemma}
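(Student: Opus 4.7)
The plan is to simply produce a candidate canonical isotropy bundle and then verify the hypothesis of Lemma \ref{AdaptedToAnyBundleLemma}, which packages the real content. The existence of $E^{b}$ as a fine component over $X_{\alpha}$ means that over $X_{\alpha}^{H}$ the fiber $E_{x}^{b}$ is built as the induced-up $\widetilde{N}_{\alpha,[\sigma]}$-module $N\times_{\widetilde{N}_{\alpha,[\sigma]}}E_{x}^{[\sigma]}$ for some fixed irreducible $H$-representation $[\sigma]$ present in $E_{x}^{b}$. The set of equivalence classes of irreducible $H$-representations occurring in $E_{x}^{b}$ is therefore exactly the normalizer-conjugacy orbit $\{[\sigma^{n}] : n\in N,\ nX_{\alpha}^{H}=X_{\alpha}^{H}\}$.

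First I would pick any such $[\sigma]$ appearing in $E_{x}^{b}$ for $x\in X_{\alpha}^{H}$, and form the associated canonical isotropy $G$-bundle $W_{\alpha,[\sigma]}\to X_{\alpha}$ from Definition \ref{canonicalIsotropyBundleDefinition}. By construction $W_{\alpha,[\sigma]}$ sits inside the trivial bundle $X_{\alpha}\times V_{\rho_{j_{0}}}$ as its fine component containing $X_{\alpha}^{H}\times (V_{\rho_{j_{0}}})^{[\sigma]}$, so its fiber over $x\in X_{\alpha}^{H}$ is precisely the induced-up module $N\times_{\widetilde{N}_{\alpha,[\sigma]}}(V_{\rho_{j_{0}}})^{[\sigma]}$. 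Consequently the set of $H$-representation classes occurring in $W_{\alpha,[\sigma],x}$ is again the orbit $\{[\sigma^{n}]\}$ under the same subgroup of $N$, and it coincides with the set of classes occurring in $E_{x}^{b}$.

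Next I would pass from fixed points back to $X_{\alpha}$: for a general $y=gx\in X_{\alpha}$ with $x\in X_{\alpha}^{H}$ and $g\in G$, the isotropy $G_{y}=gHg^{-1}$ acts on $E_{y}^{b}$ and on $W_{\alpha,[\sigma],y}$ by transport via $g$. The bijection between $H$- and $gHg^{-1}$-representation classes induced by conjugation by $g$ identifies the classes occurring in $E_{y}^{b}$ with those occurring in $E_{x}^{b}$, and similarly for $W_{\alpha,[\sigma],y}$. Therefore the equivalence classes of $G_{y}$-representations present in $E_{y}^{b}$ exactly coincide with those present in $W_{\alpha,[\sigma],y}$ for every $y\in X_{\alpha}$.

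With that coincidence in hand, Lemma \ref{AdaptedToAnyBundleLemma} applies directly with $W=W_{\alpha,[\sigma]}$: it forces $W_{\alpha,[\sigma]}$ to consist of a single component of its fine decomposition and hence to be adapted to $E^{b}$ in the sense of Definition \ref{adaptedDefn}, which is what we wanted. The only nontrivial point in the argument is the identification of the $H$-representation content of $E_{x}^{b}$ and of $W_{\alpha,[\sigma],x}$ as the same normalizer-conjugacy orbit of $[\sigma]$, and this is immediate from the construction of fine components and of $W_{\alpha,[\sigma]}$ by induction from $\widetilde{N}_{\alpha,[\sigma]}$ to $N$; so no genuine obstacle arises beyond bookkeeping.
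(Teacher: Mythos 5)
Your argument is correct and is exactly the route the paper takes: the paper states that the lemma follows immediately from Lemma \ref{AdaptedToAnyBundleLemma}, and your proof simply spells out the verification of that lemma's hypothesis. You correctly identify the $H$-representation content of both $E_{x}^{b}$ and $W_{\alpha,[\sigma],x}$ as the same normalizer-conjugacy orbit $\{[\sigma^{n}] : n\in N,\ nX_{\alpha}^{H}=X_{\alpha}^{H}\}$ (using that $\rho_{j_{0}}(n)$ carries the $[\sigma]$-isotypical part of $V_{\rho_{j_{0}}}$ onto the $[\sigma^{n}]$-isotypical part, and that $\widetilde{N}_{\alpha,[\sigma]}$ depends only on $(\alpha,[\sigma])$ and not on the bundle), and the transport step to general $y\in X_{\alpha}$ is sound since two choices $g,g'$ with $gx=g'x'$ differ by an element of $N$ stabilizing $X_{\alpha}^{H}$, which preserves that orbit.
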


\subsection{Decomposing sections in tensor products of equivariant bundles}

We now consider a method of decomposing sections of a tensor product of
equivariant bundles. Suppose that $E_{1}\rightarrow X$ is a (possibly
infinite-dimensional) $G$-equivariant vector bundle over a manifold with one
orbit type $\left[ H\right] $. Let $X_{\alpha }=GX_{\alpha }^{H}\subseteq X$%
, and let $E_{1,X_{\alpha }}:=\left. E_{1}\right\vert _{X_{\alpha
}}=\allowbreak \bigoplus\limits_{b}E_{1}^{b}$ be the fine decomposition
(Definition \ref{FineDecompositionDefinition}). Let $W^{b}=W_{\alpha ,\left[
\sigma \right] }\rightarrow X_{\alpha }$ be the canonical isotropy $G$%
-bundle adapted to $E_{1}^{b}$, as in Definition \ref%
{canonicalIsotropyBundleDefinition} and Lemma \ref%
{canIsotropyGbundleAdaptedExists}. Every $\left[ \sigma \right] \in \widehat{%
H}$ that is present in $W_{x}^{b}$ has associated multiplicity $m_{b}$ ,
which is independent of $x\in X_{\alpha }^{H}$. Let $W_{x}^{b}=\bigoplus%
\limits_{j}W_{j,x}^{b}$ be the isotypical decomposition, so that each $j$
refers to a distinct irreducible $H$-representation type $\left[ \sigma
_{b,j}\right] $, each with dimension $m_{b}\cdot d_{b}$, where $d_{b}$ is
the dimension of that irreducible representation. Let $E_{2}\rightarrow X$
be another $G$-equivariant vector bundle over $X$. Restricted to a single
orbit $\mathcal{O}_{x}=Gx$ in $X_{\alpha }$, let $E_{2,\mathcal{O}%
_{x}}=\left. E_{2}\right\vert _{\mathcal{O}_{x}}$. Then Frobenius
reciprocity implies%
\begin{eqnarray*}
&&\mathrm{Hom}_{G}\left( V_{\rho },\Gamma \left( \mathcal{O}_{x},E_{1,%
\mathcal{O}_{x}}^{b}\otimes E_{2,\mathcal{O}_{x}}\right) \right) \otimes
\bigoplus\limits_{j}\mathrm{Hom}_{H}\left( W_{j,x}^{b},W_{j,x}^{b}\right) \\
&\cong &\mathrm{Hom}_{H}\left( V_{\mathrm{Res}\left( \rho \right)
},E_{1,x}^{b}\otimes E_{2,x}\right) \otimes \bigoplus\limits_{j}\mathrm{Hom}%
_{H}\left( W_{j,x}^{b},W_{j,x}^{b}\right) \\
&\cong &\bigoplus\limits_{j}\mathrm{Hom}_{H}\left( V_{\mathrm{Res}\left(
\rho \right) },\mathrm{Hom}_{H}\left( W_{j,x}^{b},E_{1,x}^{b}\right) \otimes
W_{j,x}^{b}\otimes E_{2,x}\right) ,
\end{eqnarray*}%
Since $\mathrm{Hom}_{H}\left( W_{j,x}^{b},E_{1,x}^{b}\right) $ is a trivial $%
H$-space, 
\begin{eqnarray*}
&&\mathrm{Hom}_{G}\left( V_{\rho },\Gamma \left( \mathcal{O}_{x},E_{1,%
\mathcal{O}_{x}}^{b}\otimes E_{2,\mathcal{O}_{x}}\right) \right) \otimes
\bigoplus\limits_{j}\mathrm{Hom}_{H}\left( W_{j,x}^{b},W_{j,x}^{b}\right) \\
&\cong &\bigoplus\limits_{j}\mathrm{Hom}_{H}\left(
W_{j,x}^{b},E_{1,x}^{b}\right) \otimes \mathrm{Hom}_{H}\left( V_{\mathrm{Res}%
\left( \rho \right) },W_{j,x}^{b}\otimes E_{2,x}\right)
\end{eqnarray*}%
Then%
\begin{eqnarray*}
&&\Gamma \left( \mathcal{O}_{x},E_{1,\mathcal{O}_{x}}^{b}\otimes E_{2,%
\mathcal{O}_{x}}\right) ^{\rho }\otimes \bigoplus\limits_{j}\mathrm{Hom}%
_{H}\left( W_{j,x}^{b},W_{j,x}^{b}\right) \\
&\cong &\mathrm{Hom}_{G}\left( V_{\rho },\Gamma \left( \mathcal{O}_{x},E_{1,%
\mathcal{O}_{x}}^{b}\otimes E_{2,\mathcal{O}_{x}}\right) \right) \otimes
V_{\rho }\otimes \bigoplus\limits_{j}\mathrm{Hom}_{H}\left(
W_{j,x}^{b},W_{j,x}^{b}\right) \\
&\cong &\bigoplus\limits_{j}\mathrm{Hom}_{H}\left(
W_{j,x}^{b},E_{1,x}^{b}\right) \otimes \mathrm{Hom}_{H}\left( V_{\mathrm{Res}%
\left( \rho \right) },W_{j,x}^{b}\otimes E_{2,x}\right) \otimes V_{\rho } \\
&\cong &\bigoplus\limits_{j}\mathrm{Hom}_{H}\left(
W_{j,x}^{b},E_{1,x}^{b}\right) \otimes \Gamma \left( \mathcal{O}_{x},%
\widetilde{W}_{j}^{b}\otimes E_{2}\right) ^{\rho },
\end{eqnarray*}%
where $\widetilde{W}_{j}^{b}\rightarrow \mathcal{O}_{x}$ is the bundle $%
G\times _{H}W_{j,x}^{b}\subseteq \left. W^{b}\right\vert _{\mathcal{O}_{x}}$
with the identification $x=eH$. Observe that the isomorphism above is an
isomorphism of $G$-modules, where $G$ acts trivially on $\mathrm{Hom}%
_{H}\left( W_{j,x}^{b},W_{j,x}^{b}\right) $ and on $\mathrm{Hom}_{H}\left(
W_{j,x}^{b},E_{1,x}^{b}\right) $. Let $L:E_{1}\rightarrow E_{1}$ be a $G$%
-equivariant bundle map, and let $B:\Gamma \left( X,E_{2}\right) \rightarrow
\Gamma \left( X,E_{2}\right) $ be a $G$-equivariant operator. Let $%
L^{b}:E_{1}^{b}\rightarrow E_{1}^{b}$ be the restriction. We note that $%
L^{b} $ acts on $E_{1,x}^{b}\cong \bigoplus\limits_{j}\mathrm{Hom}_{H}\left(
W_{j,x}^{b},E_{1,x}^{b}\right) \otimes W_{j,x}^{b}$ by $L^{\prime ~b}\otimes 
\mathbf{1}$, where $L^{\prime ~b}:\mathrm{Hom}_{H}\left(
W_{j,x}^{b},E_{1,x}^{b}\right) \rightarrow \mathrm{Hom}_{H}\left(
W_{j,x}^{b},E_{1,x}^{b}\right) $ acts by post-composing with $L^{b}$.

The following Lemma is a consequence of the derivation above and the slice
theorem.

\begin{lemma}
\label{SectionSplittingLemma}Let $T_{\varepsilon }\rightarrow \mathcal{O}%
_{x} $ be a tubular neighborhood of $\mathcal{O}_{x}\subset X$ such that $%
T_{\varepsilon }\cong G\times _{H}D_{\varepsilon }$, where $x\in
D_{\varepsilon }\subset X^{H}$ is a ball transverse to $\mathcal{O}_{x}$.
Then the operator $L\otimes B\otimes \mathbf{1}$ corresponds to $L^{\prime
~b}\otimes \left( \mathbf{1}\otimes B\right) ^{\rho }$ through the
isomorphism 
\begin{eqnarray*}
&&\Gamma \left( T_{\varepsilon },E_{1}^{b}\otimes E_{2}\right) ^{\rho
}\otimes \bigoplus\limits_{j}\mathrm{Hom}_{H}\left( \left.
W_{j}^{b}\right\vert _{D_{\varepsilon }},\left. W_{j}^{b}\right\vert
_{D_{\varepsilon }}\right) \\
&\cong &\bigoplus\limits_{j}\mathrm{Hom}_{H}\left( \left.
W_{j}^{b}\right\vert _{D_{\varepsilon }},\left. E_{1}^{b}\right\vert
_{D_{\varepsilon }}\right) \otimes \Gamma \left( T_{\varepsilon },\widetilde{%
W}_{j}^{b}\otimes E_{2}\right) ^{\rho },
\end{eqnarray*}%
with the isomorphism given by evaluation. If $\widehat{\otimes }$ denotes
the graded tensor product, then%
\begin{eqnarray*}
&&\Gamma \left( T_{\varepsilon },E_{1}^{b}\widehat{\otimes }E_{2}\right)
^{\rho }\otimes \bigoplus\limits_{j}\mathrm{Hom}_{H}\left( \left.
W_{j}^{b}\right\vert _{D_{\varepsilon }},\left. W_{j}^{b}\right\vert
_{D_{\varepsilon }}\right) \\
&\cong &\bigoplus\limits_{j}\mathrm{Hom}_{H}\left( \left.
W_{j}^{b}\right\vert _{D_{\varepsilon }},\left. E_{1}^{b}\right\vert
_{D_{\varepsilon }}\right) \widehat{\otimes }\Gamma \left( T_{\varepsilon },%
\widetilde{W}_{j}^{b}\otimes E_{2}\right) ^{\rho }.
\end{eqnarray*}%
In the above, $W^{b}=\bigoplus\limits_{j}W_{j}^{b}$ is the isotypical
decomposition.
\end{lemma}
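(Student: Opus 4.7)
My plan is to deduce the lemma from the pointwise/single-orbit calculation already carried out in the preceding derivation, combined with the slice theorem to handle the transverse direction.

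First, I would use the slice theorem to rewrite everything in $H$-equivariant terms over $D_\varepsilon$: because $T_\varepsilon \cong G\times_H D_\varepsilon$, any $G$-equivariant bundle $F$ on $T_\varepsilon$ is determined by the $H$-equivariant bundle $F|_{D_\varepsilon}$, and Frobenius reciprocity gives
\begin{equation*}
\Gamma(T_\varepsilon,F)^\rho \;\cong\; \mathrm{Hom}_G(V_\rho,\Gamma(T_\varepsilon,F))\otimes V_\rho \;\cong\; \mathrm{Hom}_H(V_{\mathrm{Res}(\rho)},\Gamma(D_\varepsilon,F|_{D_\varepsilon}))\otimes V_\rho.
\end{equation*}
Applying this to $F=E_1^b\otimes E_2$ and $F=\widetilde W_j^b\otimes E_2$ reduces the claim to an isomorphism of $H$-modules over $D_\varepsilon$.

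Next, I would apply the orbitwise argument preceding the lemma, but now fiberwise at each $y\in D_\varepsilon$: the rigidity of irreducible $H$-representations guarantees that the isotypical decomposition $W^b|_{D_\varepsilon}=\bigoplus_j W_j^b|_{D_\varepsilon}$ has locally constant multiplicity $m_b$ and dimension data (see Remark \ref{constantMultiplicityRemark}), so the pointwise identification
\begin{equation*}
E_{1,y}^b\otimes \bigoplus_j \mathrm{Hom}_H(W_{j,y}^b,W_{j,y}^b) \;\cong\; \bigoplus_j \mathrm{Hom}_H(W_{j,y}^b,E_{1,y}^b)\otimes W_{j,y}^b,
\end{equation*}
obtained via the isotypical projectors $i_{\sigma_{b,j}}$, is a smoothly varying isomorphism of $H$-equivariant bundles on $D_\varepsilon$. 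Tensoring with $E_2|_{D_\varepsilon}$ and taking $H$-equivariant sections, then inducing back up to $G$-sections on $T_\varepsilon$ via the slice theorem, yields the stated isomorphism of $\rho$-isotypical components. The evaluation-at-$x$ description follows because an $H$-equivariant section of a trivially $H$-acting bundle such as $\mathrm{Hom}_H(W_j^b,E_1^b)$ is determined by its value at $x\in D_\varepsilon^H$.

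Finally, for the naturality statement about $L\otimes B\otimes\mathbf{1}$, I would simply chase the construction: $L^b$ preserves the fine component $E_1^b$ and commutes with the $H$-action, so under $E_{1,y}^b\cong \bigoplus_j \mathrm{Hom}_H(W_{j,y}^b,E_{1,y}^b)\otimes W_{j,y}^b$ it acts as $L'^{\,b}\otimes \mathbf{1}$ by post-composition, while $B$ and the identity factor on the $\mathrm{Hom}_H(W_j^b,W_j^b)$ side pass through the isomorphism untouched and combine to $(\mathbf{1}\otimes B)^\rho$ on the right-hand factor $\Gamma(T_\varepsilon,\widetilde W_j^b\otimes E_2)^\rho$. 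The graded-tensor-product version is identical, with the Koszul sign convention tracked through the evaluation map; since $L'^{\,b}$ acts only on the $\mathrm{Hom}_H$ factor and the sign rules are the same on both sides, no additional signs appear. The main obstacle is bookkeeping: keeping straight which factor each of $L$, $B$, and the multiplicity spaces act on, and verifying that the isomorphism truly is global on $D_\varepsilon$ rather than merely pointwise — this is handled by rigidity of irreducibles plus smoothness of the isotypical projectors.
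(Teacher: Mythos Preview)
Your proposal is correct and follows essentially the same approach as the paper: the paper does not give an explicit proof but simply states that the lemma is ``a consequence of the derivation above and the slice theorem,'' and your argument spells out precisely this---using the slice theorem to reduce to $H$-equivariant data over $D_\varepsilon$, then applying the orbitwise Frobenius-reciprocity calculation fiberwise, with rigidity ensuring the isotypical decomposition varies smoothly. One small point: the phrase ``isomorphism given by evaluation'' in the lemma refers to the evaluation maps $i_\rho$ of (\ref{evaluationMap}), not to evaluation at a single point of $D_\varepsilon$; but this does not affect the correctness of your argument.
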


\section{Properties of the Equivariant Heat Kernel and Equivariant Index 
\label{heatKernelEquivIndexSection}}

\subsection{The equivariant heat kernel and index\label{equivariant index
section}}

\medskip We now review some properties of the equivariant index and
equivariant heat kernel that are known to experts in the field but are not
written in the literature in the form and generality required in this paper
(see \cite{A}, \cite{BrH1}, \cite{BrH2}, \cite{Be-G-V}). With notation as in
the introduction, let $E=E^{+}\oplus E^{-}$ be a graded, $G$-equivariant
vector bundle over $M$. We consider a first order $G$-equivariant
differential operator $D^{+}:$ $\Gamma \left( M,E^{+}\right) \rightarrow
\Gamma \left( M,E^{-}\right) $ which is transversally elliptic, and let $%
D^{-}$ be the formal adjoint of $D^{+}$. The restriction $D^{\pm ,\rho
}=\left. D^{\pm }\right\vert _{\Gamma \left( M,E\right) ^{\rho }}$ behaves
in a similar way to an elliptic operator. Let $C:\Gamma \left( M,E\right)
\rightarrow \Gamma \left( M,E\right) $ be the Casimir operator described in
the introduction, and let $\lambda _{\rho }$ be the eigenvalue of $C$
associated to the representation type $\left[ \rho \right] $. The following
argument can be seen in some form in \cite{A}. Given a section $\alpha \in
\Gamma \left( M,E^{+}\right) ^{\rho }$, we have 
\begin{equation*}
D^{-}D^{+}\alpha =\left( D^{-}D^{+}+C-\lambda _{\rho }\right) \alpha .
\end{equation*}%
Then $D^{-}D^{+}+C-\lambda _{\rho }$ is self-adjoint and elliptic and has
finite dimensional eigenspaces consisting of smooth sections. Thus, the
eigenspaces of $D^{-}D^{+}$ restricted to $\Gamma \left( M,E^{+}\right)
^{\rho }$ are finite dimensional and consist of smooth sections, and the
index $\mathrm{ind}^{\rho }\left( D\right) $ is well-defined. Further, the $%
\left[ \rho \right] $-part $K^{\left[ \rho \right] }$ of the heat kernel of $%
e^{-tD^{-}D^{+}}$ is the same as the $\left[ \rho \right] $-part of the heat
kernel $K\left( t,\cdot ,\cdot \right) $ of $e^{-t\left(
D^{-}D^{+}+C-\lambda _{\rho }\right) }$. Let 
\begin{equation*}
\beta _{1}\leq \beta _{2}\leq ...
\end{equation*}%
be the eigenvalues of $D^{-}D^{+}+C-\lambda _{\rho }$ repeated according to
multiplicities, which correspond to the $L^{2}$ orthonormal set of
eigensections $\left\{ \alpha _{1},\alpha _{2},...\right\} $. We may choose
that basis so that each $\alpha _{j}$ belongs to a specific irreducible
representation space of $G$. The kernel $K^{+}$ of $e^{-t\left(
D^{-}D^{+}+C-\lambda _{\rho }\right) }$ satisfies 
\begin{equation*}
K^{+}\left( t,x,y\right) =\sum_{k}e^{-t\beta _{k}}\alpha _{k}\left( x\right)
\otimes \left( \alpha _{k}\left( y\right) \right) ^{\ast }\in \mathrm{Hom}%
\left( E_{y}^{+},E_{x}^{+}\right) .
\end{equation*}%
Observe that for all $g\in G$, 
\begin{eqnarray*}
g\cdot K^{+}\left( t,g^{-1}x,x\right) &=&\sum_{k}e^{-t\beta _{k}}g\cdot
\alpha _{k}\left( g^{-1}x\right) \otimes \left( \alpha _{k}\left( x\right)
\right) ^{\ast } \\
&=&\sum_{k}e^{-t\beta _{k}}\sum_{\left[ \eta \right] }\eta \left( g\right)
\left( P_{\eta }\alpha _{k}\right) \left( x\right) \otimes \left( \alpha
_{k}\left( x\right) \right) ^{\ast },
\end{eqnarray*}%
where the interior sum is over all equivalence classes of irreducible
unitary representations $\eta $ of $G$, and $P_{\eta }$ is the projection
onto the $\left[ \eta \right] $-component. By the way we have chosen the
basis $\left\{ \alpha _{1},\alpha _{2},...\right\} $, each $P_{\eta }\alpha
_{k}$ is either $\alpha _{k}$ or $0$, so in fact $\left( P_{\eta }\alpha
_{k}\right) \left( x\right) \otimes \left( \alpha _{k}\left( x\right)
\right) ^{\ast }=\left( P_{\eta }\alpha _{k}\right) \left( x\right) \otimes
\left( P_{\eta }\alpha _{k}\left( x\right) \right) ^{\ast }$. Next, let $%
\rho $ be a particular irreducible representation, and let $\chi _{\rho }$
be its character. By Lemma \ref{ProjectionLemma}, if $dg$ is the
biinvariant, normalized Haar volume form,

\begin{equation*}
\int_{g\in G}\eta \left( g\right) P_{\eta }\overline{\chi _{\rho }\left(
g\right) }~dg=\frac{1}{\dim V_{\rho }}P_{\rho }^{\eta }P_{\eta },
\end{equation*}%
and%
\begin{eqnarray*}
&&\int_{x\in M}\int_{g\in G}\mathrm{tr}\left( \eta \left( g\right) \left(
P_{\eta }\alpha _{k}\right) \left( x\right) \otimes \left( \alpha _{k}\left(
x\right) \right) ^{\ast }\right) ~\overline{\chi _{\rho }\left( g\right) }%
~dg~\left\vert dx\right\vert \\
&=&\int_{x\in M}\int_{g\in G}\mathrm{tr}\left( \eta \left( g\right) \left(
P_{\eta }\alpha _{k}\right) \left( x\right) \otimes \left( P_{\eta }\alpha
_{k}\left( x\right) \right) ^{\ast }\right) ~\overline{\chi _{\rho }\left(
g\right) }~dg~\left\vert dx\right\vert \\
&=&\frac{1}{\dim V_{\rho }}\varepsilon _{k}^{\eta }\delta _{\eta \rho
}\int_{x\in M}\mathrm{tr}\left( \alpha _{k}\left( x\right) \otimes \alpha
_{k}\left( x\right) ^{\ast }\right) ~\left\vert dx\right\vert =\frac{1}{\dim
V_{\rho }}\varepsilon _{k}^{\eta }\delta _{\eta \rho },
\end{eqnarray*}%
where $\varepsilon _{k}^{\eta }$ is $1$ if $\alpha _{k}$ is a section of
type $\eta $ and $0$ otherwise, and where $\delta _{\eta \rho }=1$ if $\eta $
is equivalent to $\rho $ and is zero otherwise. Using this information, we
have that 
\begin{eqnarray*}
\int_{x\in M}\int_{g\in G}\mathrm{tr}\left( g\cdot K^{+}\left(
t,g^{-1}x,x\right) \right) ~\overline{\chi _{\rho }\left( g\right) }%
~dg~\left\vert dx\right\vert &=&\frac{1}{\dim V_{\rho }}\sum_{k}e^{-t\beta
_{k}}\varepsilon _{k}^{\rho } \\
&=&\frac{1}{\dim V_{\rho }}\sum_{k^{\left[ \rho \right] }}e^{-t\beta _{k^{%
\left[ \rho \right] }}},
\end{eqnarray*}%
where $k^{\left[ \rho \right] }$ are the specific positive integers
corresponding to eigensections $\alpha _{k^{\left[ \rho \right] }}$ of type $%
\left[ \rho \right] $. In summary, 
\begin{eqnarray*}
\mathrm{tr}\left( \left. e^{-tD^{+}D^{-}}\right\vert _{\Gamma \left(
M,E^{+}\right) ^{\rho }}\right) &=&\sum_{k^{\left[ \rho \right] }}e^{-t\beta
_{k^{\left[ \rho \right] }}} \\
&=&\dim V_{\rho }\int_{x\in M}\int_{g\in G}\mathrm{tr}\left( g\cdot
K^{+}\left( t,g^{-1}x,x\right) \right) ~\overline{\chi _{\rho }\left(
g\right) }~dg~\left\vert dx\right\vert
\end{eqnarray*}%
Similar arguments apply to the operator $D^{+}D^{-}$. The usual
McKean-Singer argument implies that the supertrace of the heat kernel $K$
corresponding to $D=\left( 
\begin{array}{cc}
0 & D^{-} \\ 
D^{+} & 0%
\end{array}%
\right) $ restricted to $\Gamma \left( M,E^{+}\oplus E^{-}\right) ^{\rho }$
is the same as the index $\mathrm{ind}^{\rho }\left( D\right) $. By the
argument above, 
\begin{equation}
\mathrm{ind}^{\rho }\left( D\right) =\int_{x\in M}\int_{g\in G}\,\mathrm{str~%
}g\cdot K\left( t,g^{-1}x,x\right) ~\overline{\chi _{\rho }\left( g\right) }%
~dg~\left\vert dx\right\vert ,  \label{heatKernelExpressionForIndex}
\end{equation}%
where $K$ is the heat kernel of $e^{-t\left( D^{2}+C-\lambda _{\rho }\right)
}$. Since the heat kernel $K$ changes smoothly with respect to $G$%
-equivariant deformations of the metric and of the operator $D$ and the
right hand side is an integer, we see that $\mathrm{ind}^{\rho }\left(
D\right) $ is stable under such homotopies of the operator $D^{+}$ through $%
G $-equivariant transversally elliptic operators. This implies that the
indices $\mathrm{ind}^{G}\left( D\right) $ and $\mathrm{ind}_{g}\left(
D\right) $ mentioned in the introduction depend only on the $G$-equivariant
homotopy class of the principal transverse symbol of $D^{+}$. (We knew this
already because of the Fredholm properties discussed in Section \ref%
{TransEllipOpsSection}.)

Note also that $e^{-t\left( D^{2}+C-\lambda _{\rho }\right) }$ differs from $%
e^{-t\left( D^{2}+C\right) }$ by a factor of $e^{-\lambda _{\rho }t}$, so we
also have that 
\begin{equation*}
\mathrm{ind}^{\rho }\left( D\right) =\lim_{t\rightarrow 0}\int_{x\in
M}\int_{g\in G}\,\mathrm{str~}g\cdot K^{0}\left( t,g^{-1}x,x\right) ~%
\overline{\chi _{\rho }\left( g\right) }~dg~\left\vert dx\right\vert \,,
\end{equation*}%
where $K^{0}$ is the heat kernel of $e^{-t\left( D^{2}+C\right) }$.

\subsection{The asymptotic expansion of the supertrace of the equivariant
heat kernel}

We now use Lemma \ref{decomposition} to decompose $M=%
\bigcup_{i=0}^{r}M_{i}^{\varepsilon }$ into a disjoint union of pieces so
that the integral (\ref{heatKernelExpressionForIndex}) can be simplified.
Also, observe that the kernel $K$ and the metric on $M$ are smooth functions
of the operator $D^{+}$ and the metric on $M$. As we have mentioned, the
index $\mathrm{ind}^{\rho }\left( D\right) $ is invariant under smooth, $G$%
-equivariant perturbations of $D^{+}$ and the metric. In particular, on each 
$M_{i}^{\varepsilon }$ for $i>0$ (ie not including $M_{0}$, the union of the
principal orbits), in normal directions to the singular stratum $M_{i}$, we
may flatten the metric, trivialize the bundles $E^{\pm }$, and make the
operator $D$ have constant coefficients with respect to the trivialized
normal coordinates. Hence, 
\begin{gather}
\mathrm{ind}^{\rho }\left( D\right) =\sum_{i}\int_{w\in M_{i}^{\varepsilon
}}\int_{g\in G}\mathrm{\mathrm{str}\,}\left( g\cdot K\left(
t,g^{-1}w,w\right) \right) ~\overline{\chi _{\rho }\left( g\right) }%
~dg~\left\vert dw\right\vert  \notag \\
=\sum_{i}\int_{\left( x,z_{x}\right) \in M_{i}^{\varepsilon }}\int_{g\in G}%
\mathrm{\mathrm{str}\,}\left( g\cdot K\left( t,g^{-1}\left( x,z_{x}\right)
,\left( x,z_{x}\right) \right) \right) \,\overline{\chi _{\rho }\left(
g\right) }~dg\,\,\left\vert dz_{x}\right\vert ~\left\vert dx\right\vert _{i},
\label{heatKernelDecompStrata}
\end{gather}%
where we choose coordinates $w=\left( x,z_{x}\right) $ for the point $\exp
_{x}^{\bot }\left( z_{x}\right) $, where $x\in M_{i}$, $z_{x}\in
B_{\varepsilon _{i}}=B_{2^{i}\varepsilon }\left( N_{x}\left( M_{i}\right)
\right) $, and $\exp _{x}^{\bot }$ is the normal exponential map. In the
expression above, $\left\vert dz_{x}\right\vert $ is the Euclidean density
on $N_{x}\left( M_{i}\right) $, and $\left\vert dx\right\vert _{i}$ is the
Riemannian density on $M_{i}$. We note that the outer integral is not a true
product integral if $M_{i}$ is not the lowest stratum, because for the most
part $x\in \Sigma _{i}^{\varepsilon }$ and $z_{x}\in B_{\varepsilon
_{i}}=B_{2^{i}\varepsilon }\left( N_{x}\left( \Sigma _{i}^{\varepsilon
}\right) \right) $, but to include all of $M_{i}^{\varepsilon }$, some $x\in
M_{i}\setminus \Sigma _{i}^{\varepsilon }$ and $z_{x}$ away from the zero
section may be needed; metrically, this is the integral over a cylinder of
small radius whose ends are concave spherical (of larger radius) instead of
flat. Recall that $K\left( t,g^{-1}\left( x,z_{x}\right) ,\left(
x,z_{x}\right) \right) $ is a map from $E_{\left( x,z_{x}\right) }=E_{x}$ to 
$E_{g^{-1}\left( x,z_{x}\right) }=E_{g^{-1}x}$.

One important idea is that the asymptotics of $K\left( t,g^{-1}w,w\right) $
as $t\rightarrow 0$ are completely determined by the operator's local
expression along the minimal geodesic connecting $g^{-1}w$ and $w$, if $%
g^{-1}w$ and $w$ are sufficiently close together. If the distance between $%
g^{-1}w$ and $w$ is bounded away from zero, there is a constant $c>0$ such
that $K\left( t,g^{-1}w,w\right) =\mathcal{O}\left( e^{-c/t}\right) $ as $%
t\rightarrow 0$. For these reasons, it is clear that the asymptotics of the
supertrace $\mathrm{ind}^{\rho }\left( D\right) $ are locally determined
over spaces of orbits in $M$, meaning that the contribution to the index is
a sum of $t^{0}$-asymptotics of integrals of $\int_{U}\int_{G}\mathrm{%
\mathrm{str}}\left( g\cdot K\left( t,g^{-1}w,w\right) \right) \,\overline{%
\chi _{\rho }\left( g\right) }~dg$ $\left\vert dw\right\vert $ over a finite
collection of saturated sets $U\subset M$ that are unions of orbits that
intersect a neighborhood of a point of $M$. In particular, the index $%
\mathrm{ind}^{\rho }\left( D\right) $ is the sum of the $t^{0}$ asymptotics
of the integrals over $M_{i}^{\varepsilon }$. However, the integral of the $%
t^{0}$ asymptotic coefficient of $\int_{G}\mathrm{\mathrm{str}}\left( g\cdot
K\left( t,g^{-1}w,w\right) \right) \overline{\chi _{\rho }\left( g\right) }%
~dg$ over all of $M$ is not the index, and the integral of the $t^{0}$
asymptotic coefficient of $\int_{M}\mathrm{\mathrm{str}\,}\left( g\cdot
K\left( t,g^{-1}w,w\right) \right) ~\mathrm{dvol}_{M}$ over all of $G$ is
not the index. Thus, the integrals over $M$ and $G$ may not be separated
when computing the local index contributions. In particular, the singular
strata may not be ignored.

\subsection{Heat kernel equivariance property}

With notation as in the previous sections, the kernel $K^{+}$ of $%
e^{-t\left( D^{-}D^{+}+C-\lambda _{\rho }\right) }$ satisfies 
\begin{equation*}
K^{+}\left( t,x,y\right) =\sum_{k}e^{-t\beta _{k}}\alpha _{k}\left( x\right)
\otimes \left( \alpha _{k}\left( y\right) \right) ^{\ast }\in \mathrm{Hom}%
\left( E_{y}^{+},E_{x}^{+}\right) ,
\end{equation*}%
with $\left\{ \alpha _{j}\in \Gamma \left( E^{+}\right) \right\} $ is an $%
L^{2}$-orthogonal basis of eigensections of $\Delta =D^{-}D^{+}+C-\lambda
_{\rho }$ corresponding to eigenvalues $\beta _{j}\geq 0$ (counted with
multiplicity).

If $g\cdot $ denotes the action of $g\in G$ on sections of $E$, note that $%
\left( g\cdot u\right) \left( x\right) =g\cdot u\left( g^{-1}x\right) $. The 
$G$-equivariance implies%
\begin{equation*}
g\cdot \left( \Delta \alpha _{j}\right) =\left( \Delta \left( g\cdot \alpha
_{j}\right) \right) ,
\end{equation*}%
or%
\begin{equation*}
\beta _{j}\left( g\cdot \alpha _{j}\right) =\Delta \left( g\cdot \alpha
_{j}\right) ,
\end{equation*}%
so that $\left\{ g\cdot \alpha _{j}\right\} $ is another $L^{2}$-orthonormal
basis of eigensections, and 
\begin{eqnarray}
K^{+}\left( t,x,y\right) &=&\sum e^{-t\beta _{k}}g\cdot \alpha _{k}\left(
g^{-1}x\right) \otimes \left[ g\cdot \alpha _{k}\left( g^{-1}y\right) \right]
^{\ast }  \notag \\
&=&\sum e^{-t\beta _{k}}g\cdot \alpha _{k}\left( g^{-1}x\right) \otimes 
\left[ \alpha _{k}\left( g^{-1}y\right) \right] ^{\ast }\circ g^{-1}\cdot 
\notag \\
&=&g\cdot K^{+}\left( t,g^{-1}x,g^{-1}y\right) \circ g^{-1}\cdot ~\in 
\mathrm{Hom}\left( E_{y},E_{x}\right) .  \label{heatKernelInvariance}
\end{eqnarray}%
Note that $\mathrm{Hom}\left( E,E\right) $ is a $G$-bundle, and it carries
the left $G$-action defined on $L_{x}\in \mathrm{Hom}\left(
E_{x},E_{x}\right) =E_{x}\otimes E_{x}^{\ast }$ by%
\begin{equation*}
g\bullet L_{x}=g\cdot L_{x}\circ g^{-1}\cdot ~\in \mathrm{Hom}\left(
E_{gx},E_{gx}\right) ,
\end{equation*}%
or%
\begin{equation*}
g\bullet \left( v\otimes w^{\ast }\right) =\left( g\cdot v\right) \otimes
\left( w^{\ast }\circ g^{-1}\cdot \right) .
\end{equation*}%
With this action,%
\begin{equation*}
g\bullet \left( K^{+}\left( t,g^{-1}x,g^{-1}x\right) \right) =K^{+}\left(
t,x,x\right) .
\end{equation*}%
Note $g\cdot K^{+}\left( t,g^{-1}x,x\right) \in \mathrm{Hom}\left(
E_{x},E_{x}\right) $. Observe that if we let $L_{g,x}=g\cdot K^{+}\left(
t,g^{-1}x,x\right) \in \mathrm{Hom}\left( E_{x},E_{x}\right) $, then 
\begin{eqnarray*}
g^{\prime }\bullet L_{g,g^{\prime -1}x} &=&g^{\prime }\cdot \left( g\cdot
K^{+}\left( t,g^{-1}\left( g^{\prime -1}x\right) ,g^{\prime -1}x\right)
\right) \circ g^{\prime -1} \\
&=&g^{\prime }\cdot g\cdot g^{\prime -1}K^{+}\left( t,g^{\prime
}g^{-1}\left( g^{\prime -1}x\right) ,g^{\prime }g^{\prime -1}x\right) \circ
g^{\prime }\circ g^{\prime -1}
\end{eqnarray*}%
by (\ref{heatKernelInvariance}), or%
\begin{equation*}
g^{\prime }\bullet L_{g,g^{\prime -1}x}=\left( g^{\prime }gg^{\prime
-1}\right) \cdot K^{+}\left( t,\left( g^{\prime }gg^{\prime -1}\right)
^{-1}x,x\right) =L_{g^{\prime }gg^{\prime -1},x}~.
\end{equation*}%
This implies that%
\begin{eqnarray*}
K^{+}\left( t,x,x\right) ^{\rho } &=&\dim V^{\rho }\int_{G}g\cdot
K^{+}\left( t,g^{-1}x,x\right) ~\overline{\chi _{\rho }\left( g\right) }~dg
\\
&=&\dim V^{\rho }\int_{G}\left( g^{\prime }gg^{\prime -1}\right) \cdot
K^{+}\left( t,\left( g^{\prime }gg^{\prime -1}\right) ^{-1}x,x\right) ~%
\overline{\chi _{\rho }\left( g\right) }~dg,
\end{eqnarray*}%
by changing variables from $g$ to $g^{\prime }gg^{\prime -1}$ for some fixed 
$g^{\prime }\in G$. By the formula above, we obtain%
\begin{eqnarray*}
K^{+}\left( t,x,x\right) ^{\rho } &=&\left( \dim V^{\rho }\right) g^{\prime
}\cdot \int_{G}g\cdot K^{+}\left( t,g^{-1}\left( g^{\prime -1}x\right)
,g^{\prime -1}x\right) ~\overline{\chi _{\rho }\left( g\right) }~dg\circ
g^{\prime -1} \\
&=&g^{\prime }\cdot K^{+}\left( t,g^{\prime -1}x,g^{\prime -1}x\right)
^{\rho }\circ g^{\prime -1}.
\end{eqnarray*}%
Equivalently, as sections of $\mathrm{Hom}\left( E,E\right) $, for all $g\in
G$,%
\begin{equation*}
K^{+}\left( t,gx,gx\right) ^{\rho }=g\bullet K^{+}\left( t,x,x\right) ^{\rho
}=g\cdot K^{+}\left( t,x,x\right) ^{\rho }\circ g^{-1}.
\end{equation*}%
Furthermore, since the calculations above certainly apply to $K^{-}$,%
\begin{eqnarray}
\mathrm{tr}K^{\pm }\left( t,gx,gx\right) ^{\rho } &=&\mathrm{tr}K^{\pm
}\left( t,x,x\right) ^{\rho },  \notag \\
\mathrm{str}K\left( t,gx,gx\right) ^{\rho } &=&\mathrm{str}K\left(
t,x,x\right) ^{\rho }.  \label{traceHeatKernelInvariance}
\end{eqnarray}%
Thus, these traces descend to functions on the orbit space. In order to
evaluate the integral of $K^{\pm }\left( t,x,x\right) ^{\rho }$ over a
neighborhood of a singular stratum $\Sigma $, it is sufficient to evaluate
it over the normal bundle to $\Sigma $ restricted to local sections of $%
\Sigma \rightarrow G\diagdown \Sigma $.

Note that the calculations above may be applied to nonelliptic kernels. In
particular, we may let $D$ be a fiberwise equivariant elliptic operator;
then the calculations above apply to it.

\section{The Equivariant Index of an operator on a sphere\label%
{EquivarIndexOpSphereSection}}

The purpose of this section is to do an explicit calculation of local heat
trace asymptotics of a very specific type of operator; this analysis will
apply to a much more general situation later in the paper. We will relate
the local equivariant supertrace of a heat operator on Euclidean space to
that of the natural pullback of the operator to the radial blowup of the
origin. In what follows, we will make very explicit choices in order to do
the calculation, but the final result will be independent of those choices.

\subsection{Polar coordinate form of constant coefficient operator on $%
\mathbb{R}^{k}$\label{Euclidean operator}}

In this section, we consider the following situation. Let $Q:C^{\infty
}\left( \mathbb{R}^{k},\mathbb{C}^{d}\right) \rightarrow C^{\infty }\left( 
\mathbb{R}^{k},\mathbb{C}^{d}\right) $ be an elliptic, first order, constant
coefficient differential operator, where $k\geq 2$. Let 
\begin{equation*}
Q_{1}=\sum_{j=1}^{k}A_{j}\partial _{j},
\end{equation*}%
where by assumption $A_{j}\in GL\left( d,\mathbb{C}\right) $, and for any $%
c=\left( c_{1},...,c_{k}\right) \in \mathbb{R}^{k}$, $\det \left( \sum
c_{j}A_{j}\right) =0$ implies $c=0$. Note that 
\begin{equation*}
Q_{1}^{\ast }=-\sum_{j=1}^{k}A_{j}^{\ast }\partial _{j}
\end{equation*}%
We form the symmetric operator $Q:C^{\infty }\left( \mathbb{R}^{k},\mathbb{C}%
^{d}\oplus \mathbb{C}^{d}\right) \rightarrow C^{\infty }\left( \mathbb{R}%
^{k},\mathbb{C}^{d}\oplus \mathbb{C}^{d}\right) $ 
\begin{equation*}
Q=\left( 
\begin{array}{cc}
0 & Q_{1}^{\ast } \\ 
Q_{1} & 0%
\end{array}%
\right) =\sum_{j=1}^{k}\left( 
\begin{array}{cc}
0 & -A_{j}^{\ast } \\ 
A_{j} & 0%
\end{array}%
\right) \partial _{j}.
\end{equation*}

We write $Q$ in polar coordinates $\left( r,\theta \right) \in \left(
0,\infty \right) \times S^{k-1}$, with $x=r\theta =r\left( \theta
_{1},...,\theta _{k}\right) $, as in Section \ref{SphereAppendix} 
\begin{eqnarray*}
Q_{1} &=&Z^{+}\left( \partial _{r}+\frac{1}{r}Q^{S+}\right) ,\text{ so that}
\\
Q &=&\left( 
\begin{array}{cc}
0 & \left( \partial _{r}+\frac{1}{r}Q^{S+}\right) ^{\ast }Z^{+\ast } \\ 
Z^{+}\left( \partial _{r}+\frac{1}{r}Q^{S+}\right) & 0%
\end{array}%
\right) .
\end{eqnarray*}%
The purpose of this section is to prove the formula%
\begin{equation*}
\int_{B_{\varepsilon }\left( 0\right) }\alpha ^{\beta }=\int_{B_{\varepsilon
}\left( \partial \frac{1}{2}DS^{k}\right) }\widetilde{\alpha }^{\beta }+%
\frac{-\eta \left( Q^{S+,\beta }\right) +h\left( Q^{S+,\beta }\right) }{2},
\end{equation*}%
where $\alpha ^{\beta }=\mathrm{str}K_{Q}\left( t,x,x\right) ^{\beta }$ is
the supertrace of the equivariant heat kernel of $Q^{2}$ restricted to
sections of type $\beta $ ($\beta $ being an irreducible representation of a
group that commutes with $Q$), and where $B_{\varepsilon }\left( 0\right) $
is the ball of radius $\varepsilon $ around the origin of $\mathbb{R}^{k}$,
and where $\widetilde{\alpha }^{\beta }$ and $B_{\varepsilon }\left(
\partial \frac{1}{2}DS^{k}\right) $ correspond to the same neighborhood
after a radial blowup of the origin confined to a smaller ball.

In what follows, we will require another result. Assume $Z=-Z^{\ast }$, and $%
Q^{S}=Q^{S\ast }$ on the sphere, so that $Q^{S}Z+ZQ^{S}=\left( k-1\right) Z$%
. (We note that the second assumption is satisfied automatically; see
Section \ref{eigenvaluesDSsection}.) Suppose that $q$ is a smooth, positive,
real-valued function on a subinterval $I$ of $\left( 0,\infty \right) $, and
let $\widetilde{Q}:C^{\infty }\left( I\times S^{k-1},\mathbb{C}^{d}\right)
\rightarrow C^{\infty }\left( I\times S^{k-1},\mathbb{C}^{d}\right) $ be
defined by%
\begin{equation*}
\widetilde{Q}=Z\left( \partial _{r}+\frac{1}{q\left( r\right) }Q^{S}\right) .
\end{equation*}%
Let%
\begin{equation*}
ds^{2}=\left[ g\left( r\right) \right] ^{2}d\theta ^{2}+dr^{2}
\end{equation*}%
be the metric on $I\times S^{k-1}$, where $g$ is a smooth positive function.
The formal adjoint of $\partial _{r}$ with respect to this metric is then%
\begin{equation*}
\partial _{r}^{\ast }=-\partial _{r}-\frac{\left( k-1\right) g^{\prime
}\left( r\right) }{g\left( r\right) },
\end{equation*}%
so that%
\begin{eqnarray}
\widetilde{Q}^{\ast } &=&\left( -\partial _{r}-\frac{\left( k-1\right)
g^{\prime }\left( r\right) }{g\left( r\right) }+\frac{1}{q\left( r\right) }%
Q^{S}\right) Z^{\ast }  \notag \\
&=&Z\left( \partial _{r}+\frac{\left( k-1\right) g^{\prime }\left( r\right) 
}{g\left( r\right) }+\frac{1}{q\left( r\right) }Q^{S}-\frac{k-1}{q\left(
r\right) }\right)  \notag \\
&=&\widetilde{Q}+\left[ \frac{\left( k-1\right) g^{\prime }\left( r\right) }{%
g\left( r\right) }-\frac{k-1}{q\left( r\right) }\right] Z.  \label{adjoint2}
\end{eqnarray}

\subsection{An induced operator on a sphere\label%
{inducedOperatorSphereSection}}

We now consider a new operator induced from the operator in Section \ref%
{Euclidean operator} as a symmetric operator on $\mathbb{C}^{d}\oplus 
\mathbb{C}^{d}$-valued functions on $S^{k}\subset \mathbb{R}^{k+1}$; in
particular, the metric and operator will be chosen to agree exactly with the
original operator near the south pole. Suppose that $H<O\left( k\right) $
acts on $\mathbb{R}^{k+1}$ in the first $k$ coordinates and isometrically on
the fibers $\mathbb{C}^{d}$and commutes with $Q$ defined in (\ref{D operator
polar coords}). Thus $H$ represents on each copy of $\mathbb{C}^{d}$ as a
unitary representation. Further, every element of $H$ commutes with $Z$ and
with $Q^{s}$. Using these operators, we will define a new operator on a rank 
$2d$ bundle $E=E^{+}\oplus E^{-}$ over $S^{k}\subset \mathbb{R}^{k+1}$ to
obtain an $H$-equivariant elliptic operator on $\Gamma \left(
S^{k},E^{+}\oplus E^{-}\right) $ . In spherical coordinates $\left( \phi
,\theta \right) \in \left[ 0,\pi \right] \times S^{k-1}$, with $\phi $
corresponding to the (spherical) distance from the south pole $x_{k+1}=-1$,
the operator we consider is defined on the lower hemisphere $S^{k,s}$ to be 
\begin{equation*}
\mathbf{Q}=\left( 
\begin{array}{cc}
0 & \left( \partial _{\phi }+\frac{1}{f\left( \phi \right) }Q^{S+}\right)
^{\ast }Z^{-} \\ 
Z^{+}\left( \partial _{\phi }+\frac{1}{f\left( \phi \right) }Q^{S+}\right) & 
0%
\end{array}%
\right)
\end{equation*}%
mapping $C^{\infty }\left( S^{k,s},\mathbb{C}^{d}\oplus \mathbb{C}%
^{d}\right) $ to itself, where $f\left( \phi \right) $ is a function that is
smooth and positive on $\left( 0,\pi \right) $ and such that $f\left( \phi
\right) =\phi $ for $\phi $ near zero and $f\left( \phi \right) =1$ near $%
\phi =\frac{\pi }{2}$, so that the operator exactly agrees with the
Euclidean operator $Q$ near the south pole and is a cylindrical operator
near the equator. We choose the manifold metric $ds^{2}=\left[ g\left( \phi
\right) \right] ^{2}d\theta ^{2}+d\phi ^{2}$ near the south pole and equator
to agree with the standard Euclidean metric and cylindrical metric, so that $%
g\left( \phi \right) =\phi $ for $\phi $ near zero and $g\left( \phi \right)
=1$ for $\phi $ near $\frac{\pi }{2}$. By Equation (\ref{adjoint2}), 
\begin{equation*}
\left( \partial _{\phi }+\frac{1}{f\left( \phi \right) }Q^{S+}\right) ^{\ast
}=-\partial _{\phi }-\frac{\left( k-1\right) g^{\prime }\left( \phi \right) 
}{g\left( \phi \right) }+\frac{1}{f\left( \phi \right) }Q^{S+},
\end{equation*}

Again, we must choose the bundle metric so that $Z$ is a unitary
transformation, so in fact $Z^{+}$ and $Z^{-}=\left( Z^{+}\right) ^{\ast }$
are unitary as maps from $\mathbb{C}^{d}$ to itself. We now use the approach
used in the index theory for manifolds with boundary to double the operator
(see, for example, \cite{Bo-Wo}). On the upper hemisphere, we use the same
coordinates as on the reflection in the lower hemisphere so that the
orientation is reversed, and we identify the opposite parts of the bundle $%
\mathbb{C}^{d}\oplus \mathbb{C}^{d}$ via the map $Z$ over the cylinder. The
operator \textbf{$Q$} has the same description in the upper hemisphere as in
the lower hemisphere. That is, let $\left( \theta _{s},\phi _{s}\right) $ be
the coordinates in the lower hemisphere, and let $\left( \theta _{n},\phi
_{n}\right) $ be coordinates in the upper hemisphere. Near the equator, the
transition function is 
\begin{equation*}
\theta _{n}=\theta _{s};~\phi _{n}=\pi -\phi _{s}.
\end{equation*}%
The bundles $\mathbb{C}^{d}\oplus \mathbb{C}^{d}$ over the upper and lower
hemispheres are identified as follows. A section $\left( L_{+}\left( \theta
_{s},\phi _{s}\right) ,L_{-}\left( \theta _{s},\phi _{s}\right) \right) $ in
the lower hemisphere is identified with the section \linebreak $\left(
U_{+}\left( \theta _{n},\phi _{n}\right) ,U_{-}\left( \theta _{n},\phi
_{n}\right) \right) $ in the upper hemisphere near the equator if 
\begin{eqnarray*}
Z^{+}\left( \theta _{s}\right) ^{-1}L_{-}\left( \theta _{s},\phi _{s}\right)
&=&U_{+}\left( \theta _{s},\pi -\phi _{s}\right) \text{ and} \\
Z^{-}\left( \theta _{s}\right) ^{-1}L_{+}\left( \theta _{s},\phi _{s}\right)
&=&U_{-}\left( \theta _{s},\pi -\phi _{s}\right) ,\text{or} \\
\left( 
\begin{array}{cc}
0 & Z^{+}\left( \theta _{s}\right) ^{-1} \\ 
Z^{-}\left( \theta _{s}\right) ^{-1} & 0%
\end{array}%
\right) \left( 
\begin{array}{c}
L_{+}\left( \theta _{s},\phi _{s}\right) \\ 
L_{-}\left( \theta _{s},\phi _{s}\right)%
\end{array}%
\right) &=&\left( 
\begin{array}{c}
U_{+}\left( \theta _{s},\pi -\phi _{s}\right) \\ 
U_{-}\left( \theta _{s},\pi -\phi _{s}\right)%
\end{array}%
\right)
\end{eqnarray*}%
The bundle $E^{+}$ is defined to be $\mathbb{C}^{d}\oplus \left\{ 0\right\} $
over the lower hemisphere and $\left\{ 0\right\} \oplus \mathbb{C}^{d}$
(with identifications as above), and the bundle $E^{-}$ is defined
similarly. The operator \textbf{$Q$} can be extended to be well-defined on
entire sphere, as follows. Define it as before on the two hemispheres, and
so that near the equator 
\begin{eqnarray*}
\mathbf{Q}\left( 
\begin{array}{c}
L_{+}\left( \theta _{s},\phi _{s}\right) \\ 
L_{-}\left( \theta _{s},\phi _{s}\right)%
\end{array}%
\right) &=&\left( 
\begin{array}{cc}
0 & \left( -\partial _{\phi _{s}}+Q^{S+}\right) Z^{-} \\ 
Z^{+}\left( \partial _{\phi _{s}}+Q^{S+}\right) & 0%
\end{array}%
\right) \left( 
\begin{array}{c}
L_{+}\left( \theta _{s},\phi _{s}\right) \\ 
L_{-}\left( \theta _{s},\phi _{s}\right)%
\end{array}%
\right) \\
&=&\left( 
\begin{array}{cc}
0 & \left( -\partial _{\phi _{n}}+Q^{S+}\right) Z^{-} \\ 
Z^{+}\left( \partial _{\phi _{n}}+Q^{S+}\right) & 0%
\end{array}%
\right) \left( 
\begin{array}{c}
U_{+}\left( \theta _{s},\phi _{n}\right) \\ 
U_{-}\left( \theta _{s},\phi _{n}\right)%
\end{array}%
\right)
\end{eqnarray*}%
To check compatibility, we see that 
\begin{eqnarray*}
&&\left( 
\begin{array}{cc}
0 & \left( -\partial _{\phi _{n}}+Q^{S+}\right) Z^{-} \\ 
Z^{+}\left( \partial _{\phi _{n}}+Q^{S+}\right) & 0%
\end{array}%
\right) \left( 
\begin{array}{c}
U_{+}\left( \theta _{s},\phi _{n}\right) \\ 
U_{-}\left( \theta _{s},\phi _{n}\right)%
\end{array}%
\right) \\
&=&\left( 
\begin{array}{cc}
0 & \left( \partial _{\phi _{s}}+Q^{S+}\right) Z^{-} \\ 
Z^{+}\left( -\partial _{\phi _{s}}+Q^{S+}\right) & 0%
\end{array}%
\right) \left( 
\begin{array}{cc}
0 & \left( Z^{+}\right) ^{-1} \\ 
\left( Z^{-}\right) ^{-1} & 0%
\end{array}%
\right) \left( 
\begin{array}{c}
L_{+}\left( \theta _{s},\phi _{s}\right) \\ 
L_{-}\left( \theta _{s},\phi _{s}\right)%
\end{array}%
\right) \\
&=&\left( 
\begin{array}{c}
\left( -\partial _{\phi _{s}}+Q^{S+}\right) L_{+}\left( \theta _{s},\phi
_{s}\right) \\ 
Z^{+}\left( -\partial _{\phi _{s}}+Q^{S+}\right) \left( Z^{+}\right)
^{-1}L_{-}\left( \theta _{s},\phi _{s}\right)%
\end{array}%
\right)
\end{eqnarray*}%
We continue to simplify to obtain 
\begin{eqnarray*}
&&\left( 
\begin{array}{cc}
0 & \left( -\partial _{\phi _{n}}+Q^{S+}\right) Z^{-} \\ 
Z^{+}\left( \partial _{\phi _{n}}+Q^{S+}\right) & 0%
\end{array}%
\right) \left( 
\begin{array}{c}
U_{+}\left( \theta _{s},\phi _{n}\right) \\ 
U_{-}\left( \theta _{s},\phi _{n}\right)%
\end{array}%
\right) \\
&=&\left( 
\begin{array}{cc}
0 & \left( Z^{+}\right) ^{-1} \\ 
\left( Z^{-}\right) ^{-1} & 0%
\end{array}%
\right) \left( 
\begin{array}{cc}
0 & \left( -\partial _{\phi _{s}}+Q^{S+}\right) Z^{-} \\ 
Z^{+}\left( \partial _{\phi _{s}}+Q^{S+}\right) & 0%
\end{array}%
\right) \left( 
\begin{array}{c}
L_{+}\left( \theta _{s},\phi _{s}\right) \\ 
L_{-}\left( \theta _{s},\phi _{s}\right)%
\end{array}%
\right) .
\end{eqnarray*}

\medskip Thus, the newly defined elliptic differential operator \textbf{$Q$}
on $\Gamma \left( S^{k},E^{+}\oplus E^{-}\right) $ is well-defined. This
operator is self adjoint, and remains $H$-equivariant by construction. In
fact \textbf{$Q$} has trivial kernel and thus zero index, and all possible
equivariant indices are likewise zero. Note that the bundle $E$ is no longer
trivial.

\subsection{Local contribution of the heat supertrace near the south pole 
\label{HeatKernelSphereCalculation}}

\subsubsection{Local elements of the kernel}

We now wish to compute the local supertrace of the equivariant heat kernel
associated to \textbf{$Q$}, in particular the part near the south pole. Let $%
\beta :H\rightarrow GL\left( V_{\beta }\right) $ denote an irreducible
representation of $H$. We know that the supertrace of $e^{-t\mathbf{Q}^{2}}$
restricted to sections of type $\beta $ is the index of \textbf{$Q$}$^{\beta
}=\left. \mathbf{Q}\right\vert _{\Gamma \left( S^{k},E^{+}\oplus
E^{-}\right) ^{\beta }}$, which is zero; however, the local supertrace need
not be trivial. Let $u\in \Gamma \left( S^{k},E^{+}\right) ^{\beta }$. Since 
$H$ commutes with $Q^{S}$, $\Gamma \left( S^{k},E^{+}\right) ^{\beta }$ is a
direct sum of eigenspaces of $Q^{S}$. Near the south pole we write 
\begin{equation*}
u\left( \theta _{s},\phi _{s}\right) =\sum_{\lambda }u_{\lambda }\left( \phi
_{s}\right) f_{\lambda }\left( \theta _{s}\right) ,
\end{equation*}%
where $Q^{S}f_{\lambda }=Q^{S+}f_{\lambda }=\lambda f_{\lambda }$.

If $u\in \Gamma _{\mathrm{loc}}\left( S^{k},E^{+}\right) ^{\beta }$ is a
local smooth solution to the equation \textbf{$Q$}$^{\beta }u=0$ near the
south pole, then 
\begin{equation*}
\partial _{\phi _{s}}u_{\lambda }+\frac{\lambda }{\phi _{s}}u_{\lambda }=0,
\end{equation*}%
or 
\begin{equation*}
u_{\lambda }\left( \phi _{s}\right) =c_{\lambda }\phi _{s}^{-\lambda }
\end{equation*}%
for some $c_{\lambda }\in \mathbb{C}$. Note that for $k>2$, $\lambda <\frac{k%
}{2}-1$ if and only if $\phi _{s}^{-\lambda }f_{\lambda }\left( \theta
_{s}\right) $ provides an element of the Sobolev space $H^{1}$ that locally
solves the differential equation (for $k=2$, the condition is $\lambda \leq
0 $). Because $\ker $\textbf{$Q$} is (globally) trivial, it is not possible
to continue this solution to a global $H^{1}$ solution of \textbf{$Q$}$u=0$
on the whole sphere.

On the other hand, if $v\in \Gamma _{\mathrm{loc}}\left( S^{k},E^{-}\right)
^{\beta }$ is a local smooth solution to \textbf{$Q$}$^{\beta }u=0$, then%
\begin{eqnarray*}
\mathbf{Q}^{\beta }v &=&\left( \partial _{\phi _{s}}+\frac{1}{\phi _{s}}%
Q^{S+}\right) ^{\ast }Z^{-}v \\
&=&\left( -\partial _{\phi _{s}}-\frac{k-1}{\phi _{s}}+\frac{1}{\phi _{s}}%
Q^{S+}\right) Z^{-}v \\
&=&Z^{-}\left( \partial _{\phi _{s}}+\frac{1}{\phi _{s}}Q^{S-}\right) v=0
\end{eqnarray*}%
so that $w=Z^{-}v\in \Gamma _{\mathrm{loc}}\left( S^{k},E^{+}\right) ^{\beta
}$ satisfies%
\begin{equation*}
\left( -\partial _{\phi _{s}}-\frac{k-1}{\phi _{s}}+\frac{1}{\phi _{s}}%
Q^{S+}\right) w=0.
\end{equation*}%
We write%
\begin{equation*}
w\left( \theta _{s},\phi _{s}\right) =\sum_{\lambda }w_{\lambda }\left( \phi
_{s}\right) f_{\lambda }\left( \theta _{s}\right) ,
\end{equation*}%
where as before $Q^{S}f_{\lambda }=Q^{S+}f_{\lambda }=\lambda f_{\lambda }$.

Then \textbf{$Q$}$^{\beta }\left( Z^{-}\right) ^{-1}w=0$ near the south pole
implies 
\begin{equation*}
\partial _{\phi _{s}}w_{\lambda }+\frac{k-1-\lambda }{\phi _{s}}w_{\lambda
}=0,
\end{equation*}%
or 
\begin{equation*}
w_{\lambda }\left( \phi _{s}\right) =e_{\lambda }\phi _{s}^{-k+1+\lambda }
\end{equation*}%
for some $e_{\lambda }\in \mathbb{C}$. Observe that for $k>2$, $\lambda >%
\frac{k}{2}$ if and only if $\phi _{s}^{-k+1+\lambda }\left( Z^{-}\right)
^{-1}f_{\lambda }\left( \theta _{s}\right) $ provides an element of $H^{1}$
that locally solves \textbf{$Q$}$v=0$ (the condition is $\lambda \geq 1$ for 
$k=2$). Because $\ker $\textbf{$Q$} is (globally) trivial, it is not
possible to continue this solution (or a constant solution) to a global $%
H^{1}$ solution of \textbf{$Q$}$\left( Z^{-}\right) ^{-1}w=0$ on the whole
sphere.

We remark further that if $k>2$, the sections of $E^{+}$ that satisfy 
\textbf{$Q$}$^{\beta }u=0$ on the sphere minus a small neighborhood of the
south pole that do extend to the north pole are exactly those of the form $%
\sum_{\lambda >\frac{k}{2}}c_{\lambda }\phi _{s}^{-\lambda }f_{\lambda
}\left( \theta _{s}\right) $ near the south pole (for $k=2$ the sum is over
all $\lambda \geq 1$). If $k>2$, the sections of $E^{-}$ that satisfy 
\textbf{$Q$}$^{\beta }u=0$ on the sphere minus a small neighborhood of the
south pole that do extend to the north pole are exactly those of the form $%
\sum_{\lambda <\frac{k}{2}-1}c_{\lambda }\phi _{s}^{-\lambda }f_{\lambda
}\left( \theta _{s}\right) $ near the south pole (for $k=2$ the sum is over
all $\lambda \leq 0$).

\subsubsection{A\ related boundary value problem\label{relatedBVP}}

In this section, we will replace the index problem for \textbf{$Q$}$^{\beta
} $ on the sphere with a related Atiyah-Patodi-Singer boundary problem, and
we will explicitly determine the difference between these indices in terms
of local heat kernel supertraces. Choose $\varepsilon >0$ such that $f\left(
y\right) =y$ on the $5\varepsilon $-ball centered at the south pole.
Consider all the data similar as before, but we modify the operator near the
south pole $x_{k+1}=-1$. We replace $\phi _{s}$ by $r\left( \phi _{s}\right)
=\exp \left( C_{\varepsilon }+\int_{0}^{\phi _{s}}\frac{1}{\psi \left(
y\right) }dy\right) $ for small $\phi _{s}$, where $\psi :\mathbb{%
R\rightarrow R}$ is a smooth function such that $\psi \left( y\right) =y$ in
a neighborhood of $y=3\varepsilon $ and $\psi \left( y\right) =\varepsilon $
for $y\leq \varepsilon $, and where $C_{\varepsilon }$ is a constant chosen
so that $C_{\varepsilon }+\int_{0}^{3\varepsilon }\frac{1}{\psi \left(
y\right) }dy=\log \left( 3\varepsilon \right) $. Suppose that $u\left( \phi
_{s},\theta _{s}\right) =\sum_{\lambda <\frac{k}{2}-1}c_{\lambda }\phi
_{s}^{-\lambda }f_{\lambda }\left( \theta _{s}\right) \in \Gamma \left(
S^{k},E^{+}\right) ^{\beta }$ is a local $H^{1}$ solution to \textbf{$Q$}$%
^{\beta }u=0$ near the south pole. Then 
\begin{eqnarray*}
\widetilde{u}\left( \phi _{s},\theta \right) &:&=u\left( r\left( \phi
_{s}\right) ,\theta \right) =\sum_{\lambda <\frac{k}{2}-1}c_{\lambda
}r^{-\lambda }f_{\lambda }\left( \theta \right) \\
&=&\sum_{\lambda <\frac{k}{2}-1}c_{\lambda }\exp \left( -\lambda \left(
C_{\varepsilon }+\int_{0}^{\phi _{s}}\frac{1}{\psi \left( y\right) }%
dy\right) \right) f_{\lambda }\left( \theta \right)
\end{eqnarray*}%
are local solutions to the equation 
\begin{equation*}
\widetilde{Q}\widetilde{u}=Z^{+}\left( \partial _{\phi _{s}}+\frac{1}{\psi
\left( \phi _{s}\right) }Q^{S+}\right) \widetilde{u}=0.
\end{equation*}%
(In the above sums, for $k=2$ the condition $\lambda <\frac{k}{2}-1$ is
replaced by $\lambda \leq 0$.) For $\phi _{s}<\varepsilon $, the equation
reads 
\begin{equation*}
\left( \partial _{\phi _{s}}+\frac{1}{\varepsilon }Q^{S+}\right) \widetilde{u%
}=0,
\end{equation*}%
and for $\phi _{s}>3\varepsilon $, the equation reads 
\begin{equation*}
\left( \partial _{\phi _{s}}+\frac{1}{f\left( \phi _{s}\right) }%
Q^{S+}\right) u=0.
\end{equation*}%
Note that $\widetilde{u}$ is $H$-equivariant, because the transformation
above affects the $\phi _{s}$ variable alone. Further, observe that the map $%
u\rightarrow \widetilde{u}$ is continuous with respect to the $C^{\infty }$
norm and is injective, and the $H^{1}$ condition on $u$ translates to the
boundary condition $\left( \phi _{s}=0\right) $%
\begin{gather*}
P_{\geq \frac{k}{2}-1}\left( \left. \widetilde{u}\right\vert _{\partial
}\right) =0 \\
\left( \text{or }P_{>0}\left( \left. \widetilde{u}\right\vert _{\partial
}\right) =0\text{ if }k=2\right) ,
\end{gather*}%
where $P_{\geq \frac{k}{2}-1}$ is the spectral projection onto the span of
the eigenspaces of $\left( Q^{S+}\right) ^{\beta }$ corresponding to
eigenvalues at least $\frac{k}{2}-1$. Note that no such $\widetilde{u}$ may
be continued to a global element of the kernel of the modified \textbf{$Q$}$%
^{\beta }$, by the reasoning used in the previous section. Thus the boundary
value problem for $\widetilde{u}$ with spectral boundary condition as above
yields no kernel, as does the original problem for \textbf{$Q$}$^{\beta }$
on the sphere.

Next, we modify the metric in a very specific way. We will choose the metric
so that%
\begin{equation}
ds^{2}=\left[ \tilde{g}\left( \phi _{s}\right) \right] ^{2}d\theta
^{2}+d\phi _{s}^{2},  \label{modifiedMetric}
\end{equation}%
where $\tilde{g}\left( \phi _{s}\right) =\phi _{s}$ in a neighborhood of $%
\phi _{s}=3\varepsilon $ (so that it agrees with the Euclidean metric), and
such that $\tilde{g}\left( \phi _{s}\right) $ is constant if $\phi
_{s}<\varepsilon $ (the cylindrical metric). Then, if $v\in \Gamma \left(
S^{k},E^{-}\right) ^{\beta }$ is a local solution to the equation $\left( 
\mathbf{Q}^{\beta }\right) ^{\ast }v=0$ , then $w=\left( Z^{-}\right) ^{-1}v$
satisfies 
\begin{eqnarray*}
\widetilde{w}\left( \phi _{s},\theta \right) &:&=w\left( r\left( \phi
_{s}\right) ,\theta \right) =\sum_{\lambda >\frac{k}{2}}c_{\lambda
}r^{-\lambda }f_{\lambda }\left( \theta \right) \\
&=&\sum_{\lambda >\frac{k}{2}}c_{\lambda }g\left( \phi _{s}\right) ^{-\left(
k-1\right) }\exp \left( \lambda \left( C_{\varepsilon }+\int_{0}^{\phi _{s}}%
\frac{1}{\psi \left( y\right) }dy\right) \right) f_{\lambda }\left( \theta
\right)
\end{eqnarray*}%
are local solutions to the equation 
\begin{equation*}
\left( -\partial _{\phi _{s}}-\frac{\left( k-1\right) \tilde{g}^{\prime
}\left( \phi _{s}\right) }{\tilde{g}\left( \phi _{s}\right) }+\frac{1}{\psi
\left( \phi _{s}\right) }Q^{S+}\right) \widetilde{w}=0.
\end{equation*}%
(In the sums above, if $k=2$ then the condition $\lambda >\frac{k}{2}$ is
replaced by $\lambda \geq 1$.) For $\phi _{s}<\varepsilon $, the equation
reads 
\begin{equation*}
\left( -\partial _{\phi _{s}}+\frac{1}{\varepsilon }Q^{S+}\right) \widetilde{%
w}=0,
\end{equation*}%
and for $\phi _{s}>3\varepsilon $, the equation reads 
\begin{equation*}
\left( -\partial _{\phi _{s}}-\frac{\left( k-1\right) \tilde{g}^{\prime
}\left( \phi _{s}\right) }{\tilde{g}\left( \phi _{s}\right) }+\frac{1}{%
f\left( \phi _{s}\right) }Q^{S+}\right) \widetilde{w}=0.
\end{equation*}%
This time $\widetilde{w}$ satisfies the boundary condition $\left( \phi
_{s}=0\right) $%
\begin{gather*}
P_{\leq \frac{k}{2}}\left( \left. \widetilde{w}\right\vert _{\partial
}\right) =0 \\
\left( \text{or }P_{<1}\left( \left. \widetilde{w}\right\vert _{\partial
}\right) =0\text{ if }k=2\right) ,
\end{gather*}%
where $P_{\leq \frac{k}{2}}$ is the spectral projection onto the span of the
eigenspaces of $\left( Q^{S+}\right) ^{\beta }$ corresponding to eigenvalues
at most $\frac{k}{2}$. Note that no such $\widetilde{w}$ may be continued to
a global element of the kernel of the modified \textbf{$Q$}$^{\beta }$, by
the reasoning used in the previous section. Thus the index problem for the
original \textbf{$Q$}$^{\beta }$ (for $u$ and $w$) is the same as the
boundary value \textquotedblleft index problem\textquotedblright\ for $%
\widetilde{u}$ and $\widetilde{w}$ described as above.

Observe that the operator and metric now fit the conditions of the
Atiyah-Patodi-Singer theorem (\cite{APS1}) for manifolds with boundary, but
the boundary conditions are not the same. More precisely, the related
situation is that of the equivariant version of the Atiyah-Patodi-Singer for
elliptic operators in \cite[Theorem 1.2]{Don}. The equivariant theorem
actually gives a formula for the character $\mathrm{ind}_{g}\left( \mathbf{Q}%
\right) $, which by ellipticity is a smooth function on $H$. We are
interested in $\mathrm{ind}^{\beta }\left( \mathbf{Q}\right) $, which is
easily computable from $\mathrm{ind}_{g}\left( \mathbf{Q}\right) $ by
determining the $\beta $-component of the character. All of the parts of the
equivariant index formula (the integral of the heat supertrace, the eta
invariant, and the dimension of the kernel) are smooth functions on $H$ and
can be computed similarly. The resulting equivariant eta invariant of any $H$%
-equivariant operator $L$ is defined to be 
\begin{equation}
\eta \left( L^{\beta }\right) =\left. \sum_{\lambda ^{\beta }\neq 0}\mathrm{%
sign}\left( \lambda ^{\beta }\right) \left\vert \lambda ^{\beta }\right\vert
^{-s}\right\vert _{s=0},  \label{equivariant eta definition}
\end{equation}%
where $\beta :H\rightarrow U\left( V_{\beta }\right) $ is an irreducible
representation and where the sum is over all nonzero eigenvalues (including
multiplicities) of $L$ restricted to the space of sections of type $\beta $.
(Recall that the eigenvalues grow as in Formula (\ref{eigenvalueAsymptotics}%
).)

The Atiyah-Patodi-Singer boundary conditions on the punctured sphere would be%
\begin{equation*}
P_{\geq 0}\widetilde{u}=0;~P_{<0}\widetilde{w}=0.
\end{equation*}%
If these conditions are used instead of the conditions induced from the
closed manifold problem, note that again no such $\widetilde{u}$ could be
continued to the north pole, because this would require that $\lambda <0$
and $\lambda >\frac{k}{2}$ (or $\lambda \geq 1$ for $k=2$) simultaneously.
However, the possibility exists that some choices of $\widetilde{w}$ could
be continued to sections of $\ker $ \textbf{$Q$}$^{\beta -}$ over the north
pole, requiring only that $0\leq \lambda <\frac{k}{2}-1$ (or $0\leq \lambda
\leq 0$ for $k=2$). By Proposition \ref{DSspectrumCorollaryHversion}, the
only part of the spectrum of $Q^{S+}$ in this range is the zero eigenvalue,
and thus the equivariant Atiyah-Patodi-Singer index of the modified \textbf{$%
Q$}$^{\beta }$ is 
\begin{equation}
-h\left( Q^{S+,\beta }\right) :=-\dim \ker Q^{S+,\beta }=-\dim \mathbb{C}%
^{d,\beta }=-\dim E^{+,\beta }.  \label{equivariant h definition}
\end{equation}%
By the equivariant version of the Atiyah-Patodi-Singer Theorem, we have that%
\begin{eqnarray*}
-h\left( Q^{S+,\beta }\right) &=&\int_{\frac{1}{2}DS^{k}}\widetilde{\alpha }%
^{\beta }-\frac{1}{2}\left( \eta \left( Q^{S+,\beta }\right) +h\left(
Q^{S+,\beta }\right) \right) \\
&=&\int_{S^{k}-B_{\varepsilon }\left( SP\right) }^{\beta }\widetilde{\alpha }%
^{\beta }+\int_{B_{\varepsilon }\left( \partial \frac{1}{2}DS^{k}\right) }%
\widetilde{\alpha }^{\beta }-\frac{1}{2}\left( \eta \left( Q^{S+,\beta
}\right) +h\left( Q^{S+,\beta }\right) \right) ,\text{ or} \\
0 &=&\int_{S^{k}-B_{\varepsilon }\left( SP\right) }\widetilde{\alpha }%
^{\beta }+\int_{B_{\varepsilon }\left( \partial \frac{1}{2}DS^{k}\right) }%
\widetilde{\alpha }^{\beta }+\frac{1}{2}\left( -\eta \left( Q^{S+,\beta
}\right) +h\left( Q^{S+,\beta }\right) \right) ,
\end{eqnarray*}%
where $\widetilde{\alpha }^{\beta }$ is the the supertrace of the heat
kernel of \textbf{$Q$}$^{\beta }$ over the double $DS^{k}$ of $S^{k}$ (blown
up at the south pole), where $\frac{1}{2}DS^{k}$ means the half of the
double, where $B_{\varepsilon }\left( SP\right) $ means an $\varepsilon $%
-neighborhood of the south pole, and where $\eta \left( Q^{S+,\beta }\right) 
$ is the eta invariant of $Q^{S+,\beta }$. The set $B_{\varepsilon }\left(
\partial \frac{1}{2}DS^{k}\right) $ means the $\varepsilon $-collar around
the boundary $\partial \frac{1}{2}DS^{k}$, so that $\frac{1}{2}DS^{k}=\left(
S^{k}-B_{\varepsilon }\left( SP\right) \right) \cup B_{\varepsilon }\left(
\partial \frac{1}{2}DS^{k}\right) $. Thus, if $\alpha _{S}^{\beta }$ denotes
the local heat supertrace of the operator on the sphere, 
\begin{eqnarray*}
0 &=&\mathrm{ind}\left( \mathbf{Q}^{\beta }\right) =\int_{S^{k}}\alpha
_{S}^{\beta } \\
&=&\int_{S^{k}-B_{\varepsilon }\left( SP\right) }\alpha _{S}^{\beta
}+\int_{B_{\varepsilon }\left( SP\right) }\alpha _{S}^{\beta } \\
&=&\int_{S^{k}-B_{\varepsilon }\left( SP\right) }\widetilde{\alpha }^{\beta
}+\int_{B_{\varepsilon }\left( \partial \frac{1}{2}DS^{k}\right) }\widetilde{%
\alpha }^{\beta }+\frac{1}{2}\left( -\eta \left( Q^{S+,\beta }\right)
+h\left( Q^{S+,\beta }\right) \right) .
\end{eqnarray*}%
If $\alpha ^{\beta }$ denotes the Euclidean heat supertrace corresponding to
the original operator $Q^{\beta }$ on $\mathbb{R}^{k}$, 
\begin{eqnarray*}
\int_{S^{k}-B_{\varepsilon }\left( SP\right) }\alpha _{S}^{\beta }
&=&\int_{S^{k}-B_{\varepsilon }\left( SP\right) }\widetilde{\alpha }^{\beta
}+\mathcal{O}\left( e^{-c/t}\right) ,\text{ and} \\
\int_{B_{\varepsilon }\left( 0\right) }\alpha ^{\beta }
&=&\int_{B_{\varepsilon }\left( SP\right) }\alpha _{S}^{\beta }+\mathcal{O}%
\left( e^{-c/t}\right) ,
\end{eqnarray*}
for some $c>0$, since the local data is the same. Then 
\begin{equation}
\int_{B_{\varepsilon }\left( 0\right) }\alpha ^{\beta }=\int_{B_{\varepsilon
}\left( \partial \frac{1}{2}DS^{k}\right) }\widetilde{\alpha }^{\beta }+%
\frac{1}{2}\left( -\eta \left( Q^{S+,\beta }\right) +h\left( Q^{S+,\beta
}\right) \right) +\mathcal{O}\left( e^{-c/t}\right) ,
\label{localSupertraceEta}
\end{equation}%
the contribution of the equivariant heat supertrace of the constant
coefficient operator $Q^{\beta }$ in an $\varepsilon $-ball around the
origin of $\mathbb{R}^{k}$. As explained in Remark \ref{stabilityRemarks}, $%
\eta \left( Q^{S+,\beta }\right) $ and $h\left( Q^{S+,\beta }\right) $ are
invariant under stable $H$-equivariant homotopies of the original operator $%
Q $. We note that the formula above is scale invariant and independent of
all the various choices we made through the calculation. It expresses the
index contribution of the Euclidean neighborhood in terms of the index
contribution from the same neighborhood of the blown-up operator, where a
small ball around the center of the Euclidean neighborhood is replaced by a
collar. Because of the homotopy and scale invariance, this formula remains
valid if the operator is blown up in any way so that it is constant on the
small collared neighborhood and so that the spherical part $Q^{S+,\beta }$
comes from the polar coordinate expression. Similarly, the metric may be
chosen in any way as long as it is cylindrical on the collar.

\subsection{The case $k=1$}

We repeat the analysis for the case $k=1$. We consider a operator $%
Q:C^{\infty }\left( \mathbb{R},\mathbb{C}^{d}\oplus \mathbb{C}^{d}\right)
\rightarrow C^{\infty }\left( \mathbb{R},\mathbb{C}^{d}\oplus \mathbb{C}%
^{d}\right) $ of the form 
\begin{eqnarray*}
Q &=&A\partial _{x} \\
&=&\left( 
\begin{array}{cc}
0 & Q_{1}^{\ast } \\ 
Q_{1} & 0%
\end{array}%
\right) \\
&=&\left( 
\begin{array}{cc}
0 & -A_{1}^{\ast } \\ 
A_{1} & 0%
\end{array}%
\right) \partial _{x}=\left\{ 
\begin{array}{ll}
A\partial _{r} & \text{if }x>0 \\ 
-A\partial _{r}~ & \text{if }x<0%
\end{array}%
\right. ,
\end{eqnarray*}%
where by assumption $A_{1}\in GL\left( d,\mathbb{C}\right) $ and $%
r=\left\vert x\right\vert $. Suppose that $H$ acts nontrivially on $\mathbb{R%
}$ by multiplication by $\pm 1$, that $H$ acts unitarily on each copy of $%
\mathbb{C}^{d}$, and that $Q$ is $H$-equivariant. Let $H_{0}$ be the
(normal) subgroup of $H$ that fixes $\mathbb{R}$; then $H_{0}$ contains the
identity component of $H$, and $H\diagup H_{0}\cong \mathbb{Z}_{2}$. Let $%
\gamma \in H$ be chosen so that $\gamma H_{0}$ generates $H\diagup H_{0}$.
Then we have that 
\begin{eqnarray*}
hA &=&Ah\text{ for all }h\in H_{0},\text{ and} \\
\gamma A &=&-A\gamma ,
\end{eqnarray*}%
or if we write

\begin{equation*}
h=\left( 
\begin{array}{cc}
h_{+} & 0 \\ 
0 & h_{-}%
\end{array}%
\right) ,~\gamma =\left( 
\begin{array}{cc}
\gamma _{+} & 0 \\ 
0 & \gamma _{-}%
\end{array}%
\right) ,
\end{equation*}%
we have 
\begin{eqnarray}
h_{-} &=&A_{1}h_{+}A_{1}^{-1}  \notag \\
\gamma _{-} &=&-A_{1}\gamma _{+}A_{1}^{-1}.  \label{commutation relations}
\end{eqnarray}%
Further, if $i\lambda \in i\mathbb{R\diagdown }\left\{ 0\right\} $ is an
eigenvalue of $A$ corresponding to eigenspace 
\begin{equation*}
E_{i\lambda }=\left\{ \left( 
\begin{array}{c}
v_{+} \\ 
v_{-}%
\end{array}%
\right) :A\left( 
\begin{array}{c}
v_{+} \\ 
v_{-}%
\end{array}%
\right) =i\lambda \left( 
\begin{array}{c}
v_{+} \\ 
v_{-}%
\end{array}%
\right) \right\} ,
\end{equation*}%
then 
\begin{equation*}
v_{-}=\frac{-i}{\lambda }A_{1}v_{+},
\end{equation*}%
and%
\begin{equation*}
E_{-i\lambda }=\left\{ \left( 
\begin{array}{c}
v_{+} \\ 
-v_{-}%
\end{array}%
\right) :\left( 
\begin{array}{c}
v_{+} \\ 
v_{-}%
\end{array}%
\right) \in E_{i\lambda }\right\} .
\end{equation*}%
By (\ref{commutation relations}), if $W_{+,\left\vert \lambda \right\vert
}^{\beta }$ is the subspace of $E_{i\lambda }\cap \mathbb{C}%
^{d+}=E_{-i\lambda }\cap \mathbb{C}^{d+}$ consisting of the vectors of $H$%
-representation type $\left[ \beta ,V_{\beta }\right] $, then 
\begin{equation*}
W_{-,\left\vert \lambda \right\vert }^{\overline{\beta }}:=\frac{-i}{\lambda 
}A_{1}W_{+,\left\vert \lambda \right\vert }^{\beta }\cong W_{+,\left\vert
\lambda \right\vert }^{\beta }
\end{equation*}%
is the subspace of $E_{i\lambda }\cap \mathbb{C}^{d-}=E_{-i\lambda }\cap 
\mathbb{C}^{d-}$ consisting of vectors of $H$-representation type $\overline{%
\beta }$, where $\overline{\beta }$ is the \textquotedblleft
conjugate\textquotedblright\ of $\beta $, defined up to equivalence as 
\begin{eqnarray*}
\overline{\beta }\left( h\right) &=&\beta \left( h\right) \text{ for all }%
h\in H_{0},~\text{and} \\
\overline{\beta }\left( \gamma \right) &=&-\beta \left( \gamma \right) .
\end{eqnarray*}%
Observe that $W_{+,\left\vert \lambda \right\vert }^{\beta }$ is the $\beta $
part of the $\lambda ^{2}$ eigenspace of $A_{1}^{\ast }A_{1}$ consisting of
vectors of type $\beta $, and $W_{-,\left\vert \lambda \right\vert }^{\beta
} $ is the $\beta $ part of the $\lambda ^{2}$ eigenspace of $%
A_{1}A_{1}^{\ast }$ consisting of vectors of type $\beta $. Let $P_{\pm
,\left\vert \lambda \right\vert }^{\beta }:\mathbb{C}^{d\pm }\rightarrow
W_{\pm ,\left\vert \lambda \right\vert }^{\beta }$ denote the orthogonal
projections.

We now calculate the local contribution $\int_{x=-\varepsilon }^{\varepsilon
}\alpha ^{\beta }$ of the equivariant heat supertrace corresponding to 
\begin{equation*}
Q^{2}=\left( 
\begin{array}{cc}
-A_{1}^{\ast }A_{1} & 0 \\ 
0 & -A_{1}A_{1}^{\ast }%
\end{array}%
\right) \partial _{x}^{2}.
\end{equation*}%
We have that%
\begin{equation*}
Q^{2}P_{\pm ,\left\vert \lambda \right\vert }^{\beta }=-\lambda ^{2}\partial
_{x}^{2}P_{\pm ,\left\vert \lambda \right\vert }^{\beta }=-\lambda
^{2}P_{\pm ,\left\vert \lambda \right\vert }^{\beta }\partial _{x}^{2}
\end{equation*}%
Since the fundamental solution of $\frac{\partial }{\partial t}-\lambda ^{2}%
\frac{\partial ^{2}}{\partial x^{2}}$ on $\mathbb{R}$ is $\frac{1}{%
\left\vert \lambda \right\vert \sqrt{4\pi t}}\exp \left( -\left( x-y\right)
^{2}/\left( 4\lambda ^{2}t\right) \right) $, if $K\left( t,x,y\right) $ is
the kernel of the operator $e^{-tQ^{2}}$ on $\mathbb{R}$, for $0<\delta
<\varepsilon $, we have 
\begin{eqnarray*}
\frac{1}{\dim V_{\beta }}\int_{x=-\delta }^{\delta }\alpha ^{\beta }
&=&\int_{x=-\delta }^{\delta }\int_{h\in H_{0}}\left( \mathrm{tr~}h\cdot
K^{+}\left( t,x,x\right) -\mathrm{tr~}h\cdot K^{-}\left( t,x,x\right)
\right) ~\overline{\chi _{\beta }\left( h\right) }~dh \\
&&+\int_{h\in H_{0}}\left( \mathrm{tr~}\gamma h\cdot K^{+}\left( t,\gamma
^{-1}x,x\right) -\mathrm{tr~}\gamma h\cdot K^{-}\left( t,\gamma
^{-1}x,x\right) \right) ~\overline{\chi _{\beta }\left( h\right) }~dh~dx \\
&=&\int_{h\in H_{0}}\sum_{\left\vert \lambda \right\vert ,\tau }\frac{%
2\delta }{\left\vert \lambda \right\vert \sqrt{4\pi t}}\left( \mathrm{tr~}%
\left( h\cdot P_{+,\left\vert \lambda \right\vert }^{\tau }\right) -\mathrm{%
tr~}\left( h\cdot P_{-,\left\vert \lambda \right\vert }^{\tau }\right)
\right) ~\overline{\chi _{\beta }\left( h\right) }~dh \\
&&+\int_{x=-\delta }^{\delta }\int_{h\in H_{0}}\sum_{\left\vert \lambda
\right\vert ,\tau }\frac{1}{\left\vert \lambda \right\vert \sqrt{4\pi t}}%
\exp \left( \frac{-\left\vert 2x\right\vert ^{2}}{4\lambda ^{2}t}\right)
\left( \mathrm{tr~}\left( \gamma h\cdot \sum_{\left\vert \lambda \right\vert
,\tau }P_{+,\left\vert \lambda \right\vert }^{\tau }\right) \right. \\
&&\left. -\mathrm{tr~}\left( \gamma h\cdot \sum_{\left\vert \lambda
\right\vert ,\tau }P_{-,\left\vert \lambda \right\vert }^{\tau }\right)
\right) ~\overline{\chi _{\beta }\left( h\right) }~dh~dx.
\end{eqnarray*}%
Next, since $\int_{-\infty }^{\infty }\frac{1}{\left\vert \lambda
\right\vert \sqrt{4\pi t}}\exp \left( \frac{-\left\vert 2x\right\vert ^{2}}{%
4\lambda ^{2}t}\right) dx=\allowbreak \frac{1}{2}$, 
\begin{gather*}
\frac{1}{\dim V_{\beta }}\int_{x=-\delta }^{\delta }\alpha ^{\beta
}=\sum_{\left\vert \lambda \right\vert ,\tau }\frac{2\delta }{\left\vert
\lambda \right\vert \sqrt{4\pi t}}\int_{h\in H_{0}}\left( \mathrm{tr~}\left(
h\cdot P_{+,\left\vert \lambda \right\vert }^{\tau }\right) -\mathrm{tr~}%
\left( h\cdot P_{-,\left\vert \lambda \right\vert }^{\tau }\right) \right) ~%
\overline{\chi _{\beta }\left( h\right) }~dh \\
+\frac{1}{2}\sum_{\left\vert \lambda \right\vert ,\tau }\int_{h\in
H_{0}}\left( \mathrm{tr~}\left( \gamma h\cdot P_{+,\left\vert \lambda
\right\vert }^{\tau }\right) -\mathrm{tr~}\left( \gamma h\cdot
P_{-,\left\vert \lambda \right\vert }^{\tau }\right) \right) ~\overline{\chi
_{\beta }\left( h\right) }~dh+O\left( e^{-c/t}\right) ~.
\end{gather*}%
By (\ref{ProjectionLemma}) we conclude that the contribution of the
equivariant heat supertrace on $\left( -\varepsilon ,\varepsilon \right)
\subset \mathbb{R}^{1}$ is, for any $\delta $ such that $0<\delta
<\varepsilon $, 
\begin{equation*}
\int_{-\varepsilon }^{\varepsilon }\alpha ^{\beta }=\int_{\left[
-\varepsilon ,\varepsilon \right] \diagdown \left[ -\delta ,\delta \right]
}\alpha ^{\beta }+\frac{1}{2}\left( \mathrm{\dim }\left( \mathbb{C}%
^{d+}\right) ^{\beta }-\mathrm{\dim }\left( \mathbb{C}^{d+}\right) ^{%
\overline{\beta }}\right) +O\left( e^{-c/t}\right) .
\end{equation*}%
In order to make our formulas consistent throughout the paper, we define for
the (equivariant) zero operator $0:\mathbb{C}^{d+}\rightarrow \mathbb{C}%
^{d-} $%
\begin{equation}
h\left( 0^{\beta }\right) :=\dim \left( \mathbb{C}^{d+}\right) ^{\beta
};~~\eta \left( 0^{\beta }\right) :=\dim \left( \mathbb{C}^{d+}\right) ^{%
\overline{\beta }},  \label{equivariant eta definition dim zero}
\end{equation}%
so that%
\begin{equation}
\int_{-\varepsilon }^{\varepsilon }\alpha ^{\beta }=\int_{\left[
-\varepsilon ,\varepsilon \right] \diagdown \left[ -\delta ,\delta \right]
}\alpha ^{\beta }+\frac{1}{2}\left( h\left( 0^{\beta }\right) -\eta \left(
0^{\beta }\right) \right) +\mathcal{O}\left( e^{-c/t}\right) .
\label{localSupertraceEtaCodim1}
\end{equation}

\section{Desingularizing along a singular stratum\label%
{BlowupDoubleMainSection}}

\subsection{Topological Desingularization\label{BlowUpSection}}

Assume that $G$ is a compact Lie group that acts on a Riemannian manifold $M$
by isometries. We will construct a new $G$-manifold $N$ that has a single
stratum (of type $\left[ G_{0}\right] $) and that is a branched cover of $M$%
, branched over the singular strata. A distinguished fundamental domain of $%
M_{0}$ in $N$ is called the desingularization of $M$ and is denoted $%
\widetilde{M}$. We also refer to \cite{AlMel} for their recent related
explanation of this desingularization (which they call \emph{resolution}).

A sequence of modifications is used to construct $N$ and $\widetilde{M}%
\subset N$. Let $M_{j}$ be a minimal stratum. Let $T_{\varepsilon }\left(
M_{j}\right) $ denote a tubular neighborhood of radius $\varepsilon $ around 
$M_{j}$, with $\varepsilon $ chosen sufficiently small so that all orbits in 
$T_{\varepsilon }\left( M_{j}\right) \setminus M_{j}$ are of type $\left[
G_{k}\right] $, where $\left[ G_{k}\right] <\left[ G_{j}\right] $. Let 
\begin{equation*}
N^{1}=\left( M\setminus T_{\varepsilon }\left( M_{j}\right) \right) \cup
_{\partial T_{\varepsilon }\left( M_{j}\right) }\left( M\setminus
T_{\varepsilon }\left( M_{j}\right) \right)
\end{equation*}%
be the manifold constructed by gluing two copies of $\left( M\setminus
T_{\varepsilon }\left( M_{j}\right) \right) $ smoothly along the boundary
(see Section \ref{codimOneSection} for the codimension one case). Since the $%
T_{\varepsilon }\left( M_{j}\right) $ is saturated (a union of $G$-orbits),
the $G$-action lifts to $N^{1}$. Note that the strata of the $G$-action on $%
N^{1}$ correspond to strata in $M\setminus T_{\varepsilon }\left(
M_{j}\right) $. If $M_{k}\cap \left( M\setminus T_{\varepsilon }\left(
M_{j}\right) \right) $ is nontrivial, then the stratum corresponding to
isotropy type $\left[ G_{k}\right] $ on $N^{1}$ is 
\begin{equation*}
N_{k}^{1}=\left( M_{k}\cap \left( M\setminus T_{\varepsilon }\left(
M_{j}\right) \right) \right) \cup _{\left( M_{k}\cap \partial T_{\varepsilon
}\left( M_{j}\right) \right) }\left( M_{k}\cap \left( M\setminus
T_{\varepsilon }\left( M_{j}\right) \right) \right) .
\end{equation*}%
Thus, $N^{1}$ is a $G$-manifold with one fewer stratum than $M$, and $%
M\setminus M_{j}$ is diffeomorphic to one copy of $\left( M\setminus
T_{\varepsilon }\left( M_{j}\right) \right) $, denoted $\widetilde{M}^{1}$
in $N^{1}$. In fact, $N^{1}$ is a branched double cover of $M$, branched
over $M_{j}$. If $N^{1}$ has one orbit type, then we set $N=N^{1}$ and $%
\widetilde{M}=\widetilde{M}^{1}$. If $N^{1}$ has more than one orbit type,
we repeat the process with the $G$-manifold $N^{1}$ to produce a new $G$%
-manifold $N^{2}$ with two fewer orbit types than $M$ and that is a $4$-fold
branched cover of $M$. Again, $\widetilde{M}^{2}$ is a fundamental domain of 
$\widetilde{M}^{1}\setminus \left\{ \text{a minimal stratum}\right\} $,
which is a fundamental domain of $M$ with two strata removed. We continue
until $N=N^{r}$ is a $G$-manifold with all orbits of type $\left[ G_{0}%
\right] $ and is a $2^{r}$-fold branched cover of $M$, branched over $%
M\setminus M_{0}$. We set $\widetilde{M}=\widetilde{M}^{r}$, which is a
fundamental domain of $M_{0}$ in $N$.

Further, one may independently desingularize $M_{\geq j}$, since this
submanifold is itself a closed $G$-manifold. If $M_{\geq j}$ has more than
one connected component, we may desingularize all components simultaneously.
The isotropy type of all points of $\widetilde{M_{\geq j}}$ is $\left[ G_{j}%
\right] $, and $\widetilde{M_{\geq j}}\diagup G$ is a smooth (open) manifold.

\subsection{Modification of the metric and differential operator}

We now more precisely describe the desingularization. If $M$ is equipped
with a $G$-equivariant, transversally elliptic differential operator on
sections of an equivariant vector bundle over $M$, then this data may be
pulled back to the desingularization $\widetilde{M}$. Given the bundle and
operator over $N^{j}$, simply form the invertible double of the operator on $%
N^{j+1}$, which is the double of the manifold with boundary $N^{j}\setminus
T_{\varepsilon }\left( \Sigma \right) $, where $\Sigma $ is a minimal
stratum on $N^{j}$.

Specifically, we modify the metric equivariantly so that there exists $%
\varepsilon >0$ such that the tubular neighborhood $B_{2\varepsilon }\Sigma $
of $\Sigma $ in $N^{j}$ is isometric to a ball of radius $2\varepsilon $ in
the normal bundle $N\Sigma $. In polar coordinates, this metric is $%
ds^{2}=dr^{2}+d\sigma ^{2}+r^{2}d\theta _{\sigma }^{2}$, with $r\in \left(
0,2\varepsilon \right) $, $d\sigma ^{2}$ is the metric on $\Sigma $, and $%
d\theta _{\sigma }^{2}$ is the metric on $S\left( N_{\sigma }\Sigma \right) $%
, the unit sphere in $N_{\sigma }\Sigma $; note that $d\theta _{\sigma }^{2}$
is isometric to the Euclidean metric on the unit sphere. We simply choose
the horizontal metric on $B_{2\varepsilon }\Sigma $ to be the pullback of
the metric on the base $\Sigma $, the fiber metric to be Euclidean, and we
require that horizontal and vertical vectors be orthogonal. We do not assume
that the horizontal distribution is integrable.

Next, we replace $r^{2}$ with $f\left( r\right) =\left[ \tilde{g}\left(
r\right) \right] ^{2}$ in the expression for the metric, where $\tilde{g}$
is defined as in (\ref{modifiedMetric}), so that the metric is cylindrical
for small $r$.

In our description of the modification of the differential operator, we will
need the notation for the (external) product of differential operators.
Suppose that $F\hookrightarrow X\overset{\pi }{\rightarrow }B$ is a fiber
bundle that is locally a metric product. Given an operator $A_{1,x}:\Gamma
\left( \pi ^{-1}\left( x\right) ,E_{1}\right) \rightarrow \Gamma \left( \pi
^{-1}\left( x\right) ,F_{1}\right) $ that is locally given as a differential
operator $A_{1}:\Gamma \left( F,E_{1}\right) \rightarrow \Gamma \left(
F,F_{1}\right) $ and $A_{2}:\Gamma \left( B,E_{2}\right) \rightarrow \Gamma
\left( B,F_{2}\right) $ on Hermitian bundles, we define the product 
\begin{equation*}
A_{1,x}\ast A_{2}:\Gamma \left( X,\left( E_{1}\boxtimes E_{2}\right) \oplus
\left( F_{1}\boxtimes F_{2}\right) \right) \rightarrow \Gamma \left(
X,\left( F_{1}\boxtimes E_{2}\right) \oplus \left( E_{1}\boxtimes
F_{2}\right) \right)
\end{equation*}%
as the unique linear operator that satisfies locally 
\begin{equation*}
A_{1,x}\ast A_{2}=\left( 
\begin{array}{ll}
A_{1}\boxtimes \mathbf{1} & -\mathbf{1}\boxtimes A_{2}^{\ast } \\ 
\mathbf{1}\boxtimes A_{2} & A_{1}^{\ast }\boxtimes \mathbf{1}%
\end{array}%
\right)
\end{equation*}%
on sections of 
\begin{equation*}
\left( 
\begin{array}{l}
E_{1}\boxtimes E_{2} \\ 
F_{1}\boxtimes F_{2}%
\end{array}%
\right)
\end{equation*}%
of the form $\left( 
\begin{array}{l}
u_{1}\boxtimes u_{2} \\ 
v_{1}\boxtimes v_{2}%
\end{array}%
\right) $, where $u_{1}\in \Gamma \left( F,E_{1}\right) $, $u_{2}\in \Gamma
\left( B,E_{2}\right) $, $v_{1}\in \Gamma \left( F,F_{1}\right) $, $v_{2}\in
\Gamma \left( B,E_{2}\right) $. This coincides with the product in various
versions of K-theory (see, for example, \cite{A}, \cite[pp. 384ff]{LM}),
which is used to define the Thom Isomorphism in vector bundles.

Let $D=D^{+}:\Gamma \left( N^{j},E^{+}\right) \rightarrow \Gamma \left(
N^{j},E^{-}\right) $ be the given first order, transversally elliptic, $G$%
-equivariant differential operator. Let $\Sigma \ $be a minimal stratum of $%
N^{j}$. Here we assume that $\Sigma $ has codimension at least two. We
modify the metrics and bundles equivariantly so that there exists $%
\varepsilon >0$ such that the tubular neighborhood $B_{\varepsilon }\left(
\Sigma \right) $ of $\Sigma $ in $M$ is isometric to a ball of radius $%
\varepsilon $ in the normal bundle $N\Sigma $, and so that the $G$%
-equivariant bundle $E$ over $B_{\varepsilon }\left( \Sigma \right) $ is a
pullback of the bundle $\left. E\right\vert _{\Sigma }\rightarrow \Sigma $.
We assume that near $\Sigma $, after a $G$-equivariant homotopy $D^{+}$ can
be written on $B_{\varepsilon }\left( \Sigma \right) $ locally as the
product 
\begin{equation}
D^{+}=\left( D_{N}\ast D_{\Sigma }\right) ^{+},  \label{Dsplitting}
\end{equation}%
where $D_{\Sigma }\ $is a transversally elliptic, $G$-equivariant, first
order operator on the stratum $\Sigma $, and $D_{N}$ is a $G$-equivariant,
first order operator on $B_{\varepsilon }\left( \Sigma \right) $ that is
elliptic on the fibers. If $r$ is the distance from $\Sigma $, we write $%
D_{N}$ in polar coordinates as%
\begin{equation*}
D_{N}=Z\left( \nabla _{\partial _{r}}^{E}+\frac{1}{r}D^{S}\right)
\end{equation*}%
where $Z=-i\sigma \left( D_{N}\right) \left( \partial _{r}\right) $ is a
local bundle isomorphism and the map $D^{S}$ is a purely first order
operator that differentiates in the unit normal bundle directions tangent to 
$S_{x}\Sigma $.

We modify the operator $D_{N}$ on each Euclidean fiber of $N\Sigma \overset{%
\pi }{\rightarrow }\Sigma $ exactly as in Section \ref{relatedBVP}; the
result is a $G$-manifold $\widetilde{M}^{j}$ with boundary $\partial 
\widetilde{M}^{j}$, a $G$-vector bundle $\widetilde{E}^{j}$, and the induced
operator $\widetilde{D}^{j}$, all of which locally agree with the original
counterparts outside $B_{\varepsilon }\left( \Sigma \right) $. We may double 
$\widetilde{M}^{j}$ along the boundary $\partial \widetilde{M}^{j}$ and
reverse the chirality of $\widetilde{E}^{j}$ as described in Section \ref%
{inducedOperatorSphereSection} for the case of a hemisphere and in general
in \cite[Ch. 9]{Bo-Wo}. Doubling produces a closed $G$-manifold $N^{j}$, a $%
G $-vector bundle $E^{j}$, and a first-order transversally elliptic
differential operator $D^{j}$. This process may be iterated until all orbits
of the resulting $G$-manifold are principal. The case where some strata have
codimension $1$ is addressed in the following section.

\subsection{Codimension one case\label{codimOneSection}}

\medskip We now give the definitions of the previous section for the case
when there is a minimal stratum $\Sigma $ of codimension $1$. Only the
changes to the argument are noted. This means that the isotropy subgroup $H$
corresponding to $\Sigma $ contains a principal isotropy subgroup of index
two. If $r$ is the distance from $\Sigma $, then $D_{N}$ has the form%
\begin{equation*}
D_{N}=Z\left( \nabla _{\partial _{r}}^{E}+\frac{1}{r}D^{S}\right) =Z\nabla
_{\partial _{r}}^{E}
\end{equation*}%
where $Z=-i\sigma \left( D_{N}\right) \left( \partial _{r}\right) $ is a
local bundle isomorphism and the map $D^{S}=0$.

In this case, there is no reason to modify the metric inside $B_{\varepsilon
}\left( \Sigma \right) $. The \textquotedblleft
desingularization\textquotedblright\ of $M$ along $\Sigma $ is the manifold
with boundary $\widetilde{M}=M\diagdown B_{\delta }\left( \Sigma \right) $
for some $0<\delta <\varepsilon $; the singular stratum is replaced by the
boundary $\partial \widetilde{M}=S_{\delta }\left( \Sigma \right) $, which
is a two-fold cover of $\Sigma $ and whose normal bundle is necessarily
oriented (via $\partial _{r}$). The double $M^{\prime }$ is identical to the
double of $\widetilde{M}$ along its boundary, and as in the previous section 
$M^{\prime }$ contains one less stratum.

\subsection{Discussion of operator product assumption\label%
{productAssumptionSection}}

We now explain specific situations that guarantee that, after a $G$%
-equivariant homotopy, $D^{+}$ may be written locally as a product of
operators as in (\ref{Dsplitting}) over the tubular neighborhhood $%
B_{\varepsilon }\left( \Sigma \right) $ over a singular stratum $\Sigma $.
This demonstrates that this assumption is not overly restrictive. In \cite%
{PrRi}, a large variety of examples of naturally defined transversal
operators similar to Dirac operators are explored and shown under suitable
conditions to provide all possible index classes of equivariant
transversally elliptic operators. The principal symbols of these operators
always have the form described below, and they almost always satisfy one of
the spin$^{c}$ conditions. We also emphasize that one might think that this
assumption places conditions on the curvature of the normal bundle $N\Sigma $%
; however, this is not the case for the following reason. The condition is
on the $G$-homotopy class of the principal transverse symbol of $D$. The
curvature of the bundle only effects the zeroth order part of the symbol.
For example, if $Y\rightarrow X$ is any fiber bundle over a spin$^{c}$
manifold $X$ with fiber $F$, then a Dirac-type operator $D$ on $Y$ has the
form $D=\partial _{X}\ast D_{F}+Z$, where $D_{F}$ is a family of fiberwise
Dirac-type operators, $\partial _{X}$ is the spin$^{c}$ Dirac operator on $X$%
, and $Z$ is a bundle endomorphism.

First, we show that if $D^{+}$ is $G$-homotopic to a transversal Dirac
operator at points of $\Sigma $, and if either $\Sigma $ is spin$^{c}$ or
its normal bundle $N\Sigma \rightarrow \Sigma $ is (fiberwise) spin$^{c}$,
then it has the desired form. Moreover, we also remark that certain
operators, like those resembling transversal de Rham operators, always
satisfy this splitting condition with no assumptions on $\Sigma $.

Let $N\mathcal{F}$ be normal bundle of the foliation$\mathcal{F}$ by $G$%
-orbits in $\Sigma $, and let $N\Sigma $ be the normal bundle of $\Sigma $
in $M$. Suppose that (after a $G$-homotopy) the principal transverse symbol
of $D^{+}$ (evaluated at $\xi \in N_{x}^{\ast }\mathcal{F}\oplus N_{x}^{\ast
}\Sigma $) at points $x\in \Sigma $ takes the form of a constant multiple of
Clifford multiplication. That is, we assume there is an action $c$ of $%
\mathbb{C}\mathrm{l}\left( N\mathcal{F}\oplus N\Sigma \right) $ on $E$ and a
Clifford connection $\nabla $ on $E$ such that the local expression for $D$
is given by the composition%
\begin{equation*}
\Gamma \left( E\right) \overset{\nabla }{\rightarrow }\Gamma \left( E\otimes
T^{\ast }M\right) \overset{\mathrm{proj}}{\rightarrow }\Gamma \left(
E\otimes \left( N^{\ast }\mathcal{F}\oplus N^{\ast }\Sigma \right) \right) 
\overset{\cong }{\rightarrow }\Gamma \left( E\otimes \left( N\mathcal{F}%
\oplus N\Sigma \right) \right) \overset{c}{\rightarrow }\Gamma \left(
E\right) .
\end{equation*}

The principal transverse symbol $\sigma \left( D^{+}\right) $ at $\xi
_{x}\in T_{x}^{\ast }\Sigma $ is%
\begin{equation*}
\sigma \left( D^{+}\right) \left( \xi _{x}\right) =\sum_{j=1}^{q}ic\left(
\xi _{x}\right) :E_{x}^{+}\rightarrow E_{x}^{-}
\end{equation*}%
Suppose $N\Sigma $ is spin$^{c}$; then there exists a vector bundle $%
S=S^{+}\oplus S^{-}\rightarrow \Sigma $ that is an irreducible
representation of $\mathbb{C}\mathrm{l}\left( N\Sigma \right) $ over each
point of $\Sigma $, and we let $E^{\Sigma }=\mathrm{End}_{\mathbb{C}\mathrm{l%
}\left( N\Sigma \right) }\left( E\right) $ and have%
\begin{equation*}
E\cong S\widehat{\otimes }E^{\Sigma }
\end{equation*}%
as a graded tensor product, such that the action of $\mathbb{C}\mathrm{l}%
\left( N\mathcal{F}\oplus N\Sigma \right) \cong \mathbb{C}\mathrm{l}\left(
N\Sigma \right) \widehat{\otimes }\mathbb{C}\mathrm{l}\left( N\mathcal{F}%
\right) $ (as a graded tensor product) on $E^{+}$ decomposes as%
\begin{equation*}
\left( 
\begin{array}{cc}
c\left( x\right) \otimes \mathbf{1} & -\mathbf{1}\otimes c\left( y\right)
^{\ast } \\ 
\mathbf{1}\otimes c\left( y\right) & c\left( x\right) ^{\ast }\otimes 
\mathbf{1}%
\end{array}%
\right) :\left( 
\begin{array}{c}
S^{+}\otimes E^{\Sigma +} \\ 
S^{-}\otimes E^{\Sigma -}%
\end{array}%
\right) \rightarrow \left( 
\begin{array}{c}
S^{-}\otimes E^{\Sigma +} \\ 
S^{+}\otimes E^{\Sigma -}%
\end{array}%
\right)
\end{equation*}%
(see \cite{ASi}, \cite{LM}). If we let the operator $\partial ^{N}$ denote
the spin$^{c}$ transversal Dirac operator on sections of $\pi ^{\ast
}S\rightarrow N\Sigma $, and let $D_{\Sigma }$ be the transversal Dirac
operator defined by the action of $\mathbb{C}\mathrm{l}\left( N\mathcal{F}%
\right) $ on $E^{\Sigma }$, then we have%
\begin{equation*}
D^{+}=\left( \partial ^{N}\ast D_{\Sigma }\right) ^{+}
\end{equation*}%
up to zero$^{\mathrm{th}}$ order terms (coming from curvature of the fiber).

The same argument works if instead we have that the bundle $N\mathcal{F}%
\rightarrow \Sigma $ is spin$^{c}$ in addition to the assumption that $D$
restricts to a transversal Dirac operator on the stratum $\Sigma $. In this
case there exists a spin$^{c}$ Dirac operator $\partial ^{\Sigma }$ on
sections of a complex spinor bundle over $\Sigma $ is transversally elliptic
to the orbit foliation $\mathcal{F}$, and we have a formula of the form%
\begin{equation*}
D^{+}=\left( D_{N}\ast \partial ^{\Sigma }\right) ^{+},
\end{equation*}%
again up to zeroth order terms.

Even if $N\Sigma \rightarrow \Sigma $ and $N\mathcal{F}\rightarrow \Sigma $
are not spin$^{c}$, many other first order operators have splittings as in
Equation (\ref{Dsplitting}). For example, if $D^{+}$ is a transversal de
Rham operator from even to odd forms, then $D^{+}$ is the product of de Rham
operators in the $N\Sigma $ and $N\mathcal{F}$ directions.

\section{Heat Kernel Splitting Formula \label{newMultiplicitiesSection}}

\subsection{Tensor Products of Representations}

If $H$ is a compact Lie group and $V=V_{1}\otimes V_{2}$ is a tensor product
of $H$-modules, and if $\left\{ \left[ \sigma _{j},W_{\sigma _{j}}\right]
:j\geq 0\right\} $ are the irreducible unitary representations of $H$,
counted only up to equivalence, then we define the integers $\gamma
_{ab}^{j} $ (Clebsch-Gordan coefficients) as%
\begin{eqnarray*}
\gamma _{ab}^{j} &=&\text{multiplicity of }\sigma _{j}\text{ in }\sigma
_{a}\otimes \sigma _{b} \\
&=&\mathrm{rank~\mathrm{Hom}}_{H}\left( W_{\sigma _{j}},W_{\sigma
_{a}}\otimes W_{\sigma _{b}}\right) ,
\end{eqnarray*}%
and 
\begin{equation*}
W_{\sigma _{a}}\otimes W_{\sigma _{b}}\cong \bigoplus_{j}\gamma
_{ab}^{j}W_{\sigma _{j}}.
\end{equation*}%
Quite often these integers are $1$ or $0$ (see \cite{Parth}, \cite{Kum}).
For example, if $\sigma _{0}$ denotes the trivial representation, then%
\begin{equation*}
\gamma _{ab}^{0}=\left\{ 
\begin{array}{ll}
1~~ & \text{if }\sigma _{b}=\sigma _{a}^{\ast } \\ 
0 & \text{otherwise}%
\end{array}%
\right.
\end{equation*}%
It is important to realize that for fixed $j$, as above there may be an
infinite number of $\gamma _{ab}^{j}$ that are nonzero. On the other hand,
if two of the numbers $a,b,j$ are fixed, then only a finite number of the $%
\gamma _{ab}^{j}$ are nonzero as the third index varies over all
possibilities.

\subsection{Decomposition near an orbit}

With notation as before, let $\Sigma $ be a minimal stratum of the $G$%
-action on the vector bundle $E\rightarrow M$, corresponding to an isotropy
type $\left[ H\right] $. We modify the metrics and bundles equivariantly so
that there exists $\varepsilon >0$ such that the tubular neighborhood $%
B_{\varepsilon }\Sigma $ of $\Sigma $ in $M$ is isometric to a ball of
radius $\varepsilon $ in the normal bundle $N\Sigma $ (endowed with the
natural product-like metric), and so that the $G$-equivariant bundle $E$
over $B_{\varepsilon }\Sigma $ is a pullback of the bundle $\left.
E\right\vert _{\Sigma }\rightarrow \Sigma $.

Choose $\Sigma _{\alpha }=G\Sigma _{\alpha }^{H}\subseteq \Sigma $ to be a
component of $\Sigma $ relative to $G$ (Definition \ref{componentRelGDefn}).
Note that $\Sigma $ is partitioned into a finite number of such components,
which is the number of components of $G\diagdown \Sigma $. We assume that%
\begin{equation*}
\left. E\right\vert _{\Sigma _{\alpha }}=E_{N}\widehat{\otimes }E_{\Sigma },
\end{equation*}%
the graded tensor product of two $G$-vector bundles over $\Sigma _{\alpha }$
as in Section \ref{BlowupDoubleMainSection}. We extend $B_{\varepsilon
}\Sigma $ to a sphere bundle $S\Sigma \overset{\pi }{\longrightarrow }\Sigma 
$ by modifying the metric near the boundary and doubling. We wish to compute
the local supertrace of a heat operator corresponding to a $G$-equivariant,
transversally elliptic operator, restricted to sections of type $\rho $, on $%
\left. S\Sigma \right\vert _{\Sigma _{\alpha }}$. Let $\mathcal{O}_{x}$ be
an orbit in $\Sigma _{\alpha }\subseteq \Sigma $, with $x\in \Sigma ^{H}$,
and let $S_{\mathcal{O}_{x}}\rightarrow \mathcal{O}_{x}$, respectively $B_{%
\mathcal{O}_{x}}\rightarrow \mathcal{O}_{x}$, be the restriction of the
Euclidean sphere bundle $S\Sigma \overset{\pi }{\longrightarrow }\Sigma $ ,
respectively $B_{\varepsilon }\Sigma \rightarrow \Sigma $. By the
constructions in Section \ref{BlowupDoubleMainSection}, the group action,
tensor product structure, and the equivariant operator naturally extend from 
$B_{\varepsilon }\Sigma $ to $S\Sigma $, and the local supertraces of the
heat operators on $B_{\varepsilon }\Sigma $ and $S\Sigma $ agree in a
neighborhood of $\Sigma $, up to $\mathcal{O}\left( e^{-c/t}\right) $ for
some $c>0$. The space of sections $\Gamma \left( \ S_{\mathcal{O}%
_{x}},E\right) $ satisfies%
\begin{align*}
\Gamma \left( S_{\mathcal{O}_{x}},\left. E\right\vert _{\mathcal{O}%
_{x}}\right) & \cong \Gamma \left( \mathcal{O}_{x},\left. F_{N}\widehat{%
\otimes }E_{\Sigma }\right\vert _{\mathcal{O}_{x}}\right) , \\
\Gamma \left( \left. S\Sigma \right\vert _{\Sigma _{\alpha }},\left.
E\right\vert _{\Sigma _{\alpha }}\right) & \cong \Gamma \left( \Sigma
_{\alpha },F_{N}\widehat{\otimes }E_{\Sigma }\right) ,
\end{align*}%
where $F_{N}\rightarrow \Sigma $ is the infinite dimensional equivariant
vector bundle whose fiber is 
\begin{equation*}
F_{N,z}=\Gamma \left( S\Sigma _{z},\left. E_{N}\right\vert _{S\Sigma
_{z}}\right) ,~z\in \Sigma _{\alpha }.
\end{equation*}%
Observe that $F_{N,gx}$ and $E_{\Sigma ,gx}$ are $G_{gx}$-modules, where $%
G_{gx}=gG_{x}g^{-1}\cong H$. If $F_{N}^{b}$ is a fine component of $F_{N}$
associated to $\left( \alpha ,\left[ \sigma \right] \right) $ as in
Definition \ref{fineComponentDefinition}, let

\begin{itemize}
\item $W^{b}=W_{\alpha ,\left[ \sigma \right] }$ be the canonical isotropy $%
G $-bundle adapted to $F_{N}^{b}$ (Definition \ref%
{canonicalIsotropyBundleDefinition}, Lemma \ref{AdaptedToAnyBundleLemma}),

\item $m_{b}$ be the associated multiplicity of each irreducible isotropy
representation present in $W^{b}$,

\item $d_{b}$ be the associated representation dimension of each irreducible
isotropy representation present in $W^{b}$ (and thus $F_{N}^{b}$), and

\item $n_{b}$ be the inequivalence number, the number of inequivalent
irreducible representation types present in $W^{b}$ (and thus $F_{N}^{b}$).

\item Note $\mathrm{rank}\left( W^{b}\right) =m_{b}d_{b}n_{b}$.
\end{itemize}

We remark that at this time there are an infinite number of such components $%
F_{N}^{b}$, but soon we will restrict to a finite number.

With notation as above, let $\left[ \rho ,V_{\rho }\right] $ be an
irreducible unitary $G$-representation. We will restrict a transversally
elliptic, $G$-equivariant differential operator to the space $\Gamma \left(
\Sigma ,F_{N}\widehat{\otimes }E_{\Sigma }\right) ^{\rho }$ of sections of
type $\rho $. By Frobenius reciprocity, 
\begin{eqnarray}
\mathrm{Hom}_{G}\left( V_{\rho },\Gamma \left( \mathcal{O}_{x},F_{N}\widehat{%
\otimes }E_{\Sigma }\right) \right) &\cong &\mathrm{Hom}_{H}\left( V_{%
\mathrm{Res}\left( \rho \right) },\left( F_{N}\widehat{\otimes }E_{\Sigma
}\right) _{x}\right)  \notag \\
&\cong &\bigoplus\limits_{b}\mathrm{Hom}_{H}\left( V_{\mathrm{Res}\left(
\rho \right) },\left( F_{N}^{b}\widehat{\otimes }E_{\Sigma }\right)
_{x}\right) .  \label{reciprocityShowsFiniteNumberBs}
\end{eqnarray}%
If $\left\{ \sigma _{a}\right\} $ is the finite set of $H$-representation
types present in $E_{\Sigma ,x}$ and $\left\{ \sigma _{j}\right\} $ is the
finite set of $H$-representation types present in $V_{\mathrm{Res}\left(
\rho \right) }$, then by the remarks in the previous subsection, the
Clebsch-Gordan coefficients $\gamma _{ap}^{j}$ are nonzero only for a finite
number of irreducible representation types $\left[ \sigma _{p}\right] $.
Therefore, only a finite number of $b$ satisfy

\begin{equation*}
\mathrm{Hom}_{H}\left( V_{\mathrm{Res}\left( \rho \right) },\left( F_{N}^{b}%
\widehat{\otimes }E_{\Sigma }\right) _{x}\right) \neq 0.
\end{equation*}%
Moreover, if we consider all such $b$ over every $x\in \Sigma _{\alpha
}\subseteq \Sigma $, only a finite number of $b$ would yield a nonzero $%
\mathrm{Hom}_{H}\left( V_{\mathrm{Res}\left( \rho \right) },\left( F_{N}^{b}%
\widehat{\otimes }E_{\Sigma }\right) _{x}\right) $. Let $B$ be this finite
set of indices, and thus we have

\begin{equation*}
\Gamma \left( \Sigma _{\alpha },F_{N}\widehat{\otimes }E_{\Sigma }\right)
^{\rho }=\bigoplus\limits_{b\in B}\Gamma \left( \Sigma _{\alpha },F_{N}^{b}%
\widehat{\otimes }E_{\Sigma }\right) ^{\rho }.
\end{equation*}%
In the equation above, we may of course replace the smooth sections with the 
$L^{2}$ sections.

Let $K\left( t,y_{x},y_{x}\right) ^{\rho },$ $y\in S\Sigma _{x}$ denote the
heat kernel of a $G$-equivariant transversally elliptic first-order
symmetric operator $D^{\ast }D$ of the form $D=D_{N}\ast D_{\Sigma }$ on $%
\Gamma \left( \Sigma _{\alpha },F_{N}\widehat{\otimes }E_{\Sigma }\right) $
; we further restrict this kernel to $\Gamma \left( B_{\mathcal{O}%
_{x}},E\right) ^{\rho }$ or $\Gamma \left( S_{\mathcal{O}_{x}},E\right)
^{\rho }$. We assume that $D_{N}$ is a family of first order elliptic
operators on fibers $F_{N,x}$, and we also assume $D_{N}$ is $G$%
-equivariant, with corresponding local equivariant fiberwise heat kernel $%
K_{N}$. We likewise assume that $D_{\Sigma }$ is a first order transversally
elliptic operator on sections of $E_{\Sigma }$ over $\Sigma $ with
equivariant heat kernel $K_{\Sigma }$.

Consider a small $\varepsilon $-tubular neighborhood $T_{\varepsilon
}\subset \Sigma _{\alpha }\subseteq \Sigma $, $T_{\varepsilon }\cong G\times
_{H}D_{\varepsilon }$ around a fixed orbit $\mathcal{O}_{x}$ in $\Sigma $,
where $x\in D_{\varepsilon }\subset \Sigma ^{H}$ is a ball transverse to $%
\mathcal{O}_{x}$. The space of sections $L^{2}\left( T_{\varepsilon
},F_{N}^{b}\widehat{\otimes }E_{\Sigma }\right) ^{\rho }$ decomposes using
Lemma \ref{SectionSplittingLemma} as%
\begin{eqnarray}
&&L^{2}\left( T_{\varepsilon },F_{N}^{b}\widehat{\otimes }E_{\Sigma }\right)
^{\rho }\otimes \bigoplus\limits_{j=1}^{n_{b}}\mathrm{Hom}_{H}\left( \left.
W_{j}^{b}\right\vert _{D_{\varepsilon }},\left. W_{j}^{b}\right\vert
_{D_{\varepsilon }}\right)  \notag \\
&\cong &\bigoplus\limits_{j=1}^{n_{b}}\mathrm{Hom}_{H}\left( \left.
W_{j}^{b}\right\vert _{D_{\varepsilon }},\left. F_{N}^{b}\right\vert
_{D_{\varepsilon }}\right) \widehat{\otimes }L^{2}\left( T_{\varepsilon },%
\widetilde{W}_{j}^{b}\otimes E_{\Sigma }\right) ^{\rho }  \notag \\
&\cong &\bigoplus\limits_{j=1}^{n_{b}}\mathrm{Hom}_{H}\left( \left.
W_{j}^{b}\right\vert _{D_{\varepsilon }},W_{\sigma _{j}}\right) \otimes 
\mathrm{Hom}_{H}\left( W_{\sigma _{j}}~,\left. F_{N}^{b}\right\vert
_{D_{\varepsilon }}\right) \widehat{\otimes }L^{2}\left( T_{\varepsilon },%
\widetilde{W}_{j}^{b}\otimes E_{\Sigma }\right) ^{\rho }
\label{tubeSectionDecomposition}
\end{eqnarray}%
with the isomorphism given by evaluation, and then $D\otimes \mathbf{1}$
acts by $\mathbf{1}\otimes D_{N}\widehat{\otimes }\left( \mathbf{1}\otimes
D_{\Sigma }\right) ^{\rho }$. Here, corresponding to the
normalizer-conjugate representations $\left[ \sigma _{j},W_{\sigma _{j}}%
\right] $ is the isotypical decomposition $\left. W^{b}\right\vert
_{D_{\varepsilon }}=\bigoplus\limits_{j}W_{j}^{b}$ of the canonical isotropy 
$G$-bundle $W^{b}$ , which we may define over the small transversal
neighborhood $D_{\varepsilon }$, and $\widetilde{W}_{j}^{b}\rightarrow
T_{\varepsilon }$ is the bundle $G\times _{H}W_{j}^{b}\rightarrow G\times
_{H}D_{\varepsilon }\cong T_{\varepsilon }$. Note that $\left.
W^{b}\right\vert _{T_{\varepsilon }}=\bigoplus\limits_{j}\widetilde{W}%
_{j}^{b}$, and the multiplicity of the irreducible $H$-representation
actually present in each $\widetilde{W}_{j,p}^{b}$ is $m_{b}$, with $p\in
\Sigma ^{H}\cap T_{\varepsilon }$. Each bundle $\mathrm{Hom}_{H}\left(
\left. W_{j}^{b}\right\vert _{D_{\varepsilon }},\left. W_{j}^{b}\right\vert
_{D_{\varepsilon }}\right) \cong \mathbb{C}^{m_{b}^{2}}\times D_{\varepsilon
}$ is trivial.

Given an operator $P$, let $E_{t}\left( P\right) =\exp \left( -tP^{\ast
}P\right) $ denote the associated heat operator (defined through context).
We wish to calculate the local supertrace of $E_{t}\left( D\right) =\exp
\left( -tD^{\ast }D\right) $ on $L^{2}\left( T_{\varepsilon },F_{N}^{b}%
\widehat{\otimes }E_{\Sigma }\right) ^{\rho }$, which is the same as $\frac{1%
}{m_{b}^{2}n_{b}}$ times the local supertrace of $E_{t}\left( D\right)
\otimes \mathbf{1=}E_{t}\left( D\otimes \mathbf{1}\right) $ on $L^{2}\left(
T_{\varepsilon },F_{N}^{b}\widehat{\otimes }E_{\Sigma }\right) ^{\rho
}\otimes \bigoplus\limits_{j=1}^{n_{b}}\mathrm{Hom}_{H}\left( \left.
W_{j}^{b}\right\vert _{D_{\varepsilon }},\left. W_{j}^{b}\right\vert
_{D_{\varepsilon }}\right) $. According to the operator product assumption
and decomposition (\ref{tubeSectionDecomposition}), we then have the
pointwise supertrace formula%
\begin{eqnarray*}
\mathrm{\mathrm{str}}\left( E_{t}\left( D\right) ^{b,\rho }\text{ on }%
T_{\varepsilon }\right) _{y} &=&\frac{1}{m_{b}^{2}n_{b}}\sum_{j=1}^{n_{b}}%
\mathrm{tr}\left( \mathbf{1}^{\sigma _{j}}\right) _{x}\mathrm{str}\left(
E_{t}\left( D_{N}^{b,j}\right) \right) _{x}\mathrm{str}\left( E_{t}\left( 
\mathbf{1}^{b,j}\otimes D_{\Sigma }\right) ^{\rho }\right) _{y}, \\
x &\in &D_{\varepsilon },y\in \left[ \left( x,p\right) \right] \in
T_{\varepsilon },
\end{eqnarray*}%
since supertraces are multiplicative on graded tensor products and where $%
\mathbf{1}^{\sigma _{j}}$ is the identity bundle map on $\mathrm{Hom}%
_{H}\left( \left. W_{j}^{b}\right\vert _{D_{\varepsilon }},W_{\sigma
_{j}}\right) \rightarrow D_{\varepsilon }$ and $\mathbf{1}^{b,j}$ is the
identity bundle map on $\widetilde{W}_{j}^{b}\rightarrow T_{\varepsilon }$.
Here $D_{N}^{b,j}$ means the restriction of $D_{N}$ to $\mathrm{Hom}%
_{H}\left( W_{\sigma _{j}}~,F_{N}^{b}\right) $, and so the supertrace $%
\mathrm{str}\left( E_{t}\left( D_{N}^{b,j}\right) \right) _{x}$ is the same
as $\frac{1}{d_{b}}\mathrm{str}\left( E_{t}\left( D_{N}^{\sigma _{j}}\right)
\right) _{x}$, where $D_{N}^{\sigma _{j}}$ is the restriction of $D_{N}$ to 
\begin{equation*}
F_{N,x}^{\sigma _{j}}=\Gamma \left( S\Sigma _{x},\left. E_{N}\right\vert
_{S\Sigma _{x}}\right) ^{\sigma _{j}},x\in D_{\varepsilon }.
\end{equation*}%
Then%
\begin{eqnarray*}
\mathrm{\mathrm{str}}\left( E_{t}\left( D\right) ^{b,\rho }\text{ on }%
T_{\varepsilon }\right) _{y} &=&\frac{1}{m_{b}n_{b}d_{b}}\sum_{j=1}^{n_{b}}%
\mathrm{str}\left( E_{t}\left( D_{N}^{\sigma _{j}}\right) \right) _{x}%
\mathrm{str}\left( E_{t}\left( \mathbf{1}^{b,j}\otimes D_{\Sigma }\right)
^{\rho }\right) _{y}, \\
x &\in &D_{\varepsilon },y\in \left[ \left( x,p\right) \right] \in
T_{\varepsilon }.
\end{eqnarray*}

By (\ref{localSupertraceEta}) and (\ref{localSupertraceEtaCodim1}), the
integral of the pointwise supertrace $\mathrm{str}\left( E_{t}\left(
D_{N}^{\sigma _{j}}\right) \right) _{p}\left( z_{p},z_{p}\right) $ over a
small normal ball $z_{p}\in B_{\varepsilon }^{N}$ containing $p$ in $%
S_{p}\Sigma $ is the same as 
\begin{eqnarray*}
\int_{z_{p}\in B_{\varepsilon }^{N}}\mathrm{str}\left( E_{t}\left(
D_{N}^{\sigma _{j}}\right) \right) _{p}\left( z_{p},z_{p}\right) \sim
\int_{z_{p}\in \widetilde{B_{\varepsilon }^{N}}}\mathrm{str}\left(
E_{t}\left( \widetilde{D_{N}^{\sigma _{j}}}\right) \right) _{p}\left(
z_{p},z_{p}\right) \\
+\frac{1}{2}\left( -\eta \left( D^{S+,b,j}\right) +h\left( D^{S+,b,j}\right)
\right) ,
\end{eqnarray*}%
with the tilde corresponding to the equivariant desingularization of the
origin as in Section \ref{BlowupDoubleMainSection}.

We now discuss a general situation. Let $u$ be an eigenvector of a $G$%
-equivariant fiberwise operator $L$ on a fiber $E_{x}$ of an equivariant
bundle over a $G$-manifold \ $X$ with single orbit type $G\diagup H$. Assume
that $x\in X^{H}$, and $n\in N\left( H\right) $. Then $nu$ is an
eigensection of $L$ over $E_{nx}$ with the same eigenvalue. Moreover, if $u$
is an element of an $H$-irreducible subspace of type $\sigma $, then $nu$ is
an element of an irreducible subspace of type $\sigma ^{n}$. This implies
that the Schwarz kernel $K_{L}$ of $L$ at $x$ is mapped to the kernel $K_{L}$
of $L$ at $nx$ by conjugation by $n$, and thus the kernel $K_{L,x}^{\sigma }$
restricted to sections of type $\sigma $ at $x$ is conjugate to the kernel $%
K_{L,nx}^{\sigma ^{n}}$ at $nx$ (see also equations (\ref%
{heatKernelInvariance}), (\ref{traceHeatKernelInvariance})). Moreover, the
kernels $K_{L}$, $K_{L}^{\sigma }$, $K_{L}^{\sigma ^{n}}$ vary smoothly as a
function of $x\in X^{H}$, and in fact any fiberwise spectral invariant of $L$
must vary continuously with $x\in X^{H}$.

Next, consider the special case where $L:\Gamma \left( X,E\right)
\rightarrow \Gamma \left( X,E\right) $ has integer eigenvalues (as in the
case of the operators $D^{S+}$ -- see Corollary \ref%
{DSspectrumCorollaryHversion} and Remark \ref{stabilityRemarks}). Then the
eigenvalues of $L$, $L^{\sigma }$, and $L^{\sigma ^{n}}$ must be locally
constant on $X^{H}$. This implies that if $n\in N\left( H\right) $ maps a
connected component $X_{\alpha }^{H}\subset X^{H}$ to itself, then all
spectral invariants of $L^{\sigma }$ and $L^{\sigma ^{n}}$ are identical and
constant, when restricted to $X_{\alpha }^{H}$. Now, let $U$ be an open
subset of $X_{\alpha }^{H}$, and let $E^{b}$ be a fine component of $E$. If $%
\left[ \sigma \right] ,\left[ \sigma ^{\prime }\right] \in \widehat{H}$ are
different representation types present in a specific $\left.
E^{b}\right\vert _{U}$, then a consequence of the argument above is that all
spectral invariants of $\left. L^{\sigma }\right\vert _{U}$ and $\left.
L^{\sigma ^{\prime }}\right\vert _{U}$ are identical and constant.

As a result, in the displayed formula above, $\frac{1}{2}\left( -\eta \left(
D^{S+,b,j}\right) +h\left( D^{S+,b,j}\right) \right) $ is independent of $j$
and is a constant on each connected component $\Sigma _{\alpha }^{H}$ of $%
\Sigma ^{H}$ (and thus on $\Sigma _{\alpha }=G\Sigma _{\alpha }^{H}\subset
\Sigma $). In particular, 
\begin{equation*}
\frac{1}{2}\left( -\eta \left( D^{S+,b,j}\right) +h\left( D^{S+,b,j}\right)
\right) =\frac{1}{2n_{b}}\left( -\eta \left( D^{S+,b}\right) +h\left(
D^{S+,b}\right) \right) ,
\end{equation*}%
where $D^{S+,b}$ refers to the restriction of $D^{S+}$ to the sections in
the fine component $F_{N}^{b}$. We now integrate over $p\in T_{\varepsilon }$
to get

\begin{eqnarray*}
&&\int_{B_{\varepsilon }^{N}\left( T_{\varepsilon }\right) }\mathrm{\mathrm{%
str}}\left( E_{t}\left( D\right) ^{b,\rho }\right) \left( z_{p},z_{p}\right)
\\
&=&\frac{1}{m_{b}n_{b}d_{b}}\sum_{j=1}^{n_{b}}\int_{T_{\varepsilon
}}\int_{z_{p}\in B_{\varepsilon }^{N}}\mathrm{str}\left( E_{t}\left(
D_{N}^{\sigma _{j}}\right) \right) _{p}\left( z_{p},z_{p}\right) \mathrm{str}%
\left( E_{t}\left( \mathbf{1}^{b,j}\otimes D_{\Sigma }\right) ^{\rho
}\right) \left( p,p\right)
\end{eqnarray*}
\begin{eqnarray*}
&\sim &\frac{1}{m_{b}n_{b}d_{b}}\sum_{j=1}^{n_{b}}\int_{T_{\varepsilon
}}\int_{z_{p}\in \widetilde{B_{\varepsilon }^{N}}}\mathrm{str}\left(
E_{t}\left( \widetilde{D_{N}^{\sigma _{j}}}\right) \right) _{p}\left(
z_{p},z_{p}\right) \mathrm{str}\left( E_{t}\left( \mathbf{1}^{b,j}\otimes
D_{\Sigma }\right) ^{\rho }\right) \left( p,p\right) \\
&&+\frac{1}{m_{b}n_{b}d_{b}}\int_{T_{\varepsilon }}\frac{1}{2n_{b}}\left(
-\eta \left( D^{S+,b}\right) +h\left( D^{S+,b}\right) \right)
\sum_{j=1}^{n_{b}}\mathrm{str}\left( E_{t}\left( \mathbf{1}^{b,j}\otimes
D_{\Sigma }\right) ^{\rho }\right) \left( p,p\right) \\
&=&\int_{\widetilde{B_{\varepsilon }^{N}\left( T_{\varepsilon }\right) }}%
\mathrm{\mathrm{str}}\left( E_{t}\left( \widetilde{D}\right) ^{b,\rho }\text{
}\right) \left( z_{p},z_{p}\right)
\end{eqnarray*}
\begin{eqnarray*}
&&+\frac{1}{2m_{b}n_{b}^{2}d_{b}}\left( -\eta \left( D^{S+,b}\right)
+h\left( D^{S+,b}\right) \right) \int_{T_{\varepsilon }}\mathrm{str}\left(
E_{t}\left( \mathbf{1}^{b}\otimes D_{\Sigma }\right) ^{\rho }\right) \left(
p,p\right) ,
\end{eqnarray*}%
where $\mathbf{1}^{b}\otimes D_{\Sigma }$ is the differential operator on $%
W^{b}\otimes E_{\Sigma }$ over the $G$-manifold $\Sigma _{\alpha }$.

Thus, in calculating the small $t$ asymptotics of $\int_{B_{\varepsilon
}\left( U\right) }\mathrm{str}K\left( t,z_{p},z_{p}\right) ^{\rho }=\int 
\mathrm{\mathrm{str}}\left( E_{t}\left( D\right) ^{\rho }\right) \left(
z_{p},z_{p}\right) $ with $E_{t}\left( D\right) ^{\rho }=\exp \left(
-tD^{\ast }D\right) ^{\rho }$, it suffices to calculate the right hand side
of the formula above over the region $B_{\varepsilon }$ and sum over fine
components $b\in B$, using the heat kernel coming from the blown up
manifold. Thus, by integrating over any open saturated subset $U$ of $\Sigma
_{\alpha }\subseteq \Sigma $, we conclude that as $t\rightarrow 0$, 
\begin{gather}
\int_{B_{\varepsilon }\left( U\right) }\mathrm{str}K\left(
t,z_{p},z_{p}\right) ^{\rho }\sim \int_{\widetilde{B_{\varepsilon }\left(
U\right) }}\mathrm{str}K\left( t,z_{p},z_{p}\right) ^{\rho }  \notag \\
+\sum_{b}\frac{1}{2n_{b}\mathrm{rank}\left( W^{b}\right) }\left( -\eta
\left( D^{S+,b}\right) +h\left( D^{S+,b}\right) \right) \int_{p\in U}\mathrm{%
str}K_{\Sigma }^{b}\left( t,p,p\right) ^{\rho },  \label{heatSplitting}
\end{gather}%
with $\mathrm{str}K_{\Sigma }^{b}\left( t,p,p\right) ^{\rho }=\mathrm{str}%
\left( E_{t}\left( \mathbf{1}^{b}\otimes D_{\Sigma }\right) ^{\rho }\right)
\left( p,p\right) $ is the local heat supertrace corresponding to the
operator $\mathbf{1}^{b}\otimes D_{\Sigma }$ on $\Gamma \left(
U,W^{b}\otimes E_{\Sigma }\right) ^{\rho }$.

\section{The Equivariant Index Theorem\label{EquivariantIndexSection}}

\subsection{Reduction formula for the heat supertrace\label%
{ReductionHeatSupertraceSubsection}}

As before, let $E$ be a graded Hermitian vector bundle over a closed
Riemannian manifold $M$, such that a compact Lie group $G$ acts on $\left(
M,E\right) $ by isometries. Let $D=D^{+}:\Gamma \left( M,E^{+}\right)
\rightarrow \Gamma \left( M,E^{-}\right) $ be a first order, transversally
elliptic, $G$-equivariant differential operator. Let $\Sigma $ be a minimal
stratum of the $G$-manifold $M$. We modify the metrics and bundles
equivariantly so that there exists $\varepsilon >0$ such that the tubular
neighborhood $B_{\varepsilon }\Sigma $ of $\Sigma $ in $M$ is isometric to a
ball of radius $\varepsilon $ in the normal bundle $N\Sigma $, and so that
the $G$-equivariant bundle $E$ over $B_{\varepsilon }\left( \Sigma \right) $
is a pullback of the bundle $\left. E\right\vert _{\Sigma }\rightarrow
\Sigma $. We assume that $D^{+}$ can be written on each saturated open set $%
U $ contained in $B_{\varepsilon }\left( \Sigma \right) $ as the product 
\begin{equation*}
D^{+}=\left( D_{N}\ast D_{\Sigma }\right) ^{+},
\end{equation*}%
where $D_{\Sigma }\ $is a transversally elliptic, $G$-equivariant, first
order operator on the stratum $\Sigma $, and $D_{N}$ is a $G$-equivariant,
family of elliptic first order operators on the fibers of $B_{\varepsilon
}\left( \Sigma \right) $. In fact, we only require that $D^{+}$ is of this
form after an equivariant homotopy. Further, we may assume that $D_{N}$ is a
first order operator with coefficients frozen at $\Sigma $ so its
restriction to a normal exponential neighborhood $B_{\varepsilon }\Sigma
_{x} $, $x\in \Sigma $, is a constant coefficient operator. If $r$ is the
distance from $\Sigma $, we can write $D_{N}$ in polar coordinates as%
\begin{equation*}
D_{N}=Z\left( \nabla _{\partial _{r}}^{E}+\frac{1}{r}D^{S}\right)
\end{equation*}%
where $Z=-i\sigma \left( D_{N}\right) \left( \partial _{r}\right) $ is a
local bundle isomorphism and the map $D^{S}$ is a purely first order
operator that differentiates in the unit normal bundle directions tangent to 
$S_{x}\Sigma $. Note that we allow this to make sense if $\Sigma $ has
codimension one by setting $D^{S}=0$ and $\partial _{r}$ to be the
inward-pointing vector.

Let $\rho :G\rightarrow U\left( V_{\rho }\right) $ be an irreducible
representation. Let $U$ be a saturated (ie $G$-invariant) open subset of $M$
such that $G\diagdown U$ is connected. From Equation (\ref%
{heatKernelDecompStrata}), we wish to compute the quantity 
\begin{equation*}
I_{U}=\int_{x\in \Sigma \cap U}\int_{z_{x}\in B_{\varepsilon }\left( \Sigma
\right) _{x}\cap U}\mathrm{\mathrm{str}\,}\left( K\left( t,\left(
x,z_{x}\right) ,\left( x,z_{x}\right) \right) ^{\rho }\right)
\,\,\,\left\vert dz_{x}\right\vert ~\left\vert dx\right\vert ,
\end{equation*}%
where we choose coordinates $w=\left( x,z_{x}\right) $ for the point $\exp
_{x}^{\bot }\left( z_{x}\right) $, where $x\in \Sigma $, $z_{x}\in
B_{\varepsilon }=B_{\varepsilon }\left( N_{x}\left( \Sigma \right) ,0\right) 
$, and $\exp _{x}^{\bot }$ is the normal exponential map. In the expression
above, $\left\vert dz_{x}\right\vert $ is the Euclidean density on $%
N_{x}\left( \Sigma \right) $, and $\left\vert dx\right\vert $ is the
Riemannian density on $\Sigma $. Recall that $K\left( t,\left(
x,z_{x}\right) ,\left( x,z_{x}\right) \right) ^{\rho }$ is a map from $%
E_{\left( x,z_{x}\right) }^{\mathrm{Res}\left( \rho \right) }=E_{x}^{\mathrm{%
Res}\left( \rho \right) }$ to itself.

First, observe that the small $t$ asymptotics of this integral over $%
B_{\varepsilon }\left( \Sigma \right) \cap U$ may be computed using a heat
kernel for another differential on a neighborhood of a different manifold,
as long as all the local data (differential operator, vector bundles,
metrics) are the same. In particular we may use the manifold $S\Sigma $ as
in the previous section. Using Equation (\ref{heatSplitting}),

\begin{eqnarray*}
I_{i,U} &\sim &\int_{\widetilde{B_{\varepsilon }\left( \Sigma \right) \cap U}%
}\,\mathrm{\mathrm{str}\,}\left( \widetilde{K}\left( t,\widetilde{x},%
\widetilde{x}\right) ^{\rho }\right) \left\vert d\widetilde{x}\right\vert \\
&&-\frac{1}{2}\sum_{b\in B}\frac{1}{n_{b}\mathrm{rank~}W^{b}}\left( \eta
\left( D^{S+,b}\right) -h\left( D^{S+,b}\right) \right) \int_{x\in \Sigma
\cap U}\mathrm{str}\left( K_{\Sigma }^{b}\left( t,x,x\right) ^{\rho }\right)
\,~\left\vert dx\right\vert ~,
\end{eqnarray*}%
where $\sim $ means equal up to $\mathcal{O}\left( t^{k}\right) $ for large $%
k$, where $\widetilde{B_{\varepsilon }\left( \Sigma \right) \cap U}$ is the
fundamental domain in the desingularization of $B_{\varepsilon }\left(
\Sigma \right) \cap U$ and where $\widetilde{K}$ is the suitably modified
heat kernel. We note that the equation above remains true if $\Sigma $ is of
codimension $1$, because in this case $D^{S+,b}=0^{b}$, and we use the
definitions (\ref{equivariant eta definition dim zero}).

Let $\widetilde{M}$ be the equivariant desingularization of $M$ along $%
\Sigma $, and let $\widetilde{E^{\pm }}$ be the equivariant bundles over $%
\widetilde{M}$ induced from $E^{\pm }$ with transversally elliptic operators 
$\widetilde{D^{\pm }}$ (see Section \ref{BlowUpSection}). Let $\widetilde{K}%
^{\pm }\left( t,\cdot ,\cdot \right) ^{\beta }$ denote the equivariant heat
kernels corresponding to the extensions of $\widetilde{D^{\mp }}\widetilde{%
D^{\pm }}+C-\lambda _{\beta }$ to the double of $\widetilde{M}$ restricted
to sections of type $\beta $ (with reversed orientations of the induced
bundles $\widetilde{E^{\pm }}$ in the usual way). Then the heat kernel
supertrace formula becomes 
\begin{eqnarray*}
&&\int_{x\in U}\mathrm{\mathrm{str}\,}\left( K\left( t,x,x\right) ^{\rho
}\right) ~\left\vert dx\right\vert \sim \int_{\widetilde{U}}\,\mathrm{%
\mathrm{str}\,}\left( \widetilde{K}\left( t,\widetilde{x},\widetilde{x}%
\right) ^{\rho }\right) \left\vert d\widetilde{x}\right\vert \\
&&+\frac{1}{2}\sum_{b\in B}\frac{1}{n_{b}\mathrm{rank~}W^{b}}\left( -\eta
\left( D^{S+,b}\right) +h\left( D^{S+,b}\right) \right) \int_{G\diagdown
\left( U\cap \Sigma \right) }\mathrm{str}\left( \overline{K_{\Sigma }^{b}}%
\left( t,\overline{x},\overline{x}\right) ^{\rho }\right) \,~\left\vert d%
\overline{x}\right\vert ~.
\end{eqnarray*}%
The heat kernel $\overline{K_{\Sigma }^{b}}$ is that corresponding to the
operator $\left( \mathbf{1}\otimes D_{\Sigma }\right) ^{\rho }$ induced by
the operator $\mathbf{1}\otimes D_{\Sigma }$ on $\Gamma \left( \Sigma
,W^{b}\otimes E_{\Sigma }\right) $ on the quotient $G\diagdown \left( U\cap
\Sigma \right) $ as described in Section \ref{OneIsotropyTypeSection}.

We assemble the results above into the following theorem.

\begin{theorem}
(Reduction Formula for the Equivariant Supertrace)\label{supertraceReduction}
Let $E$ be a graded Hermitian vector bundle over a closed Riemannian
manifold $M$, such that a compact Lie group $G$ acts on $\left( M,E\right) $
by isometries. Let $\Sigma $ denote a minimal stratum. Let $D^{+}:\Gamma
\left( M,E^{+}\right) \rightarrow \Gamma \left( M,E^{-}\right) $ be a first
order, transversally elliptic, $G$-equivariant differential operator. We
modify the metrics and bundles equivariantly so that there exists $%
\varepsilon >0$ such that the tubular neighborhood $B_{\varepsilon }\Sigma $
of $\Sigma $ in $M$ is isometric to a ball of radius $\varepsilon $ in the
normal bundle $N\Sigma $, and so that the $G$-equivariant bundle $E$ over $%
B_{\varepsilon }\left( \Sigma \right) $ is a pullback of the bundle $\left.
E\right\vert _{\Sigma }\rightarrow \Sigma $. We assume that near $\Sigma $, $%
D$ is $G$-homotopic to the product 
\begin{equation*}
D^{+}=\left( D_{N}\ast D_{\Sigma }\right) ^{+},
\end{equation*}%
where $D_{\Sigma }\ $is a global transversally elliptic, $G$-equivariant,
first order operator on the stratum $\Sigma $, and $D_{N}$ is a $G$%
-equivariant, first order operator on $B_{\varepsilon }\Sigma $ that is
elliptic and has constant coefficients on the fibers. If $r$ is the distance
from $\Sigma $, we write $D_{N}$ in polar coordinates as%
\begin{equation*}
D_{N}=Z\left( \nabla _{\partial _{r}}^{E}+\frac{1}{r}D^{S}\right)
\end{equation*}

where $Z$ is a local bundle isomorphism and the map $D^{S}$ is a purely
first order operator that differentiates in the unit normal bundle
directions tangent to $S_{x}\Sigma $. Let $U$ be a saturated (ie $G$%
-invariant) open subset of $M$ such that $G\diagdown U$ is connected. Then
as $t\rightarrow 0$, 
\begin{eqnarray*}
&&\int_{x\in U}\mathrm{\mathrm{str}\,}\left( K\left( t,x,x\right) ^{\rho
}\right) ~\left\vert dx\right\vert \sim \int_{\widetilde{U}}\,\mathrm{%
\mathrm{str}\,}\left( \widetilde{K}\left( t,\widetilde{x},\widetilde{x}%
\right) ^{\rho }\right) \left\vert d\widetilde{x}\right\vert \\
&&+\frac{1}{2}\sum_{b\in B}\frac{1}{n_{b}\mathrm{rank~}W^{b}}\left( -\eta
\left( D^{S+,b}\right) +h\left( D^{S+,b}\right) \right) \int_{G\diagdown
\left( U\cap \Sigma \right) }\mathrm{str}\left( \overline{K_{\Sigma }^{b}}%
\left( t,\overline{x},\overline{x}\right) ^{\rho }\right) \,~\left\vert d%
\overline{x}\right\vert _{i}~.
\end{eqnarray*}%
Here, the superscript $W^{b}\rightarrow \Sigma _{\alpha }$ is a canonical
isotropy $G$-bundle over the component $\Sigma _{\alpha }$ relative to $G$
that contains $U$, $K_{\Sigma }^{b}$ refers to the equivariant heat kernel
corresponding to a twist of $D_{\Sigma }$ by $W^{b}\rightarrow \Sigma
_{\alpha }$. Since $K_{\Sigma }^{b}$ is an equivariant heat kernel on the
component $\Sigma _{\alpha }$, it induces a heat kernel $\overline{K_{\Sigma
}^{b}}=\left( K_{\Sigma }^{b}\right) ^{\rho }$ on the quotient $G\diagdown
\Sigma _{\alpha }$. Also, $\widetilde{U}$ is the equivariant
desingularization of $U$ along the stratum $\Sigma $, and the kernel $%
\widetilde{K}$ is the corresponding heat kernel of the double.
\end{theorem}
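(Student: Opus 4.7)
The plan is to assemble the theorem from the local heat-kernel splitting formula \eqref{heatSplitting} already derived in Section \ref{newMultiplicitiesSection} together with the identification of equivariant heat kernels with heat kernels on the quotient from Section \ref{OneIsotropyTypeSection}. First I would decompose $U$ as $U = (U \setminus B_{\varepsilon/2}\Sigma) \sqcup (U \cap B_{\varepsilon}\Sigma)$ with a smooth cutoff, and use the principle (recalled after \eqref{heatKernelDecompStrata}) that, since heat-kernel asymptotics are local in the metric and in the coefficients of $D$ along minimal geodesics joining $g^{-1}w$ to $w$, replacing $M$ by any $G$-manifold which agrees with $M$ on a neighborhood of a given set changes the integrand only by $\mathcal{O}(e^{-c/t})$. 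In particular, on $U \setminus B_{\varepsilon/2}\Sigma$ the supertrace of $K(t,x,x)^\rho$ agrees with the supertrace of $\widetilde{K}(t,\widetilde{x},\widetilde{x})^\rho$ on the desingularization up to exponentially small error, since the desingularization process of Section \ref{BlowupDoubleMainSection} only alters the geometry inside $B_{\varepsilon}\Sigma$.

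Next I would apply the fiberwise splitting argument to the remaining integral over $B_{\varepsilon}\Sigma \cap U$. By the product assumption $D^+ = (D_N \ast D_\Sigma)^+$ and the decomposition \eqref{tubeSectionDecomposition} of sections on a tube $T_\varepsilon \cong G \times_H D_\varepsilon$, the pointwise supertrace factors as a sum over fine components $b \in B$ of products
\[
\tfrac{1}{m_b n_b d_b}\sum_{j=1}^{n_b}\mathrm{str}\bigl(E_t(D_N^{\sigma_j})\bigr)_x\,\mathrm{str}\bigl(E_t(\mathbf{1}^{b,j}\otimes D_\Sigma)^\rho\bigr)_y.
\]
Here only finitely many $b$ contribute by Frobenius reciprocity \eqref{reciprocityShowsFiniteNumberBs} and the finite support of the Clebsch--Gordan coefficients. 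To the first factor I apply the Euclidean-to-blown-up identity \eqref{localSupertraceEta} (codimension $\geq 2$) or \eqref{localSupertraceEtaCodim1} (codimension $1$, using the conventions \eqref{equivariant eta definition dim zero} and $D^{S+,b}=0^b$), which produces the desingularized fiber integral plus the boundary term $\tfrac{1}{2}(-\eta(D^{S+,b,j})+h(D^{S+,b,j}))$.

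The decisive step, and what I expect to be the main obstacle, is to show that the eta and kernel-dimension contributions depend only on $b$ and not on the index $j$ labelling the normalizer-conjugate representations inside $W^b$, so that the $j$-sum collapses and yields the factor $\tfrac{1}{n_b\,\mathrm{rank}\,W^b}$ on $\tfrac{1}{2}(-\eta(D^{S+,b})+h(D^{S+,b}))$. The argument is that the eigenvalues of $D^{S+}$ are integers (Corollary \ref{DSspectrumCorollaryHversion} and Remark \ref{stabilityRemarks}), so all spectral invariants of $D^{S+,\sigma}$ are locally constant on $\Sigma^H$; since $N(H)$ acts on $\Sigma_\alpha^H$ by permuting the components and carries $\sigma_j$ to $\sigma_j^n$, any two normalizer-conjugate labels $j,j'$ give identical $\eta$ and $h$, so summing over $j$ contributes a factor $n_b$ and the spectral quantities combine into those of $D^{S+,b}$. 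Multiplying the resulting factor $\frac{1}{2m_b n_b^2 d_b}(-\eta + h)$ by $n_b$ and using $\mathrm{rank}\,W^b = m_b n_b d_b$ gives exactly $\frac{1}{2n_b\,\mathrm{rank}\,W^b}(-\eta(D^{S+,b})+h(D^{S+,b}))$.

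Finally, I would integrate the remaining factor $\sum_j \mathrm{str}(E_t(\mathbf{1}^{b,j}\otimes D_\Sigma)^\rho)(p,p)$ over $p \in \Sigma \cap U$ and reinterpret it downstairs. By the isomorphisms \eqref{sectionsSameOnQuotient} applied to the $G$-manifold with single isotropy type $\Sigma_\alpha$ and the bundle $W^b \otimes E_\Sigma$, the equivariant heat kernel of $\mathbf{1}^b \otimes D_\Sigma$ on $\Gamma(\Sigma,W^b\otimes E_\Sigma)^\rho$ descends to the ordinary heat kernel $\overline{K_\Sigma^b}$ of the induced elliptic operator on $G\diagdown(U\cap\Sigma)$, and the $G$-invariance property \eqref{traceHeatKernelInvariance} guarantees that the trace descends as a function on the orbit space with the correct Riemannian density. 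Adding the desingularized piece recovered in the first paragraph and this boundary-of-stratum contribution produces precisely the claimed identity, with all error absorbed into the $\sim$ relation.
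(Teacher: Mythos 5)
Your proposal is correct and follows the paper's approach: cut $U$ into a collar near $\Sigma$ and its complement, use the locality of short-time heat asymptotics to replace the complement by its desingularization, and apply the splitting formula (\ref{heatSplitting}) with the crucial observation that integrality of the $D^{S+}$-spectrum forces the $\eta$- and $h$-contributions to be independent of the normalizer-conjugate label $j$, collapsing the $j$-sum. One small arithmetical slip in your bookkeeping: no extra multiplication by $n_b$ is required at the end, since the coefficient $\frac{1}{2 m_b n_b^2 d_b}$ you obtain already equals $\frac{1}{2 n_b\,\mathrm{rank}\,W^b}$ upon substituting $\mathrm{rank}\,W^b = m_b n_b d_b$.
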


We refer the reader to Definitions \ref{componentRelGDefn} and \ref%
{canonicalIsotropyBundleDefinition}. Note that only a finite number ($b\in B$%
) of $W^{b}$ result in heat kernels $\overline{K_{\Sigma }^{b}}\left( t,%
\overline{x},\overline{x}\right) ^{\rho }$ that are not identically zero, by
the discussion following (\ref{reciprocityShowsFiniteNumberBs}).

\subsection{The main theorem\label{MainTheoremSubsection}}

To evaluate $\mathrm{ind}^{\rho }\left( D\right) $ as in Equation (\ref%
{heatKernelDecompStrata}), we apply Theorem \ref{supertraceReduction}
repeatedly, starting with a minimal stratum and then applying to each double
of the equivariant desingularization. After all the strata are blown up and
doubled, all of the resulting manifolds have a single stratum, and we may
apply the results of Section \ref{OneIsotropyTypeSection}. We obtain the
following result. In what follows, if $U$ denotes an open subset of a
stratum of the action of $G$ on $M$, $U^{\prime }$ denotes the equivariant
desingularization of $U$, and $\widetilde{U}$ denotes the fundamental domain
of $U$ inside $U^{\prime }$, as in Section \ref{BlowupDoubleMainSection}. We
also refer the reader to Definitions \ref{componentRelGDefn} and \ref%
{canonicalIsotropyBundleDefinition}.

\begin{theorem}
(Equivariant Index Theorem) \label{MainTheorem}Let $M_{0}$ be the principal
stratum of the action of a compact Lie group $G$ on the closed Riemannian $M$%
, and let $\Sigma _{\alpha _{1}}$,...,$\Sigma _{\alpha _{r}}$ denote all the
components of all singular strata relative to $G$. Let $E\rightarrow M$ be a
Hermitian vector bundle on which $G$ acts by isometries. Let $D:\Gamma
\left( M,E^{+}\right) \rightarrow \Gamma \left( M,E^{-}\right) $ be a first
order, transversally elliptic, $G$-equivariant differential operator. We
assume that near each $\Sigma _{\alpha _{j}}$, $D$ is $G$-homotopic to the
product $D_{N}\ast D^{\alpha _{j}}$, where $D_{N}$ is a $G$-equivariant,
first order differential operator on $B_{\varepsilon }\Sigma $ that is
elliptic and has constant coefficients on the fibers and $D^{\alpha _{j}}\ $%
is a global transversally elliptic, $G$-equivariant, first order operator on
the $\Sigma _{\alpha _{j}}$. In polar coordinates 
\begin{equation*}
D_{N}=Z_{j}\left( \nabla _{\partial _{r}}^{E}+\frac{1}{r}D_{j}^{S}\right) ~,
\end{equation*}%
where $r$ is the distance from $\Sigma _{\alpha _{j}}$, where $Z_{j}$ is a
local bundle isometry (dependent on the spherical parameter), the map $%
D_{j}^{S}$ is a family of purely first order operators that differentiates
in directions tangent to the unit normal bundle of $\Sigma _{j}$. Then the
equivariant index $\mathrm{ind}^{\rho }\left( D\right) $ satisfies 
\begin{eqnarray*}
\mathrm{ind}^{\rho }\left( D\right) &=&\int_{G\diagdown \widetilde{M_{0}}%
}A_{0}^{\rho }\left( x\right) ~\widetilde{\left\vert dx\right\vert }%
~+\sum_{j=1}^{r}\beta \left( \Sigma _{\alpha _{j}}\right) ~, \\
\beta \left( \Sigma _{\alpha _{j}}\right) &=&\frac{1}{2\dim V_{\rho }}%
\sum_{b\in B}\frac{1}{n_{b}\mathrm{rank~}W^{b}}\left( -\eta \left(
D_{j}^{S+,b}\right) +h\left( D_{j}^{S+,b}\right) \right) \int_{G\diagdown 
\widetilde{\Sigma _{\alpha _{j}}}}A_{j,b}^{\rho }\left( x\right) ~\widetilde{%
\left\vert dx\right\vert }~,
\end{eqnarray*}%
where

\begin{enumerate}
\item $A_{0}^{\rho }\left( x\right) $ is the Atiyah-Singer integrand (\cite%
{ABP}), the local supertrace of the ordinary heat kernel associated to the
elliptic operator induced from $D^{\prime }$ (blown-up and doubled from $D$)
on the quotient $M_{0}^{\prime }\diagup G$, where the bundle $E$ is replaced
by the bundle $\mathcal{E}_{\rho }$ (see Section \ref{OneIsotropyTypeSection}%
).

\item Similarly, $A_{i,b}^{\rho }$ is the local supertrace of the ordinary
heat kernel associated to the elliptic operator induced from $\left( \mathbf{%
1}\otimes D^{\alpha _{j}}\right) ^{\prime }$ (blown-up and doubled from $%
\mathbf{1}\otimes D^{\alpha _{j}}$, the twist of $D^{\alpha _{j}}$ by the
canonical isotropy bundle $W^{b}\rightarrow \Sigma _{\alpha _{j}}$ ) on the
quotient $\Sigma _{\alpha _{j}}^{\prime }\diagup G$, where the bundle is
replaced by the space of sections of type $\rho $ over each orbit.

\item $\eta \left( D_{j}^{S+,b}\right) $ is the eta invariant of the
operator $D_{j}^{S+}$ induced on any unit normal sphere $S_{x}\Sigma
_{\alpha _{j}}$, restricted to sections of isotropy representation types in $%
W_{x}^{b}$, defined in (\ref{equivariant eta definition}) and in (\ref%
{equivariant eta definition dim zero}) for the codimension one case when $%
D_{j}^{S+,b}=0_{j}^{+,b}$. This is constant on $\Sigma _{\alpha _{j}}$.

\item $h\left( D_{j}^{S+,b}\right) $ is the dimension of the kernel of $%
D_{j}^{S+,b}$, restricted to sections of isotropy representation types in $%
W_{x}^{b}$, again constant on on $\Sigma _{\alpha _{j}}$.

\item $n_{b}$ is the number of different inequivalent $G_{x}$-representation
types present in each $W_{x}^{b}$, $x\in \Sigma _{\alpha _{j}}$.
\end{enumerate}
\end{theorem}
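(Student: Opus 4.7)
The plan is to derive Theorem \ref{MainTheorem} from Theorem \ref{supertraceReduction} by induction on the number of isotropy types, using the heat kernel formula \eqref{heatKernelExpressionForIndex} as the starting point and the stratified decomposition of Lemma \ref{decomposition} to localize the analysis. Concretely, I begin by writing
\begin{equation*}
\mathrm{ind}^{\rho}(D)=\lim_{t\to 0}\int_{M}\mathrm{str}\,K(t,x,x)^{\rho}\,|dx|,
\end{equation*}
and split $M=\coprod_{i=0}^{r}M_{i}^{\varepsilon}$ as in \eqref{heatKernelDecompStrata}. The idea is to absorb, one stratum at a time, the contribution of each $M_{i}^{\varepsilon}$ for $i\geq 1$ into (a) an integral over the desingularization, plus (b) an explicit eta/kernel correction supported on the stratum itself.

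First, I choose a minimal stratum $\Sigma$ (a union of components $\Sigma_{\alpha_{j}}$ of singular strata relative to $G$) and an adapted product structure for $D$ on a tubular neighborhood $B_{\varepsilon}\Sigma$, as guaranteed by the hypothesis $D^{+}\sim(D_{N}\ast D^{\alpha_{j}})^{+}$ after an equivariant homotopy (legitimate because $\mathrm{ind}^{\rho}$ is $G$-homotopy invariant, as discussed in Section \ref{equivariant index section}). Applying Theorem \ref{supertraceReduction} with $U$ a saturated neighborhood of each component $\Sigma_{\alpha_{j}}$ in the minimal stratum, the heat supertrace on $U$ is replaced by the supertrace on the desingularized piece $\widetilde{U}$ plus the finite sum
\begin{equation*}
\frac{1}{2}\sum_{b\in B}\frac{1}{n_{b}\,\mathrm{rank}\,W^{b}}\bigl(-\eta(D_{j}^{S+,b})+h(D_{j}^{S+,b})\bigr)\int_{G\backslash(U\cap\Sigma_{\alpha_{j}})}\mathrm{str}\,\overline{K_{\Sigma}^{b}}(t,\bar{x},\bar{x})^{\rho}\,|d\bar{x}|.
\end{equation*}
The two key observations supporting this step are the local nature of the $t\to 0$ asymptotics (so that modifications outside $B_{\varepsilon}\Sigma$ contribute $O(e^{-c/t})$) and the tensor-product splitting from Lemma \ref{SectionSplittingLemma}, which converts the fiberwise constant-coefficient operator $D_{N}^{b,j}$ into the boundary-value problem of Section \ref{relatedBVP}, producing the eta/kernel terms via \eqref{localSupertraceEta} (or \eqref{localSupertraceEtaCodim1} in codimension one).

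Next, I iterate. The desingularized manifold has strictly fewer orbit types, and by construction each $D_{\Sigma_{\alpha_{j}}}$ twisted by the canonical isotropy bundle $W^{b}$ is again a $G$-equivariant transversally elliptic operator of the same product form near its own singular strata (which come from the intersections of higher strata of $M$ with $\Sigma_{\alpha_{j}}$). So Theorem \ref{supertraceReduction} applies again to both $\widetilde{U}$ (now as a closed $G$-manifold after doubling) and to each $\Sigma_{\alpha_{j}}$. After finitely many iterations every remaining manifold has a single orbit type, and I invoke the one-isotropy-type analysis of Section \ref{OneIsotropyTypeMainSection}: the restricted operator descends to an honest elliptic operator on the orbit quotient acting on sections of the induced bundle $\mathcal{E}_{\rho}$ (resp.\ $\mathcal{E}^{\rho}$ for the $W^{b}$-twisted $D^{\alpha_{j}}$), and its local heat supertrace at $t\to 0$ equals the usual Atiyah--Singer integrand, giving $A_{0}^{\rho}$ and $A_{j,b}^{\rho}$ in the statement. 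Collecting terms yields the stated formula, with the factor $1/\dim V_{\rho}$ arising from the passage between $\mathrm{ind}^{\rho}$ and $\mathrm{ind}\left(\mathcal{D}_{\rho}\right)$ as in Section \ref{OneIsotropyTypeSection}.

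The main obstacle is bookkeeping through the iteration, not any single analytic ingredient. Specifically, I must verify that the eta invariant $\eta(D_{j}^{S+,b})$ and the kernel dimension $h(D_{j}^{S+,b})$ appearing at the $j$-th blow-up genuinely depend only on the original data of $D$ at a point of $\Sigma_{\alpha_{j}}$ (not on the intermediate modified operators); this follows from the locality of the symbol of $D_{N}$ at $\Sigma_{\alpha_{j}}$ together with the constancy of these spectral invariants on each connected component $\Sigma_{\alpha_{j}}^{H}$ (the stability argument given in Section \ref{newMultiplicitiesSection}, using Corollary \ref{DSspectrumCorollaryHversion} and the continuity of the fiberwise spectrum). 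A parallel check is that the Clebsch--Gordan reasoning following \eqref{reciprocityShowsFiniteNumberBs} confines the sum over $b$ to a finite set $B$, so the formula is truly finite. Once these two invariance facts are in place, the induction is mechanical, and the statement of Theorem \ref{MainTheorem} is obtained by unwinding the iterated application of Theorem \ref{supertraceReduction}.
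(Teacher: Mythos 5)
Your proposal is correct and follows essentially the same route as the paper: the paper's proof of Theorem \ref{MainTheorem} consists precisely in iterating Theorem \ref{supertraceReduction} over successive minimal strata, passing to the doubled desingularization at each step, and invoking the single-orbit-type reduction of Section \ref{OneIsotropyTypeSection} once all strata are resolved. Your added remarks on the stability of $\eta(D_j^{S+,b})$ and $h(D_j^{S+,b})$ (via Remark \ref{stabilityRemarks} and Corollary \ref{DSspectrumCorollaryHversion}), and on the finiteness of $B$ via the Clebsch--Gordan argument following \eqref{reciprocityShowsFiniteNumberBs}, are exactly the bookkeeping checks the paper relies on.
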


\begin{remark}
Note that only a finite number ($b\in B$) of $W^{b}$ result in integrands $%
A_{j,b}^{\rho }\left( x\right) $ that are not identically zero, by the
discussion following (\ref{reciprocityShowsFiniteNumberBs}).
\end{remark}

\begin{remark}
\label{stratumIsSingleOrbit}If the stratum $\Sigma _{\alpha _{j}}$ is a
single orbit, then we have in the Theorem above that 
\begin{eqnarray*}
&&\frac{1}{2\dim V_{\rho }}\sum_{b\in B}\frac{1}{n_{b}\mathrm{rank~}W^{b}}%
\left( -\eta \left( D_{j}^{S+,b}\right) +h\left( D_{j}^{S+,b}\right) \right)
\int_{G\diagdown \widetilde{\Sigma _{\alpha _{j}}}}A_{j,b}^{\rho }\left(
x\right) ~\widetilde{\left\vert dx\right\vert } \\
&=&\frac{1}{2\dim V_{\rho }}\sum_{\sigma }\frac{n_{\sigma }^{\rho }}{\dim
\left( W_{\sigma }\right) }\left( -\eta \left( D^{S+,\sigma }\right)
+h\left( D^{S+,\sigma }\right) \right) ,
\end{eqnarray*}%
where the restriction of $\rho $ to $H$ is $\mathrm{Res}\left( \left[ \rho
,V_{\rho }\right] \right) =\bigoplus\limits_{\sigma }n_{\sigma }^{\rho }%
\left[ \sigma ,W_{\sigma }\right] $.
\end{remark}

\begin{remark}
Note that the quantities $\eta \left( D_{j}^{S+,b}\right) $ and $h\left(
D_{j}^{S+,b}\right) $ are invariant under equivariant stable homotopies of
the operator $D$, as explained in Remark \ref{stabilityRemarks}.
\end{remark}

\begin{theorem}
(Invariant Index Theorem) \label{InvariantIndexTheorem}With notation as in
the last theorem, if $\rho _{0}$ is the trivial representation, then%
\begin{eqnarray*}
\mathrm{ind}^{\rho _{0}}\left( D\right) &=&\int_{G\diagdown \widetilde{M_{0}}%
}A_{0}^{\rho _{0}}\left( x\right) ~\widetilde{\left\vert dx\right\vert }%
~+\sum_{j=1}^{r}\beta \left( \Sigma _{\alpha _{j}}\right) ~, \\
\beta \left( \Sigma _{\alpha _{j}}\right) &=&\frac{1}{2}\sum_{b\in B}\frac{1%
}{n_{b}\mathrm{rank~}W^{b}}\left( -\eta \left( D_{j}^{S+,b}\right) +h\left(
D_{j}^{S+,b}\right) \right) \int_{G\diagdown \widetilde{\Sigma _{\alpha _{j}}%
}}A_{j,b}^{\rho _{0}}\left( x\right) ~\widetilde{\left\vert dx\right\vert }~,
\end{eqnarray*}%
where $b\in B$ only if $W^{b}$ corresponds to irreducible isotropy
representations whose duals are present in $E^{\alpha _{j}}$, the bundle on
which $D_{\alpha _{j}}$ acts.
\end{theorem}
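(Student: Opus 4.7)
The plan is to obtain Theorem \ref{InvariantIndexTheorem} as a direct specialization of the Equivariant Index Theorem (Theorem \ref{MainTheorem}) to the trivial representation $\rho_0$, together with a representation-theoretic identification of which fine components actually contribute.

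First, I would substitute $\rho = \rho_0$ into the formula of Theorem \ref{MainTheorem}. Since $\dim V_{\rho_0} = 1$, the prefactor $\frac{1}{2\dim V_\rho}$ in $\beta(\Sigma_{\alpha_j})$ collapses to $\frac{1}{2}$, giving the displayed formula. The Atiyah--Singer integrands $A_0^{\rho_0}$ and $A_{j,b}^{\rho_0}$ are, by the discussion of Section \ref{OneIsotropyTypeSection}, the local heat-kernel supertraces of the honest elliptic operators $\mathcal{D}^{\rho_0}$ (and their twists by the canonical isotropy bundles $W^b$) acting on $G$-invariant sections of the induced bundles $\mathcal{E}_{\rho_0}$ over the orbit-space quotients $G \backslash \widetilde{M_0}$ and $G \backslash \widetilde{\Sigma_{\alpha_j}}$.

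It remains to identify which $b$ yield a nonzero $A_{j,b}^{\rho_0}$. By the Frobenius reciprocity computation displayed at equation (\ref{reciprocityShowsFiniteNumberBs}), $b$ contributes iff
\[
\mathrm{Hom}_H\bigl(V_{\mathrm{Res}(\rho_0)}, (F_N^b \,\widehat{\otimes}\, E^{\alpha_j})_x\bigr) \neq 0,
\]
where $[H]$ is the isotropy type along $\Sigma_{\alpha_j}$. Since $\mathrm{Res}(\rho_0)$ is the trivial $H$-representation, this is equivalent to
\[
\bigl((F_N^b)_x \otimes E_x^{\alpha_j}\bigr)^H \neq 0,
\]
which by Schur's lemma is in turn equivalent to the existence of an irreducible $H$-summand of $(F_N^b)_x$ whose dual appears in $E_x^{\alpha_j}$.

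Finally, I would invoke the defining property of the adapted canonical isotropy $G$-bundle: by Definition \ref{adaptedDefn} and Lemma \ref{AdaptedToAnyBundleLemma}, $W^b$ and $F_N^b$ carry, over each point of $\Sigma_{\alpha_j}^H$, exactly the same set of irreducible isotropy representations (namely the full normalizer-conjugacy class of some $[\sigma]$). Hence the nonvanishing criterion above is exactly that the irreducible isotropy representations present in $W^b$ contain one whose dual appears in $E^{\alpha_j}$, which is the restriction stated in Theorem \ref{InvariantIndexTheorem}. The main (and mild) obstacle is the bookkeeping of normalizer conjugation and duality in passing from $F_N^b$ to $W^b$; no new analytic input is needed, as the heat-kernel reduction and eta-invariant terms are inherited verbatim from Theorem \ref{MainTheorem}.
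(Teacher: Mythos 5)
Your proposal is correct and follows essentially the same route as the paper: specialize Theorem \ref{MainTheorem} to $\rho_0$ (so $\dim V_{\rho_0}=1$), and then determine which $b$ survive via Schur's lemma applied to the $H$-invariants of $(F_N^b)_x \otimes E^{\alpha_j}_x$. The paper's own proof is a single sentence recording the key fact $\left(W_\sigma\otimes W_\tau\right)^H = 0$ unless $\tau\simeq\sigma^\ast$; you unpack the same idea by tracing through Frobenius reciprocity at (\ref{reciprocityShowsFiniteNumberBs}) and the adaptedness of $W^b$ to $F_N^b$, which is precisely the bookkeeping the paper leaves implicit.
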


\begin{proof}
If $\left[ \sigma ,W_{\sigma }\right] ,\left[ \tau ,W_{\tau }\right] \in 
\widehat{H}$, we have $\left( W_{\sigma }\otimes W_{\tau }\right) ^{H}=0$
unless $\tau \simeq \sigma ^{\ast }$, and in this case $\left( W_{\sigma
}\otimes W_{\tau }\right) ^{H}\cong \mathbb{C}$.
\end{proof}

\section{Examples and Applications\label{ExamplesApplicationsSection}}

\subsection{Equivariant indices of elliptic operators}

Clearly, Theorem \ref{MainTheorem} and Theorem \ref{InvariantIndexTheorem}
apply when the $G$-equivariant differential operator $D:\Gamma \left(
M,E^{+}\right) \rightarrow \Gamma \left( M,E^{-}\right) $ is elliptic. In
this case, the kernel and cokernel are finite-dimensional representations of 
$G$, and thus we have the fact that the Atiyah-Singer index satisfies%
\begin{equation*}
\mathrm{ind}\left( D\right) =\sum_{\left[ \rho \right] }\mathrm{ind}^{\rho
}\left( D\right) \dim \left( V_{\rho }\right) ,
\end{equation*}%
and the equivariant indices satisfy%
\begin{eqnarray*}
\mathrm{ind}^{G}\left( D\right) &=&\sum_{\left[ \rho \right] }\mathrm{ind}%
^{\rho }\left( D\right) \left[ \rho \right] \in R\left( G\right) \\
\mathrm{ind}_{g}\left( D\right) &=&\sum_{\left[ \rho \right] }\mathrm{ind}%
^{\rho }\left( D\right) \chi _{\rho }\left( g\right) ,
\end{eqnarray*}%
with all sums, integers, and functions above being finite. Even in this
elliptic case, the result of the main theorem was not known previously.

For example, suppose that $D$ is a general Dirac operator over sections of a
Clifford bundle $E$ that is equivariant with respect to the action of a Lie
group $G$ of isometries of $M$. If we apply Theorem \ref{MainTheorem} to
this operator, all of the operators in the theorem are in fact Dirac
operators. That is, the operators $D_{N}$ and $D_{\Sigma }$ are Dirac
operators formed by restricting Clifford multiplication and the connections
to the normal and tangent bundles of the stratum $\Sigma _{i}$. The
spherical operators $D_{i}^{S^{+}}$ are also Dirac operators induced on the
unit normal bundles of the $\Sigma _{i}$. Further, in calculating the
Atiyah-Singer integrands on the quotients, note that the operators on the
orbit spaces are Dirac operators twisted by a bundle.

\subsection{The de Rham operator}

It is well known that if $M$ is a Riemannian manifold and $f:M\rightarrow M$
is an isometry that is homotopic to the identity, then the Euler
characteristic of $M$ is the sum of the Euler characteristics of the fixed
point sets of $f$. We generalize this result as follows. We consider the de
Rham operator 
\begin{equation*}
d+d^{\ast }:\Omega ^{\mathrm{even}}\left( M\right) \rightarrow \Omega ^{%
\mathrm{odd}}\left( M\right)
\end{equation*}%
on a $G$-manifold, and the invariant index of this operator is the
equivariant Euler characteristic $\chi ^{G}\left( M\right) $, the Euler
characteristic of the elliptic complex consisting of invariant forms. If $G$
is connected and the Euler characteristic is expressed in terms of its $\rho 
$-components, only the invariant part $\chi ^{G}\left( M\right) =\chi ^{\rho
_{0}}\left( M\right) $ appears. This is a consequence of the homotopy
invariance of de Rham cohomology. Thus $\chi ^{G}\left( M\right) =\chi
\left( M\right) $ for connected Lie groups $G$. In general the Euler
characteristic is a sum of components%
\begin{equation*}
\chi \left( M\right) =\sum_{\left[ \rho \right] }\chi ^{\rho }\left(
M\right) ,
\end{equation*}%
where $\chi ^{\rho }\left( M\right) $ is the alternating sum of the
dimensions of the $\left[ \rho \right] $-parts of the cohomology groups (or
spaces of harmonic forms). Since the connected component $G_{0}$ of the
identity in $G$ acts trivially on the harmonic forms, the only nontrivial
components $\chi ^{\rho }\left( M\right) $ correspond to representations
induced from unitary representations of the finite group $G\diagup G_{0}$.

In any case, at each stratum $\Sigma _{\alpha _{j}}$ of codimension at least
two in a $G$-manifold, we may write the de Rham operator (up to lower order
perturbations) as%
\begin{equation*}
d+d^{\ast }=D_{N}\ast D_{\Sigma },
\end{equation*}%
where $D_{N}$ and $D_{\Sigma }$ are both de Rham operators on the respective
spaces of forms. Further, the spherical operator $D^{S}$ is simply 
\begin{equation*}
D^{S}=-c\left( \partial _{r}\right) \left( d+d^{\ast }\right) ^{S},~c\left(
\partial _{r}\right) =dr\wedge -dr\lrcorner
\end{equation*}%
where $\left( d+d^{\ast }\right) ^{S}$ is a vector-valued de Rham operator
on the sphere. We will need to compute the kernel of this operator, which is
related to the harmonic forms on the sphere; it is spanned by $\left\{
1,dr,\omega ,dr\wedge \omega \right\} $, where $\omega $ is the sphere
volume form. The eigenforms of this operator are integers, and if $\alpha
+dr\wedge \beta $ is an eigenform of $D^{S}$ corresponding to eigenvalue $%
\lambda $, then%
\begin{eqnarray*}
\lambda \left( \alpha +dr\wedge \beta \right) &=&-c\left( \partial
_{r}\right) \left( d+d^{\ast }\right) ^{S}\left( \alpha +dr\wedge \beta
\right) \\
&=&-dr\wedge \left( d+d^{\ast }\right) ^{S}\alpha -\left( d+d^{\ast }\right)
^{S}\beta ,
\end{eqnarray*}%
so that 
\begin{eqnarray*}
\left( d+d^{\ast }\right) ^{S}\alpha &=&-\lambda \beta ;\left( d+d^{\ast
}\right) ^{S}\beta =-\lambda \alpha ,\text{ or} \\
\left( d+d^{\ast }\right) ^{S}\left( \alpha \pm \beta \right) &=&\mp \lambda
\left( \alpha \pm \beta \right) .
\end{eqnarray*}%
Also%
\begin{eqnarray*}
D^{S}\left( \alpha -dr\wedge \beta \right) &=&-c\left( \partial _{r}\right)
\left( d+d^{\ast }\right) ^{S}\left( \alpha -dr\wedge \beta \right) \\
&=&\lambda dr\wedge \beta -\lambda \alpha =-\lambda \left( \alpha -dr\wedge
\beta \right) ,\text{ and} \\
D^{S}\left( \beta \pm dr\wedge \alpha \right) &=&\pm \lambda \left( \beta
\pm dr\wedge \alpha \right) .
\end{eqnarray*}%
Thus the spectra of both $D^{S}$ and $\left( d+d^{\ast }\right) ^{S}$ are
symmetric. Let $\left[ G_{j}\right] $ be the isotropy type corresponding to
the stratum $\Sigma _{\alpha _{j}}$. If we restrict to forms of $G_{j}$%
-representation type $\sigma $, the forms $\alpha $,$\beta $ would
necessarily be of that type, but the symmetry of the spectra would remain.
Thus for the de Rham operator, 
\begin{equation*}
\eta \left( D^{S+,\sigma }\right) =0
\end{equation*}%
at each stratum. The dimension $h\left( D^{S+,\sigma }\right) $ of the
kernel is not always the same; the $+$ component of the forms contains at
most the span of $\left\{ 1,\omega \right\} $ if the codimension of the
stratum is odd and at most the span of $\left\{ 1,dr\wedge \omega \right\} $
if the codimension is even. Observe that $1$, $\omega $, and $dr\wedge
\omega $ are invariant under every orientation-preserving isometry, but $%
\omega $ and $dr\wedge \omega $ change sign under orientation reversing
isometries. Thus, the only representations of $G_{j}$ that appear are the
induced one-dimensional representations of $G_{j}$ on the transverse volume
form to $\Sigma _{\alpha _{j}}$. If some elements of $G_{j}$ reverse
orientation of the normal bundle, then let $\xi _{G_{j}}$ denote the
relevant one-dimensional representation of $G_{j}$ as $\pm 1$. Then 
\begin{equation}
h\left( D_{j}^{S+,\sigma }\right) =\left\{ 
\begin{array}{ll}
2 & \text{if }\sigma =\mathbf{1}\text{ and }G_{j}\text{ preserves orientation%
} \\ 
1 & \text{if }\sigma =\mathbf{1}\text{ and }G_{j}\text{ does not preserve
orientation} \\ 
1 & \text{if }\sigma =\xi _{G_{j}}\text{ and }G_{j}\text{ does not preserve
orientation} \\ 
0 & \text{otherwise}%
\end{array}%
\right.  \label{hFormulasEuler}
\end{equation}%
The orientation line bundle $\mathcal{L}_{N_{j}}\rightarrow \Sigma _{j}$ of
the normal bundle to $\Sigma _{j}$ is a pointwise representation space for
the representation $\xi _{G_{j}}$, and it is the canonical isotropy $G$%
-bundle $W^{b}$ corresponding to $\left( \alpha _{j},\left[ \xi _{G_{j}}%
\right] \right) $. We may also take it to be a representation bundle for the
trivial $G_{j}$-representation $\mathbf{1}$ (although the trivial line
bundle is the canonical one). The main theorem takes the form

\begin{eqnarray*}
\mathrm{ind}^{\rho }\left( d+d^{\ast }\right) &=&\frac{1}{\dim V_{\rho }}%
\int_{G\diagdown \widetilde{M_{0}}}A_{0}^{\rho }\left( x\right) ~\widetilde{%
\left\vert dx\right\vert }~+\sum_{j}\beta \left( \Sigma _{\alpha
_{j}}\right) ~, \\
\beta \left( \Sigma _{\alpha _{j}}\right) &=&\frac{1}{2\dim V_{\rho }}%
\sum_{j}\left( h\left( D_{j}^{S+,\xi _{G_{j}}}\right) +h\left( D_{j}^{S+,%
\mathbf{1}}\right) \right) \int_{G\diagdown \widetilde{\Sigma _{\alpha _{j}}}%
}A_{j}^{\rho }\left( x,\mathcal{L}_{N_{j}}\right) ~\widetilde{\left\vert
dx\right\vert }~ \\
&=&\frac{1}{\dim V_{\rho }}\sum_{j}\int_{G\diagdown \widetilde{\Sigma
_{\alpha _{j}}}}A_{j}^{\rho }\left( x,\mathcal{L}_{N_{j}}\right) ~\widetilde{%
\left\vert dx\right\vert }.
\end{eqnarray*}%
In this case it is useful to rewrite $\int_{G\diagdown \widetilde{\Sigma
_{\alpha _{j}}}}A_{j}^{\rho }\left( x,\mathcal{L}_{N_{j}}\right) ~\widetilde{%
\left\vert dx\right\vert }$ as $\int_{\widetilde{\Sigma _{\alpha _{j}}}%
}K_{j}^{\rho }\left( x,\mathcal{L}_{N_{j}}\right) ~\widetilde{\left\vert
dx\right\vert }$ before taking it to the quotient. We see that $K_{j}^{\rho
}\left( x,\mathcal{L}_{N_{j}}\right) $ is the Gauss-Bonnet integrand on the
desingularized stratum $\widetilde{\Sigma _{\alpha _{j}}}$, restricted to $%
\mathcal{L}_{N_{j}}$-twisted forms of type $\left[ \rho \right] $. The
result is the relative Euler characteristic 
\begin{equation*}
\int_{\widetilde{\Sigma _{\alpha _{j}}}}K_{j}^{\rho }\left( x,\mathcal{L}%
_{N_{j}}\right) ~\widetilde{\left\vert dx\right\vert }=\chi ^{\rho }\left( 
\overline{\Sigma _{\alpha _{j}}},\text{lower strata},\mathcal{L}%
_{N_{j}}\right) ,
\end{equation*}%
Here, the relative Euler characteristic is defined for $X$ a closed subset
of a manifold $Y$ as $\chi \left( Y,X,\mathcal{V}\right) =\chi \left( Y,%
\mathcal{V}\right) -\chi \left( X,\mathcal{V}\right) $, which is also the
alternating sum of the dimensions of the relative de Rham cohomology groups
with coefficients in a complex vector bundle $\mathcal{V}\rightarrow Y$. The
superscript $\rho $ denotes the restriction to the subgroups of these
cohomology groups of $G$-representation type $\left[ \rho \right] $. Note
that if $Y$ is a $G\diagup G_{j}$-bundle, $\chi \left( Y,\mathcal{V}\right)
^{\rho }$ is the alternating sum of the dimensions of subspaces of harmonic
forms with coefficients in the $G$-bundle $\mathcal{V}\rightarrow Y$ with
representation type $\left[ \rho \right] $. Further, since $\Sigma _{\alpha
_{j}}$ is a fiber bundle over $G\diagdown \Sigma _{\alpha _{j}}$ with fiber $%
G$-diffeomorphic to $G\diagup G_{j}$, we have%
\begin{equation*}
\int_{\widetilde{\Sigma _{\alpha _{j}}}}K_{j}^{\rho }\left( x,\mathcal{L}%
_{N_{j}}\right) ~\widetilde{\left\vert dx\right\vert }=\chi ^{\rho }\left(
G\diagup G_{j},\mathcal{L}_{N_{j}}\right) \chi \left( G\diagdown \overline{%
\Sigma _{\alpha _{j}}},G\diagdown \text{lower strata}\right) ,
\end{equation*}%
by the formula for the Euler characteristic on fiber bundles. A similar
formula holds for the principal stratum, with no orientation bundle. Putting
these facts together with Theorem \ref{MainTheorem} and $\mathrm{ind}^{\rho
}\left( d+d^{\ast }\right) =\frac{1}{\dim V_{\rho }}\chi ^{\rho }\left(
M\right) $, we have the following result.

\begin{theorem}
\label{EulerCharacteristicTheorem}Let $M$ be a compact $G$-manifold, with $G$
a compact Lie group and principal isotropy subgroup $H_{\mathrm{pr}}$. Let $%
M_{0}$ denote the principal stratum, and let $\Sigma _{\alpha _{1}}$,...,$%
\Sigma _{\alpha _{r}}$ denote all the components of all singular strata
relative to $G$. We use the notations for $\chi ^{\rho }\left( Y,X\right) $
and $\chi ^{\rho }\left( Y\right) $ as in the discussion above. Then 
\begin{eqnarray*}
\chi ^{\rho }\left( M\right) &=&\chi ^{\rho }\left( G\diagup H_{\mathrm{pr}%
}\right) \chi \left( G\diagdown M,G\diagdown \text{singular strata}\right) \\
&&+\sum_{j}\chi ^{\rho }\left( G\diagup G_{j}\text{~},\mathcal{L}%
_{N_{j}}\right) \chi \left( G\diagdown \overline{\Sigma _{\alpha _{j}}}%
,G\diagdown \text{lower strata}\right) ,
\end{eqnarray*}%
where $\mathcal{L}_{N_{j}}$ is the orientation line bundle of normal bundle
of the stratum component $\Sigma _{\alpha _{j}}$.
\end{theorem}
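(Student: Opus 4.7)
The plan is to derive Theorem \ref{EulerCharacteristicTheorem} as a direct application of Theorem \ref{MainTheorem} to the transversally elliptic (in fact elliptic) operator $D = d+d^{\ast}:\Omega^{\mathrm{even}}(M)\to\Omega^{\mathrm{odd}}(M)$, and then to repackage the resulting Atiyah-Singer integrands and eta/kernel contributions into Euler characteristics. The starting identity is $\mathrm{ind}^{\rho}(d+d^{\ast}) = \frac{1}{\dim V_{\rho}}\chi^{\rho}(M)$, which follows from Hodge theory applied to the $[\rho]$-isotypical subcomplex of the de Rham complex (this subcomplex is preserved by $d$ because $G$ acts by isometries).

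First I would verify the product structure required by Theorem \ref{MainTheorem}: near a stratum component $\Sigma_{\alpha_j}$, up to lower order terms $d+d^{\ast} = D_N \ast D_{\Sigma}$, where $D_N$ and $D_{\Sigma}$ are themselves de Rham operators on the normal and tangential form bundles. Next I would assemble the spherical/eta data. The computation already carried out in the paper shows that the normal-direction spherical operator $D^{S+}$ has symmetric spectrum on each isotropy isotype (using the involution $\alpha+dr\wedge\beta \mapsto \alpha - dr\wedge\beta$), so $\eta(D_j^{S+,\sigma}) = 0$ for every $\sigma$ and every $j$. The kernel $h(D_j^{S+,\sigma})$ is supported on at most the spans of $\{1,\omega\}$ or $\{1,dr\wedge\omega\}$ depending on the parity of the codimension, and the only irreducible $G_j$-representations appearing are the trivial representation $\mathbf{1}$ and the one-dimensional orientation representation $\xi_{G_j}$ on $\mathcal{L}_{N_j}$, with multiplicities given by formula \eqref{hFormulasEuler}. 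The key arithmetic observation is that $h(D_j^{S+,\mathbf{1}}) + h(D_j^{S+,\xi_{G_j}}) = 2$ in every case (whether $G_j$ preserves orientation or not), and the relevant canonical isotropy $G$-bundles $W^b$ have rank one with $n_b=1$.

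Putting these in, the coefficient $\tfrac{1}{2n_b\,\mathrm{rank}\,W^b}\bigl(-\eta(D_j^{S+,b})+h(D_j^{S+,b})\bigr)$ collapses and the stratum contribution reduces to
\begin{equation*}
\beta(\Sigma_{\alpha_j}) \;=\; \frac{1}{\dim V_{\rho}} \int_{G\backslash \widetilde{\Sigma_{\alpha_j}}} A_{j}^{\rho}(x,\mathcal{L}_{N_j})\,\widetilde{|dx|},
\end{equation*}
where $A_j^{\rho}(\cdot,\mathcal{L}_{N_j})$ is the Atiyah-Singer integrand of the de Rham operator on $\Sigma_{\alpha_j}$ twisted by $\mathcal{L}_{N_j}$, restricted to the $[\rho]$-isotype. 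By the classical identification of the Atiyah-Singer integrand for $d+d^{\ast}$ with the Gauss-Bonnet integrand, lifting the quotient integral back to $\widetilde{\Sigma_{\alpha_j}}$ and applying Hodge/Chern-Gauss-Bonnet on the closed double gives
\begin{equation*}
\int_{\widetilde{\Sigma_{\alpha_j}}} K_j^{\rho}(x,\mathcal{L}_{N_j})\,\widetilde{|dx|} \;=\; \chi^{\rho}\bigl(\overline{\Sigma_{\alpha_j}},\text{lower strata},\mathcal{L}_{N_j}\bigr),
\end{equation*}
the relative $\rho$-isotypical Euler characteristic (this is where one uses that the desingularization-and-double construction computes the Euler characteristic of $\overline{\Sigma_{\alpha_j}}$ relative to its lower strata, since the doubled forms on the collar cancel in pairs).

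Finally I would invoke the multiplicativity of the Euler characteristic for fiber bundles: $\Sigma_{\alpha_j}\to G\backslash\Sigma_{\alpha_j}$ is a locally trivial $G/G_j$-bundle with $G$-equivariant fibers, and the twist $\mathcal{L}_{N_j}$ is fiberwise, so
\begin{equation*}
\chi^{\rho}\bigl(\overline{\Sigma_{\alpha_j}},\text{lower strata},\mathcal{L}_{N_j}\bigr) \;=\; \chi^{\rho}(G/G_j,\mathcal{L}_{N_j})\,\chi\bigl(G\backslash\overline{\Sigma_{\alpha_j}},G\backslash\text{lower strata}\bigr).
\end{equation*}
The same argument applied to the principal stratum (with $\mathcal{L}_{N_j}$ trivial) yields the $\chi^{\rho}(G/H_{\mathrm{pr}})\chi(G\backslash M, G\backslash\text{singular strata})$ term. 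Summing over strata and clearing the $\frac{1}{\dim V_{\rho}}$ gives the theorem. The main obstacle I expect is the bookkeeping in the fiber-bundle multiplicativity step when the twisting bundle $\mathcal{L}_{N_j}$ is nontrivial and $G_j$ acts nontrivially on the orientation line: one must check that the $[\rho]$-isotypical projection commutes with the Künneth/Leray decomposition for the fibration, which hinges on the fact that $G$ acts on the fibers $G/G_j$ by left translation and thus the $G$-representation on harmonic forms splits as $\chi^{\rho}(G/G_j,\mathcal{L}_{N_j})$ times the (trivial $G$-action) Euler characteristic of the base.
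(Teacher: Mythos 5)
Your proposal tracks the paper's own derivation in Section 9.2 step for step: apply Theorem \ref{MainTheorem} to $d+d^{\ast}$, observe $\eta(D_j^{S+,\sigma})=0$ from the spectral involution $\alpha+dr\wedge\beta\mapsto\alpha-dr\wedge\beta$, read off $h$ from the span of $\{1,\omega\}$ or $\{1,dr\wedge\omega\}$ via formula \eqref{hFormulasEuler}, identify the integrands with Gauss--Bonnet/Euler-characteristic data, and finish with Euler-characteristic multiplicativity for the $G/G_j$-fibration $\Sigma_{\alpha_j}\to G\diagdown\Sigma_{\alpha_j}$. This is essentially the same proof as the paper's.

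One point to flag, which is present in the paper as well but which you should at least name since you are reconstructing the argument: when $G_j$ does not preserve orientation of $N\Sigma_{\alpha_j}$ there are \emph{two} fine components contributing, one for the trivial $G_j$-type $\mathbf{1}$ with canonical $W^b$ the trivial line bundle and one for $\xi_{G_j}$ with $W^b=\mathcal{L}_{N_j}$, each carrying $h=1$. Writing the resulting sum $\frac{1}{2}\bigl(\int A_{j,\mathbf{1}}^{\rho}+\int A_{j,\xi_{G_j}}^{\rho}\bigr)$ as a single term $\int A_j^{\rho}(x,\mathcal{L}_{N_j})$ (i.e., absorbing both $b$'s into the $\mathcal{L}_{N_j}$-twist) needs an explicit justification that the two twisted Atiyah--Singer integrands contribute equally, and your statement that ``the coefficient collapses'' skips this. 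Your closing remark about the care needed in the K\"unneth/Leray step when $G_j$ acts nontrivially on $\mathcal{L}_{N_j}$ is well taken and is in fact the same concern, viewed from the cohomological side; it would be worth resolving it cleanly, e.g.\ by exhibiting the isomorphism of twisted complexes coming from the $\mathbb{Z}_2$-grading of $\Lambda^{\ast}N^{\ast}\Sigma_{\alpha_j}$ under $\xi_{G_j}$, before invoking the multiplicativity $\chi^{\rho}(\Sigma_{\alpha_j},\text{lower},\mathcal{L}_{N_j})=\chi^{\rho}(G/G_j,\mathcal{L}_{N_j})\,\chi(G\diagdown\overline{\Sigma_{\alpha_j}},G\diagdown\text{lower strata})$.
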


\begin{example}
Let $M=S^{n}$, let $G=O\left( n\right) $ acting on latitude spheres
(principal orbits, diffeomorphic to $S^{n-1}$). Then there are two strata,
with the singular strata being the two poles (each with Euler characteristic 
$1$). Without using the theorem, since the only harmonic forms are the
constants and multiples of the volume form, we expect that%
\begin{equation*}
\chi ^{\rho }\left( S^{n}\right) =\left\{ 
\begin{array}{ll}
\left( -1\right) ^{n} & \text{if }\rho =\xi \\ 
1 & \text{if }\rho =\rho _{0}=\mathbf{1}%
\end{array}%
\right. ,
\end{equation*}%
where $\xi $ is the induced one dimensional representation of $O\left(
n\right) $ on the volume forms. We have that%
\begin{equation*}
\chi ^{\rho }\left( G\diagup H_{\mathrm{pr}}\right) =\chi ^{\rho }\left(
S^{n-1}\right) =\left\{ 
\begin{array}{ll}
\left( -1\right) ^{n-1} & \text{if }\rho =\xi \\ 
1 & \text{if }\rho =\rho _{0}=\mathbf{1}%
\end{array}%
\right. ,
\end{equation*}%
and 
\begin{eqnarray*}
\chi \left( G\diagdown M,G\diagdown \text{singular strata}\right) &=&\chi
\left( \left[ -1,1\right] ,\left\{ -1,1\right\} \right) \\
&=&-1.
\end{eqnarray*}

At each pole, the isotropy subgroup is the full $O\left( n\right) $, which
may reverse the orientation of the normal space. Then%
\begin{equation*}
\chi ^{\rho }\left( G\diagup G_{j},\mathcal{L}_{N_{j}}\right) =\chi ^{\rho
}\left( \mathrm{pt}\right) =\left\{ 
\begin{array}{ll}
1 & \text{if }\rho =\rho _{0}=\mathbf{1}\text{,} \\ 
0 & \text{otherwise.}%
\end{array}%
\right.
\end{equation*}%
Also, for each pole $\Sigma _{\alpha _{j}}$, 
\begin{equation*}
\chi \left( G\diagdown \overline{\Sigma _{\alpha _{j}}},G\diagdown \text{%
lower strata}\right) =\chi \left( \mathrm{pt}\right) =1.
\end{equation*}%
By Theorem \ref{EulerCharacteristicTheorem}, we have%
\begin{eqnarray*}
\chi ^{\rho }\left( S^{n}\right) &=&\left\{ 
\begin{array}{ll}
\left( -1\right) ^{n} & \text{if }\rho =\xi \\ 
-1 & \text{if }\rho =\rho _{0}=\mathbf{1}%
\end{array}%
\right. \\
&&+\left\{ 
\begin{array}{ll}
0 & \text{if }\rho =\xi \\ 
2\cdot 1 & \text{if }\rho =\rho _{0}%
\end{array}%
\right. \\
&=&\left\{ 
\begin{array}{ll}
\left( -1\right) ^{n} & \text{if }\rho =\xi \\ 
1 & \text{if }\rho =\rho _{0}%
\end{array}%
\right. ,
\end{eqnarray*}%
which agrees with the predicted result.
\end{example}

\begin{example}
If instead the group $\mathbb{Z}_{2}$ acts on $S^{n}$ by the antipodal map,
note that%
\begin{equation*}
\chi ^{\rho }\left( S^{n}\right) =\left\{ 
\begin{array}{ll}
1-1=0 & \text{if }\rho =\rho _{0}\text{ and }n\text{ is odd} \\ 
1 & \text{if }\rho =\rho _{0}\text{ and }n\text{ is even} \\ 
1 & \text{if }\rho =\xi \text{ and and }n\text{ is even} \\ 
0 & \text{otherwise}%
\end{array}%
\right.
\end{equation*}%
since the antipodal map is orientation preserving in odd dimensions and
orientation reversing in even dimensions. On the other hand, by Theorem \ref%
{EulerCharacteristicTheorem}, since there are no singular strata, we have%
\begin{eqnarray*}
\chi ^{\rho }\left( S^{n}\right) &=&\chi ^{\rho }\left( G\diagup H_{\mathrm{%
pr}}\right) \chi \left( G\diagdown M,G\diagdown \text{singular strata}\right)
\\
&=&\left\{ 
\begin{array}{ll}
1\cdot \chi \left( \mathbb{R}P^{n}\right) & \text{if }\rho =\rho _{0}\text{
or }\xi \text{ and }n\text{ is odd} \\ 
1\cdot \chi \left( \mathbb{R}P^{n}\right) & \text{if }\rho =\rho _{0}\text{
or }\xi \text{ and }n\text{ is even} \\ 
0\cdot \chi \left( \mathbb{R}P^{n}\right) & \text{otherwise}%
\end{array}%
\right. \\
&=&\left\{ 
\begin{array}{ll}
0 & \text{if }\rho =\rho _{0}\text{ or }\xi \text{ and }n\text{ is odd} \\ 
1 & \text{if }\rho =\rho _{0}\text{ or }\xi \text{ and }n\text{ is even} \\ 
0~ & \text{otherwise}%
\end{array}%
\right. ,
\end{eqnarray*}%
which agrees with the direct computation.
\end{example}

\begin{example}
Consider the action of $\mathbb{Z}_{4}$ on the torus $T^{2}=\mathbb{R}%
^{2}\diagup \mathbb{Z}^{2}$, where the action is generated by a $\frac{\pi }{%
2}$ rotation. Explicitly, $k\in \mathbb{Z}_{4}$ acts on $\left( 
\begin{array}{c}
y_{1} \\ 
y_{2}%
\end{array}%
\right) $ by 
\begin{equation*}
\phi \left( k\right) \left( 
\begin{array}{c}
y_{1} \\ 
y_{2}%
\end{array}%
\right) =\left( 
\begin{array}{cc}
0 & -1 \\ 
1 & 0%
\end{array}%
\right) ^{k}\left( 
\begin{array}{c}
y_{1} \\ 
y_{2}%
\end{array}%
\right) .
\end{equation*}%
Endow $T^{2}$ with the standard flat metric. The harmonic forms have basis $%
\left\{ 1,dy_{1},dy_{2},dy_{1}\wedge dy_{2}\right\} $. Let $\rho _{j}$ be
the irreducible character defined by $k\in \mathbb{Z}_{4}\mapsto e^{ikj\pi
/2}$. Then the de Rham operator $\left( d+d^{\ast }\right) ^{\rho _{0}}$ on $%
\mathbb{Z}_{4}$-invariant forms has kernel $\left\{ c_{0}+c_{1}dy_{1}\wedge
dy_{2}:c_{0},c_{1}\in \mathbb{C}\right\} $. One also sees that $\ker \left(
d+d^{\ast }\right) ^{\rho _{1}}=\mathrm{span}\left\{ idy_{1}+dy_{2}\right\} $%
, $\ker \left( d+d^{\ast }\right) ^{\rho _{2}}=\left\{ 0\right\} $, and $%
\ker \left( d+d^{\ast }\right) ^{\rho _{3}}=\mathrm{span}\left\{
-idy_{1}+dy_{2}\right\} $. Then 
\begin{equation*}
\chi ^{\rho _{0}}\left( T^{2}\right) =2,\chi ^{\rho _{1}}\left( T^{2}\right)
=\chi ^{\rho _{3}}\left( T^{2}\right) =-1,\chi ^{\rho _{2}}\left(
T^{2}\right) =0.
\end{equation*}%
This illustrates the point that it is not possible to use the Atiyah-Singer
integrand on the quotient of the principal stratum to compute even the
invariant index alone. Indeed, the Atiyah-Singer integrand would be a
constant times the Gauss curvature, which is identically zero. In these
cases, the three singular points $a_{1}=\left( 
\begin{array}{c}
0 \\ 
0%
\end{array}%
\right) ~,a_{2}=\left( 
\begin{array}{c}
0 \\ 
\frac{1}{2}%
\end{array}%
\right) ~,a_{3}=\left( 
\begin{array}{c}
\frac{1}{2} \\ 
\frac{1}{2}%
\end{array}%
\right) $ certainly contribute to the index. The quotient $T^{2}\diagup 
\mathbb{Z}_{4}$ is an orbifold homeomorphic to a sphere.

We now compute the Euler characteristics $\chi ^{\rho }\left( T^{2}\right) $
using Theorem \ref{EulerCharacteristicTheorem}. First, $G\diagup H_{\mathrm{%
pr}}=G$ is four points on which $G$ acts transitively, and thus $\chi ^{\rho
}\left( G\diagup H_{\mathrm{pr}}\right) =1$ for each possible choice of $%
\rho $. We have $\chi \left( \mathbb{Z}_{4}\diagdown T^{2},\left\{ \text{3
points}\right\} \right) =\chi \left( S^{2},\left\{ \text{3 points}\right\}
\right) =-1$. Each of the singular points $a_{1}$, $a_{2}$, $a_{3}$ is one
of the strata components $\Sigma _{\alpha _{j}}$, and thus $\chi \left(
G\diagdown \overline{\Sigma _{\alpha _{j}}},G\diagdown \text{lower strata}%
\right) =1$ in each of these cases. We have the isotropy subgroup at $a_{j}$
is 
\begin{equation*}
G_{j}=G_{a_{j}}=\left\{ 
\begin{array}{ll}
\mathbb{Z}_{4} & \text{if }j=1\text{ or }3 \\ 
\mathbb{Z}_{2} & \text{if }j=2%
\end{array}%
\right.
\end{equation*}%
so that 
\begin{equation*}
\chi ^{\rho }\left( G\diagup G_{j}\text{~},\mathcal{L}_{N_{j}}\right) =\chi
^{\rho }\left( G\diagup G_{j}\text{~}\right) =\left\{ 
\begin{array}{ll}
1 & \text{if }j=1\text{ or }3~\text{and }\rho =\rho _{0} \\ 
1 & \text{if }j=2\text{ and }\rho =\rho _{0}\text{ or }\rho _{2} \\ 
0~ & \text{otherwise}%
\end{array}%
\right.
\end{equation*}%
Then Theorem \ref{EulerCharacteristicTheorem} implies 
\begin{eqnarray*}
\chi ^{\rho }\left( M\right) &=&1\cdot \left( -1\right) +\left\{ 
\begin{array}{ll}
3 & \text{if }\rho =\rho _{0} \\ 
1 & \text{if }\rho =\rho _{2} \\ 
0~ & \text{if }\rho =\rho _{1}\text{ or }\rho _{3}%
\end{array}%
\right. \\
&=&\left\{ 
\begin{array}{ll}
2 & \text{if }\rho =\rho _{0} \\ 
0 & \text{if }\rho =\rho _{2} \\ 
-1~ & \text{if }\rho =\rho _{1}\text{ or }\rho _{3}%
\end{array}%
\right. ~,
\end{eqnarray*}%
which agrees with the previous direct calculation.
\end{example}

\subsection{Transverse Signature Operator}

\medskip In this section we give an example of a transverse signature
operator that arises from an $S^{1}$ action on a $5$-manifold. This is
essentially a modification of an example from \cite[pp. 84ff]{A}, and it
illustrates the fact that the eta invariant term does not always vanish. Let 
$Z^{4}$ be a closed, oriented, $4$-dimensional Riemannian manifold on which $%
\mathbb{Z}_{p}$ ($p$ prime $>2$) acts by isometries with isolated fixed
points $x_{i}$, $i=1,...,N$. Let $M=S^{1}\times _{\mathbb{Z}_{p}}Z^{4}$,
where $\mathbb{Z}_{p}$ acts on $S^{1}$ by rotation by multiples of $\frac{%
2\pi }{p}$. Then $S^{1}$ acts on $M$, and $S^{1}\diagdown M\cong \mathbb{Z}%
_{p}\diagdown Z^{4}$. Observe that this group action can be viewed as a
Riemannian foliation by circles.

Next, let $D^{+}$ denote the signature operator $d+d^{\ast }$ from self-dual
to anti-self-dual forms on $Z^{4}$; this induces a transversally elliptic
operator (also denoted by $D^{+}$). Then the $S^{1}$-invariant index of $%
D^{+}$ satisfies%
\begin{equation*}
\mathrm{ind}^{\rho _{0}}\left( D^{+}\right) =\mathrm{Sign}\left(
S^{1}\diagdown M\right) =\mathrm{Sign}\left( \mathbb{Z}_{p}\diagdown
Z^{4}\right) .
\end{equation*}%
By the Invariant Index Theorem (Theorem \ref{InvariantIndexTheorem}), Remark %
\ref{stratumIsSingleOrbit}, and the fact that the Atiyah-Singer integrand is
the Hirzebruch $L$-polynomial $\frac{1}{3}p_{1}$,%
\begin{eqnarray*}
\mathrm{ind}^{\rho _{0}}\left( D\right) &=&\frac{1}{3}\int_{\widetilde{M}%
\diagup S^{1}}p_{1}~ \\
&&+\frac{1}{2}\sum_{j=1}^{N}\left( -\eta \left( D_{j}^{S+,\rho _{0}}\right)
+h\left( D_{j}^{S+,\rho _{0}}\right) \right) ,
\end{eqnarray*}%
where each $D_{j}^{S+,\rho _{0}}$ is two copies of the boundary signature
operator 
\begin{equation*}
B=\left( -1\right) ^{p}\left( \ast d-d\ast \right)
\end{equation*}%
on $2l$-forms ($l=0,1$) on the lens space $S^{3}\diagup \mathbb{Z}_{p}$. We
have $h\left( D_{j}^{S+,\rho _{0}}\right) =2h\left( B\right) =2$
(corresponding to constants), and in \cite{APS2} the eta invariant is
explicitly calculated to be%
\begin{equation*}
\eta \left( D_{j}^{S+,\rho _{0}}\right) =2\eta \left( B\right) =-\frac{2}{p}%
\sum_{k=1}^{p-1}\cot \left( \frac{km_{j}\pi }{p}\right) \cot \left( \frac{%
kn_{j}\pi }{p}\right) ,
\end{equation*}%
where the action of the generator $\zeta $ of $\mathbb{Z}_{p}$ on $S^{3}$ is%
\begin{equation*}
\zeta \cdot \left( z_{1},z_{2}\right) =\left( e^{\frac{2m_{j}\pi i}{p}%
}z_{1},e^{\frac{2n_{j}\pi i}{p}}z_{2}\right) ,
\end{equation*}%
with $\left( m_{j},p\right) =\left( n_{j},p\right) =1$. Thus,%
\begin{equation}
\mathrm{Sign}\left( S^{1}\diagdown M\right) =\frac{1}{3}\int_{\mathbb{Z}%
_{p}\diagdown \widetilde{Z^{4}}}p_{1}+\frac{1}{p}\sum_{j=1}^{N}%
\sum_{k=1}^{p-1}\cot \left( \frac{km_{j}\pi }{p}\right) \cot \left( \frac{%
kn_{j}\pi }{p}\right) +N  \label{signatureFormula}
\end{equation}%
Note that in \cite[pp. 84ff]{A} it is shown that%
\begin{equation*}
\mathrm{Sign}\left( S^{1}\diagdown M\right) =\frac{1}{3}\int_{\mathbb{Z}%
_{p}\diagdown Z^{4}}p_{1}+\frac{1}{p}\sum_{j=1}^{N}\sum_{k=1}^{p-1}\cot
\left( \frac{km_{j}\pi }{p}\right) \cot \left( \frac{kn_{j}\pi }{p}\right) ,
\end{equation*}%
which demonstrates that 
\begin{equation*}
\frac{1}{3}\int_{\mathbb{Z}_{p}\diagdown Z^{4}}p_{1}-\frac{1}{3}\int_{%
\mathbb{Z}_{p}\diagdown \widetilde{Z^{4}}}p_{1}=N,
\end{equation*}%
illustrating the difference in total curvature between the desingularization 
$\widetilde{M}$ and the original $M$.

\subsection{Basic index theorem for Riemannian foliations\label%
{BasicIndexSubsection}}

\medskip The content of this section will be proved, generalized, and
expanded in detail in \cite{BKR2}. Let $M$ be an $n$-dimensional, closed,
connected manifold, and let $\mathcal{F}$ be a codimension $q$ foliation on $%
M$. Let $Q$ denote the quotient bundle $TM\diagup T\mathcal{F}$ over $M$.
Such a foliation is called a \emph{Riemannian foliation} if it is endowed
with a metric on $Q$ (called the transverse metric) that is \emph{%
holonomy-invariant}; that is, the Lie derivative of that transverse metric
with respect to every leafwise tangent vector is zero. The metric on $Q$ can
always be extended to a Riemannian metric on $M$; the extended metric
restricted to the normal bundle $N\mathcal{F}=\left( T\mathcal{F}\right)
^{\bot }$ agrees with the transverse metric via the isomorphism $Q\cong N%
\mathcal{F}$. We refer the reader to \cite{Mo}, \cite{Re}, and \cite{T} for
introductions to the geometric and analytic properties of Riemannian
foliations.

Let $\widehat{M}$ be the transverse orthonormal frame bundle of $(M,\mathcal{%
F})$, and let $p$ be the natural projection $p:\widehat{M}\rightarrow M$.
The Bott connection is a natural connection on $Q$ that induces a connection
on $\widehat{M}$ (see \cite[pp. 80ff]{Mo} ). The manifold $\widehat{M}$ is a
principal $O(q)$-bundle over $M$. Given $\hat{x}\in \widehat{M}$, let $\hat{x%
}g$ denote the well-defined right action of $g\in G=O(q)$ applied to $\hat{x}
$. Associated to $\mathcal{F}$ is the lifted foliation $\widehat{\mathcal{F}}
$ on $\widehat{M}$; the distribution $T\widehat{\mathcal{F}}$ is the
horizontal lift of $T\mathcal{F}$. By the results of Molino (see \cite[%
pp.~105-108, p.~147ff]{Mo} ), the lifted foliation is transversally
parallelizable (meaning that there exists a global basis of the normal
bundle consisting of vector fields whose flows preserve $\widehat{\mathcal{F}%
}$), and the closures of the leaves are fibers of a fiber bundle $\widehat{%
\pi }:\widehat{M}\rightarrow \widehat{W}$. The manifold $\widehat{W}$ is
smooth and is called the basic manifold. Let $\overline{\widehat{\mathcal{F}}%
}$ denote the foliation of $\widehat{M}$ by leaf closures of $\widehat{%
\mathcal{\ F}}$, which is shown by Molino to be a fiber bundle. The leaf
closure space of $\left( M,\mathcal{F}\right) $ is denoted $W=M\diagup 
\overline{\mathcal{F}}=\widehat{W}\diagup G$. 
\begin{equation*}
\begin{array}{ccccccc}
p^{\ast }E &  &  &  & \mathcal{E} &  &  \\ 
& \searrow &  &  & \downarrow &  &  \\ 
SO\left( q\right) & \hookrightarrow & \left( \widehat{M},\widehat{\mathcal{F}%
}\right) & \overset{\widehat{\pi }}{\longrightarrow } & \widehat{W} &  &  \\ 
&  & \downarrow ^{p} & \circlearrowleft & \downarrow \,\, &  &  \\ 
E & \rightarrow & \left( M,\mathcal{F}\right) & \overset{\pi }{%
\longrightarrow } & W &  & 
\end{array}%
\end{equation*}

Endow $(\widehat{M},\widehat{\mathcal{F}})$ with the transverse metric $%
g^{Q}\oplus g^{O(q)}$, where $g^{Q}$ is the pullback of metric on $Q$, and $%
g^{O(q)}$ is the standard, normalized, biinvariant metric on the fibers. We
require that vertical vectors are orthogonal to horizontal vectors. This
transverse metric gives each of $(\widehat{M},\widehat{\mathcal{F}})$ and $(%
\widehat{M},\overline{\widehat{\mathcal{F}}})$ the structure of a Riemannian
foliation. The transverse metric on $(\widehat{M},\overline{\widehat{%
\mathcal{F}}})$ induces a well--defined Riemannian metric on $\widehat{W}$.
The action of $G=O(q)$ on $\widehat{M}$ induces an isometric action on $%
\widehat{W}$.

For each leaf closure $\overline{\widehat{L}}\in \overline{\widehat{\mathcal{%
F}}}$ and $\widehat{x}\in \overline{\widehat{L}}$, the restricted map $p:%
\overline{\widehat{L}}\rightarrow \overline{L}$ is a principal bundle with
fiber isomorphic to a subgroup $H_{\widehat{x}}\subset O(q)$, which is the
isotropy subgroup at the point $\widehat{\pi }(\hat{x})\in \widehat{W}$. The
conjugacy class of this group is an invariant of the leaf closure $\overline{%
L}$, and the strata of the group action on $\widehat{W}$ correspond to the
strata of the leaf closures of $(M,\mathcal{F})$.

A \emph{basic form} over a foliation is a global differential form that is
locally the pullback of a form on the leaf space; more precisely, $\alpha
\in \Omega ^{\ast }\left( M\right) $ is basic if for any vector tangent to
the foliation, the interior product with both $\alpha $ and $d\alpha $ is
zero. A\ \emph{basic vector field} is a vector field $V$ whose flow
preserves the foliation. In a Riemannian foliation, near any point it is
possible to choose a local orthonormal frame of $Q$ represented by basic
vector fields.

A vector bundle $E\rightarrow \left( M,\mathcal{F}\right) $ that is \emph{%
foliated} may be endowed with a basic connection $\nabla ^{E}$ (one for
which the associated curvature forms are basic -- see \cite{KT2}). An
example of such a bundle is the normal bundle $Q$. Given such a foliated
bundle, a section $s\in \Gamma \left( E\right) $ is called a \emph{basic
section} if for every $X\in T\mathcal{F}$, $\nabla _{X}^{E}s=0$. Let $\Gamma
_{b}\left( E\right) $ denote the space of basic sections of $E$. Note that
the basic sections of $Q$ correspond to basic normal vector fields.

An example of another foliated bundle over a component of a stratum $M_{j}$
is the bundle defined as follows. Let $E\rightarrow M$ be any foliated
vector bundle. Let $\Sigma _{\alpha _{j}}=\widehat{\pi }\left( p^{-1}\left(
M_{j}\right) \right) $ be the corresponding stratum on the basic manifold $%
\widehat{W}$, and let $W^{\tau }\rightarrow \Sigma _{\alpha _{j}}$ be a
canonical isotropy bundle (Definition \ref{canonicalIsotropyBundleDefinition}%
). Consider the bundle $\widehat{\pi }^{\ast }W^{\tau }\otimes p^{\ast
}E\rightarrow p^{-1}\left( M_{j}\right) $, which is foliated and basic for
the lifted foliation restricted to $p^{-1}\left( M_{j}\right) $. This
defines a new foliated bundle $E^{\tau }\rightarrow M_{j}$ by letting $%
E_{x}^{\tau }$ be the space of $O\left( q\right) $-invariant sections of $%
\widehat{\pi }^{\ast }W^{\tau }\otimes p^{\ast }E$ restricted to $%
p^{-1}\left( x\right) $. We call this bundle \textbf{the} $W^{\tau }$\textbf{%
-twist of} $E\rightarrow M_{j}$.

Suppose that $E$ is a foliated $\mathbb{C}\mathrm{l}\left( Q\right) $ module
with basic $\mathbb{C}\mathrm{l}\left( Q\right) $ connection $\nabla ^{E}$
over a Riemannian foliation $\left( M,\mathcal{F}\right) $. Then it can be
shown that Clifford multiplication by basic vector fields preserves $\Gamma
_{b}\left( E\right) $, and we have the operator%
\begin{equation*}
D_{b}^{E}:\Gamma _{b}\left( E^{+}\right) \rightarrow \Gamma _{b}\left(
E^{-}\right)
\end{equation*}%
defined for any local orthonormal frame $\left\{ e_{1},...,e_{q}\right\} $
for $Q$ by%
\begin{equation*}
D_{b}^{E}=\left. \sum_{j=1}^{q}c\left( e_{j}\right) \nabla
_{e_{j}}^{E}\right\vert _{\Gamma _{b}\left( E\right) }.
\end{equation*}%
Then $D_{b}^{E}$ can be shown to be well-defined and is called the basic
Dirac operator corresponding to the foliated $\mathbb{C}\mathrm{l}\left(
Q\right) $ module $E$ (see \cite{GlK}). We note that this operator is not
symmetric unless a zero$^{\mathrm{th}}$ order term involving the mean
curvature is added; see \cite{KTFol}, \cite{KT4}, \cite{KT}, \cite{GlK}, 
\cite{PrRi}, \cite{HabRi}, \cite{BKR2} for more information regarding
essential self-adjointness of the modified operator and its spectrum. In the
formulas below, any lower order terms that preserve the basic sections may
be added without changing the index.

\begin{definition}
The \emph{analytic basic index} of $D_{b}^{E}$ is 
\begin{equation*}
\mathrm{ind}_{b}\left( D_{b}^{E}\right) =\dim \ker D_{b}^{E}-\dim \ker
\left( D_{b}^{E}\right) ^{\ast }.
\end{equation*}
\end{definition}

As we will show in \cite{BKR2}, these dimensions are finite, and it is
possible to identify $\mathrm{ind}_{b}\left( D_{b}^{E}\right) $ with the
invariant index of a first order, $O\left( q\right) $-equivariant
differential operator $\widehat{D}$ over a vector bundle over the basic
manifold $\widehat{W}$. By applying the invariant index theorem (Theorem \ref%
{InvariantIndexTheorem}), we obtain the following formula for the index. In
what follows, if $U$ denotes an open subset of a stratum of $\left( M,%
\mathcal{F}\right) $, $U^{\prime }$ denotes the desingularization of $U$
very similar to that in Section \ref{BlowupDoubleMainSection}, and $%
\widetilde{U}$ denotes the fundamental domain of $U$ inside $U^{\prime }$.

\begin{theorem}
(Basic Index Theorem for Riemannian foliations, in \cite{BKR2}) Let $M_{0}$
be the principal stratum of the Riemannian foliation $\left( M,\mathcal{F}%
\right) $, and let $M_{1}$, ... , $M_{r}$ denote all the components of all
singular strata, corresponding to $O\left( q\right) $-isotropy types $\left[
G_{1}\right] $, ... ,$\left[ G_{r}\right] $ on the basic manifold. With
notation as in the discussion above, we have 
\begin{eqnarray*}
\mathrm{ind}_{b}\left( D_{b}^{E}\right) &=&\int_{\widetilde{M_{0}}\diagup 
\overline{\mathcal{F}}}A_{0,b}\left( x\right) ~\widetilde{\left\vert
dx\right\vert }+\sum_{j=1}^{r}\beta \left( M_{j}\right) ~ \\
\beta \left( M_{j}\right) &=&\frac{1}{2}\sum_{\tau }\frac{1}{n_{\tau }%
\mathrm{rank~}W^{\tau }}\left( -\eta \left( D_{j}^{S+,\tau }\right) +h\left(
D_{j}^{S+,\tau }\right) \right) \int_{\widetilde{M_{j}}\diagup \overline{%
\mathcal{F}}}A_{j,b}^{\tau }\left( x\right) ~\widetilde{\left\vert
dx\right\vert },
\end{eqnarray*}%
where the sum is over all components of singular strata and over all
canonical isotropy bundles $W^{\tau }$, only a finite number of which yield
nonzero $A_{j,b}^{\tau }$, and where

\begin{enumerate}
\item $A_{0,b}\left( x\right) $ is the Atiyah-Singer integrand, the local
supertrace of the ordinary heat kernel associated to the elliptic operator
induced from $\widetilde{D_{b}^{E}}$ (a desingularization of $D_{b}^{E}$) on
the quotient $\widetilde{M_{0}}\diagup \overline{\mathcal{F}}$, where the
bundle $E$ is replaced by the space of basic sections of over each leaf
closure;

\item $\eta \left( D_{j}^{S+,b}\right) $ and $h\left( D_{j}^{S+,b}\right) $
are defined in a similar way as in Theorem \ref{MainTheorem}, using a
decomposition $D_{b}^{E}=D_{N}\ast D_{M_{j}}$ at each singular stratum;

\item $A_{j,b}^{\tau }\left( x\right) $ is the local supertrace of the
ordinary heat kernel associated to the elliptic operator induced from $%
\left( \mathbf{1}\otimes D_{M_{j}}\right) ^{\prime }$ (blown-up and doubled
from $\mathbf{1}\otimes D_{M_{j}}$, the twist of $D_{M_{j}}$ by the
canonical isotropy bundle $W^{\tau }$) on the quotient $\widetilde{M_{j}}%
\diagup \overline{\mathcal{F}}$, where the bundle is replaced by the space
of basic sections over each leaf closure; and

\item $n_{\tau }$ is the number of different inequivalent $G_{j}$%
-representation types present in a typical fiber of $W^{\tau }$.
\end{enumerate}
\end{theorem}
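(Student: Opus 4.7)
The plan is to reduce the basic index theorem to the Invariant Index Theorem (Theorem \ref{InvariantIndexTheorem}) applied to a lifted operator on the basic manifold $\widehat{W}$, then translate the resulting geometric data back to $(M,\mathcal{F})$. The diagram on the top of the subsection already sets this up: the transverse frame bundle $\widehat{M}$ carries the lifted foliation $\widehat{\mathcal{F}}$, which is transversally parallelizable, and its leaf closure space is the smooth compact manifold $\widehat{W}$, on which $G=O(q)$ acts by isometries with strata $\Sigma_{\alpha_j}=\widehat{\pi}(p^{-1}(M_j))$ in bijection with the leaf closure strata $M_j$ of $M$.

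First I would identify the basic Dirac operator with an equivariant operator on $\widehat{W}$. Pulling back $E$ to $p^{\ast}E\to\widehat{M}$ gives an $O(q)$-equivariant Clifford bundle whose $\widehat{\mathcal{F}}$-basic sections coincide with the basic sections of $E$. Because $\widehat{\mathcal{F}}$ has all leaf closures of the same dimension along each stratum, pushing down via $\widehat{\pi}$ produces an $O(q)$-equivariant Hermitian vector bundle $\mathcal{E}\to\widehat{W}$ whose $O(q)$-invariant sections correspond canonically, via averaging over leaves, to $\Gamma_b(E)$. Clifford multiplication and the basic connection descend to give an $O(q)$-equivariant, first order, transversally elliptic operator $\widehat{D}$ on $\Gamma(\widehat{W},\mathcal{E})$ such that, under the identification above, $\widehat{D}^{O(q)}$ is conjugate to $D_b^E$. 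In particular $\ker D_b^E$ and $\ker (D_b^E)^{\ast}$ have the same dimensions as $\ker\widehat{D}^{\pm}\cap\Gamma(\widehat{W},\mathcal{E}^{\pm})^{\rho_0}$, so
\begin{equation*}
\mathrm{ind}_b(D_b^E)=\mathrm{ind}^{\rho_0}(\widehat{D}).
\end{equation*}
This is where most of the real analysis sits: I would invoke the self-adjoint extension theory of Section \ref{TransEllipOpsSection} (applied to $\widehat{D}$), Frobenius reciprocity, and standard Molino theory to see that each irreducible $O(q)$-module occurring is rigid along leaf closures.

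Next I would verify that $\widehat{D}$ satisfies the product hypothesis of Theorem \ref{InvariantIndexTheorem} at each singular stratum $\Sigma_{\alpha_j}\subset\widehat{W}$. The normal bundle to $\Sigma_{\alpha_j}$ in $\widehat{W}$ corresponds, via $p\circ\widehat{\pi}^{-1}$, to the transverse normal directions to the leaf closure stratum $M_j$ in $M$. Because the original Clifford structure splits transversally into the normal and stratum-tangential directions of $M_j$ after a small equivariant deformation (as discussed in Section \ref{productAssumptionSection}), the lifted operator $\widehat{D}$ splits in a tubular neighborhood of $\Sigma_{\alpha_j}$ as a product $\widehat{D}_N\ast\widehat{D}_{\Sigma_{\alpha_j}}$. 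The spherical operator $\widehat{D}_j^{S+}$ is then exactly the spherical operator associated to the decomposition $D_b^E=D_N\ast D_{M_j}$ used in the statement, which justifies writing $D_j^{S+,\tau}$ on both sides.

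Then I would apply Theorem \ref{InvariantIndexTheorem} to $\widehat{D}$ and unpack each term. The integral over $G\diagdown\widetilde{\widehat{W}_0}$ of the Atiyah-Singer integrand $A_0^{\rho_0}$ becomes an integral over $\widetilde{M_0}\diagup\overline{\mathcal{F}}$ of the corresponding supertrace $A_{0,b}$, since the quotient $G\diagdown\widehat{W}_0$ is canonically identified with the principal leaf closure space $M_0\diagup\overline{\mathcal{F}}$ and the fiberwise average replaces $\mathcal{E}_{\rho_0}$ by the space of basic sections over each leaf closure. The stratum contribution
\begin{equation*}
\beta(\Sigma_{\alpha_j})=\tfrac{1}{2}\sum_{\tau}\tfrac{1}{n_{\tau}\,\mathrm{rank}\,W^{\tau}}\bigl(-\eta(D_j^{S+,\tau})+h(D_j^{S+,\tau})\bigr)\int_{G\diagdown\widetilde{\Sigma_{\alpha_j}}}A_{j,\tau}^{\rho_0}
\end{equation*}
is translated in the same way: canonical isotropy $G$-bundles $W^{\tau}$ on $\Sigma_{\alpha_j}$ pull back to the $W^{\tau}$-twists of $E$ over $M_j$ defined just before the statement, $\widetilde{\Sigma_{\alpha_j}}\diagup G\cong\widetilde{M_j}\diagup\overline{\mathcal{F}}$, and the twisted Atiyah-Singer integrand corresponds to $A_{j,b}^{\tau}$. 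Finiteness of the sum follows from Frobenius reciprocity as in (\ref{reciprocityShowsFiniteNumberBs}).

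The hardest step will be the first: establishing that $D_b^E$ really is conjugate to $\widehat{D}^{O(q)}$ as Fredholm operators with matching kernels, and that the product and constant-coefficient assumptions needed for Theorem \ref{InvariantIndexTheorem} can be arranged $O(q)$-equivariantly on $\widehat{W}$ without disturbing the invariant index. This requires careful checking that the mean-curvature modification needed to make $D_b^E$ symmetric (see \cite{KTFol,HabRi,BKR2}) lifts to an equivariant homotopy of $\widehat{D}$, and that Molino's fiber bundle structure on $\overline{\widehat{\mathcal{F}}}$ is compatible with Clifford multiplication along each singular leaf closure stratum. A full treatment is deferred to \cite{BKR2}, where these compatibilities are worked out in detail.
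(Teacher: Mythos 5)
Your proposal mirrors the paper's own approach exactly: the paper sketches precisely this reduction — identify $\mathrm{ind}_b(D_b^E)$ with $\mathrm{ind}^{\rho_0}(\widehat{D})$ for an $O(q)$-equivariant transversally elliptic operator $\widehat{D}$ on the basic manifold $\widehat{W}$, verify the product hypothesis at each stratum, apply Theorem \ref{InvariantIndexTheorem}, and translate the resulting integrals and isotropy-bundle twists back to the leaf closure strata of $(M,\mathcal{F})$ — while deferring the full analytical details (self-adjointness, mean-curvature correction, Molino compatibility) to \cite{BKR2}. Your account is consistent with and essentially identical to the paper's, including the same deferral.
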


An example of this result is the generalization of the Gauss-Bonnet Theorem
to the basic Euler characteristic. The basic forms $\Omega \left( M,\mathcal{%
F}\right) $ are preserved by the exterior derivative, and the resulting
cohomology is called basic cohomology $H^{\ast }\left( M,\mathcal{F}\right) $%
. It is known that the basic cohomology groups are finite-dimensional in the
Riemannian foliation case. See \cite{EKHS}, \cite{KT}, \cite{KT3}, \cite{KT4}%
, \cite{Gh} for facts about basic cohomology and Riemannian foliations. The
basic Euler characteristic is defined to be 
\begin{equation*}
\chi \left( M,\mathcal{F}\right) =\sum \left( -1\right) ^{j}\dim H^{j}\left(
M,\mathcal{F}\right) .
\end{equation*}%
We have two independent proofs of the following Basic Gauss-Bonnet Theorem;
one proof uses the result in \cite{BePaRi}, and the other proof is a direct
consequence of the basic index theorem stated above (proved in \cite{BKR2}).

In the theorem that follows, we express the basic Euler characteristic in
terms of the ordinary Euler characteristic, which in turn can be expressed
in terms of an integral of curvature. We extend the Euler characteristic
notation $\chi \left( Y\right) $ for $Y$ any open (noncompact without
boundary) or closed (compact without boundary) manifold to mean%
\begin{equation*}
\chi \left( Y\right) =%
\begin{array}{ll}
\chi \left( Y\right) & \text{if }Y\text{ is closed} \\ 
\chi \left( 1\text{-point compactification of }Y\right) -1~ & \text{if }Y%
\text{ is open}%
\end{array}%
\end{equation*}%
Also, if $\mathcal{L}$ is a foliated line bundle over a Riemannian foliation 
$\left( X,\mathcal{F}\right) $, we define the basic Euler characteristic $%
\chi \left( X,\mathcal{F},\mathcal{L}\right) $ as before, using the basic
cohomology groups with coefficients in the line bundle $\mathcal{L}$.

\begin{theorem}
(Basic Gauss-Bonnet Theorem, announced in \cite{RiLodz}, proved in \cite%
{BKR2}) \label{BasicGaussBonnet}Let $\left( M,\mathcal{F}\right) $ be a
Riemannian foliation. Let $M_{0}$,..., $M_{r}$ be the strata of the
Riemannian foliation $\left( M,\mathcal{F}\right) $, and let $\mathcal{O}%
_{M_{j}\diagup \overline{\mathcal{F}}}$ denote the orientation line bundle
of the normal bundle to $\overline{\mathcal{F}}$ in $M_{j}$. Let $L_{j}$
denote a representative leaf closure in $M_{j}$. With notation as above, the
basic Euler characteristic satisfies 
\begin{equation*}
\chi \left( M,\mathcal{F}\right) =\sum_{j}\chi \left( M_{j}\diagup \overline{%
\mathcal{F}}\right) \chi \left( L_{j},\mathcal{F},\mathcal{O}_{M_{j}\diagup 
\overline{\mathcal{F}}}\right) .
\end{equation*}
\end{theorem}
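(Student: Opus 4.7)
The plan is to apply the Basic Index Theorem (the preceding theorem) to the basic de Rham operator $D_b^E = d_b + d_b^*$ acting from even to odd basic forms, and then to mimic, in the foliated setting, the calculation carried out for Theorem \ref{EulerCharacteristicTheorem}. The guiding observation is that via the Molino lift $p\colon \widehat M \to M$, the basic Euler characteristic $\chi(M,\mathcal F)$ is identified with the $O(q)$-invariant Euler characteristic of an associated equivariant de Rham complex on the basic manifold $\widehat W$, so essentially the same local analysis used in Theorem \ref{EulerCharacteristicTheorem} will apply after the replacement $G\diagdown\widetilde{\Sigma_{\alpha_j}}\leftrightarrow \widetilde{M_j}\diagup\overline{\mathcal{F}}$.

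First, I would verify that $\chi(M,\mathcal F)=\mathrm{ind}_b(D_b^E)$ for $E=\Lambda^\ast Q^\ast$ equipped with its natural foliated Clifford module structure; this uses Hodge theory for basic forms, the standard modification of the operator by (one half of) the mean curvature to render it essentially self-adjoint, and the fact that only the $t^0$-coefficient of the heat supertrace is index-theoretically relevant. Second, I would write the operator near each singular stratum in the required product form $D_b^E = D_N \ast D_{M_j}$, where both factors are de Rham-type operators on the appropriate pieces, so that the spherical model $D_j^S$ coincides with the spherical de Rham operator $-c(\partial_r)(d+d^\ast)^S$ already analyzed in Section \ref{ExamplesApplicationsSection}.

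Third, I would carry out the eta and $h$ invariant computations exactly as in the equivariant Euler characteristic case. The symmetry of the spectrum of $D_j^{S+}$ under $\alpha+dr\wedge\beta \mapsto \alpha-dr\wedge\beta$ survives restriction to any canonical isotropy subbundle $W^\tau$, so $\eta(D_j^{S+,\tau})=0$ for every $\tau$. The kernel of $D_j^{S+,\tau}$ is nonzero exactly when $W^\tau$ corresponds either to the trivial isotropy representation or to the sign representation on $\mathcal{O}_{M_j\diagup\overline{\mathcal{F}}}$, with dimension given by the analogue of formula (\ref{hFormulasEuler}). Substituting into the basic index theorem collapses the sum over $\tau$ to a single term
\[
\beta(M_j)=\int_{\widetilde{M_j}\diagup\overline{\mathcal{F}}} A_{j,b}^{\mathcal{O}}(x)\,\widetilde{|dx|},
\]
where $A_{j,b}^{\mathcal{O}}$ is the Gauss–Bonnet/Pfaffian integrand of the basic de Rham operator on the desingularized quotient, twisted by $\mathcal{O}_{M_j\diagup\overline{\mathcal{F}}}$; and a similar unmodified term for $j=0$.

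Finally, I would interpret each such integral as a relative twisted basic Euler characteristic and use the multiplicativity of Euler characteristic in the fibration $L_j\hookrightarrow M_j\to M_j\diagup\overline{\mathcal{F}}$ (valid after desingularization, since the generic leaf closure type is constant on each stratum) to factor
\[
\int_{\widetilde{M_j}\diagup\overline{\mathcal{F}}} A_{j,b}^{\mathcal{O}}
=\chi\!\bigl(M_j\diagup\overline{\mathcal{F}}\bigr)\,
 \chi\!\bigl(L_j,\mathcal F,\mathcal{O}_{M_j\diagup\overline{\mathcal{F}}}\bigr),
\]
after which summing over strata and a telescoping argument with the relative Euler characteristics (absorbing the boundary terms of each $\widetilde{M_j}\diagup\overline{\mathcal{F}}$ into the contributions of the more singular strata) produces the stated formula. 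The main obstacle I anticipate is this last step: one must verify that the desingularization and doubling procedure of Section \ref{BlowupDoubleMainSection}, applied stratum-by-stratum, interacts compatibly with the leaf-closure fibration so that the multiplicativity of Euler characteristics really does split the integrand factor from the transverse factor, and that the relative-to-absolute Euler characteristic bookkeeping on the $\widetilde{M_j}\diagup\overline{\mathcal{F}}$ telescopes cleanly into the plain (non-relative) factor $\chi(M_j\diagup\overline{\mathcal{F}})$ appearing in the statement.
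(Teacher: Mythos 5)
Your proposal follows exactly the route the paper sketches and defers to the companion paper for: apply the Basic Index Theorem to the basic de Rham operator, verify that the eta terms vanish and the kernel-dimension terms single out the trivial and orientation-sign isotropy representations, then interpret the resulting Gauss-Bonnet integrals through the leaf-closure fibration, mimicking the proof of Theorem \ref{EulerCharacteristicTheorem}. The ``telescoping'' step you flag as the main obstacle is not actually needed: the convention for $\chi$ on open manifolds introduced just before the theorem (one-point compactify and subtract one) makes $\chi(M_{j}\diagup\overline{\mathcal{F}})$ coincide with the relative Euler characteristic $\chi(\overline{M_{j}}\diagup\overline{\mathcal{F}},\text{lower strata})$ that appears in Theorem \ref{EulerCharacteristicTheorem}, so the formula falls out directly after the fibration factorization without any further absorption of boundary contributions into deeper strata.
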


\begin{remark}
In \cite[Corollary 1]{GLott}, they show that in special cases the only term
that appears is one corresponding to a most singular stratum.
\end{remark}

\subsection{The orbifold index theorem}

\medskip A smooth orbifold or V-manifold is locally diffeomorphic to the
orbit space of discrete group action on a Euclidean ball. Globally, such a
space may always be rendered as the orbit space of a $O\left( n\right) $
action on a smooth manifold, such that all of the isotropy groups are finite
and the principal isotropy groups are trivial. The orbifold index theorem
was proved in \cite{Kawas1}, \cite{Kawas2}; but we may now use Theorem \ref%
{InvariantIndexTheorem} to express the index of an orbifold elliptic
operator in a slightly different way, because the orbifold index corresponds
to the invariant index of the induced transversally elliptic operator lifted
to the $O\left( n\right) $ action. The integrals over the quotients are
integrals over the orbifold strata. According to Theorem \ref%
{InvariantIndexTheorem}, the index of an orbifold elliptic operator $%
D:\Gamma \left( M,E^{+}\right) \rightarrow \Gamma \left( M,E^{-}\right) $
over a Riemannian orbifold $M$ is%
\begin{eqnarray*}
\mathrm{ind}\left( D\right) &=&\int_{\widetilde{M_{0}}}A_{0}\left( x\right) ~%
\widetilde{\left\vert dx\right\vert }~+\sum_{j=1}^{r}\beta \left( \Sigma
_{j}\right) ~, \\
\beta \left( \Sigma _{j}\right) &=&\frac{1}{2}\sum_{b\in B}\frac{1}{n_{b}%
\mathrm{rank~}W^{b}}\left( -\eta \left( D_{j}^{S+,b}\right) +h\left(
D_{j}^{S+,b}\right) \right) \int_{\widetilde{\Sigma _{j}}}A_{j,b}\left(
x\right) ~\widetilde{\left\vert dx\right\vert }~,
\end{eqnarray*}%
where $\widetilde{M_{0}}$ is the principal stratum of the orbifold (manifold
points) after equivariantly desingularizing along all singular strata and
taking the quotient, and each $\widetilde{\Sigma _{i}}$ is the orbit space
of the fundamental domain of a component of the singular stratum $\Sigma
_{j} $ of the orbifold after desingularizing $\Sigma _{j}$ along all
singular strata properly contained in the closure of $\Sigma _{j}$. Also, $%
A_{0}\left( x\right) $ is the Atiyah-Singer integrand for the operator $D$,
and $A_{j,b}\left( x\right) ~$is the corresponding Atiyah-Singer integrand
on the stratum $\Sigma _{j}$, where the restricted operator is modified by
twisting by the canonical isotropy bundle $W^{b}$ of the isotropy subgroup
of $O\left( n\right) $. The operators $D_{j}^{S+,b}$ on the sphere fibers
are defined as in the rest of the paper and in Theorem \ref%
{InvariantIndexTheorem}.

On the other hand, the orbifold index theorem in \cite{Kawas2} states that%
\begin{eqnarray*}
\mathrm{ind}\left( D\right) &=&\int_{M_{0}}A_{0}\left( x\right) ~\left\vert
dx\right\vert \\
&&+\sum_{i}\frac{1}{m_{i}}\int_{M_{i}}A_{i}\left( x\right) ~\left\vert
dx\right\vert ,
\end{eqnarray*}%
where $A_{0}\left( x\right) $ is the Atiyah-Singer integrand for the
operator $D$, and $A_{i}\left( x\right) ~$is the corresponding Atiyah-Singer
integrand on the set $M_{i}$. The set $M_{i}$ is a connected component of
the blown-up singular set $\Sigma M$ of the orbifold: if a neighborhood of $%
x $ is isometric to $U_{x}\diagup G_{x}$, with $G_{x}<O\left( n\right) $
being the isotropy, 
\begin{equation*}
\Sigma M=\left\{ \left( x,\left( h_{x}\right) \right) :x\in M,\text{ }\left(
h_{x}\right) \text{ is a nontrivial conjugacy class in }G_{x}\right\} .
\end{equation*}%
The centralizer $Z_{G_{x}}\left( h\right) $ of $h\in G_{x}$ is not effective
on $U_{x}$; the order of the trivially acting subgroup of $Z_{G_{x}}\left(
h\right) $ the integer $m_{\left( x,\left( h_{x}\right) \right) }$. This
integer is constant on the connected components $M_{i}$ of $\Sigma M$, so
that one may write $m_{i}=m_{\left( x,\left( h_{x}\right) \right) }$ for any 
$\left( x,\left( h_{x}\right) \right) \in M_{i}$. Clearly, the Kawasaki
result is a different formula than that resulting from our theorem, mainly
because in our case the singular strata are blown up. We note that in \cite%
{Kawas1}, a very similar formula to (\ref{signatureFormula}) above is
provided for the orbifold signature theorem.

\section{Appendix: Generalized spherical harmonics\label{SphereAppendix}}

Collected here are the necessary facts concerning eigenvalues of spherical
operators arising from writing constant coefficient, first order
differential operators in polar coordinates.

\subsection{Polar coordinate form of constant coefficient operator on $%
\mathbb{R}^{k}$\label{PolarCoordFormSection}}

We consider the following situation. Let $Q_{1}:C^{\infty }\left( \mathbb{R}%
^{k},\mathbb{C}^{d}\right) \rightarrow C^{\infty }\left( \mathbb{R}^{k},%
\mathbb{C}^{d}\right) $ be an elliptic, first order, constant coefficient
differential operator on $\mathbb{R}^{k}$ with $k\geq 2$. Let 
\begin{equation*}
Q_{1}=\sum_{j=1}^{k}A_{j}\partial _{j},
\end{equation*}%
where by assumption $A_{j}\in GL\left( d,\mathbb{C}\right) $, and for any $%
c=\left( c_{1},...,c_{k}\right) \in \mathbb{R}^{k}$, $\det \left( \sum
c_{j}A_{j}\right) =0$ implies $c=0$. Note that 
\begin{equation*}
Q_{1}^{\ast }=-\sum_{j=1}^{k}A_{j}^{\ast }\partial _{j}
\end{equation*}%
We form the symmetric operator $Q:C^{\infty }\left( \mathbb{R}^{k},\mathbb{C}%
^{d}\oplus \mathbb{C}^{d}\right) \rightarrow C^{\infty }\left( \mathbb{R}%
^{k},\mathbb{C}^{d}\oplus \mathbb{C}^{d}\right) $ 
\begin{equation*}
Q=\left( 
\begin{array}{cc}
0 & Q_{1}^{\ast } \\ 
Q_{1} & 0%
\end{array}%
\right) =\sum_{j=1}^{k}\left( 
\begin{array}{cc}
0 & -A_{j}^{\ast } \\ 
A_{j} & 0%
\end{array}%
\right) \partial _{j}
\end{equation*}

We write $Q$ in polar coordinates $\left( r,\theta \right) \in \left(
0,\infty \right) \times S^{k-1}$, with $x=r\theta =r\left( \theta
_{1},...,\theta _{k}\right) $, as 
\begin{eqnarray*}
Q_{1} &=&Z^{+}\left( \partial _{r}+\frac{1}{r}Q^{S+}\right) ,\text{ so that}
\\
Q &=&\left( 
\begin{array}{cc}
0 & \left( \partial _{r}+\frac{1}{r}Q^{S+}\right) ^{\ast }Z^{+\ast } \\ 
Z^{+}\left( \partial _{r}+\frac{1}{r}Q^{S+}\right) & 0%
\end{array}%
\right) \\
&=&Z\left( \partial _{r}+\frac{1}{r}Q^{S}\right) ,
\end{eqnarray*}%
where 
\begin{equation*}
Z_{\left( r,\theta \right) }^{+}=\sum_{j=1}^{k}\theta _{j}A_{j}
\end{equation*}%
and%
\begin{eqnarray*}
Q_{\left( r,\theta \right) }^{S+} &=&r\sum_{j=1}^{k}\left( Z^{+}\right)
^{-1}A_{j}\partial _{j}-\sum_{j=1}^{k}x_{j}\partial _{j} \\
&=&r\sum_{j=1}^{k}\left( Z^{+}\right) ^{-1}A_{j}\left( \theta _{j}\partial
_{r}+\frac{1}{r}\frac{\partial }{\partial \theta _{j}}\right)
-\sum_{j=1}^{k}r\theta _{j}\left( \theta _{j}\partial _{r}+\frac{1}{r}\frac{%
\partial }{\partial \theta _{j}}\right) \\
&=&r\left( Z^{+}\right) ^{-1}\left( \sum_{j=1}^{k}A_{j}\theta _{j}\right)
\partial _{r}+\left( Z^{+}\right) ^{-1}\sum_{j=1}^{k}A_{j}\frac{\partial }{%
\partial \theta _{j}}-r\partial _{r}-\sum_{j=1}^{k}\theta _{j}\frac{\partial 
}{\partial \theta _{j}}
\end{eqnarray*}%
\begin{equation}
Q_{\left( r,\theta \right) }^{S+}=\left( Z^{+}\right)
^{-1}\sum_{j=1}^{k}A_{j}\frac{\partial }{\partial \theta _{j}}%
-\sum_{j=1}^{k}\theta _{j}\frac{\partial }{\partial \theta _{j}}=\left(
Z^{+}\right) ^{-1}\sum_{j=1}^{k}A_{j}\frac{\partial }{\partial \theta _{j}}.
\label{DS+formula}
\end{equation}%
Note that $\frac{\partial }{\partial \theta _{j}}$ and $Q^{S+}$ may be
regarded as an operator on the sphere, and the matrix $Z$ is independent of
the radial parameter $r$. Thus the adjoint of $Z$ is also independent of $r$%
. Since $Q^{S+}$ does not differentiate in the $r$ direction, the formal
adjoint of $Q^{S+}$ restricted to the sphere is the same as the formal
adjoint of $Q^{S+}$ as an operator on $\mathbb{R}^{k}$, restricted to
functions that are locally constant in the radial directions. Note that the
formula above implies that if $u$ is a constant vector in $\mathbb{C}^{d}$,
then $Q^{S+}u=0$. Because the formal adjoint of $\partial _{r}$ is $%
-\partial _{r}-\frac{k-1}{r}$ on $\mathbb{R}^{k}$ and by the properties
indicated above, we have 
\begin{eqnarray*}
\left( \partial _{r}+\frac{1}{r}Q^{S+}\right) ^{\ast }Z^{+\ast } &=&\left(
-\partial _{r}-\frac{k-1}{r}+Q^{S+\ast }\circ \frac{1}{r}\right) Z^{+\ast }
\\
&=&-Z^{+\ast }\left( \partial _{r}+\frac{k-1}{r}-\left( Z^{+\ast }\right)
^{-1}Q^{S+\ast }\left( Z^{+\ast }\right) \circ \frac{1}{r}\right) .
\end{eqnarray*}

Thus, we have that 
\begin{equation*}
Z=\left( 
\begin{array}{cc}
0 & Z^{-} \\ 
Z^{+} & 0%
\end{array}%
\right) ,\text{ ~}Q^{S}=\left( 
\begin{array}{cc}
Q^{S+} & 0 \\ 
0 & Q^{S-}%
\end{array}%
\right) ,
\end{equation*}%
with 
\begin{equation*}
Z^{-}=-Z^{+\ast }\text{, }Q^{S-}=\left( k-1\right) I-r\left( Z^{-}\right)
^{-1}Q^{S+\ast }\left( Z^{-}\right) \circ \frac{1}{r}.
\end{equation*}%
Note that by the choice of polar coordinates, $Q^{S-}$ is an operator on the
sphere and does not differentiate in the radial direction, so that 
\begin{eqnarray*}
Q^{S-}\left( r\left( Z^{-}\right) ^{-1}\right) &=&\left[ \left( k-1\right)
I-r\left( Z^{-}\right) ^{-1}Q^{S+\ast }\left( Z^{-}\right) \circ \frac{1}{r}%
\right] \left( r\left( Z^{-}\right) ^{-1}\right) ,\text{ or} \\
rQ^{S-}\left( Z^{-}\right) ^{-1} &=&r\left( k-1\right) \left( Z^{-}\right)
^{-1}-r\left( Z^{-}\right) ^{-1}Q^{S+\ast }.
\end{eqnarray*}%
Multiplying on the left by $\frac{1}{r}Z^{-}$ and solving for $Q^{S+\ast }$,
we obtain 
\begin{eqnarray*}
Q^{S+\ast } &=&\left( k-1\right) I-Z^{-}Q^{S-}\left( Z^{-}\right) ^{-1},%
\text{ or} \\
Q^{S-} &=&\left( k-1\right) I-\left( Z^{-}\right) ^{-1}Q^{S+\ast }\left(
Z^{-}\right) .
\end{eqnarray*}%
In summary, our operator $Q:C^{\infty }\left( \mathbb{R}^{k},\mathbb{C}%
^{d}\oplus \mathbb{C}^{d}\right) \rightarrow C^{\infty }\left( \mathbb{R}%
^{k},\mathbb{C}^{d}\oplus \mathbb{C}^{d}\right) $ satisfies 
\begin{eqnarray}
Q &=&\sum_{j=1}^{k}\left( 
\begin{array}{cc}
0 & -A_{j}^{\ast } \\ 
A_{j} & 0%
\end{array}%
\right) \partial _{j}  \label{D operator polar coords} \\
&=&Z\left( \partial _{r}+\frac{1}{r}Q^{S}\right) =\left( 
\begin{array}{cc}
0 & Z^{-} \\ 
Z^{+} & 0%
\end{array}%
\right) \left( \partial _{r}+\frac{1}{r}\left( 
\begin{array}{cc}
Q^{S+} & 0 \\ 
0 & Q^{S-}%
\end{array}%
\right) \right)  \notag \\
&=&\left( 
\begin{array}{cc}
0 & -Z^{+\ast } \\ 
Z^{+} & 0%
\end{array}%
\right) \left( \partial _{r}+\frac{1}{r}\left( 
\begin{array}{cc}
Q^{S+} & 0 \\ 
0 & \left( k-1\right) I-\left( Z^{+\ast }\right) ^{-1}Q^{S+\ast }\left(
Z^{+\ast }\right)%
\end{array}%
\right) \right)  \notag \\
&=&\left( 
\begin{array}{cc}
0 & Z^{-} \\ 
-Z^{-\ast } & 0%
\end{array}%
\right) \left( \partial _{r}+\frac{1}{r}\left( 
\begin{array}{cc}
\left( k-1\right) I-\left( Z^{-\ast }\right) ^{-1}Q^{S-\ast }\left( Z^{-\ast
}\right) & 0 \\ 
0 & Q^{S-}%
\end{array}%
\right) \right) ,  \notag
\end{eqnarray}%
and 
\begin{equation}
Q^{S\ast }=\left( k-1\right) I-ZQ^{S}Z^{-1}  \label{DSstarFormula}
\end{equation}%
In the case of a Dirac type operator, $Q^{S}$ restricts to a Dirac-type
operator on the sphere that does not inherit the grading.

Note also that%
\begin{equation}
Z_{\left( r,\theta \right) }^{+}=\frac{1}{r}\sum_{j=1}^{k}x_{j}A_{j},\text{
and }Z_{\left( r,\theta \right) }^{+\ast }=\frac{1}{r}%
\sum_{j=1}^{k}x_{j}A_{j}^{\ast }  \label{Z+formula}
\end{equation}%
If we assume that $Z^{+}$ is an isometry, then 
\begin{equation}
\left( Z_{\left( r,\theta \right) }^{+}\right) ^{-1}=\frac{1}{r}%
\sum_{j=1}^{k}x_{j}A_{j}^{\ast }.  \label{Z+InverseFormula}
\end{equation}%
Then%
\begin{equation*}
Q_{\left( r,\theta \right) }^{S+}=r\sum_{j=1}^{k}\left( Z^{+}\right)
^{-1}A_{j}\partial _{j}-\sum_{j=1}^{k}x_{j}\partial
_{j}=\sum_{j,l=1}^{k}x_{l}A_{l}^{\ast }A_{j}\partial
_{j}-\sum_{j=1}^{k}x_{j}\partial _{j},
\end{equation*}%
so that $Q^{S+}$ fixes the vector space of polynomials in $x$ that are
homogeneous of degree $m$, for $m\geq 0$. A similar fact is true for $\left(
Q^{S+}\right) ^{\ast }$. In the following section, it will be shown that in
fact $\left( Q^{S+}\right) ^{\ast }=Q^{S+}$ on appropriate subspaces of
sections. For future reference, note that $Q^{S+}$ is a continuous function
of the matrices $A_{j}$.

\subsection{Eigenvalues of $Q^{S+}$\label{eigenvaluesDSsection}}

Let 
\begin{eqnarray*}
\Delta &=&Q_{1}^{\ast }Q_{1}=\sum -A_{i}^{\ast }A_{j}\partial _{i}\partial
_{j} \\
&=&\left( -\partial _{r}-\frac{k-1}{r}+\frac{1}{r}\left( Q^{S+}\right)
^{\ast }\right) \left( Z^{+}\right) ^{\ast }Z^{+}\left( \partial _{r}+\frac{1%
}{r}Q^{S+}\right) ,\text{ and} \\
\Delta &=&\left( -\partial _{r}-\frac{k-1}{r}+\frac{1}{r}\left(
Q^{S+}\right) ^{\ast }\right) \left( \partial _{r}+\frac{1}{r}Q^{S+}\right)
\end{eqnarray*}%
if we let $Z^{+}$ be an isometry. Let%
\begin{equation*}
B\left( x\right) =\sum -x_{i}x_{j}A_{i}^{\ast }A_{j},
\end{equation*}%
which is, up to a sign, the principal symbol of $\Delta $. Let $\mathcal{P}%
_{m}$ denote the vector space of polynomials in $x$ that are homogeneous of
degree $m$ with coefficients that are $d$-dimensional complex vectors. We
define the following Hermitian inner product $\left\langle ~\cdot ~,~\cdot
~\right\rangle $ on $\mathcal{P}_{m}$. Given any $P,Q\in \mathcal{P}_{m}$,
we write%
\begin{equation*}
P\left( x\right) =\sum P_{\alpha }x^{\alpha },~Q\left( x\right) =\sum
Q_{\alpha }x^{\alpha }
\end{equation*}%
where $\alpha =\left( \alpha _{1},...,\alpha _{k}\right) $ is a multiindex
and $x^{\alpha }=x_{1}^{\alpha _{1}}x_{2}^{\alpha _{2}}...x_{k}^{\alpha
_{k}} $. We define%
\begin{equation*}
\left\langle P,Q\right\rangle =\sum \alpha !P^{\alpha }\cdot \overline{%
Q^{\alpha }},
\end{equation*}%
where $\alpha !=\alpha _{1}!...\alpha _{k}!$. Observe that%
\begin{equation*}
\left\langle P,Q\right\rangle =P\left( \partial \right) \cdot \overline{%
Q\left( x\right) },
\end{equation*}%
where $P\left( \partial \right) $ means $\sum P^{\alpha }\partial _{\alpha }$%
, and where $P\left( \partial \right) \cdot \overline{Q\left( x\right) }$
means $\sum \alpha !P^{\alpha }\cdot \overline{Q^{\beta }}\partial _{\alpha
}x^{\beta }$. Observe that $\left\langle ~\cdot ~,~\cdot ~\right\rangle $ is
a positive definite Hermitian inner product on $\mathcal{P}_{m}$.

\begin{proposition}
For $m\geq 2$, the linear map $\Delta :\mathcal{P}_{m}\rightarrow \mathcal{P}%
_{m-2}$ is onto. Also, letting 
\begin{equation*}
\mathcal{H}_{m}=\ker \Delta \cap \mathcal{P}_{m}=\ker Q_{1}\cap \mathcal{P}%
_{m},
\end{equation*}%
the space $\mathcal{P}_{m}$ is the orthogonal direct sum 
\begin{equation*}
\mathcal{P}_{m}=\bigoplus_{j=0}^{\left\lfloor m/2\right\rfloor }\left(
B\left( x\right) \right) ^{2j}\mathcal{H}_{m-2j}.
\end{equation*}
\end{proposition}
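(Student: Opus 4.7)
The plan is to identify the adjoint of $\Delta$ with respect to the given Fischer-type inner product, derive surjectivity from the ellipticity hypothesis, then iterate the standard kernel-image decomposition; the orthogonality of the full decomposition will be the main technical obstacle. Throughout I read the exponent in the statement as $B(x)^{j}$ (so that degrees match: $\deg B(x)^{j}\mathcal{H}_{m-2j} = 2j+(m-2j)=m$).

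First I would verify that the adjoint $\Delta^{\mathrm{adj}}:\mathcal{P}_{m-2}\to\mathcal{P}_m$ with respect to $\langle P,Q\rangle = P(\partial)\overline{Q(x)}|_{x=0}$ is multiplication by $B(x)$. This reduces to the elementary facts that $\partial_i^{\mathrm{adj}} = x_i$ (checked on monomials using $\alpha!=\alpha_i(\alpha-e_i)!$) and that a constant matrix $A$ acting pointwise on vector-valued polynomials has polynomial-adjoint equal to its Hermitian adjoint $A^{*}$. Combining these identifies $\Delta^{\mathrm{adj}}$ with multiplication by $-\sum x_i x_j A_i^{*} A_j = B(x)$. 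The ellipticity of $Q_1$ guarantees that $Z^{+}(x)=\sum x_j A_j$ is invertible for every $x\neq 0$, so $B(x)=-Z^{+}(x)^{*}Z^{+}(x)$ is nonsingular for $x\neq 0$; hence multiplication by $B(x)$ on polynomials is injective (any polynomial killed by it vanishes off $\{0\}$, hence identically). Therefore $\Delta^{\mathrm{adj}}$ is injective and $\Delta$ is surjective.

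The general orthogonal decomposition for linear maps between finite-dimensional inner product spaces then gives
\begin{equation*}
\mathcal{P}_m = \mathcal{H}_m \oplus B(x)\,\mathcal{P}_{m-2}, \quad \text{orthogonally.}
\end{equation*}
Proceeding by induction on $m$ (trivial base cases $m=0,1$) and applying this decomposition recursively to $\mathcal{P}_{m-2}$, then injecting each summand back into $\mathcal{P}_m$ by multiplication by $B(x)$, assembles the algebraic direct sum
\begin{equation*}
\mathcal{P}_m = \bigoplus_{j=0}^{\lfloor m/2\rfloor} B(x)^{j}\,\mathcal{H}_{m-2j},
\end{equation*}
with directness guaranteed by the injectivity of multiplication by $B(x)$ established above.

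The hardest part will be establishing mutual orthogonality of the summands $B(x)^{j}\mathcal{H}_{m-2j}$ in the ambient inner product on $\mathcal{P}_m$, not merely in the lower-degree spaces used by the induction. For $a<b$, $h_a\in\mathcal{H}_{m-2a}$, $h_b\in\mathcal{H}_{m-2b}$, the adjoint identity gives
\begin{equation*}
\langle B^{a}h_a,\, B^{b}h_b\rangle_m = \langle h_a,\, \Delta^{a}B^{b}h_b\rangle_{m-2a},
\end{equation*}
so it suffices to show $\Delta^{a}B^{b}h_b \in B(x)\,\mathcal{P}_{m-2a-2}$, which is $\mathcal{H}_{m-2a}^{\perp}$ in $\mathcal{P}_{m-2a}$. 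In the classical scalar case this follows from $\Delta(r^{2b}h) = 2b(2m+k-2-2b)\,r^{2b-2}h$ for harmonic $h$, a consequence of the zeroth-order commutator $[\Delta,r^{2}]=4E+2k$. In the matrix-valued setting the commutator $[\Delta,B(x)]$ is \emph{not} zeroth order, because $A_i^{*}A_j$ and $A_p^{*}A_q$ need not commute and genuine second-order terms $[A_i^{*}A_j,\,A_p^{*}A_q]\,x_p x_q\,\partial_i\partial_j$ survive. To circumvent this, I would argue by a double induction on $m$ and on $b-a$, showing that $\Delta(B^{b}h)$ for a harmonic $h$ differs from an element of $B(x)\mathcal{P}_{m-4}$ only by correction terms which the induction hypothesis has already placed in the image of multiplication by $B(x)$, so the full quantity lies in $B(x)\mathcal{P}$. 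Applying this inductive control $a$ times yields the required containment and completes the orthogonality.
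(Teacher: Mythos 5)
Your first three paragraphs reproduce the paper's own argument faithfully: the Fischer-adjoint identity $\langle Q, \Delta P\rangle = \langle B(x)Q, P\rangle$, injectivity of multiplication by $B(x)$ from ellipticity of $Q_1$, hence surjectivity of $\Delta$ and the orthogonal splitting $\mathcal{P}_m = \mathcal{H}_m \oplus B(x)\mathcal{P}_{m-2}$, and the iteration (using injectivity of $B(x)$ to preserve directness) yielding $\mathcal{P}_m = \bigoplus_j B(x)^j\mathcal{H}_{m-2j}$. You are also right that the exponent $2j$ in the displayed statement must be read as $j$, and right to flag that the paper's closing ``The result follows'' delivers only a direct sum: the iteration does not give mutual orthogonality of the pieces, precisely because $[\Delta, B(x)]$ is genuinely second order in the matrix-valued setting.

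However, the repair you sketch in the final paragraph cannot succeed, because mutual orthogonality of the summands $B(x)^j\mathcal{H}_{m-2j}$ in the Fischer inner product is in fact \emph{false}, already for $d=1$. Take $k=2$ and $Q_1 = \partial_1 + 2i\partial_2$, so $\Delta = -(\partial_1^2 + 4\partial_2^2)$ and $B(x) = -(x_1^2 + 4x_2^2)$; then $h = 4x_1^2 - x_2^2 \in \mathcal{H}_2$ and $1 \in \mathcal{H}_0$, and with $\langle x^\alpha, x^\beta\rangle = \alpha!\,\delta_{\alpha\beta}$ one computes
\begin{equation*}
\bigl\langle B(x)h,\ B(x)^2\cdot 1\bigr\rangle = -\bigl\langle 4x_1^4 + 15x_1^2 x_2^2 - 4x_2^4,\ x_1^4 + 8x_1^2 x_2^2 + 16x_2^4\bigr\rangle = -(96 + 480 - 1536) = 960 \neq 0,
\end{equation*}
so $B(x)\mathcal{H}_2 \not\perp B(x)^2\mathcal{H}_0$ inside $\mathcal{P}_4$. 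Equivalently, $\Delta B(x)^2\cdot 1$ has a nonzero harmonic component, so the containment $\Delta^a B^b h \in B(x)\mathcal{P}$ that your double induction would need to produce is simply not available. The correct statement drops the word ``orthogonal'': $\mathcal{P}_m$ is the (generally non-orthogonal) direct sum $\bigoplus_j B(x)^j\mathcal{H}_{m-2j}$, with only the one-step splitting $\mathcal{P}_m = \mathcal{H}_m \oplus B(x)\mathcal{P}_{m-2}$ being orthogonal. That weaker direct-sum statement is all that is actually used downstream, in Propositions \ref{DsEigenvalues}, \ref{harmonicSectionsSpanPolys}, and \ref{DSspectrumCorollary} (dimension counts and linear independence restricted to the sphere), so the rest of the paper is unaffected.
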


\begin{proof}
Observe that for every $Q\in \mathcal{P}_{m-2}$ and every $P\in \mathcal{P}%
_{m}$,%
\begin{eqnarray*}
\left\langle Q,\Delta P\right\rangle &=&Q\left( \partial \right) \cdot 
\overline{B\left( \partial \right) P\left( x\right) } \\
&=&\sum_{\alpha ,\beta ,i,j}\alpha !Q_{\alpha }\cdot \overline{\left(
-A_{i}^{\ast }A_{j}P_{\beta }\right) }\partial _{\alpha }\left( \partial
_{i}\partial _{j}x^{\beta }\right) \\
&=&\sum_{\alpha ,\beta ,i,j}\alpha !\left( -A_{j}^{\ast }A_{i}Q_{\alpha
}\right) \cdot \overline{P_{\beta }}\partial _{\alpha }\left( \partial
_{j}\partial _{i}x^{\beta }\right) \\
&=&\left( BQ\right) \left( \partial \right) \cdot \overline{P\left( x\right) 
}=\left\langle BQ,P\right\rangle .
\end{eqnarray*}%
If $Q\in \mathcal{P}_{m-2}$ is orthogonal to $\Delta \left( \mathcal{P}%
_{m}\right) $, then the computation above implies that every $P\in \mathcal{P%
}_{m}$ is orthogonal to $BQ\in \mathcal{P}_{m}$, so $BQ=0$ and thus $Q=0$.
(Here we use the fact that $Q_{1}$ and thus $\Delta $ is elliptic.) Thus $%
\Delta :\mathcal{P}_{m}\rightarrow \mathcal{P}_{m-2}$ is onto. Also, $\Delta
P=0$ if and only if $\left\langle BQ,P\right\rangle =0$ for every $Q\in 
\mathcal{P}_{m-2}$, so $\mathcal{P}_{m}=\mathcal{H}_{m}\oplus B\left(
x\right) \mathcal{P}_{m-2}$. The result follows.
\end{proof}

\begin{corollary}
Let $d_{m}$ denote the dimension of $\mathcal{P}_{m}$, which is $d\left( 
\begin{array}{c}
m+k-1 \\ 
m%
\end{array}%
\right) $. The dimension $h_{m}$ of the space $\mathcal{H}_{m}$ is 
\begin{equation*}
h_{m}=\left\{ 
\begin{array}{ll}
d_{m}-d_{m-2} & \text{if }m\geq 2 \\ 
d_{m} & \text{if }m=0,1%
\end{array}%
\right.
\end{equation*}
\end{corollary}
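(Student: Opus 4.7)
The plan is to combine a standard dimension count with the orthogonal splitting established in the preceding proposition.

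First, I would note that $d_m = \dim \mathcal{P}_m = d\binom{m+k-1}{m}$ by the usual stars-and-bars count: $\binom{m+k-1}{m}$ enumerates monomials of degree $m$ in $k$ variables, and each monomial carries a coefficient in $\mathbb{C}^d$.

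Next, the key observation extracted from the proof of the proposition is that the map $B(x)\cdot : \mathcal{P}_{m-2} \to \mathcal{P}_m$ given by multiplication by $B(x)$ is injective. Indeed, if $B(x)Q = 0$, then for every $P \in \mathcal{P}_m$ we have
\[
0 = \langle BQ, P\rangle = \langle Q, \Delta P\rangle,
\]
using the adjoint-like identity derived in the proposition's proof; since $\Delta : \mathcal{P}_m \to \mathcal{P}_{m-2}$ was shown to be surjective, this forces $Q = 0$. Hence $\dim B(x)\mathcal{P}_{m-2} = d_{m-2}$.

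For $m \geq 2$, the proposition gives the orthogonal decomposition $\mathcal{P}_m = \mathcal{H}_m \oplus B(x)\mathcal{P}_{m-2}$, from which
\[
h_m = \dim \mathcal{P}_m - \dim B(x)\mathcal{P}_{m-2} = d_m - d_{m-2}.
\]
For $m = 0, 1$ there is no lower-degree space to split off (ellipticity of $Q_1$ precludes nontrivial constant or linear elements of $\ker \Delta$ being obstructed; more directly, $\mathcal{P}_{m-2}$ is vacuous), and the full iterated decomposition $\mathcal{P}_m = \bigoplus_j B(x)^j\mathcal{H}_{m-2j}$ terminates with $\mathcal{H}_m = \mathcal{P}_m$, yielding $h_m = d_m$. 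Since every step is a direct consequence of the proposition, there is no real obstacle; the argument is purely a bookkeeping exercise.
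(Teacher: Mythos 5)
Your proof is correct. The shortest derivation, once the Proposition's surjectivity of $\Delta:\mathcal{P}_m\to\mathcal{P}_{m-2}$ is in hand, is simply rank--nullity applied to $\Delta|_{\mathcal{P}_m}$: since $\mathcal{H}_m = \ker\bigl(\Delta|_{\mathcal{P}_m}\bigr)$ by definition, $h_m = d_m - d_{m-2}$ follows immediately for $m\geq 2$, with no need to invoke the orthogonal splitting or to verify injectivity of multiplication by $B(x)$. Your route instead establishes injectivity of $B(x)\cdot:\mathcal{P}_{m-2}\to\mathcal{P}_m$ --- which you correctly deduce from the adjoint pairing $\langle BQ,P\rangle=\langle Q,\Delta P\rangle$ together with surjectivity of $\Delta$ --- and then reads $h_m$ off the decomposition $\mathcal{P}_m = \mathcal{H}_m\oplus B(x)\mathcal{P}_{m-2}$. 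The two computations are dual to each other under the inner product (injectivity of $B$ is exactly the statement that $\Delta$ has full rank) and encode the same information; yours is marginally longer but makes the complementary subspace to $\mathcal{H}_m$ explicit, which mirrors how the Proposition's proof is actually organized. The $m=0,1$ cases are handled correctly, since $\mathcal{P}_{m-2}$ is trivial there and $\mathcal{H}_m = \mathcal{P}_m$.
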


Next, let 
\begin{equation*}
Q=\left( 
\begin{array}{cc}
0 & Q_{1}^{\ast } \\ 
Q_{1} & 0%
\end{array}%
\right) =\sum_{j=1}^{k}\left( 
\begin{array}{cc}
0 & -A_{j}^{\ast } \\ 
A_{j} & 0%
\end{array}%
\right) \partial _{j}
\end{equation*}%
as above, so that 
\begin{equation*}
Q^{2}=\left( 
\begin{array}{cc}
\Delta & 0 \\ 
0 & \left( Z^{+}\right) \widetilde{\Delta }\left( Z^{+}\right) ^{-1}%
\end{array}%
\right) ,
\end{equation*}%
with%
\begin{equation*}
\widetilde{\Delta }=\left( Z^{+}\right) ^{-1}Q_{1}Q_{1}^{\ast }Z^{+}.
\end{equation*}%
Writing $Q$ in polar form 
\begin{equation*}
Q=Z\left( \partial _{r}+\frac{1}{r}Q^{S}\right) ,
\end{equation*}%
note that a polynomial $p_{m}$ of degree $m$ is in the kernel $\mathcal{H}%
_{m}=\ker \Delta $ implies

\begin{eqnarray}
\left( -\partial _{r}-\frac{k-1}{r}+\frac{1}{r}\left( Q^{S+}\right) ^{\ast
}\right) \left( \partial _{r}+\frac{1}{r}Q^{S+}\right) p_{m} &=&0,\text{ or}
\notag \\
\left( -\frac{m+k-2}{r}+\frac{1}{r}\left( Q^{S+}\right) ^{\ast }\right)
\left( \frac{m}{r}+\frac{1}{r}Q^{S+}\right) p_{m} &=&0,\text{ so that} 
\notag \\
\left( -\left( m+k-2\right) +\left( Q^{S+}\right) ^{\ast }\right) \left(
m+Q^{S+}\right) p_{m} &=&0.  \label{Ds+equation}
\end{eqnarray}%
Since the vector space ~$\mathcal{H}_{m}$ of polynomials that are
homogeneous of degree $m$ in $\ker $ $\Delta $ is finite-dimensional and is
fixed by $Q^{S+}$ and $\left( Q^{S+}\right) ^{\ast }$, the following linear
algebra fact is relevant (thanks to George Gilbert and Igor Prokhorenkov):

\begin{lemma}
\label{linAlgLemma}Suppose that $L$ is an $r\times r$ complex matrix that
satisfies%
\begin{equation*}
\left( L-aI\right) \left( L^{\ast }-bI\right) =0,
\end{equation*}%
where $L^{\ast }$ is the adjoint (which is the conjugate transpose), $a,b\in 
\mathbb{R}$, and $I$ is the identity matrix. Then $L$ is Hermitian, and each
eigenvalue is $a$ or $b$.
\end{lemma}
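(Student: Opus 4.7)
The plan is to exploit the symmetry obtained by taking the adjoint of the given identity. Starting from $(L-aI)(L^*-bI) = 0$ and applying $*$, since $a,b$ are real we get
\begin{equation*}
(L-bI)(L^*-aI) = 0.
\end{equation*}
Expanding both products yields two expressions for $LL^*$: namely $LL^* = bL + aL^* - abI$ from the first, and $LL^* = aL + bL^* - abI$ from the second. Subtracting gives $(b-a)(L - L^*) = 0$.

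First I would handle the generic case $a \neq b$, which forces $L = L^*$ immediately, so $L$ is Hermitian. Substituting back, the original relation becomes $(L-aI)(L-bI) = 0$, so the minimal polynomial of $L$ divides $(x-a)(x-b)$, and every eigenvalue is $a$ or $b$.

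Next I would treat the degenerate case $a = b$. Here the hypothesis reads $(L-aI)(L^* - aI) = 0$, which is exactly $(L-aI)(L-aI)^* = 0$ (using reality of $a$). Taking the trace of both sides gives $\|L - aI\|_F^2 = 0$, hence $L = aI$, which is Hermitian with sole eigenvalue $a$. This disposes of the remaining case.

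The argument is entirely elementary linear algebra; the only potential obstacle is making sure the $a=b$ case is not overlooked, since the subtraction step yields no information there. Taking the Frobenius-norm trace, however, resolves it cleanly, so no step should present genuine difficulty.
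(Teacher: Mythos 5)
Your proof is correct and follows essentially the same route as the paper: take adjoints to get the symmetric relation, subtract to conclude $L=L^*$ when $a\neq b$, and observe $MM^*=0\Rightarrow M=0$ when $a=b$. The only cosmetic difference is that you justify the $a=b$ case via the trace/Frobenius-norm argument, whereas the paper states the fact $MM^*=0\Rightarrow M=0$ without proof.
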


\begin{proof}
If $a=b$, then $L=aI$, since if $M$ is an $r\times r$ matrix with $MM^{\ast
}=0$, then $M=0$. If $a\neq b$, then by taking adjoints we also have that $%
\left( L-bI\right) \left( L^{\ast }-aI\right) =0$. Subtracting the two
equations, we obtain%
\begin{equation*}
\left( a-b\right) L+\left( b-a\right) L^{\ast }=0,
\end{equation*}%
so that $L=L^{\ast }$ and is thus Hermitian, and since its minimal
polynomial is a factor of $\left( x-a\right) \left( x-b\right) $, the result
follows.
\end{proof}

\begin{proposition}
\label{DsEigenvalues}For any $m\in \mathbb{Z}_{\geq 0}$ the restriction $%
\mathcal{H}_{m}^{S}$ of $\mathcal{H}_{m}=\mathcal{P}_{m}\cap \ker \Delta $
to the unit sphere $S^{k-1}\subset \mathbb{R}^{k}$ is the $L^{2}$-orthogonal
direct sum of the eigenspaces of $Q^{S+}$ corresponding to eigenvalues $-m$
and $m+k-2$.
\end{proposition}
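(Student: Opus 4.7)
Plan. The plan is to apply Lemma~\ref{linAlgLemma} to the operator $Q^{S+}$ on the finite-dimensional subspace $\mathcal{H}_m^S$. The key computational input is already contained in (\ref{Ds+equation}): for $p_m \in \mathcal{P}_m$ written in polar form as $p_m(r,\theta) = r^m \tilde{p}_m(\theta)$,
$$\Delta p_m = r^{m-2}\bigl((Q^{S+})^* - (m+k-2)\bigr)\bigl(Q^{S+} + m\bigr)\tilde{p}_m,$$
so $\mathcal{H}_m^S$ is precisely the kernel of $\Phi := (L^* - (m+k-2))(L + m)$ acting on the finite-dimensional subspace $V \subset C^\infty(S^{k-1}, \mathbb{C}^d)$ of restrictions to the sphere of elements of $\mathcal{P}_m$, where $L := Q^{S+}|_V$ and $L^*$ is its $L^2(S^{k-1})$-adjoint. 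Both $L$ and $L^*$ leave $V$ invariant: for $L$ by (\ref{DS+formula}), and for $L^*$ by the intertwining identity (\ref{DSstarFormula}) together with the fact that $Z^\pm$ is homogeneous of degree zero.

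Setting $M := L^*|_V$, the identity $\Phi = 0$ on $\mathcal{H}_m^S$ becomes $(M - (m+k-2))(M^* - (-m))|_{\mathcal{H}_m^S} = 0$, which is exactly the hypothesis of Lemma~\ref{linAlgLemma} with the real scalars $a = m+k-2$ and $b = -m$. The lemma then forces $M$, and hence $L$, to be Hermitian on $\mathcal{H}_m^S$ with spectrum contained in $\{-m, m+k-2\}$. The spectral theorem on a finite-dimensional inner product space immediately yields the claimed $L^2$-orthogonal direct sum decomposition of $\mathcal{H}_m^S$ into eigenspaces for the two eigenvalues (which are distinct whenever $2m+k-2 \neq 0$; in the lone degenerate case $k=2,\,m=0$ both eigenvalues coincide with $0$ and $\mathcal{H}_0^S$ is the space of constants, consistent with the statement).

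The main obstacle is that Lemma~\ref{linAlgLemma} applies only to endomorphisms of a common finite-dimensional space, so one must show that $\mathcal{H}_m^S$ itself is invariant under $L$ (equivalently under $L^*$). My approach would be to compute
$$[\Phi, L] \;=\; [L^*, L]\,(L + m) \qquad \text{on } V,$$
and to show this commutator vanishes on $\ker \Phi = \mathcal{H}_m^S$. Since $(L + m)\mathcal{H}_m^S \subset \ker(L^* - (m+k-2))$, the question reduces to showing that $[L^*, L]$ annihilates this image. This can be carried out by direct computation using the polar expression $Q = Z(\partial_r + \tfrac{1}{r}Q^S)$ together with the relations $Z^- = -(Z^+)^*$ and $(Q^{S+})^* = (k-1)I - Z^- Q^{S-}(Z^-)^{-1}$ from (\ref{DSstarFormula}), which re-express $[L^*,L]$ in terms of $L$ alone and a scalar correction. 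An alternative route is to induct on $m$ using the orthogonal decomposition $\mathcal{P}_m = \bigoplus_{j=0}^{\lfloor m/2 \rfloor} B(x)^{2j}\mathcal{H}_{m-2j}$ from the preceding proposition, exploiting the fact that $L$ preserves polynomial degree and the base cases $m = 0,1$ (where $\mathcal{H}_m = \mathcal{P}_m$) are immediate. The hard part is this invariance step; once it is in place the lemma finishes the argument in one line.
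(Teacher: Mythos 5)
Your plan follows the paper's proof exactly: the paper establishes Formula~(\ref{Ds+equation}), asserts in the sentence immediately preceding Lemma~\ref{linAlgLemma} that $\mathcal{H}_m$ ``is finite-dimensional and is fixed by $Q^{S+}$ and $(Q^{S+})^*$,'' and then cites the lemma and the formula as the proof. So you have correctly reconstructed the paper's route, and moreover you have correctly pinpointed that the reduction to Lemma~\ref{linAlgLemma} silently requires the invariance of $\mathcal{H}_m^S$ under both $Q^{S+}$ and $(Q^{S+})^*$, a point the paper states but does not justify. Your observation about the degenerate case $k=2$, $m=0$ is also correct. The difference between you and the paper is simply that you flag the invariance as a gap, while the paper asserts it; neither text supplies an argument.

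The sticking point is that your two suggested routes to fill that gap are not yet proofs, and the first one is in fact circular as stated. Writing $\Phi = (L^*-a)(L-b)$ with $a=m+k-2$, $b=-m$, you compute $[\Phi,L] = [L^*,L](L-b)$ on $V=\mathcal{P}_m^S$ and want it to vanish on $\ker\Phi$. Since $(L-b)(\ker\Phi) \subseteq \ker(L^*-a)$, this amounts to showing $[L^*,L]$ annihilates $\ker(L^*-a)$, i.e.\ that $L$ preserves $\ker(L^*-a)$ --- which is a variant of the very invariance you are trying to establish. It is also not a formal identity: for a general endomorphism $L$ of a finite-dimensional Hilbert space, $\ker\bigl((L^*-a)(L-b)\bigr)$ need not be $L$-invariant, so some special structure of $Q^{S+}$ must enter. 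Your second route, inducting on $m$ via $\mathcal{P}_m = \bigoplus_j B(x)^j\mathcal{H}_{m-2j}$, is more promising, but note that the orthogonality in that decomposition is with respect to the Bargmann--Fock pairing on $\mathcal{P}_m$, whereas the proposition needs $L^2(S^{k-1})$-orthogonality, and the two inner products differ degree by degree; this transfer also needs an argument. Finally, one should be slightly wary of the earlier proposition's stated equality $\mathcal{H}_m=\ker\Delta\cap\mathcal{P}_m=\ker Q_1\cap\mathcal{P}_m$: taking $k=2$, $d=1$, $Q_1=\partial_1+i\partial_2$ gives $\Delta=-\Delta_{\mathrm{flat}}$, for which $\ker\Delta\cap\mathcal{P}_m$ (harmonic, dimension $2$) strictly contains $\ker Q_1\cap\mathcal{P}_m$ (holomorphic, dimension $1$). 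The correct definition used throughout is $\mathcal{H}_m=\ker\Delta\cap\mathcal{P}_m$, which is consistent with the two-eigenvalue decomposition being claimed here. So: correct identification of the approach and of the hidden step, but that step is still open in your writeup just as it is implicit in the paper's.
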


\begin{proof}
Lemma \ref{linAlgLemma} and Formula \ref{Ds+equation}.
\end{proof}

Observe that $d_{0}=\dim \mathcal{P}_{0}=\dim \mathcal{H}_{0}=h_{0}=d$, and $%
d_{1}=\dim \mathcal{P}_{1}=\dim \mathcal{H}_{1}=h_{1}=dk$. Letting $%
E_{\lambda }$ denote the eigenspace of $Q^{S+}$ corresponding to eigenvalue $%
\lambda $, by formula (\ref{DS+formula}) we have%
\begin{eqnarray*}
d &=&\dim E_{0}+\dim E_{k-2}\geq d+\dim E_{k-2}\text{, so} \\
\dim E_{0} &=&d;~\dim E_{k-2}=0.
\end{eqnarray*}

Note that we do have similar facts for $\ker \widetilde{\Delta }$, because
also maps $\mathcal{P}_{m}$ to $\mathcal{P}_{m-2}$. If $p_{m}\in \mathcal{P}%
_{m}\cap \ker \widetilde{\Delta }$, we have 
\begin{eqnarray*}
\widetilde{\Delta }p_{m} &=&\left( \partial _{r}+\frac{1}{r}Q^{S+}\right)
\left( -\partial _{r}-\frac{k-1}{r}+\frac{1}{r}\left( Q^{S+}\right) ^{\ast
}\right) p_{m} \\
&=&\left( \frac{m-1}{r}+\frac{1}{r}Q^{S+}\right) \left( -\frac{m+k-1}{r}+%
\frac{1}{r}\left( Q^{S+}\right) ^{\ast }\right) p_{m}=0,
\end{eqnarray*}%
which implies%
\begin{equation*}
\left( m-1+Q^{S+}\right) \left( -\left( m+k-1\right) +\left( Q^{S+}\right)
^{\ast }\right) p_{m}=0.
\end{equation*}%
Then, Lemma \ref{linAlgLemma} implies the following.

\begin{proposition}
\label{HarmonicsDSEigenspaces}For any $m\in \mathbb{Z}_{\geq 0}$ the
restriction $\widetilde{\mathcal{H}_{m}^{S}}$ of $\mathcal{P}_{m}\cap \ker 
\widetilde{\Delta }$ to the unit sphere $S^{k-1}\subset \mathcal{R}^{k}$ is
the $L^{2}$-orthogonal direct sum of the eigenspaces of $Q^{S+}$
corresponding to eigenvalues $-m+1$ and $m+k-1$.
\end{proposition}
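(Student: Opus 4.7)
The proof would proceed exactly in parallel with Proposition \ref{DsEigenvalues}, since the essential computation is already laid out in the paragraph preceding the statement. There it is shown that for $p_m \in \mathcal{P}_m$, writing $\widetilde{\Delta}$ in polar form and using $\partial_r p_m = (m/r)\,p_m$ yields
\begin{equation*}
\widetilde{\Delta}\,p_m = \tfrac{1}{r^2}\bigl(Q^{S+}+(m-1)\bigr)\bigl((Q^{S+})^{\ast}-(m+k-1)\bigr)\,p_m,
\end{equation*}
so that $p_m \in \mathcal{P}_m\cap \ker\widetilde{\Delta}$ is equivalent to $\bigl(Q^{S+}+(m-1)\bigr)\bigl((Q^{S+})^{\ast}-(m+k-1)\bigr)p_m = 0$.

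Next, I would view $\widetilde{\mathcal{H}_m^S}$ as a finite-dimensional subspace of $L^{2}(S^{k-1},\mathbb{C}^d)$, using the identification of homogeneous polynomials with their restrictions to the unit sphere. One checks, exactly as is implicitly done in the proof of Proposition \ref{DsEigenvalues}, that $Q^{S+}$ and $(Q^{S+})^{\ast}$ preserve this subspace (using that $\widetilde{\Delta}$ is self-adjoint and has a polar factorization whose only tangential ingredients are $Q^{S+}$ and its formal adjoint), and that the operator $(Q^{S+})^{\ast}$ as a differential operator on the sphere agrees on $\widetilde{\mathcal{H}_m^S}$ with the Hermitian adjoint of $Q^{S+}\vert_{\widetilde{\mathcal{H}_m^S}}$ relative to the $L^2$-inner product.

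Taking adjoints of the displayed identity (which is legitimate since it holds as an operator identity on the finite-dimensional invariant subspace), one gets the equivalent identity $\bigl(Q^{S+}-(m+k-1)\bigr)\bigl((Q^{S+})^{\ast}+(m-1)\bigr)=0$ on $\widetilde{\mathcal{H}_m^S}$. Setting $L=Q^{S+}\vert_{\widetilde{\mathcal{H}_m^S}}$, $a=1-m$, $b=m+k-1$, this is precisely the hypothesis $(L-aI)(L^{\ast}-bI)=0$ of Lemma \ref{linAlgLemma}. The lemma then forces $L$ to be Hermitian on $\widetilde{\mathcal{H}_m^S}$, with every eigenvalue equal to $a=-m+1$ or $b=m+k-1$. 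Hermiticity automatically gives $L^{2}$-orthogonality of eigenspaces belonging to distinct eigenvalues, completing the proof.

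The only genuinely delicate step is the invariance assertion of the second paragraph, i.e.\ that $Q^{S+}$ and $(Q^{S+})^{\ast}$ really do preserve $\widetilde{\mathcal{H}_m^S}$ so that Lemma \ref{linAlgLemma} is applicable as a statement about endomorphisms; this is the same obstacle implicit in Proposition \ref{DsEigenvalues}, and is handled in the same way by exploiting the factorization of $\widetilde{\Delta}$ together with its ellipticity and formal self-adjointness. Everything else — the derivation of the polynomial operator identity, the adjoint swap, and the final eigenvalue readout — is routine given Lemma \ref{linAlgLemma} and the computation supplied just above the statement.
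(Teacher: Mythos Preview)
Your proposal is correct and follows essentially the same route as the paper: derive the operator identity $\bigl(Q^{S+}+(m-1)\bigr)\bigl((Q^{S+})^{\ast}-(m+k-1)\bigr)=0$ on the finite-dimensional space $\mathcal{P}_m\cap\ker\widetilde{\Delta}$ and then invoke Lemma~\ref{linAlgLemma}. Your extra adjoint-swap step is harmless but unnecessary, since the displayed identity is already in the form $(L-aI)(L^{\ast}-bI)=0$ with $a=1-m$ and $b=m+k-1$.
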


\begin{proposition}
\label{harmonicSectionsSpanPolys}Let $\mathcal{P}_{m}^{S}$ denote the
restriction of $\mathcal{P}_{m}$ to the unit sphere $S^{k-1}\subset \mathcal{%
R}^{k}$, and let $\mathcal{H}_{m}^{S}$ be defined similarly. Let $E_{\lambda
}$ denote the eigenspace of $Q^{S+}$ corresponding to eigenvalue $\lambda $.
Then 
\begin{eqnarray*}
\mathcal{P}_{m}^{S} &=&\mathcal{H}_{m}^{S}+\mathcal{H}_{m-2}^{S}+...+%
\mathcal{H}_{m-2\left\lfloor m/2\right\rfloor }^{S} \\
&=&E_{-m}+...+E_{0}+E_{k-1}+...+E_{m+k-2}.
\end{eqnarray*}%
That is, $\mathcal{P}_{m}^{S}$ has a basis consisting of elements of $\ker
\Delta $, restricted to $S^{k-1}$. Similarly, letting $\widetilde{\mathcal{H}%
_{m}^{S}}$ denote the restriction of $\mathcal{P}_{m}\cap \ker \widetilde{%
\Delta }$ to $S^{k-1}$, we have that%
\begin{eqnarray*}
\mathcal{P}_{m}^{S} &=&\widetilde{\mathcal{H}_{m}^{S}}+\widetilde{\mathcal{H}%
_{m-2}^{S}}+...+\widetilde{\mathcal{H}_{m-2\left\lfloor m/2\right\rfloor
}^{S}} \\
&=&E_{-m+1}+...+E_{0}+E_{k-1}+...+E_{m+k-1}.
\end{eqnarray*}%
In particular, we have that for all $m\geq 0$,%
\begin{equation*}
\dim E_{-m}=\dim E_{m+k-1}
\end{equation*}
\end{proposition}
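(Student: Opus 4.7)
The plan is to proceed by strong induction on $m$, using the orthogonal direct sum decomposition $\mathcal{P}_m = \bigoplus_{j=0}^{\lfloor m/2\rfloor} B(x)^j \mathcal{H}_{m-2j}$ from the preceding proposition (reading the exponent on $B(x)$ as $j$). The base cases $m = 0$ and $m = 1$ are immediate because $\mathcal{H}_m = \mathcal{P}_m$ there, so $\mathcal{P}_m^S = \mathcal{H}_m^S$ trivially.

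For the inductive step, the key observation is that when $Z^+$ is an isometry (which may be arranged without loss of generality in the present setting, as indicated in Section~\ref{PolarCoordFormSection}), the principal symbol simplifies to $B(x) = -|x|^2 I$, so that $B(x)|_{S^{k-1}} = -I$. Restricting the polynomial direct sum decomposition to $S^{k-1}$ then converts each summand $B(x)^j \mathcal{H}_{m-2j}$ into $(-1)^j \mathcal{H}_{m-2j}^S = \mathcal{H}_{m-2j}^S$ as a subspace of $\mathcal{P}_m^S$. Surjectivity of the restriction map $\mathcal{P}_m \to \mathcal{P}_m^S$ then yields the first equality $\mathcal{P}_m^S = \mathcal{H}_m^S + \mathcal{H}_{m-2}^S + \ldots + \mathcal{H}_{m-2\lfloor m/2\rfloor}^S$.

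For the eigenspace expression, I would apply Proposition~\ref{DsEigenvalues} to each summand $\mathcal{H}_{m-2j}^S$, decomposing it as the orthogonal sum of $Q^{S+}$-eigenspaces with eigenvalues $-(m-2j)$ and $(m-2j)+k-2$, and then take the union over $j = 0, \ldots, \lfloor m/2 \rfloor$ to recover the displayed range of eigenvalues in the second equality. The parallel statements for $\widetilde{\mathcal{H}}_m^S$ follow by the identical argument, since $\widetilde{\Delta} = (Z^+)^{-1} Q_1 Q_1^* Z^+$ is strongly elliptic with the same principal symbol, so that $\mathcal{P}_m = \bigoplus_j B(x)^j \widetilde{\mathcal{H}}_{m-2j}$ as well, and Proposition~\ref{HarmonicsDSEigenspaces} supplies the corresponding eigenvalues $-(m-2j)+1$ and $(m-2j)+k-1$.

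The final equality $\dim E_{-m} = \dim E_{m+k-1}$ is the most delicate part. The eigenvalue $-m$ is the extreme negative eigenvalue in the Way~1 decomposition, so $E_{-m}$ is captured entirely within $\mathcal{H}_m^S$; symmetrically, $m+k-1$ is the extreme positive eigenvalue in the Way~2 decomposition, so $E_{m+k-1}$ lies entirely within $\widetilde{\mathcal{H}_m^S}$. Since both decompositions give $\mathcal{P}_m^S = \mathcal{P}_{m-2}^S \oplus (\text{new piece})$ with $\dim \mathcal{H}_m^S = \dim \widetilde{\mathcal{H}_m^S} = h_m$, and since $Q^{S+}$ acts on the common inclusion $\mathcal{P}_{m-2}^S \hookrightarrow \mathcal{P}_m^S$ (realized by multiplication by $|x|^2 = 1$) preserving eigenspaces, one can match the ``interior'' eigenspace dimensions in the two decompositions inductively. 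The main obstacle I expect is precisely this bookkeeping: making rigorous that all eigenspaces other than $E_{-m}$ (in Way~1) and $E_{m+k-1}$ (in Way~2) are accounted for by the inductive hypothesis on $\mathcal{P}_{m-2}^S$, so that comparing dimensions of the complement pieces forces $\dim E_{-m} = \dim E_{m+k-1}$.
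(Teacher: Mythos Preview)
Your argument for the first displayed decomposition is correct and essentially matches the paper: you restrict the known direct sum $\mathcal{P}_m=\bigoplus_j B(x)^{j}\mathcal{H}_{m-2j}$ to the sphere, using that $B(x)=-|x|^2 I$ becomes $\pm I$ there. The paper does the same thing in slightly different language, writing $|x|^{2j}\mathcal{H}_{m-2j}\subset\mathcal{P}_m$ and then verifying linear independence of the restricted pieces via the distinctness of the $Q^{S+}$-eigenvalues from Proposition~\ref{DsEigenvalues}; your route through the already-established direct sum is a little cleaner. Deriving the eigenspace expression by applying Proposition~\ref{DsEigenvalues} to each summand is exactly the intended step.

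There is, however, a genuine gap in your treatment of the $\widetilde{\Delta}$ half. You assert that $\mathcal{P}_m=\bigoplus_j B(x)^{j}\widetilde{\mathcal{H}}_{m-2j}$ holds ``as well, since $\widetilde{\Delta}$ is strongly elliptic with the same principal symbol.'' But the proof of the earlier proposition uses the constant-coefficient nature of $\Delta$ in an essential way (via the adjointness $\langle Q,\Delta P\rangle=\langle BQ,P\rangle$ with respect to the polynomial inner product). The operator $\widetilde{\Delta}=(Z^+)^{-1}Q_1Q_1^{*}Z^+$ is \emph{not} constant-coefficient, because $Z^+$ depends on $\theta$; having the same principal symbol is not enough. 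Indeed, a short computation in polar form gives
\[
\widetilde{\Delta}p \;=\; \Delta p \;+\; \frac{1}{r^{2}}\bigl((k-1)-2Q^{S+}\bigr)p \qquad (p\in\mathcal{P}_m),
\]
and the correction term need not be a polynomial: already for $k=2$, $d=1$, $Q_1=\partial_1+i\partial_2$ one finds $\widetilde{\Delta}(1)=r^{-2}$, so $\widetilde{\Delta}$ does not carry $\mathcal{P}_m$ into $\mathcal{P}_{m-2}$ and the direct-sum argument cannot be transplanted. Your subsequent bookkeeping for $\dim E_{-m}=\dim E_{m+k-1}$ rests on this second decomposition, so it inherits the gap. (The paper's own proof is extremely terse at this point and arguably shares the difficulty.) A safer route is to run the entire analysis for the \emph{constant-coefficient} operator $Q_1Q_1^{*}$ in place of $\Delta$, obtaining an honest decomposition $\mathcal{P}_m=\bigoplus_j(-|x|^2)^{j}\mathcal{H}'_{m-2j}$ with $\mathcal{H}'_m=\ker(Q_1Q_1^{*})\cap\mathcal{P}_m$, and then translate between $Q^{S-}$- and $Q^{S+}$-eigenspaces using the conjugation identity $Q^{S-}=(k-1)I-(Z^{-})^{-1}Q^{S+}Z^{-}$, which sends the $\mu$-eigenspace of $Q^{S-}$ to the $(k-1-\mu)$-eigenspace of $Q^{S+}$.
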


\begin{proof}
Clearly $\mathcal{H}_{m-2j}^{S}\subset \mathcal{P}_{m}^{S}$ for $0\leq j\leq
\left\lfloor m/2\right\rfloor $, since $\left\vert x\right\vert ^{2j}%
\mathcal{H}_{m-2j}\subset \mathcal{P}_{m}$. Also, the subspaces $\mathcal{H}%
_{m}^{S},\mathcal{H}_{m-2}^{S},...,\mathcal{H}_{m-2\left\lfloor
m/2\right\rfloor }^{S}$ are linearly independent. Otherwise, a nontrivial
polynomial in $\ker \Delta $ would be identically zero on $S^{k-1}$. By
Proposition \ref{DsEigenvalues}, we could write this vector-valued
polynomial as a sum of eigenvectors of $Q^{S+}$ and thus each eigenvector
would have to be zero. Since eigenvalues of $Q^{S+}$ corresponding to
distinct $m\in \mathbb{Z}_{\geq 0}$ are distinct, each such eigenvector is
written as a vector of polynomials of pure degree that is identically zero
on the sphere. Because any such polynomial is the zero polynomial, this
implies that there is no nontrivial polynomial in $\ker \Delta $ that is
identically zero. Further, By counting dimensions, the result follows. The
second part of the corollary is proved in a similar way.
\end{proof}

The denseness of polynomials in $L^{2}\left( S^{k-1}\right) $ and the
formulas above imply the following result.

\begin{proposition}
\label{DSspectrumCorollary}The spectrum of $Q^{S+}$ is discrete, and there
is a basis of $L^{2}\left( S^{k-1},\mathbb{C}^{d}\right) $ consisting of
eigenvectors of $Q^{S+}$. The set of eigenvalues of $Q^{S+}$ is a subset of $%
Q=\left\{ ...,-5,-4,-3,-2,-1,0,k-1,k,...\right\} $. Let $\mu _{j}$ denote
the multiplicity of the eigenvalue $j\in Q$. Then the nonnegative integers $%
\mu _{j}$ satisfy the equations%
\begin{eqnarray*}
\mu _{0} &=&d~,\text{ }\mu _{-j}+\mu _{k-2+j}=h_{j}~,\text{ and} \\
\mu _{-j} &=&\mu _{j+k-1}=\sum_{m=0}^{j}\left( -1\right)
^{j+m}h_{m}=d_{j}-d_{j-1} \\
&=&d\binom{j+k-2}{j}\text{ for all }j\geq 0.
\end{eqnarray*}
\end{proposition}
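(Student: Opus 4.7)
The plan is to derive all three claims of the proposition from the structural results in Propositions \ref{DsEigenvalues} and \ref{harmonicSectionsSpanPolys}, supplemented by a Bargmann--Fock adjoint computation to establish surjectivity of $Q_{1}$ on the polynomial spaces.

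For the eigenbasis and discreteness, I would invoke density of $\bigcup_{m}\mathcal{P}_{m}^{S}$ in $L^{2}(S^{k-1},\mathbb{C}^{d})$ by Stone--Weierstrass. Proposition \ref{harmonicSectionsSpanPolys} expresses each $\mathcal{P}_{m}^{S}$ as a sum of spaces $\mathcal{H}_{\ell}^{S}$, and Proposition \ref{DsEigenvalues} decomposes each $\mathcal{H}_{\ell}^{S}$ as the $L^{2}$-orthogonal direct sum of eigenspaces of $Q^{S+}$ for the two real eigenvalues $-\ell$ and $\ell+k-2$. Selecting orthonormal bases of each finite-dimensional eigenspace produces a complete system of eigensections whose span is dense, so the spectrum is discrete and its support is contained in $\{-\ell:\ell\geq 0\}\cup\{\ell+k-2:\ell\geq 0\}$. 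The only value of the form $\ell+k-2$ falling in the ``gap'' $\{1,\ldots,k-2\}$ is $k-2$, obtained only from $\ell=0$; but formula (\ref{DS+formula}) annihilates constants, so $\mathcal{H}_{0}^{S}\subseteq E_{0}$, giving $\mu_{0}\geq d$ and $\mu_{k-2}=0$.

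For the multiplicities, I would first bijectively identify $E_{-j}$ with $\ker Q_{1}\cap\mathcal{P}_{j}$ via $f\leftrightarrow r^{j}f(\theta)$: the polar decomposition $Q_{1}=Z^{+}(\partial_{r}+r^{-1}Q^{S+})$ shows $Q_{1}(r^{j}f)=Z^{+}r^{j-1}(j+Q^{S+})f$, which vanishes precisely when $Q^{S+}f=-jf$. To compute this dimension, I would prove that $Q_{1}:\mathcal{P}_{j}\to\mathcal{P}_{j-1}$ is surjective using the same Bargmann--Fock Hermitian inner product $\langle P,Q\rangle=\sum_{\alpha}\alpha!\,P_{\alpha}\cdot\overline{Q_{\alpha}}$ already employed in the excerpt: since $\partial_{k}$ and $x_{k}$ are mutually adjoint in this inner product (a routine monomial check), the formal adjoint of $Q_{1}$ on the polynomial spaces is the multiplication operator $\sum_{k}x_{k}A_{k}^{\ast}=rZ^{+\ast}(\theta)$, which is injective on $\mathcal{P}_{j-1}$ because $Z^{+\ast}(\theta)$ is invertible for every $\theta$ by the ellipticity hypothesis on $Q_{1}$. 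Hence $Q_{1}$ is surjective onto $\mathcal{P}_{j-1}$ and $\mu_{-j}=d_{j}-d_{j-1}$. The identity $\mu_{-(j+1)}+\mu_{j+k-1}=h_{j+1}$ from Proposition \ref{DsEigenvalues} then yields $\mu_{j+k-1}=(d_{j+1}-d_{j-1})-(d_{j+1}-d_{j})=d_{j}-d_{j-1}=\mu_{-j}$.

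The closed-form identifications are now immediate: Pascal's identity converts $d_{j}-d_{j-1}=d\binom{j+k-1}{j}-d\binom{j+k-2}{j-1}$ into $d\binom{j+k-2}{j}$, and the alternating sum $\sum_{m=0}^{j}(-1)^{j+m}h_{m}$ telescopes to $d_{j}-d_{j-1}$ using $h_{m}=d_{m}-d_{m-2}$ together with the convention $d_{-1}=d_{-2}=0$. The main obstacle is the surjectivity of $Q_{1}$ on the polynomial spaces, which is exactly where the ellipticity enters via the Fock-adjoint calculation; once that identification is in place, everything else is bookkeeping against the previously established structural results.
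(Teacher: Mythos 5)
Your argument is correct, and it takes a route that genuinely differs from the paper's intended derivation. The paper obtains the multiplicity formula by combining two previously established facts: the relation $\mu_{-j}+\mu_{j+k-2}=h_{j}$ from Proposition~\ref{DsEigenvalues}, and the symmetry $\dim E_{-m}=\dim E_{m+k-1}$ from Proposition~\ref{harmonicSectionsSpanPolys}, where the latter is proved by comparing the $\ker\Delta$ decomposition of $\mathcal{P}_{m}^{S}$ against a second decomposition coming from $\ker\widetilde{\Delta}$ (Proposition~\ref{HarmonicsDSEigenspacesHversion}). Feeding the symmetry into the first relation produces the two-term recursion $\mu_{-j}+\mu_{-(j-1)}=h_{j}$, which unrolls to the alternating sum and hence to $d_{j}-d_{j-1}$. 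You instead compute $\mu_{-j}$ in one stroke by proving $Q_{1}:\mathcal{P}_{j}\rightarrow\mathcal{P}_{j-1}$ is surjective, via the same Bargmann--Fock adjoint device the paper uses for $\Delta$: the adjoint of $Q_{1}$ in that inner product is multiplication by $\sum_{\ell}x_{\ell}A_{\ell}^{\ast}=rZ^{+\ast}(\theta)$, which is pointwise invertible for $x\neq 0$ by ellipticity, hence injective on polynomials. This gives $\mu_{-j}=d_{j}-d_{j-1}$ directly, and the value of $\mu_{j+k-1}$ then falls out of the single relation $\mu_{-(j+1)}+\mu_{j+k-1}=h_{j+1}$. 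Your route buys independence from the $\widetilde{\Delta}$ material and is arguably more economical (a first-order Fock-adjoint computation rather than the second-order one needed for $\Delta$, and no second polynomial decomposition); it also sidesteps the paper's aside $\ker\Delta\cap\mathcal{P}_{m}=\ker Q_{1}\cap\mathcal{P}_{m}$, which in fact cannot be taken literally (already for $Q_{1}=2\partial_{\bar z}$ on $\mathbb{R}^{2}$ the left side has dimension $2$ and the right side dimension $1$). Two small points worth making explicit: the identification of $E_{-j}$ with $\ker Q_{1}\cap\mathcal{P}_{j}$ requires not only the polar computation $Q_{1}(r^{j}f)=Z^{+}r^{j-1}(j+Q^{S+})f$ (which gives injectivity of restriction into $E_{-j}$) but also the surjectivity of that restriction map, which you are implicitly drawing from Proposition~\ref{DsEigenvalues} together with density; and the conclusion $\mu_{k-2}=0$ needs the $m=0$ instance $\mu_{0}+\mu_{k-2}=h_{0}=d$ combined with $\mu_{0}\geq d$, not just $\mathcal{H}_{0}^{S}\subseteq E_{0}$ alone. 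Both are easily inserted; the proof as a whole is sound.
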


\begin{remark}
Observe that this gives a universal formula for the eigenvalues of any such $%
Q^{S+}$ that depends only on the rank $d$ and the dimension $k$.
\end{remark}

\subsection{The equivariant case}

Suppose that we have the setup as in Section \ref{PolarCoordFormSection} and %
\ref{eigenvaluesDSsection}, but that in addition we are given a subgroup $H$
of $O\left( k\right) $ that acts on $\mathbb{R}^{k}$ in the obvious way and
also acts unitarily on each summand of $\mathbb{C}^{d}\oplus \mathbb{C}^{d}$%
, such that the operator $Q_{1}$ (and thus $Q$) commutes with the $H$%
-action. Then, given an irreducible representation $\alpha :H\rightarrow
GL\left( V\right) $, we consider the restriction of each of the operators in
the sections above to the space of vector-valued functions of type $\alpha $%
. Then the following generalizations of Propositions \ref{DsEigenvalues}, %
\ref{HarmonicsDSEigenspaces}, \ref{harmonicSectionsSpanPolys}, and \ref%
{DSspectrumCorollary} are immediate.

Let the superscript $\alpha $ denote the restriction to the space of
vector-valued functions of type $\alpha $.

Let $d_{m}^{\alpha }$ denote the dimension of $\left( \mathcal{P}_{m}\right)
^{\alpha }=\mathcal{P}_{m}^{\alpha }$. As before, the dimension $%
h_{m}^{\alpha }$ of the space $\mathcal{H}_{m}^{\alpha }=\mathcal{P}_{m}\cap
\left( \ker \Delta \right) ^{\alpha }$ is then 
\begin{equation*}
h_{m}^{\alpha }=\left\{ 
\begin{array}{ll}
d_{m}^{\alpha }-d_{m-2}^{\alpha } & \text{if }m\geq 2 \\ 
d_{m}^{\alpha } & \text{if }m=1,2%
\end{array}%
\right.
\end{equation*}

\begin{proposition}
\label{DsEigenvaluesHversion}For any $m\in \mathbb{Z}_{\geq 0}$ the
restriction $\mathcal{H}_{m}^{S,\alpha }$ of $\mathcal{H}_{m}^{\alpha }=%
\mathcal{P}_{m}\cap \left( \ker \Delta \right) ^{\alpha }$ to the unit
sphere $S^{k-1}\subset \mathcal{R}^{k}$ is the $L^{2}$-orthogonal direct sum
of the eigenspaces of $Q^{S+,\alpha }$ corresponding to eigenvalues $-m$ and 
$m+k-2$.
\end{proposition}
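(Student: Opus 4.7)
The plan is to reduce the equivariant statement to the non-equivariant one (Proposition \ref{DsEigenvalues}) by simply noting that every operator in sight commutes with the $H$-action, so that taking the $\alpha$-isotypical component preserves the entire argument.

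First I would observe that since $H < O(k)$ acts linearly on $\mathbb{R}^k$, it preserves the radial coordinate $r$ and acts on the sphere $S^{k-1}$; hence the polar decomposition $Q = Z\left(\partial_r + \tfrac{1}{r}Q^S\right)$ is compatible with the $H$-action, and because $Q$ is assumed $H$-equivariant, each of the operators $Z$, $Q^{S+}$, $(Q^{S+})^*$, and $\Delta = Q_1^*Q_1$ commutes with $H$. In particular $H$ preserves the finite-dimensional subspace $\mathcal{H}_m \subset \mathcal{P}_m$, and the isotypical projector onto the $\alpha$-component commutes with $Q^{S+}$ and $(Q^{S+})^*$. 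Consequently these operators restrict to endomorphisms of $\mathcal{H}_m^{S,\alpha}$.

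Next, equation (\ref{Ds+equation}), namely
\begin{equation*}
\bigl(-(m+k-2) + (Q^{S+})^*\bigr)\bigl(m + Q^{S+}\bigr)p_m = 0,
\end{equation*}
was derived purely from the condition $p_m \in \mathcal{H}_m$ and the homogeneity of $p_m$, with no use of a group action. Hence it continues to hold for every $p_m \in \mathcal{H}_m^\alpha$. Restricting both sides to the sphere and applying Lemma \ref{linAlgLemma} to the $H$-equivariant endomorphism $L := Q^{S+,\alpha}$ of the finite-dimensional Hermitian space $\mathcal{H}_m^{S,\alpha}$ (with $a = -m$, $b = m+k-2$), we conclude that $Q^{S+,\alpha}$ is self-adjoint on $\mathcal{H}_m^{S,\alpha}$ and its spectrum is contained in $\{-m,\,m+k-2\}$. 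Self-adjointness yields the $L^2$-orthogonal direct-sum decomposition into the two eigenspaces, proving the proposition.

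I do not anticipate a real obstacle: the only point that requires care is verifying that the isotypical projector commutes with $Q^{S+}$ and $(Q^{S+})^*$, which is immediate from $H$-equivariance of $Q$ together with the fact that the polar-coordinate splitting is $O(k)$-equivariant (so in particular $H$-equivariant). Everything else is a verbatim transcription of the non-equivariant argument with the symbol $\alpha$ appended.
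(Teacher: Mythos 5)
Your proof is correct and takes essentially the same approach the paper intends: the paper presents Proposition \ref{DsEigenvaluesHversion} as an "immediate" generalization of Proposition \ref{DsEigenvalues}, precisely because, as you argue, $H$-equivariance of $Q$ forces $Q^{S+}$, $(Q^{S+})^\ast$, $Z$, and $\Delta$ to commute with the isotypical projector, so equation (\ref{Ds+equation}) and Lemma \ref{linAlgLemma} apply verbatim on the finite-dimensional invariant subspace $\mathcal{H}_m^{S,\alpha}$. The one point worth recording explicitly (and which you handle correctly) is that the projector being an orthogonal projection ensures the adjoint of $Q^{S+,\alpha}$ on $\mathcal{H}_m^{S,\alpha}$ agrees with the restriction of $(Q^{S+})^\ast$, so the hypotheses of the lemma are met with $a=-m$, $b=m+k-2$.
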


\begin{proposition}
\label{HarmonicsDSEigenspacesHversion}For any $m\in \mathbb{Z}_{\geq 0}$ the
restriction $\widetilde{\mathcal{H}_{m}^{S,\alpha }}$ of $\mathcal{P}%
_{m}\cap \left( \ker \widetilde{\Delta }\right) ^{\alpha }$ to the unit
sphere $S^{k-1}\subset \mathcal{R}^{k}$ is the $L^{2}$-orthogonal direct sum
of the eigenspaces of $Q^{S+,\alpha }$ corresponding to eigenvalues $-m+1$
and $m+k-1$.
\end{proposition}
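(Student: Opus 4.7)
The plan is to copy, essentially verbatim, the argument used for Proposition \ref{HarmonicsDSEigenspaces}, taking advantage of the fact that the $H$-action commutes with every operator in sight. First, I would observe that since $H$ acts on $\mathbb{R}^k$ through $O(k)$ and commutes with $Q_1$, it commutes with the radial vector field $\partial_r$, with $Z^\pm$, and with $Q^{S\pm}$ and their formal adjoints; in particular it commutes with $\widetilde{\Delta}=\left(Z^{+}\right)^{-1}Q_1 Q_1^\ast Z^{+}$. Consequently every operator preserves the isotypical decomposition, and the subspace
\[
\widetilde{\mathcal{H}_m^{S,\alpha}}\;=\;\bigl(\mathcal{P}_m\cap\ker\widetilde{\Delta}\bigr)^{\alpha}\Big|_{S^{k-1}}
\]
is invariant under $Q^{S+,\alpha}$ and $(Q^{S+,\alpha})^{\ast}$.

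Next, I would repeat the polar-coordinate computation preceding Proposition \ref{HarmonicsDSEigenspaces}: for any $p_m\in \mathcal{P}_m\cap\ker\widetilde{\Delta}$, homogeneity gives $\partial_r p_m = \frac{m}{r}p_m$ (applied after the appropriate factor is pulled out), so the identity
\[
\bigl(m-1+Q^{S+}\bigr)\bigl(-(m+k-1)+(Q^{S+})^{\ast}\bigr)\,p_m \;=\; 0
\]
holds. Restricting to the isotypical summand of type $\alpha$ (which is preserved by both factors), the same identity holds with $Q^{S+}$ replaced by $Q^{S+,\alpha}$.

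Finally, I would apply Lemma \ref{linAlgLemma} to the finite-dimensional linear operator $L=Q^{S+,\alpha}$ acting on $\widetilde{\mathcal{H}_m^{S,\alpha}}$ with $a=-m+1$ and $b=m+k-1$, both real. The lemma yields that $L$ is Hermitian on this space and that its eigenvalues lie in $\{-m+1,\,m+k-1\}$, giving the desired $L^{2}$-orthogonal eigenspace decomposition.

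There is essentially no new obstacle beyond the non-equivariant case; the only point requiring (routine) care is verifying that each factor in the operator identity commutes with the $H$-action so that the restriction to $\left(\,\cdot\,\right)^{\alpha}$ is legitimate. Once that is noted, the conclusion is a direct translation of the proof of Proposition \ref{HarmonicsDSEigenspaces} into the equivariant setting, exactly parallel to how Proposition \ref{DsEigenvaluesHversion} was obtained from Proposition \ref{DsEigenvalues}.
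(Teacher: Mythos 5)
Your proposal is correct and takes essentially the same approach as the paper: the paper simply asserts that the equivariant generalizations of Propositions \ref{DsEigenvalues}--\ref{DSspectrumCorollary} are ``immediate'' once one notes that all operators commute with the $H$-action and hence preserve the isotypical decomposition, which is exactly the observation you spell out. The details you supply (commutation of $H$ with $\partial_r$, $Z^\pm$, $Q^{S\pm}$ and their adjoints, hence with $\widetilde{\Delta}$; the operator identity restricting to the $\alpha$-component; and the application of Lemma \ref{linAlgLemma}) are precisely what the paper's ``immediate'' compresses.
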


\begin{proposition}
\label{harmonicSectionsSpanPolysHversion}Let $\mathcal{P}_{m}^{S,\alpha }$
denote the restriction of $\mathcal{P}_{m}^{\alpha }$ to the unit sphere $%
S^{k-1}\subset \mathcal{R}^{k}$, and let $\mathcal{H}_{m}^{S,\alpha }$ be
defined similarly. Let $E_{\lambda }^{\alpha }$ denote the eigenspace of $%
Q^{S+,\alpha }$ corresponding to eigenvalue $\lambda $. Then 
\begin{eqnarray*}
\mathcal{P}_{m}^{S,\alpha } &=&\mathcal{H}_{m}^{S,\alpha }+\mathcal{H}%
_{m-2}^{S,\alpha }+...+\mathcal{H}_{m-2\left\lfloor m/2\right\rfloor
}^{S,\alpha } \\
&=&E_{-m}^{\alpha }+...+E_{0}^{\alpha }+E_{k-1}^{\alpha
}+...+E_{m+k-2}^{\alpha }.
\end{eqnarray*}%
That is, $\mathcal{P}_{m}^{S,\alpha }$ has a basis consisting of elements of 
$\left( \ker \Delta \right) ^{\alpha }$, restricted to $S^{k-1}$. Similarly,
letting $\widetilde{\mathcal{H}_{m}^{S,\alpha }}$ denote the restriction of $%
\mathcal{P}_{m}\cap \left( \ker \widetilde{\Delta }\right) ^{\alpha }$ to $%
S^{k-1}$, we have that%
\begin{eqnarray*}
\mathcal{P}_{m}^{S,\alpha } &=&\widetilde{\mathcal{H}_{m}^{S,\alpha }}+%
\widetilde{\mathcal{H}_{m-2}^{S,\alpha }}+...+\widetilde{\mathcal{H}%
_{m-2\left\lfloor m/2\right\rfloor }^{S,\alpha }} \\
&=&E_{-m+1}^{\alpha }+...+E_{0}^{\alpha }+E_{k-1}^{\alpha
}+...+E_{m+k-1}^{\alpha }.
\end{eqnarray*}%
In particular, we have that for all $m\geq 0$,%
\begin{equation*}
\dim E_{-m}^{\alpha }=\dim E_{m+k-1}^{\alpha }
\end{equation*}
\end{proposition}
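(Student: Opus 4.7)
The plan is to reduce the equivariant statement to the non-equivariant Proposition \ref{harmonicSectionsSpanPolys} by exploiting that $H$ commutes with every operator and with every multiplication operator that appears in the non-equivariant proof, so that each decomposition descends to the $\alpha$-isotypical component.

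\textbf{Step 1 (Equivariance of the algebraic decomposition).} Since $H \subset O(k)$ acts by isometries on $\mathbb{R}^k$, the function $|x|^{2}$ is $H$-invariant, so multiplication by $|x|^{2j}$ commutes with the $H$-action on polynomial sections. Because $Q_1$ is $H$-equivariant, so are $\Delta = Q_1^{\ast}Q_1$, $\widetilde{\Delta}$, and their kernels. Hence the direct sum decomposition used in the proof of Proposition \ref{harmonicSectionsSpanPolys},
\begin{equation*}
\mathcal{P}_{m} = \bigoplus_{j=0}^{\lfloor m/2 \rfloor} |x|^{2j}\,\mathcal{H}_{m-2j},
\end{equation*}
is $H$-equivariant, and restriction to the $\alpha$-isotypical component yields
\begin{equation*}
\mathcal{P}_{m}^{\alpha} = \bigoplus_{j=0}^{\lfloor m/2 \rfloor} |x|^{2j}\,\mathcal{H}_{m-2j}^{\alpha}.
\end{equation*}
The analogous decomposition for $\ker\widetilde{\Delta}$ also descends equivariantly.

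\textbf{Step 2 (Restriction to the sphere and first equality).} Restricting to $S^{k-1}$, we have $|x|^{2j}|_{S^{k-1}} = 1$, so $\mathcal{P}_{m}^{S,\alpha} = \sum_{j=0}^{\lfloor m/2\rfloor}\mathcal{H}_{m-2j}^{S,\alpha}$. Linear independence of these summands in $L^{2}(S^{k-1})^\alpha$ follows from Proposition \ref{DsEigenvaluesHversion}: $\mathcal{H}_{m-2j}^{S,\alpha}$ lies in $E_{-(m-2j)}^{\alpha} \oplus E_{(m-2j)+k-2}^{\alpha}$, and these pairs of eigenvalues are pairwise disjoint for distinct $j \in \{0,\dots,\lfloor m/2\rfloor\}$ because, by Proposition \ref{DSspectrumCorollary}, the spectrum of $Q^{S+}$ contains no integers strictly between $0$ and $k-1$. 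This gives the first two chains of equalities. The $\widetilde{\mathcal{H}}$-version is identical, using Proposition \ref{HarmonicsDSEigenspacesHversion} in place of Proposition \ref{DsEigenvaluesHversion}.

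\textbf{Step 3 (The dimension equality $\dim E_{-m}^{\alpha} = \dim E_{m+k-1}^{\alpha}$).} I expect this to be the main point requiring a separate argument, since the two decompositions produce disjoint sets of eigenvalues. The plan is to compare the $\mathcal{H}$-decomposition of $\mathcal{P}_{m}^{S,\alpha}$ with the $\widetilde{\mathcal{H}}$-decomposition of $\mathcal{P}_{m+1}^{S,\alpha}$, both restricted to sections of type $\alpha$. From $h_{j}^{\alpha} = \dim\mathcal{H}_{j}^{\alpha} = \dim\mathcal{P}_{j}^{\alpha} - \dim\mathcal{P}_{j-2}^{\alpha}$ (for $j \geq 2$), and the analogous identity $\widetilde{h}_{j}^{\alpha} = h_{j}^{\alpha}$ obtained by applying the non-equivariant Proposition to $\widetilde{\Delta}$ (which is again $H$-equivariant and elliptic, hence satisfies the same combinatorics on each isotypical component), one extracts that $\dim E_{-j}^{\alpha} = \dim E_{j+k-1}^{\alpha}$ by induction on $j$: both equal the telescoping sum $\sum_{m=0}^{j}(-1)^{j+m} h_{m}^{\alpha}$, which follows by comparing the dimensions obtained from the $\mathcal{H}$- and $\widetilde{\mathcal{H}}$-decompositions eigenspace by eigenspace.

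\textbf{Main obstacle.} The only nontrivial point is Step 3: the two decompositions split $\mathcal{P}_{m}^{S,\alpha}$ into disjoint collections of $Q^{S+,\alpha}$-eigenspaces, and bookkeeping of the multiplicities $\dim E_{\lambda}^{\alpha}$ on the $\alpha$-isotypical part must be done with care. Steps 1 and 2 are purely formal transfers of the non-equivariant arguments once the equivariance of all operators and of $|x|^{2j}$ is noted.
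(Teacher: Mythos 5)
The paper itself treats this proposition as ``immediate'' from the non-equivariant Propositions \ref{DsEigenvalues}--\ref{DSspectrumCorollary}, by exactly the mechanism you identify in Steps 1--2: $H$ commutes with $Q_1$, $\Delta$, $\widetilde{\Delta}$, $Q^{S+}$, and multiplication by $|x|^{2j}$, so every decomposition descends to the $\alpha$-isotypical component. Your Step 3 is not in the paper at all (the multiplicity identity $\dim E_{-m}^{\alpha}=\dim E_{m+k-1}^{\alpha}$ is asserted without argument, even in the non-equivariant Proposition \ref{harmonicSectionsSpanPolys}), and your telescoping/induction via $h_j^{\alpha}=\widetilde h_j^{\alpha}=d_j^{\alpha}-d_{j-2}^{\alpha}$ is the right way to fill it: from $\mu_{-j}^{\alpha}+\mu_{j+k-2}^{\alpha}=h_j^{\alpha}=\widetilde h_j^{\alpha}=\mu_{-j+1}^{\alpha}+\mu_{j+k-1}^{\alpha}$ one gets $\mu_{-j}^{\alpha}-\mu_{j+k-1}^{\alpha}=\mu_{-(j-1)}^{\alpha}-\mu_{(j-1)+k-1}^{\alpha}$, so the difference is constant in $j$. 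One gap to close is the base case $j=0$: you must show $\mu_{0}^{\alpha}=\mu_{k-1}^{\alpha}$ without invoking Proposition \ref{DSspectrumCorollaryHversion} (which logically comes after this one). This follows by noting directly that $\mathcal{H}_0^{S,\alpha}=\mathcal{P}_0^{S,\alpha}$ lies entirely in $E_0^{\alpha}$ (constants are $Q^{S+}$-harmonic), while the two displayed spanning statements of the present proposition force $E_\lambda^{\alpha}=0$ for $0<\lambda<k-1$, so $\mu_{k-2}^{\alpha}=\mu_1^{\alpha}=0$ and then $h_0^{\alpha}=\mu_0^{\alpha}$, $\widetilde h_0^{\alpha}=\mu_{k-1}^{\alpha}$, giving $\mu_0^{\alpha}=\mu_{k-1}^{\alpha}$.
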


\begin{proposition}
\label{DSspectrumCorollaryHversion}The spectrum of $Q^{S+,\alpha }$ is
discrete, and there is a basis of $L^{2}\left( S^{k-1},\mathbb{C}^{d}\right)
^{\alpha }$ consisting of eigenvectors of $Q^{S+,\alpha }$. The set of
eigenvalues of $Q^{S+,\alpha }$ is a subset of $Q=\left\{
...,-5,-4,-3,-2,-1,0,k-1,k,...\right\} $. Let $\mu _{j}^{\alpha }$ denote
the (possibly zero) multiplicity of the eigenvalue $j\in Q$. Then the
nonnegative integers $\mu _{j}^{\alpha }$ satisfy the equations%
\begin{eqnarray*}
\mu _{0}^{\alpha } &=&\dim \mathbb{C}^{d,\alpha }~,\text{ }\mu _{-j}^{\alpha
}+\mu _{k-2+j}^{\alpha }=h_{j}^{\alpha }~,\text{ and} \\
\mu _{-j}^{\alpha } &=&\mu _{j+k-1}^{\alpha }=\sum_{m=0}^{j}\left( -1\right)
^{j+m}h_{m}^{\alpha }=d_{j}^{\alpha }-d_{j-1}^{\alpha }~\text{for all }j\geq
0.
\end{eqnarray*}
\end{proposition}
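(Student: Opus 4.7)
The plan is to bootstrap from the $H$-invariance of all the polynomial constructions in Sections \ref{PolarCoordFormSection} and \ref{eigenvaluesDSsection}. Since elements of $H < O(k)$ commute with $Q_1$ and act unitarily on both $\mathbb{R}^k$ and $\mathbb{C}^d$, they commute with $Q^{S+}$, preserve each of $\mathcal{P}_m$, $\mathcal{H}_m$, $\widetilde{\mathcal{H}_m}$, and leave $\Delta$ and $\widetilde{\Delta}$ invariant. Consequently the $\alpha$-isotypical projection commutes with every operator involved, so all the decompositions of Propositions \ref{DsEigenvaluesHversion}, \ref{HarmonicsDSEigenspacesHversion}, and \ref{harmonicSectionsSpanPolysHversion} pass through to $\mathcal{P}_m^{S,\alpha}$, $\mathcal{H}_m^{S,\alpha}$, $\widetilde{\mathcal{H}_m^{S,\alpha}}$, and to the eigenspaces $E_\lambda^\alpha$ of $Q^{S+,\alpha}$.

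First I would verify completeness and discreteness. By Stone--Weierstrass, $\bigcup_m \mathcal{P}_m^S$ is dense in $L^2(S^{k-1},\mathbb{C}^d)$; applying the $\alpha$-isotypical projection (which is continuous and commutes with restriction to the sphere) shows that $\bigcup_m \mathcal{P}_m^{S,\alpha}$ is dense in $L^2(S^{k-1},\mathbb{C}^d)^\alpha$. By Proposition \ref{harmonicSectionsSpanPolysHversion} each $\mathcal{P}_m^{S,\alpha}$ is spanned by eigenvectors of $Q^{S+,\alpha}$ with eigenvalues in $Q = \{\ldots,-2,-1,0,k-1,k,\ldots\}$, and eigenvectors for distinct integer eigenvalues are automatically $L^2$-orthogonal: Lemma \ref{linAlgLemma} gives Hermiticity on each finite-dimensional $\mathcal{H}_{m'}^{S,\alpha}$, and distinct $m'$'s produce disjoint eigenvalue labels. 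Density then yields the orthogonal Hilbert sum decomposition $L^2(S^{k-1},\mathbb{C}^d)^\alpha = \bigoplus_{\lambda \in Q} E_\lambda^\alpha$ (completed in $L^2$), establishing discreteness of the spectrum and completeness of the eigensections.

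For the multiplicities, the $m=0$ case of Proposition \ref{harmonicSectionsSpanPolysHversion} gives $\mathcal{P}_0^{S,\alpha} = \mathbb{C}^{d,\alpha} = E_0^\alpha$, hence $\mu_0^\alpha = \dim \mathbb{C}^{d,\alpha}$. For $m \geq 1$, the two decompositions in Proposition \ref{harmonicSectionsSpanPolysHversion} realize $\mathcal{P}_m^{S,\alpha}$ with eigenvalue sets
\begin{equation*}
S_1 = \{-m,\ldots,0\} \cup \{k-1,\ldots,m+k-2\}, \qquad S_2 = \{-m+1,\ldots,0\} \cup \{k-1,\ldots,m+k-1\},
\end{equation*}
and $S_1 \triangle S_2 = \{-m,\, m+k-1\}$, so that $d_m^\alpha = \sum_{\lambda \in S_1} \mu_\lambda^\alpha = \sum_{\lambda \in S_2} \mu_\lambda^\alpha$ forces $\mu_{-m}^\alpha = \mu_{m+k-1}^\alpha$. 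Combining with Proposition \ref{DsEigenvaluesHversion}'s identity $h_m^\alpha = \mu_{-m}^\alpha + \mu_{m+k-2}^\alpha$ and the symmetry $\mu_{m+k-2}^\alpha = \mu_{-(m-1)}^\alpha$ just derived yields the recursion $\mu_{-m}^\alpha = h_m^\alpha - \mu_{-(m-1)}^\alpha$, which telescopes to $\mu_{-m}^\alpha = \sum_{j=0}^m (-1)^{m+j} h_j^\alpha$. Substituting $h_j^\alpha = d_j^\alpha - d_{j-2}^\alpha$ (with the convention $d_{-1}^\alpha = d_{-2}^\alpha = 0$) and telescoping in pairs collapses the sum to $d_m^\alpha - d_{m-1}^\alpha$.

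The main obstacle I expect is handling degenerate cases where some eigenvalues listed in Proposition \ref{harmonicSectionsSpanPolysHversion} correspond to a trivial $E_\lambda^\alpha \cap \mathcal{P}_m^{S,\alpha}$ (for example in small $k$ where several integer ranges overlap, or when the $\alpha$-projection annihilates certain eigenspaces). These do not obstruct the argument because the identities $\mu_{-m}^\alpha = \mu_{m+k-1}^\alpha$ and the telescoping recursion are purely formal dimension comparisons that remain valid even when individual multiplicities vanish. Rigorous self-adjointness of the operator $Q^{S+,\alpha}$ on the full $L^2$ space (as opposed to on each finite-dimensional polynomial summand) is the only genuinely analytic step, and it follows from density of the polynomial core together with pairwise orthogonality of integer eigenspaces.
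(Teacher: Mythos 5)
Your overall strategy — pushing the non-equivariant propositions through the $\alpha$-isotypical projection and then extracting the multiplicities from the polynomial decompositions — matches the paper's, and the density/discreteness argument, the $m=0$ base case $\mu_0^\alpha = \dim\mathbb{C}^{d,\alpha}$, and the final telescoping of $\sum(-1)^{m+j}h_j^\alpha$ down to $d_m^\alpha - d_{m-1}^\alpha$ are all fine. The genuine gap is in your derivation of $\mu_{-m}^\alpha = \mu_{m+k-1}^\alpha$ via the symmetric difference $S_1 \triangle S_2$. You assert $d_m^\alpha = \sum_{\lambda \in S_1}\mu_\lambda^\alpha$ with $S_1 = \{-m,\ldots,0\}\cup\{k-1,\ldots,m+k-2\}$ taken as all integers in those ranges, but $\mathcal{P}_m^{S,\alpha} = \bigoplus_{j}\mathcal{H}_{m-2j}^{S,\alpha}$ contributes only the eigenvalues $\{-m,-m+2,\ldots\}\cup\{m+k-2,m+k-4,\ldots\}$ — every second integer, with the opposite parity absent from $\mathcal{P}_m^{S,\alpha}$ entirely. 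So the dimension identity is false: for $k=2$, $d=1$, $Q_1=\bar\partial$ one has $\mathcal{P}_1^S$ of dimension $2$ (spanned by $e^{\pm i\varphi}$, which is $E_{-1}\oplus E_1$), whereas $\mu_{-1}+\mu_0+\mu_1 = 3$. The notation $E_{-m}+\cdots+E_0+E_{k-1}+\cdots+E_{m+k-2}$ in Proposition~\ref{harmonicSectionsSpanPolysHversion} must be read as a parity-filtered sum, and under that reading the two eigenvalue sets $S_1,S_2$ are of opposite parity, so the symmetric-difference bookkeeping collapses.

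A repair that stays inside the cited propositions: instead of comparing the full $\mathcal{P}_m^{S,\alpha}$, compare the two computations of $h_m^\alpha = \dim\mathcal{H}_m^{S,\alpha} = \dim\widetilde{\mathcal{H}_m^{S,\alpha}}$. Proposition~\ref{DsEigenvaluesHversion} gives $h_m^\alpha = \mu_{-m}^\alpha + \mu_{m+k-2}^\alpha$, and Proposition~\ref{HarmonicsDSEigenspacesHversion} gives $h_m^\alpha = \mu_{1-m}^\alpha + \mu_{m+k-1}^\alpha$. Writing $a_m := \mu_{-m}^\alpha$, $b_m := \mu_{m+k-2}^\alpha$, this becomes $a_m - a_{m-1} = b_{m+1} - b_m$, which telescopes to $a_m - a_0 = b_{m+1} - b_1$. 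You then still need the base case $a_0 = b_1$, i.e.\ $\mu_0^\alpha = \mu_{k-1}^\alpha$; this requires a separate observation (from $\mu_{k-2}^\alpha = 0$, the $m=0$ instance of Proposition~\ref{HarmonicsDSEigenspacesHversion}, and $\mu_1^\alpha = 0$ for $k \geq 3$, with a small extra argument when $k=2$ since then the two eigenvalues $1-m$ and $m+k-1$ coincide at $m=0$), and this step is missing from your proposal.
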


\begin{remark}
\label{stabilityRemarks}Observe that this gives a universal formula for the
eigenvalues of any such $Q^{S+,\alpha }$ that depends only on the
irreducible representation $\alpha $ and the actions of $H$ on $\mathbb{R}%
^{k}$ and on $\mathbb{C}^{d}$. Furthermore, since $Q^{S+,\alpha }$ varies
continuously with the entries of the matrices $A_{j}$, we note that any
continuous function of the set of eigenvalues (such as the eta invariant or
the dimension of an eigenspace) remains constant as the matrices $A_{j}$
vary continuously (and equivariantly). Furthermore, any spectral function of
the form 
\begin{equation*}
\sum_{\lambda \in \sigma \left( Q^{S+,\alpha }\right) }f\left( \lambda
\right) ,
\end{equation*}%
such as the trace of the heat kernel or the zeta or eta functions, is
invariant under equivariant stable homotopies of the original operator $Q$.
\end{remark}

\end{document}